\def\namedlabel#1#2{\begingroup
    #2%
    \def\@currentlabel{#2}%
    \phantomsection\label{#1}\endgroup
}
\numberwithin{equation}{section}
\numberwithin{figure}{section}
\theoremstyle{plain}
\newtheorem{thm}{\protect\theoremname}[section]
\newtheorem{introtheorem}{Theorem}
\theoremstyle{definition}
\newtheorem{defn}[thm]{\protect\definitionname}
\newtheorem*{defn*}{\protect\definitionname}
\theoremstyle{remark}
\newtheorem{rem}[thm]{\protect\remarkname}
\theoremstyle{plain}
\newtheorem{fact}[thm]{\protect\factname}
\theoremstyle{plain}
\newtheorem{prop}[thm]{\protect\propositionname}
\theoremstyle{plain}
\newtheorem{lem}[thm]{\protect\lemmaname}
\theoremstyle{plain}
\newtheorem{cor}[thm]{\protect\corollaryname}
\theoremstyle{definition}
\newtheorem{example}[thm]{\protect\examplename}
\DeclareMathOperator{\VFA}{VFA}
\DeclareMathOperator{\ACFA}{ACFA}
\DeclareMathOperator{\ACVF}{ACVF}
\DeclareMathOperator{\ACF}{ACF}
\DeclareMathOperator{\OGA}{OGA}
\DeclareMathOperator{\VF}{VF}
\DeclareMathOperator{\sep}{sep}
\DeclareMathOperator{\tensor}{\otimes}
\DeclareMathOperator{\Aut}{Aut}
\DeclareMathOperator{\tp}{tp}
\DeclareMathOperator{\alg}{alg}
\DeclareMathOperator{\tdeg}{tdeg}
\DeclareMathOperator{\VFE}{VFE}
\DeclareMathOperator{\SCVF}{SCVF}
\DeclareMathOperator{\SCFE}{SCFE}
\DeclareMathOperator{\OGE}{OGE}
\DeclareMathOperator{\FF}{FE}
\DeclareMathOperator{\NTPtwo}{NTP_{2}}
\DeclareMathOperator{\NIP}{NIP}
\DeclareMathOperator{\qftp}{qftp}
\DeclareMathOperator{\ACFtwo}{ACF^{2}}
\newcommand{\sphul}{{\sigma\cdot p^{-\infty}}}
\newcommand{\inv}{{\sigma^{-\infty}}}
\newcommand{\Ns}{{\mathbf{N}[\sigma]}}
\newcommand{\Nsp}{{\mathbf{N}[\sigma, p^{-1}]}}
\newcommand{\Nsphul}{{\mathbf{N}[\frac{\sigma}{p^\infty}]}}
\newcommand{\Zsphul}{{\mathbf{Z}[\frac{\sigma}{p^\infty}]}}
\newcommand{\TwVFA}{\widetilde{\VFA}}
\newcommand{\wVFA}{\VFA}
\newcommand{\wVFE}{\VFE}
\newcommand{\TVFA}{\widetilde{\VFA}}
\newcommand{\TVFE}{\widetilde{\VFE}}
\newcommand{\TwVFE}{\widetilde{\VFE}}
\newcommand{\wVFEe}{\VFE_{\leq e}}
\newcommand{\TwVFEe}{\widetilde{\VFE_e}}
\newcommand{\TwOGA}{\widetilde{\omega\OGA}}
\newcommand{\wOGE}{\omega\OGE}
\newcommand{\Qs}{\mathbf{Q}\left(\sigma\right)}
\providecommand{\set}[2]{\{#1 \mid #2\}}
\providecommand{\sequence}[2]{(#1)_{#2}}
\def\Ind{\setbox0=\hbox{$x$}\kern\wd0\hbox to 0pt{\hss$\mid$\hss} \lower.9\ht0\hbox to 0pt{\hss$\smile$\hss}\kern\wd0} 
\def\Notind{\setbox0=\hbox{$x$}\kern\wd0\hbox to 0pt{\mathchardef \nn=12854\hss$\nn$\kern1.4\wd0\hss}\hbox to 0pt{\hss$\mid$\hss}\lower.9\ht0 \hbox to 0pt{\hss$\smile$\hss}\kern\wd0} 
\def\ind{\mathop{\mathpalette\Ind{}}}
\providecommand{\corollaryname}{Corollary}
\providecommand{\definitionname}{Definition}
\providecommand{\examplename}{Example}
\providecommand{\factname}{Fact}
\providecommand{\lemmaname}{Lemma}
\providecommand{\propositionname}{Proposition}
\providecommand{\remarkname}{Remark}
\providecommand{\theoremname}{Theorem}
\let\@wraptoccontribs\wraptoccontribs
\begin{document}
\title{Contracting Endomorphisms of Valued Fields}
\author{Yuval Dor}
\address{Apple Inc., Israel}
\email{yuval.dor4@gmail.com}

\author{Yatir Halevi}
\address{Department of Mathematics\\ University of Haifa\\ 199 Abba Khoushy Avenue \\ Haifa \\Israel}
\email{ybenarih@campus.haifa.ac.il}

\contrib[with an appendix by]{Itay Kaplan}
\thanks{The second author was partially supported by ISF grants No. 555/21 and 290/19 and by the Fields Institute for Research in Mathematical Sciences. The author of the appendix would like to thank the Israel Science Foundation for their support of this research (grant no. 1254/18)}

\maketitle

\begin{center}
    \textit{In memory of Zo\'e Chatzidakis, 1955-2025.}
\end{center}


\begin{abstract}
We prove that the class of separably algebraically closed valued fields equipped with a distinguished Frobenius endomorphism $x \mapsto x^q$ is decidable, uniformly in $q$. The result is a simultaneous generalization of the work of Chatzidakis and Hrushovski (in the case of the trivial valuation) and the work of the first author and Hrushovski (in the case where the fields are algebraically closed).

The logical setting for the proof is a model completeness result for valued fields equipped with an endomorphism $\sigma$ which is locally infinitely contracting and fails to be onto. Namely we prove the existence of a model complete theory $\widetilde{\mathrm{VFE}}$ amalgamating the theories $\mathrm{SCFE}$ and $\widetilde{\mathrm{VFA}}$ introduced in \cite{SCFEe} and \cite{dor-hrushovskiVFA}, respectively. In characteristic zero, we also prove that $\widetilde{\mathrm{VFE}}$ is NTP$_2$ and classify the stationary types: they are precisely those orthogonal to the fixed field and the value group.
\end{abstract}


\section{Introduction}

\subsection{Motivation and Background}

 By a \emph{transformal valued field} we mean a valued field $K$ equipped with a distinguished endomorphism $\sigma \colon K \to K$ of fields such that $\sigma^{-1}\left(\mathcal{O}\right) = \mathcal{O}$ and $\sigma^{-1}\left(\mathcal{M}\right) = \mathcal{M}$; here $\mathcal{O}$ and $\mathcal{M}$ denote the valuation ring and the maximal ideal, respectively.

Of particular interest is the case where $K$ is of positive characteristic $p$ and $\sigma = p^n$ coincides with a power of the Frobenius endomorphism of $K$; in this case, we say that $K$ is a \emph{Frobenius transformal valued field}.

The following is a logical corollary of our main results:
\begin{introtheorem}[Theorems \ref{thm:wvfe-decidable} and \ref{T:asymp theory}]
\label{thm-frob-sep-val-decidable}
    The elementary theory of the class of separably algebraically closed Frobenius transformal valued fields is decidable.
\end{introtheorem}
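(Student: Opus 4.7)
The plan is to deduce the decidability of the elementary theory $T^{\ast}$ of the class from the two results already cited in the statement, following the template established by Chatzidakis--Hrushovski for decidability of finite fields with the Frobenius (via $\ACFA$), and extended by the first author and Hrushovski to the algebraically closed valued field case.

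First, $T^{\ast}$ is recursively enumerable: the conditions ``separably algebraically closed valued field of characteristic $p>0$ with $\sigma(x)=x^{p^{n}}$ for some $n\geq 1$'' admit a recursive axiomatization, uniformly in $q=p^{n}$, so one enumerates their consequences.

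For the reverse direction, I would invoke Theorem \ref{T:asymp theory}, an Ax--Kochen--Ershov-style statement identifying non-principal ultraproducts of Frobenius transformal SCVFs (as $q$ varies, or over a fixed residue characteristic) with models of the appropriate completions of $\widetilde{\VFE}$. By compactness and the ultraproduct theorem, a sentence $\phi$ fails in infinitely many Frobenius transformal SCVFs if and only if $\neg\phi$ is consistent with some such completion; the latter is decidable by Theorem \ref{thm:wvfe-decidable}. Sentences that fail in only finitely many exceptional Frobenius transformal SCVFs are handled by a separate finite-case analysis, using decidability of each individual theory. Combining these: to decide whether $\phi\in T^{\ast}$, run in parallel (i) an enumeration of consequences of the recursive axiomatization of the class, halting if $\phi$ is derived, and (ii) a search, via decidability of $\widetilde{\VFE}$, for a completion witnessing the consistency of $\neg\phi$, halting if one is found; exactly one of these procedures terminates.

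I expect the main obstacle to lie not in the above reduction, which is a formal compactness argument, but in Theorem \ref{T:asymp theory} itself: establishing the two-way correspondence between completions of $\widetilde{\VFE}$ and theories of ultraproducts of Frobenius transformal SCVFs. The forward direction follows from the model completeness of $\widetilde{\VFE}$ given by Theorem \ref{thm:wvfe-decidable} once one verifies that $\sigma=\mathrm{Frob}_{q}$ is locally infinitely contracting and non-surjective in the limit; the backward direction, namely that every relevant completion is realized as such an ultraproduct, requires a Lang--Weil-type density input guaranteeing sufficiently many Frobenius points in $\sigma$-definable varieties over the residue field and inside the valuation ring.
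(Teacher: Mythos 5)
Your overall architecture---a decidable asymptotic theory combined with a finite-exceptions analysis, in the style of Ax and of Chatzidakis--Hrushovski---is the deduction the paper intends, and you correctly identify the twisted Lang--Weil estimates as the geometric input. But there are two genuine gaps in the recursion-theoretic bookkeeping. First, your opening claim that $T^{\ast}$ is recursively enumerable ``because the class admits a recursive axiomatization, uniformly in $q$'' does not follow: the class $\bigcup_{q}\mathcal{C}_{q}$ is not elementary (a nonprincipal ultraproduct with $q\to\infty$ is not a Frobenius transformal valued field), so $T^{\ast}=\bigcap_{q}\mathrm{Cn}(\Sigma_{q})$ is an intersection of infinitely many r.e.\ sets, which need not be r.e. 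The r.e.\ direction is precisely the hard half: one must produce a single recursive theory whose consequences are exactly $T^{\ast}$, and the standard way to do this is to replace each axiom $\psi$ of $\widetilde{T}=\TwVFE$ plus the surjectivity scheme by the guarded axiom ``if $\sigma\neq x^{q}$ for all $q\leq N(\psi)$, then $\psi$,'' which requires an \emph{effectively computable} bound $N(\psi)$ on the exceptional prime powers. That effectivity is exactly what the effective form of the twisted Lang--Weil estimates supplies; you invoke Lang--Weil only at the end and only as input to Theorem \ref{T:asymp theory}, but it is equally indispensable for the recursive axiomatization of $T^{\ast}$ itself.

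Second, your parallel algorithm as stated does not terminate on a sentence $\phi$ that lies in $\widetilde{T}$ but fails in some member with small $q$ (or fails only in trivially valued members, which lie outside the scope of Theorem \ref{T:asymp theory}): procedure (ii) finds no completion of $\widetilde{T}$ consistent with $\neg\phi$, and procedure (i) never derives $\phi$ since $\phi\notin T^{\ast}$. Relatedly, ``$\phi$ fails in infinitely many members'' is not equivalent to ``$\neg\phi$ is consistent with some completion of $\widetilde{T}$''---a sentence can fail in every member of a single $\mathcal{C}_{q}$. You do flag the finitely-many-exceptions case, but closing it requires two ingredients you leave implicit: the computable bound $N(\phi)$ telling you which $q$ must be checked, and the fact that each fixed-$q$ theory $T_{q}$ is decidable uniformly in $q$ (it is a definitional expansion of the theory of separably algebraically closed valued fields of characteristic $p$, whose decidability is a known but nontrivial input that should be cited). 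Equivalently, procedure (ii) must also search over the theories $T_{q}$ so that the complement of $T^{\ast}$ becomes r.e. With these points made explicit your reduction agrees with the paper's, which likewise rests on Theorem \ref{thm:wvfe-decidable} (decidability of $\TwVFE$ via relative completeness over the residue field and decidability of $\ACFA$) and Theorem \ref{T:asymp theory} (identification of the asymptotic theory), taken as given here.
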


Theorem \ref{thm-frob-sep-val-decidable} follows from an abstract model completeness result; a key geometric input here is the the twisted Lang-Weil estimates of Hrushovski \cite{hrushovski2004elementary} and Shuddhodan-Varshavsky \cite{ShVa}. More precisely, we will define a theory $\TwVFE$ of transformal valued fields which is closely related to the elementary theory of separably algebraically closed Frobenius transformal valued fields; Theorem \ref{thm-frob-sep-val-decidable} is then deduced from a quantifier elimination statement for $\TwVFE$. Before we turn to the precise statement it will be useful to give an overview of the model theory of the Frobenius endomorphism.
\vspace{1mm}

\subsubsection{$\ACFA$ and The Limit Theory of the Frobenius Action.} The seminal work of Hrushovski \cite{hrushovski2004elementary} initiated the study of transformal geometry. Whereas algebraic geometry takes place over a field, transformal geometry takes place over a \textit{difference field}; that is, a field $K$ equipped with a distinguished endomorphism $\sigma \colon K \to K$.

The connection between transformal geometry and algebraic geometry is mediated by Hrushovski's Frobenius specialization functors. Model theoretically, the class of difference fields admits a model companion $\ACFA$, which agrees precisely with the limit theory of the Frobenius action on an algebraically closed field. This justifies the useful heuristic that the generic endomorphism $\sigma$ can be regarded as an infinitely large prime power.

The theory $\ACFA$ was studied in detail in the work of Chatzidakis and Hrushovski \cite{ChHr-Dif}.

\subsubsection{$\TVFA$} In Hrushovski's work \cite{hrushovski2004elementary} a key role is played by a moving lemma and a theory of specialization of difference varieties. This leads naturally to the use of valuations, and hence of transformal valued fields of the following form.

Let $K$ be a transformal valued field. We say that $K$ is $\omega$-\emph{increasing} if the induced action of $\sigma$ on $\Gamma$ is $\omega$-increasing in the sense that $n\alpha < \sigma\alpha$ for all $0 < \alpha \in \Gamma$ and all $n \in \mathbf{N}$. The Frobenius transformal valued fields are not $\omega$-increasing, but a nonprincipal ultraproduct of them is.

The class of $\omega$-increasing transformal valued fields admits a model companion $\TVFA$. In characteristic zero this is due to Durhan \cite{azgin2010valued} and in all characteristics due to Hrushovski and the first author \cite{dor-hrushovskiVFA}. Furthermore, the theory $\TVFA$ is precisely the limit theory of the Frobenius action on an algebraically closed and nontrivially valued field.

\subsubsection{Transformally Separable Extensions} Let $K$ be a difference field. We say that $K$ is \textit{inversive} or \textit{transformally perfect} if $\sigma$ is onto on $K$. We say that an embedding $\nicefrac{L}{K}$ of difference fields is \textit{transformally separable} if $K$ and $L^{\sigma}$ are linearly disjoint over $K^{\sigma}$; when $\sigma = p^n$ is the Frobenius endomorphism, this corresponds to the usual notion of a separable field extension.

\subsubsection{} One of the major goals of the present work is to initiate a systematic study of transformally separable extensions (with no valuation); this is carried out in Section \ref{s:tran separable}. Here we build and expand on the ideas of Chatzidakis-Hrushovski (\cite{SCFEe}).

In order to obtain a well behaved theory we restrict attention to a universal theory of difference fields introduced in \cite{SCFEe} which we call here $\FF$; see Definition \ref{D:FE} for a precise statement. The class of models of $\FF$ enjoys excellent functorial properties, and includes algebraically closed fields, Frobenius ultraproducts, as well as all those difference fields which are perfect and inversive.

For transformally separable extensions of $\FF$ we obtain a structure theory which is in many ways analogous to the classical structure theory of separable field extensions; see Theorem \ref{thm:main-thm-on-tsep}. For example, we exhibit a functorial relative transformal separable algebraic closure, characterized set theoretically as the set of simple roots of "twisted" difference polynomials; the theory of separable generation can likewise be generalized (see Theorem \ref{thm:main-thm-on-tsep}).

Using the results of Section \ref{s:tran separable} functorial constructions available in the inversive realm descend to the category of models of $\FF$; we use this repeatedly in order to apply the machinery of \cite{dor-hrushovskiVFA}. We expect the results to be useful in other settings; see for example the work of Simone Ramello \cite{simone} which vastly generalizes our quantifier elimination results in characteristic zero.

\subsubsection{Notation} For any theory of $T$ difference fields, possibly enriched, one can define an expansion $T_{\lambda}$ of $T$ by definitions in which the models are the same, but embeddings are required to be transformally separable (see \ref{s:trans lambda functions}).

\subsubsection{$\SCFE$} In \cite{SCFEe} Chatzidakis and Hrushovski introduce a theory $\SCFE$ of difference fields which are not inversive. The theory $\SCFE$ is simple and the expansion $\SCFE_{\lambda}$ is model complete. The nonprincipal ultraproduct of separably algebraically closed and Frobenius difference fields is a model of $\SCFE$.
\vspace{1mm}

\subsection{Main Results}

We let $\VFE$ be the theory of $\omega$-increasing transformal valued fields with underlying difference field a model of $\FF$.

 We prove the following:

\begin{introtheorem}\label{introthm:vfeec}(See Theorem \ref{thm:model-companion})
\begin{enumerate}
    \item The class of models of $\VFE$ whose inversive hull is a model of $\TVFA$ is elementary; write $\TwVFE$ for the common first order theory.
    \item The theory $\TwVFE_{\lambda}$ is model complete. The models of $\TwVFE$ which fail to be inversive are uniquely characterized up to elementary equivalence by the characteristic, the degree of imperfection, and the isomorphism type of the action of $\sigma$ on the algebraic closure of the prime field (and all possibilities occur).
    \end{enumerate}
\end{introtheorem}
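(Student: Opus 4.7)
The overall strategy is to reduce every assertion to the known model theory of $\TVFA$ from \cite{dor-hrushovskiVFA} via the inversive-hull functor $K \mapsto K^\inv$, which by the structural results of Section \ref{s:tran separable} is a well-behaved functor on models of $\FF$ preserving the valuation, the residue field, and the $\omega$-increasing property. For part (1), the plan is to axiomatize $\TwVFE$ by translating each axiom of $\TVFA$ into a statement about $K$ rather than $K^\inv$. The basic axioms of $\TVFA$ become \emph{separably algebraically closed}, nontrivial valuation, and $\omega$-increasing, all manifestly first order (the first because $K^\inv$ is algebraically closed iff $K$ is separably closed, by Theorem \ref{thm:main-thm-on-tsep}). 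The geometric existence axioms of $\TVFA$ — asking for points of admissible transformally integral difference varieties $W \subseteq V \times \sigma V$ — translate via the observation that a point $a$ of such a variety in $K^\inv$ satisfies $b := \sigma^n a \in K$ for some $n \ge 0$, so the axiom becomes: for every admissible $W$ over $K$ there exist $n \ge 0$ and $b \in K$ with $(b, \sigma b) \in \sigma^n W$. Varying $n$ gives a first-order scheme, indexed by the complexity data of $W$.

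For the model completeness in part (2), let $M \subseteq N$ be models of $\TwVFE_\lambda$. Taking inversive hulls gives $M^\inv \subseteq N^\inv$, models of $\TVFA_\lambda$. By the model completeness of $\TVFA_\lambda$ from \cite{dor-hrushovskiVFA} this embedding is elementary, so $M^\inv \preceq N^\inv$. The descent is the heart of the argument: given an existential $\lambda$-formula $\phi(\bar x, \bar a)$ with $\bar a \in M$ realized in $N$, it is a fortiori realized in $N^\inv$ and hence in $M^\inv$ by some $\bar b$; since $M^\inv = \bigcup_n \sigma^{-n}(M)$, some $\sigma^n(\bar b)$ lies in $M$, and the $\lambda$-language encodes transformally separable subextensions precisely so that, via Theorem \ref{thm:main-thm-on-tsep}, a suitable $\lambda$-automorphism (after passage to an $\aleph_1$-saturated extension) moves $\bar b$ into $M$. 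This gives $M \preceq N$.

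For the classification of non-inversive completions, I would note that models of $\TwVFE$ that fail to be inversive admit a well-defined degree of imperfection $e$, as in $\SCFE_e$ of \cite{SCFEe}. Given two such models $M_1, M_2$ with equal characteristic, equal imperfection degree $e$, and isomorphic $\sigma$-action on the algebraic closure of the prime field, their inversive hulls are $\TVFA$-models with matching data for the completion-classification of $\TVFA$ from \cite{dor-hrushovskiVFA}, hence elementarily equivalent. A back-and-forth among countable saturated $\lambda$-substructures, using Theorem \ref{thm:main-thm-on-tsep} to transport $p$-bases witnessing the imperfection degree, then yields $M_1 \equiv M_2$. Realization of all possible invariants is verified by an explicit construction, producing models of prescribed $(\mathrm{char}, e, \sigma|_{\text{alg(prime)}})$ by beginning with a separably closed transformal valued field of the desired invariants and closing off under the axiom scheme of part (1).

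The chief obstacle is the descent step in the proof of model completeness: passing from a realization in $M^\inv$ of an existential $\lambda$-formula over $M$ to a realization in $M$ itself. This requires that the transformal-separability theory of Section \ref{s:tran separable} be fully compatible with the valuation and the $\omega$-increasing condition — in particular, that relative transformally separable closures computed inside a $\TwVFE$-model $M$ agree with those computed after passing to $M^\inv$. A secondary (but more routine) difficulty is showing the translated axiom scheme in part (1) is faithful, i.e.\ that no spurious conditions are introduced when exchanging "exists in $K^\inv$" for "exists $n$ and a point in $K$ of $\sigma^n W$"; this is controlled by the functoriality and exactness of the inversive hull on models of $\FF$.
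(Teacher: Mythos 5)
Your proposal identifies the right reduction target ($\TVFA$ via the inversive hull), but both halves contain genuine gaps, and the mechanism you propose for the crucial descent step would fail.

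For part (1), your translated axiom ``for every admissible $W$ over $K$ there exist $n \geq 0$ and $b \in K$ with $(b, \sigma b) \in \sigma^n W$'' is not a first-order scheme: the existential quantifier over $n$ ranges over the metatheory, and there is no a priori bound on $n$ in terms of the complexity data of $W$. The paper does not translate the $\TVFA$ axioms at all; it gives a direct axiomatization (Definition \ref{def:tsep-alg-closed}): $K$ is strictly transformally Henselian (its residue field is a model of $\SCFE$, itself axiomatized by restricting the $\ACFA$-style axioms to correspondences whose second projection is finite \'etale, Definition \ref{def:tsep-alg-closed-trivially}), the value group is tamely transformally divisible, and twisted additive operators with nonzero derivative are surjective. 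The equivalence with ``$K^{\inv} \models \TVFA$'' is then Proposition \ref{P:pure insep inside twvfe}, and first-orderness is manifest.

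For part (2), the descent step is not merely ``the chief obstacle'' --- the argument you sketch for it is broken. If $\bar b \in M^{\inv}$ realizes $\phi(\bar x, \bar a)$, then $\sigma^n \bar b \in M$ for some $n$, but $\sigma^n \bar b$ need not satisfy $\phi$ (the formula is not $\sigma$-invariant), and $M^{\inv}$ admits no nontrivial automorphisms over $M$ (it is a directed union of copies of $M$), so there is no ``$\lambda$-automorphism'' available to move $\bar b$ into $M$. Model completeness of $\TwVFE_{\lambda}$ is genuinely not a formal consequence of model completeness of $\TVFA$: the $\lambda$-structure records which elements lie in finite-dimensional $K^{\sigma}$-subspaces, and this is exactly the information destroyed by passing to the inversive hull. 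The paper's proof (Theorem \ref{thm:wVFE-is-EC}) instead runs an induction on transformally separably generated extensions (Theorem \ref{thm:main-thm-on-tsep}(5)), handling the transformally separably algebraic steps by the simple-root characterization together with model completeness of $\TVFA$ (Proposition \ref{prop:ec-tsep-alg}), and handling the transcendental steps by Proposition \ref{prop:density-of-generic-types}: in a saturated model the set of elements generic in a prescribed ball contains an open ball, while the set of $\lambda$-generic elements is dense for the valuation topology (by deep ramification, Lemma \ref{lem:deep-ramification}), so the two genericity requirements can be met simultaneously. This compatibility of topological genericity with $\lambda$-genericity is the essential new content of the theorem and is absent from your proposal; the same machinery, not a back-and-forth on inversive hulls, yields the classification of completions via Corollary \ref{cor:qe} and Corollary \ref{cor:tilde-w-vfe-is-complete-rel-resd}.
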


Using Theorem \ref{introthm:vfeec} and the main results of \cite{hrushovski2004elementary} it is not difficult to verify the following:

\begin{introtheorem}\label{frobisvfe} (Theorem \ref{T:asymp theory})
Let $\widetilde{T}$ be the asymptotic theory of separably algebraically closed and nontrivially valued Frobenius transformal valued fields; that is, the set of sentences true in all such transformal valued fields, outside a finite set of exceptional prime powers. Then $\widetilde{T}$ is decidable and $\widetilde{T}_{\lambda}$ is model complete; indeed, every model of $\widetilde{T}$ is a model of $\TwVFE$.
\end{introtheorem}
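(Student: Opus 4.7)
The plan is to deduce the theorem from Theorem \ref{introthm:vfeec} by proving that every model of $\widetilde{T}$ is a model of $\TwVFE$. Granted this containment, the model completeness of $\widetilde{T}_{\lambda}$ is immediate from the model completeness of $\TwVFE_{\lambda}$ in Theorem \ref{introthm:vfeec}(2), and the decidability of $\widetilde{T}$ follows from the effective classification of models of $\TwVFE$ by the invariants in Theorem \ref{introthm:vfeec}(2), together with a direct computation of those invariants realized by ultraproducts of separably algebraically closed nontrivially valued Frobenius fields.

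By compactness the verification reduces to ultraproducts: fix a nonprincipal ultrafilter $\mathcal{U}$ and let $K = \prod_i (K_i,\sigma_i)/\mathcal{U}$ with each $K_i$ separably algebraically closed and nontrivially valued, $\sigma_i\colon x\mapsto x^{q_i}$, and $q_i\to\infty$ along $\mathcal{U}$. Two of the three defining features of $\TwVFE$ are immediate: the universal theory $\FF$ is preserved under ultraproducts and its axioms hold in Frobenius fields, so $K\models\FF$; and the action on the value group is $\omega$-increasing since $v(\sigma_i x) = q_i\cdot v(x)$ and $q_i\to\infty$. What remains is to show that the inversive hull $K^{\mathrm{inv}}$ is a model of $\TVFA$.

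For this I would first show that $K^{\mathrm{inv}}$ is algebraically closed. Writing $q_i=p^{n_i}$, for each $a\in K$ and each standard $m$ the element $a^{1/p^m}$ of $L := \prod_i (K_i^{\mathrm{alg}},\mathrm{Frob}_{q_i})/\mathcal{U}$ satisfies $\sigma(a^{1/p^m})$, represented coordinatewise by $a_i^{p^{n_i-m}}\in K_i$ (valid for $n_i\ge m$, which holds cofinitely in $i$), so $a^{1/p^m}\in\sigma^{-1}(K)\subseteq K^{\mathrm{inv}}$; hence $K^{\mathrm{inv}}$ contains the perfect hull of $K$, and since $K$ is separably closed this coincides with $K^{\mathrm{alg}}$. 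Next, $K^{\mathrm{inv}}$ is itself separably closed: for a separable $g\in K^{\mathrm{inv}}[T]$ with coefficients in $\sigma^{-k}(K)$, the polynomial $\sigma^k(g)\in K[T]$ is again separable and has a root $\beta\in K$, so $\sigma^{-k}(\beta)\in K^{\mathrm{inv}}$ is a root of $g$. Being perfect and separably closed forces $K^{\mathrm{inv}}$ algebraically closed. Thus $K^{\mathrm{inv}}$ sits as an algebraically closed, nontrivially valued, inversive difference subfield of $L$; by the theorem of Hrushovski and the first author \cite{dor-hrushovskiVFA} identifying $\TVFA$ with the limit theory of Frobenius on algebraically closed nontrivially valued fields, $L\models\TVFA$. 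Appealing to the quantifier elimination for $\TVFA$ from the same reference, the inclusion $K^{\mathrm{inv}}\hookrightarrow L$ is elementary, whence $K^{\mathrm{inv}}\models\TVFA$.

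The main obstacle I anticipate is this last step: verifying that $K^{\mathrm{inv}}\hookrightarrow L$ is an embedding in the full language in which $\TVFA$ is known to admit quantifier elimination, and that $K^{\mathrm{inv}}$ is closed under the auxiliary structure (leading-term sorts, $\lambda$-functions) present there. This is precisely the place where the functorial transformally separable machinery of Section \ref{s:tran separable} is needed — to certify $K^{\mathrm{inv}}$ as a substructure in the expanded language and transfer the axioms of $\TVFA$ down from $L$.
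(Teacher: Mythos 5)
The central step of your argument — deducing $K^{\mathrm{inv}}\models\TVFA$ from the embedding $K^{\mathrm{inv}}\hookrightarrow L$ and quantifier elimination for $\TVFA$ — does not work. Quantifier elimination (even the relative form of Fact \ref{fact:wvfa}(8)) never makes an inclusion of a substructure into a model elementary, nor does it transfer the theory downward: $\ACF$ eliminates quantifiers, yet $\mathbf{Q}\hookrightarrow\mathbf{C}$ is not elementary, and an algebraically closed, inversive, nontrivially valued difference subfield of a model of $\TVFA$ need not be a model of $\TVFA$ (its residue field need not model $\ACFA$, for instance). Note also that $K^{\mathrm{inv}}$ is in general a \emph{proper} subfield of $L$ (an element $(a_i^{1/p^{m_i}})_i$ with $m_i/n_i\to\infty$ lies in $L$ but in no $\sigma^{-n}(K)$), so one cannot even argue that the two coincide. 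The closure issues you flag at the end (auxiliary sorts, $\lambda$-functions) are not the real obstruction. What is actually needed — and what the paper does — is a direct verification of the explicit axioms: either the axioms of $\TwVFE$ for $K$ itself (Definition \ref{def:tsep-alg-closed}: transformal Henselianity, which for a Frobenius field reduces to ordinary Henselianity of a separably closed valued field; the residue field modelling $\SCFE$, which is where the twisted Lang--Weil estimates of \cite{hrushovski2004elementary} enter; tame transformal divisibility from divisibility of the value group; surjectivity of twisted additive operators from separable closedness), or equivalently the axioms of $\TVFA$ for $K^{\mathrm{inv}}$ via Proposition \ref{P:pure insep inside twvfe}(1). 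A smaller slip in the same spirit: the axioms of $\FF$ (closure under twists) do \emph{not} hold in an individual imperfect Frobenius field with $\sigma=p^n$ for $p^m$-th roots with $m>n$; each axiom of the scheme holds only for $q$ sufficiently large, which is enough for the ultraproduct but should be said.

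Separately, your proof establishes at most one direction of Theorem \ref{T:asymp theory} (every model of $\widetilde{T}$ models $\TwVFE$), and while that direction does yield model completeness of $\widetilde{T}_\lambda$, it does not yield decidability. For decidability one must also show that $\widetilde{T}$ is axiomatized by a recursive theory, i.e.\ that every model of $\TwVFE$ together with the scheme ``$\sigma$ and Frobenius simultaneously onto'' is a model of $\widetilde{T}$. This converse is the content of the paper's displayed proof: given such a $K$, one realizes its residue field (up to elementary equivalence) as an ultraproduct of algebraically closed Frobenius difference fields using \cite{hrushovski2004elementary}, builds separably algebraically closed nontrivially valued Frobenius fields with those residue fields and the correct degree of imperfection, and invokes relative completeness over the residue field (Corollary \ref{cor:tilde-w-vfe-is-complete-rel-resd}) to identify $K$ with the resulting ultraproduct. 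Your appeal to ``a direct computation of those invariants realized by ultraproducts'' gestures at this but does not carry it out, and without it the identification of $\widetilde{T}$ with a recursively axiomatized theory — hence decidability — is not established.
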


We prove various other results:
\begin{itemize}
     \item The theory $\TwVFE$ admits an explicit axiomatization which parallels that of $\TVFA$ and is consequently decidable (Theorem \ref{thm:wvfe-decidable}).
    \item In $\TwVFE$ the residue field field and the value group are stably embedded and model theoretically fully orthogonal; the induced structure is that of a pure model of $\ACFA$ and $\TwOGA$, respectively (Theorem \ref{thm:stably-embedded}).
    \item The model theoretic algebraic closure in $\TwVFE$ is determined in Theorem \ref{T:model theoretic acl}; it is as small as can possibly be expected given the behavior in $\TVFA$ and $\SCFE$.
    \item The theory $\TwVFE$ admits elimination of quantifiers to the level of the field theoretic algebraic closure in a language where the transformal $\lambda$-functions are considered basic; see Corollary \ref{cor:qe} and Corollary \ref{cor:qe-explained} for a precise statement. This level of quantifier reduction is likewise optimal given its behavior in $\TVFA$ and $\SCFE$.
    \item In characteristic zero we study stability theoretic properties of $\TwVFE$. We show that $\TwVFE_{\mathbf{Q}}$ enjoys the $\NTPtwo$; over highly saturated models, forking is witnessed by formulas with the $\NIP$ (\ref{cor:twvfe-ntp2}). A basic intuition on $\NTPtwo$ theories is that they are $\NIP$ theories with "random noise" (see e.g \cite{johnson2019finiteburdenmultivaluedalgebraically}); our results support this.
\end{itemize}
    In characteristic zero, we are able to pin down the stationary types:

\begin{introtheorem}[Corollary \ref{C:stationary type}]
In the theory $\TwVFE_{\mathbf{Q}}$, a global invariant type is stationary if and only if it is orthogonal to the fixed field and to the value group. 
\end{introtheorem}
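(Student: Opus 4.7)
The plan is to treat the two implications separately, each time reducing to the behavior of stationary types on the residue field and the value group via their stable embeddedness.

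For the ``only if'' direction, the argument is extrinsic: I transport non-stationary global invariant types from the fixed field and the value group up to the ambient theory. The fixed field of $\ACFA$ is pseudofinite and therefore carries global invariant types with multiple global non-forking extensions, and by the stable embeddedness of the residue field (Theorem \ref{thm:stably-embedded}) any invariant type non-orthogonal to the fixed field inherits this failure of stationarity. The same strategy applies to $\Gamma$: by Theorem \ref{thm:stably-embedded} it carries the pure structure of $\TwOGA$, whose global invariant types arising from non-principal cuts provide explicit non-stationarity.

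For the converse, assume $p$ is a global invariant type orthogonal both to the fixed field and to $\Gamma$. By Theorem \ref{thm:stably-embedded}, the residue field $k$ is a stably embedded pure model of $\ACFA$ in characteristic zero, $\Gamma$ is a stably embedded pure model of $\TwOGA$, and the two sorts are fully orthogonal. The Chatzidakis--Hrushovski classification in characteristic zero $\ACFA$ characterizes stationary types as precisely those orthogonal to the fixed field, so $p|_k$ is stationary; the analogous classification in $\TwOGA$ ensures $p|_\Gamma$ is stationary.

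The remaining, and principal, task is to promote stationarity of $p|_k$ and $p|_\Gamma$ to stationarity of $p$ itself. This is a domination step and is the main obstacle. I would follow the template of \cite{dor-hrushovskiVFA} for $\TVFA$: combining the quantifier reduction to the field theoretic algebraic closure in the $\lambda$-enriched language (Corollary \ref{cor:qe}) with the explicit description of $\acl$ in $\TwVFE$ (Theorem \ref{T:model theoretic acl}), the freedom in choosing a global extension of $p$ is confined to the residue field and value group traces. The $\NTPtwo$ framework with $\NIP$-witnessed forking (\ref{cor:twvfe-ntp2}) supplies the non-simple analogue of the usual stability-theoretic machinery needed for such a domination argument. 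The subtle new ingredient, absent in $\TVFA$, is that $\TwVFE$ permits non-inversive difference fields: one must thread the transformally separable machinery of Section \ref{s:tran separable} through the argument to be sure that the $\lambda$-functions do not conceal additional multiplicities beyond those already seen in $k$ and $\Gamma$.
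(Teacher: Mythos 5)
Your necessity direction is fine and matches the paper's: the value group is linearly ordered so it carries no non-algebraic generically stable types, and non-orthogonality to the fixed field imports the failure of stationarity from the pseudofinite fixed field via the easy direction of Fact \ref{fact:acfa-char-0-stationary}. (One small quibble: orthogonality to $\Gamma$ simply means realizations of $p$ add no value group points, so there is no ``classification of stationary types in $\TwOGA$'' to invoke for the converse; that coordinate is just empty.)

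The gap is in the sufficiency direction: you correctly identify the domination step as the principal task, but you do not carry it out, and the sketch you give of how it would go misses the actual mechanism. First, the sources of multiplicity are \emph{not} confined to the residue field and value group traces. In the non-inversive setting there is a third component, namely the type in the reduct $\ACFtwo$ recording the pair $\left(K, K^{\sigma}\right)$, equivalently the $\SCFE$-independence data; this is exactly the content of Proposition \ref{prop:sph-dom-vfe}, whose ``moreover'' clause says $\tp_{\TwVFE}\left(\nicefrac{b}{A}\right) \cup \tp_{\ACFtwo}\left(\nicefrac{b}{C}\right) \cup \tp_{\TwOGA}\left(\nicefrac{\gamma}{\Gamma_C}\right) \cup \tp_{\ACFA}\left(\nicefrac{\alpha}{k_C}\right)$ determines $\tp_{\TwVFE}\left(\nicefrac{b}{C}\right)$. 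The paper disposes of this extra component by the \emph{stability} of $\ACFtwo$ (hence uniqueness of its nonforking extensions), via Lemma \ref{lem:scfe-stable-forking} and Lemma \ref{lem:qe-in-twvfe-wvfa-plus-scfe}; your argument never accounts for it. Second, the engine of the domination is not the $\NTPtwo$/$\NIP$-forking machinery — in the paper the logical dependency runs the other way, with Corollary \ref{cor:twvfe-ntp2} \emph{deduced from} the domination. The actual engine is spherically complete domination in $\ACVF$ (\cite[Proposition 12.11]{HHMStable}) applied over transformally separably maximal models: one first proves Lemma \ref{lem:spherically-completeq-qf} and Corollary \ref{C: forking in wVFA} over a spherically complete model of $\TVFA_{\mathbf{Q}}$, then transfers to $\TwVFE_{\mathbf{Q}}$ over a transformally separably maximal base using Lemma \ref{lem:tsep-max-iff-inversive-hull-max} (the inversive hull of such a base is spherically complete) together with the quantifier elimination relative to the separable algebraic closure. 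Your proposal also never identifies this class of base models, which is needed even to state the domination and to produce the small set $C_0$ required by Definition \ref{def:stationary}. Without these ingredients the reduction from stationarity of the $k$- and $\Gamma$-traces to stationarity of $p$ itself is not established.
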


In $\SCFE_{\mathbf{Q}}$ this follows from the trichotomy for $\ACFA_{\mathbf{Q}}$ and the stationarity theorem of \cite{SCFEe}; we deduce this here in the presence of a valuation using domination results.

\subsubsection{} 

The proof that $\TwVFE_{\mathbf{Q}}$ (and $\TVFA_{\mathbf{Q}}$) enjoys the NTP$_2$ uses a domination result by the residue field and the value group over spherically complete models; in order to lift it to the general case we use an observation due to Chernikov \cite{chernikov2014theories}. Itay Kaplan relaxed Chernikov's argument to show that if forking of a global type is always witnessed by an NIP formula, then the theory is NTP$_2$. We thank him for his permission to include it here as an appendix.

\subsection{Acknowledgments}

We would like to thank Zo\'e Chatzidakis, Ehud Hrushovski and Simone Ramello for useful discussions. Lastly, we would like to thank the anonymous referee for his$\backslash$her thorough reading, numerous helpful comments and suggestions and for catching several inaccuracies in some of the proofs.

\section{Preliminaries}
\subsection{Conventions and Notation}

\subsubsection{The Characteristic Exponent of a Field}

Recall that the \emph{characteristic exponent} of a field is equal
to $p$ if the field is of positive characteristic $p>0$ and is equal
to $1$ if the field if of characteristic zero. By convention, a natural
number $p$ which is either prime or equal to $1$ is fixed through
this text, and all valued fields under consideration are assumed of
equal characteristic, of characteristic exponent equal to $p$.

\subsubsection{Field Theory}

For basic reference on field theory we refer the reader to \cite{karp,FrJaBook}.
Fix a field $k$ of characteristic exponent $p$. We write $\phi x=x^{p}$
for the Frobenius endomorphism of $k$, thus it is the identity map
in characteristic exponent $p=1$. We write $k^{p^{-\infty}},k^{\sep}$
and $k^{\alg}$ for the perfect hull and for the choice of a separable
and algebraic closure of $k$, respectively. The \emph{Ershov invariant}
or the \emph{imperfection degree} of $k$ is the natural number
$e$ with the property that $\left[k\colon k^{p}\right]=p^{e}$ if
the former quantity is finite, and equal to $\infty$ otherwise.

Let $\nicefrac{K}{k}$ be a field extension. We say that $K$ is \emph{separable}
over $k$ if the fields $K$ and $k^{p^{-\infty}}$ are linearly disjoint
over $k$ in $K^{p^{-\infty}}$. We say that $K$ is
\emph{separably generated} over $k$ if there is a transcendence basis $x$
of $K$ over $k$ such that $K$ is separably algebraic over $k\left(x\right)$.
If $K$ is separably generated over $k$ then it is separable over
$K$; the converse holds when $K$ is a finitely generated field extension
of $k$. We say that $K$ is \emph{primary} over $k$ if the field
$k$ is relatively separably algebraically closed in $K$, that is,
every element of $K$ separably algebraic over $k$ already lies in
$k$. It is evident that purely inseparably algebraic extensions are primary. Finally, the field $K$ is \emph{regular} over $k$ if it
is both separable and primary over $k$.

Let $\nicefrac{K}{k}$ be a field extension. We say that $K$ is \emph{relatively
perfect} over $k$ or that \emph{the perfect hull of $K$ splits
over $k$} if the field $K$ is separable over $k$ and one has $K\otimes_{k}k^{p^{-\infty}}=K^{p^{-\infty}}$. Equivalently,\footnote{We are thankful to the referee for this remark.} the field extension $\nicefrac{K}{k}$ is relatively perfect if it is separable and some (every) $p$-basis of $k$ is a $p$-basis of $K$.
Beware that in \cite{karp} relatively perfect extensions are \emph{not}
assumed separable. If $k$ is of finite Ershov invariant then a separable
extension $K$ of $k$ is relatively perfect precisely in the event
that the Ershov invariants coincide.

\subsubsection{\label{subsec:Difference-Rings}Difference Algebra}

We follow the conventions and notation of \cite{hrushovski2004elementary} and \cite{dor-hrushovskiVFA}. See Section 3 of \cite{dor-hrushovskiVFA} for a summary of the basic facts from the model theory of difference fields we will use.

By a \emph{difference ring} we mean a (commutative and unital) ring $R$ equipped with a distinguished (injective) endomorphism $\sigma$ of rings; a \emph{homomorphism} of difference rings is a ring homomorphism compatible with the action of $\sigma$ in the evident manner. By a \emph{difference field} we mean a difference ring whose underlying ring is a field. 

Let $K$ be a difference field. We use exponential notation for the action of $\sigma$, thus for an element (or a tuple) $a \in K$ we write $a^\sigma = \sigma\left(a\right)$. We also write $K^\sigma = \sigma\left(K\right)$ for the image of $K$ under the endomorphism $\sigma$ and regard $K^{\sigma}$ as a subfield of $K$.

Let $K$ be a difference field. We say that $K$ is \emph{inversive} if $\sigma$ is an automorphism of $K$. In the literature this is sometimes taken as a part of the definition, but the whole point of the present work is to dispense with this assumption. In any event, if $K$ is a difference field, then there is an inversive difference field $K^{\sigma^{-\infty}}$ over $K$ which embeds uniquely over $K$ in any other. In this situation, we say that $K^{\sigma^{-\infty}}$ is the \textit{inversive hull} of $K$; it is unique up to a unique isomorphism of difference fields over $K$. The inversive hull of $K$ is given explicitly as the directed union of isomorphic copies of $K$, so any property of difference fields stable in directed unions passes from $K$ to its inversive hull.

\subsection{\label{subsec:notation} Notation}

We write $\mathbf{N}\left[\sigma\right]$  for the commutative semiring of finite formal sums of the form $\lambda_n \sigma^n + \ldots + \lambda_1 \sigma + \lambda_0$ where the $\lambda_i$ are natural numbers. If $K$ is a difference field then we write $K\left[x^{\mathbf{N}\left[\sigma\right]}\right]$ for the ring of difference polynomials over $K$. For $a$ an element (or a tuple) in an ambient difference field extension of $K$, we write $K\left(a^{\Ns}\right)$ for the smallest difference field extension of $K$ containing $a$.
We write $K^{\sigma \cdot p^{-\infty}}$ for the perfect hull of $K^{\sigma}$. 

Let $K$ be a difference field and $a$ an element in an ambient extension. We say that $a$ is \emph{transformally algebraic} over $K$ if it is a zero of a nonzero difference polynomial over $K$; otherwise, the element $a$ is said to be \emph{transformally transcendental} over $K$. The notion of a transformal transcendence basis and transformal independence of a tuple behaves as expected; see \cite{dor-hrushovskiVFA}, Section 3 for more.

\section{Descent, Base Extension, and the Field Theoretic Tensor Product}

In modern algebraic geometry and commutative algebra, the method of
descent along a morphism is ubiquitous. In the present work, in order
to avoid excess generality, we restrict ourselves exclusively to the
case of fields. Grothendieck's relative point of view is nevertheless
valid: it is often possible to study an embedding $\nicefrac{L}{K}$
of difference fields after extending $K$, and many statements true
over $K$ inversive and algebraically closed admit relative versions.
The purpose of this section is to set up the notation and terminology
necessary in order to make a systematic use of descent. The material
here is standard and no claim to originality is made.

\subsubsection{\label{subsec:Linear-Disjointness}Linear Disjointness}

Let $B\hookleftarrow A\hookrightarrow C$ be difference fields, jointly
embedded over $A$ in an ambient extension $\Omega$. We say that
$B$ and $C$ are \emph{linearly disjoint over $A$ }in $\Omega$
if whenever the sequence $a_{1},\ldots,a_{n}\in B$ is linearly independent
over $A$ then it remains linearly independent over $C$. For more
on this notion see \cite[Section 2.5]{FrJaBook}.

\subsubsection{\label{subsec:Abstract-Linear-Disjointness}Abstract Linear Disjointness}

Linear disjointness is strictly speaking meaningless in the absence
of an ambient extension $\Omega$. Nevertheless, we will encounter
the situation where the choice of $\Omega$ is irrelevant. Thus the
following definition will be useful: 
\begin{defn}
Let $B\hookleftarrow A\hookrightarrow C$ be embeddings of difference
fields. 
\begin{enumerate}
\item By a \emph{compositum} of $B$ and $C$ over $A$ we mean a difference
field $F$ over $A$, equipped with embeddings of $B$ and $C$ over
$A$, such that $F$ is generated as a difference field by the images
of $B$ and $C$. 
\item Let $F$ be a compositum of $B$ and $C$ over $A$. We say that $F$
is a \emph{linearly free compositum} if $B$ and $C$ are linearly
disjoint over $A$ in $F$ in the sense of \ref{subsec:Linear-Disjointness}. 
\item We say that $B$ and $C$ are \emph{abstractly linearly disjoint}
over $A$ or simply \emph{linearly disjoint} over $A$ if they admit
a linearly free compositum over $A$. 
\end{enumerate}
\begin{rem}
Let $B$ and $C$ be difference fields over $A$, jointly embedded over $A$
in an ambient extension $\Omega$. Then whether or not $B$ and $C$
are linearly disjoint over $A$ in $\Omega$ depends only on the underlying
field structure and not on the action of $\sigma$. By contrast, abstract
linear disjointness depends on the difference field structure. Namely
the category of fields admits linearly disjoint amalgamation over
an algebraically closed base, whereas the category of difference fields
only admits linearly disjoint amalgamation over a base which is algebraically
closed and inversive.
\end{rem}

Let $B$ and $C$ be difference fields over $A$. Then a linearly
free compositum $F$ of $B$ and $C$ over $A$, if it exists, is
unique up to a unique isomorphism. More precisely, the difference field
$F$ is characterized by the following universal mapping property.
Let $E$ be a difference field extension of $A$. Then the datum of
an embedding of $F$ in $E$ over $A$ is equivalent to the datum
of embeddings of $B$ and $C$ in $E$ over $A$, so as to render
them linearly disjoint over $A$ in the image.
\end{defn}

\subsubsection{Tensor Product of Difference Fields}
\begin{defn}
\label{def:field-tensor}
Let $B$ and $A_{0}$ be difference fields,
abstractly linearly disjoint over $A$. Then we write $B_0=B\otimes_{A}A_{0}$
for the linearly free compositum of $A$; by the discussion of \ref{subsec:Linear-Disjointness},
it is uniquely determined up to a unique isomorphism. In this situation,
we say that $B_0=B\otimes_{A}A_{0}$ is the \emph{(difference) field theoretic tensor
product} of $B$ and $A_{0}$ over $A$.
\end{defn}

In the settings of Definition \ref{def:field-tensor}, we say that
$\nicefrac{B_{0}}{A_{0}}$ is obtained from $\nicefrac{B}{A}$ via
\emph{base extension}.

\begin{rem}
    Let $K$ be an abstract field. We may regard $K$ as a difference field via the identity automorphism. In this way the definitions apply to abstract fields as well.
\end{rem}

\begin{rem}
\label{rem:warning}(Warning) The notation of Definition \ref{def:field-tensor}
is potentially ambiguous as we now explain. Let us use temporarily
the notation $\overline{\otimes}$ for the field theoretic tensor
product. Let $L$ and $M$ be difference fields over $K$ and write
$R=L\otimes_{K}M$ for the usual tensor product of rings. The ring
$R$ comes equipped with a canonical endomorphism, compatible with
both inclusions. The difference fields $L$ and $M$ are abstractly
linearly disjoint over $K$ if and only if the ring $R$ is an integral
domain and $\sigma$ is injective on $R$. In this situation, the
endomorphism $\sigma$ of $R$ lifts uniquely to the field of fractions
$F$ of $R$, and the field theoretic tensor product is just the field
of fractions of the usual tensor product, i.e, we have $L\overline{\otimes}_{K}M=\mathrm{Frac}\left(R\right)$.
However, in this work we will deal exclusively with fields, as opposed
to rings, so no confusion should arise.
\end{rem}

\begin{rem}
The field theoretic tensor is associative and compatible with directed
unions whenever defined. This follows immediately from the universal
mapping property of the linearly free compositum of \ref{subsec:Abstract-Linear-Disjointness}.
\end{rem}

\begin{rem}
Let $B$ and $A_{0}$ be difference fields, abstractly linearly disjoint
over $A$, and write $B_{0}=B\otimes_{A}A_{0}$ for the field theoretic
tensor product. In model theory, it is common to regard linear disjointness
as an abstract independence relation, obeying properties similar to
forking independence. Moreover, we have given a description of the
field theoretic tensor product which is of category theoretic flavor.
However, despite the symmetry of the construction, the role played
by $B$ and $A_{0}$ in our application is not symmetric: the idea
is that for $A_{0}$ an arbitrary extension of $A$, properties of
the extension $\nicefrac{B}{A}$ are inherited by the base extension
$\nicefrac{B_{0}}{A_{0}}$ and vice versa; see the discussion of \ref{subsec:descent}.
\end{rem}

\subsubsection{\label{subsec:descent}Invariance under Base Extension}

Here is the key definition. 
\begin{defn}
\label{def:descent-along-base-extension}
Let $\mathcal{P}$ be a property
of embeddings of difference fields. Let $\nicefrac{B}{A}$ be an extension
of difference fields and assume that $\nicefrac{B_{0}}{A_{0}}$ is
obtained from $\nicefrac{B}{A}$ via base extension, thus $B_0 = B \otimes_{A} A_0$ (in particular $B$ and $A_0$ are linearly disjoint over $A$). We say that the property $\mathcal{P}$
is \emph{invariant under base extension} if the two extensions $\nicefrac{B}{A}$
and $\nicefrac{B_{0}}{A_{0}}$ simultaneously enjoy the property
$\mathcal{P}$.
\end{defn}

We will only be interested in properties of embeddings of difference
fields which are invariant under base extension; this is the case for
many natural classes of interest, see Fact \ref{fact:basic-descent}.
Moreover, we will encounter the following situation. Suppose we wish
to prove a statement about an extension $\nicefrac{B}{A}$ of difference
fields. If we can show that the hypothesis and the conclusion are
both invariant under base extension, then we can pick an extension
$A_{0}$ of $A$ abstractly linearly disjoint from $B$ over $A$,
and study the base extension $\nicefrac{B_{0}}{A_{0}}$ instead, with
no loss of generality. In practice, we will almost always take $A_{0}$
to be the perfect or inversive hull of $A$.

\subsubsection{\label{subsec:Basic-Properties}Basic Properties}

The following elementary properties of abstract linear disjointness
will be used repeatedly, often without mention. See e.g \cite[Chapter 2]{FrJaBook}.

Fix a tower $A\subseteq B\subseteq C$ of difference fields, and let
$A_{0}$ be a difference field extension of $A$. Let us assume that
$A_{0}$ and $C$ are linearly disjoint over $A$. Then $A_{0}$ and
$B$ are linearly disjoint over $A$. Furthermore, the difference
field $B_{0}=A_{0}\otimes_{A}B$ is linearly disjoint from $C$ over
$A$. The converse holds as well using associativity of the field
theoretic tensor product.

Moreover, linear disjointness is of finite character in the
following sense. Let $B\hookleftarrow A\hookrightarrow C$ be embeddings
of difference fields and assume that $B$ is the directed union of
difference field extensions $B_{i}$ of $A$; then $B$ and $C$ are
linearly disjoint over $A$ if and only if the $B_{i}$ are all linearly
disjoint from $C$ over $A$.

\subsubsection{Descent for Field Extensions}

Recall that the field extension $\nicefrac{K}{k}$ is \emph{relatively
perfect} if it is separable and one has $K\otimes_{k}k^{p^{-\infty}}=K^{p^{-\infty}}$.
\begin{fact}
\label{fact:basic-descent} The following properties of field extensions
are invariant under base extension: separable extensions, relatively
perfect extensions, primary extensions, and regular extensions.
\end{fact}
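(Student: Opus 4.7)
The plan is to reduce each of the four properties to a tensor-product characterization and then to transport it between $B/A$ and $B_0/A_0 = (B \otimes_A A_0)/A_0$ by associativity of the tensor product, combined with the book-keeping of linear disjointness from \ref{subsec:Basic-Properties}. Since none of the four notions refers to $\sigma$, the endomorphism may be ignored throughout; after this reduction the statement is classical field theory, essentially contained in \cite[Chapter 2]{FrJaBook}.

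The \emph{regular} case serves as the template. I would use the characterization that $K/k$ is regular iff $K \otimes_k F$ is an integral domain for every field extension $F/k$. The upward direction is then automatic: for any field $F/A_0$, associativity gives $B_0 \otimes_{A_0} F = B \otimes_A F$, which is a domain by hypothesis on $B/A$. For the downward direction, given a field $F/A$, embed $F$ and $A_0$ into a common field $F'$ (for instance the residue field at a maximal ideal of $F \otimes_A A_0$); then $B \otimes_A F \hookrightarrow B \otimes_A F' = B_0 \otimes_{A_0} F'$ by flatness, and the latter is a domain by regularity of $B_0/A_0$. The \emph{separable} case proceeds identically with ``domain'' replaced by ``reduced.'' For the \emph{primary} case I would use the characterization that $K/k$ is primary iff $K$ and $k^{\sep}$ are linearly disjoint over $k$, and transfer linear disjointness between $B, A^{\sep}$ over $A$ and $B_0, A_0^{\sep}$ over $A_0$ via the tower and associativity properties of \ref{subsec:Basic-Properties}, choosing compatible separable closures $A^{\sep} \hookrightarrow A_0^{\sep}$. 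For the \emph{relatively perfect} case, reduce first to separability (handled already) and then invoke the $p$-basis characterization recalled in the excerpt: both versions of the property amount to a chosen $p$-basis of the base remaining a $p$-basis upstairs, and this is stable under base extension by a direct check using the linear disjointness of $B$ and $A_0$ over $A$.

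The main obstacle, modest as it is, lies uniformly in the descent direction: one must use the linear disjointness of $B$ and $A_0$ over $A$ to pull concrete witnessing data (a zero divisor in a tensor product, a nontrivial nilpotent, or a redundant $p$-basis element) back from $B_0$ to $B$. Once the right tensor-product formulation of each property is in place, each case resolves through associativity and flatness of field extensions, and all four cases share a uniform proof structure.
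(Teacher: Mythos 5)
Your treatment of the separable and regular cases is correct and genuinely different from the paper's: you use the tensor-product characterizations ($K\otimes_{k}F$ reduced, respectively a domain, for every field extension $F$ of $k$) together with flatness, whereas the paper argues via separating transcendence bases for separability and via stationarity of types in $\ACF$ for primarity. Your route is cleaner and more uniform, modulo one harmless imprecision: $B_{0}\otimes_{A_{0}}F$ is only a localization of $B\otimes_{A}F$ (since $B_{0}$ is the fraction field of $B\otimes_{A}A_{0}$, not that tensor product itself), but integrality and reducedness pass to localizations and to subrings, so both directions go through. The relatively perfect case is also salvageable along the lines you indicate.

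The genuine gap is in the primary case, and it sits exactly where you assert there is no obstacle. With the characterization ``$K$ is primary over $k$ iff $K$ and $k^{\sep}$ are linearly disjoint over $k$,'' the \emph{descent} direction is the formal one: from the linear disjointness of $B_{0}$ and $A_{0}^{\sep}$ over $A_{0}$, together with that of $B$ and $A_{0}$ over $A$, the tower property gives that $B$ and $A_{0}^{\sep}$ are linearly disjoint over $A$, hence so are $B$ and $A^{\sep}$. It is the \emph{base-extension} direction that does not follow from the tower and associativity properties. Knowing that $B$ is linearly disjoint from $A^{\sep}$ over $A$ and from $A_{0}$ over $A$ does not formally yield linear disjointness of $B$ from $A_{0}^{\sep}$ over $A$: linear disjointness from two extensions separately does not in general give linear disjointness from their compositum, and in any case $A_{0}^{\sep}$ is usually strictly larger than the compositum $A^{\sep}A_{0}$ (take $A_{0}$ purely transcendental over $A$; it acquires many new separably algebraic elements). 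But $B$ being linearly disjoint from $A_{0}^{\sep}$ over $A$ is, via the tower property, literally equivalent to the statement being proved, so the proposed ``transfer'' is circular. This is precisely the direction on which the paper spends its only non-formal effort (the stationarity of $\tp_{\ACF}$ of a generator). The fix within your own framework is to use a genuine tensor-product characterization for primarity as well --- $K$ is primary over $k$ iff $\Spec\left(K\otimes_{k}F\right)$ is irreducible (equivalently, $K\otimes_{k}F$ has a unique minimal prime) for every field extension $F$ of $k$ --- after which the same localization-and-flatness argument as in your regular case applies verbatim. Your closing claim that the obstacle lies ``uniformly in the descent direction'' is therefore wrong for primarity and should be corrected.
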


\begin{proof}
The results are well known, so we only sketch the proofs. Let $\nicefrac{K}{k}$
be a field extension and $\nicefrac{M}{m}$ a base extension, i.e,
we have $M=K\otimes_{k}m$.

\textbf{Separable Extensions.} We want to prove that $\nicefrac{K}{k}$
is separable if and only if $\nicefrac{M}{m}$ is so. If $M$ is separable
over $m$, then by definition the fields $M$ and $m^{p^{-\infty}}$
are linearly disjoint over $m$. By transitivity of linear disjointness,
the fields $K$ and $m^{p^{-\infty}}$ are linearly disjoint over
$k$. In view of the inclusion $k^{p^{-\infty}}\subseteq m{}^{p^{-\infty}}$,
it follows in particular that $K$ and $k^{p^{-\infty}}$ are linearly
disjoint over $k$, which is to say that $K$ is separable over $k$.

Now assume conversely that $K$ is separable over $k$; we want to
prove that $M$ is separable over $m$. By finite character of separable
extensions and the discussion of \ref{subsec:Basic-Properties}, we
may assume that $K$ is finitely generated over $k$. So $K$ is separably
generated over $k$, i.e, there is a transcendence basis $x$ of $K$
over $k$ such that $K$ is separably algebraic over $k\left(x\right)$.
It then follows immediately from the definition of linear disjointness
that $x$ is also a separating transcendence basis of $M$ over $m$.

\textbf{Primary Extensions.} First assume that $M$ is primary over
$m$. Then $M$ and $m^{\sep}$ are linearly disjoint over $m$. By
transitivity of linear disjointness the fields $K$ and $m^{\sep}$
are linearly disjoint over $k$, hence in particular $K$ and $k^{\sep}$
are linearly disjoint over $k$.

Now assume that $K$ is primary over $k$; we show that $M$ is primary over $m$. We will give a model
theoretic argument. By finite character we may assume that $K=k\left(\alpha\right)$
for a finite sequence $\alpha$. Recall that the theory of algebraically
closed fields eliminates imaginaries in the field language and that
the definable closure of a subfield is the perfect hull. It is moreover
a stable theory. So $K$ is primary over $k$ if and only if the type
$\tp_{\ACF}\left(\nicefrac{\alpha}{k}\right)$ admits a unique global
nonforking extension. Now in the theory $\ACF$, forking is characterized
by a drop in the transcendence degree. Since $K$ and $m$ are linearly
disjoint over $k$, a transcendence basis of $K$ over $k$ lifts
to a transcendence basis of $M$ over $m$. So as $\tp\left(\nicefrac{\alpha}{m}\right)$
does not fork over $k$ we find that it admits a unique global nonforking
extension too. So $\tp\left(\nicefrac{\alpha}{m}\right)$ is stationary,
whence $M$ is primary over $m$.

\textbf{Regular Extensions.} This follows from the fact that an extension
is regular if and only if it is both primary and separable.

\textbf{Relatively Perfect Extensions.} We want to prove that $\nicefrac{K}{k}$
is relatively perfect if and only if $\nicefrac{M}{m}$ is so. Note
that relatively perfect extensions are in particular separable; we
have already shown invariance of separable extensions, so assume that
one (and hence both) of these extensions is separable.

Let us assume first that $k$ and $m$ are both perfect. By definition,
a relatively perfect extension of a perfect field is just a perfect
extension. Thus we must show that if one of $K$ or $M$ is perfect
then so is the other. If $K$ is perfect, then $M$ is perfect, since it is
the tensor product of the perfect fields $K$ and $m$. Conversely,
assume that $M$ is perfect. The field $M$ is separable over $K$
as it is obtained from the separable extension $\nicefrac{m}{k}$
via base extension. Since no separable extension of an imperfect field
can possibly be perfect, we find that $K$ is perfect.

Next consider the case where $m=k^{p^{-\infty}}$ is the perfect hull
of $k$. If $K$ is relatively perfect over $k$, then by definition
$M$ is the perfect hull of $K$, and hence perfect; so it is a relatively
perfect extension of $m$. Conversely, assume $M$ is perfect. The
field $M$ is purely inseparably algebraic over $K$, since it is
obtained from the purely inseparably algebraic extension $\nicefrac{m}{k}$
via base extension. Since it is perfect, it is also the maximal purely
inseparably algebraic extension of $K$, i.e, we have $M=K^{p^{-\infty}}$,
which means that $\nicefrac{K}{k}$ is relatively perfect.

The general case follows from transitivity, combining the two special
cases considered above together.
\end{proof}
The class of separable extensions is in general not transitive in
towers. However, transitivity holds if the extension on the bottom
is relatively perfect: 
\begin{fact}
\label{fact:rel-perf-transitive-in-towers}Let $K\subseteq M\subseteq L$
be a tower of fields and assume $M$ is relatively perfect over $K$.
Then $L$ is separable over $K$ if and only if it is separable over
$M$. 
\end{fact}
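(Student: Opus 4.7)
The plan is to handle the two implications separately, using the characterization of separability as linear disjointness with the perfect hull.

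For the direction $L/M$ separable $\Rightarrow$ $L/K$ separable, I would first note that the hypothesis that $M$ is relatively perfect over $K$ in particular yields that $M/K$ is itself separable. The conclusion then follows from the standard transitivity of separability in towers: working inside an ambient field containing $K^{p^{-\infty}}$ and $L$, the tower law for linear disjointness (\ref{subsec:Basic-Properties}) decomposes linear disjointness of $L$ from $K^{p^{-\infty}}$ over $K$ into the two layers ``$M$ and $K^{p^{-\infty}}$ linearly disjoint over $K$'' and ``$L$ and $M \cdot K^{p^{-\infty}}$ linearly disjoint over $M$''; the first is separability of $M/K$, and the second follows from the (stronger) separability of $L/M$ since $M \cdot K^{p^{-\infty}} \subseteq M^{p^{-\infty}}$.

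For the converse, assume $L/K$ is separable, and work inside a common ambient field $\Omega$ (say $\Omega = L^{p^{-\infty}}$) containing $L$ and $K^{p^{-\infty}}$. By the definition of separability, $L$ and $K^{p^{-\infty}}$ are linearly disjoint over $K$ in $\Omega$. I would apply the same tower law, now in the reverse direction: it yields that $L$ and the compositum $M \cdot K^{p^{-\infty}}$ inside $\Omega$ are linearly disjoint over $M$. This is exactly the point at which the relatively perfect hypothesis enters: it identifies $M \otimes_K K^{p^{-\infty}}$ with $M^{p^{-\infty}}$, and the tensor product coincides with the compositum $M \cdot K^{p^{-\infty}}$ because $M$ and $K^{p^{-\infty}}$ are already known to be linearly disjoint over $K$. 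Hence $L$ and $M^{p^{-\infty}}$ are linearly disjoint over $M$, which by definition means $L/M$ is separable.

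I do not expect any genuine obstacle: the whole argument is essentially a double invocation of the tower law for linear disjointness, combined with the tautology $M^{p^{-\infty}} = M \cdot K^{p^{-\infty}}$ afforded by the relatively perfect hypothesis. The only matter requiring a little care is the choice of a common ambient field and the bookkeeping of which intermediate compositum one is working with; fixing $\Omega = L^{p^{-\infty}}$ at the outset removes any ambiguity.
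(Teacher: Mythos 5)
Your proof is correct, but the hard direction takes a genuinely different route from the paper's. The paper base-changes the entire tower along $K\hookrightarrow K^{p^{-\infty}}$ and invokes the invariance of separable and relatively perfect extensions under base extension (Fact \ref{fact:basic-descent}) to reduce to the case where $K$ is perfect; then $M$ is perfect by the definition of relatively perfect, and every extension of a perfect field is separable. You instead work directly inside $\Omega=L^{p^{-\infty}}$ with the tower law for linear disjointness: from the linear disjointness of $L$ and $K^{p^{-\infty}}$ over $K$ you extract the linear disjointness of $L$ and the compositum $M\cdot K^{p^{-\infty}}$ over $M$, and the relatively perfect hypothesis identifies that compositum with $M^{p^{-\infty}}$. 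The one step needing care --- that the compositum inside $\Omega$ really is the linearly free compositum, hence realizes $M\otimes_{K}K^{p^{-\infty}}=M^{p^{-\infty}}$ --- is supplied by the first clause of the same tower law (linear disjointness of $M$ and $K^{p^{-\infty}}$ over $K$), and you handle it. What each approach buys: yours is more self-contained, using only the tower law and the definition of relatively perfect, and avoids relying on the descent statements of Fact \ref{fact:basic-descent} (whose proof of invariance of separability itself passes through separating transcendence bases); the paper's is shorter on the page because it leans on the descent formalism it has already established and reuses systematically throughout the text. The easy direction is the same in both: composition of separable extensions is separable.
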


\begin{proof}
Let us assume first that $L$ is separable over $M$. We must prove
that $L$ is separable over $K$. Since $\nicefrac{M}{K}$ is relatively
perfect, it is in particular separable. The field extension $\nicefrac{L}{K}$
is therefore separable as the composition of such.

Let us assume conversely that $L$ is separable over $K$ and that
$M$ is relatively perfect over $K$; we must prove that $L$ is separable
over $M$. Let $\overline{K}=K^{p^{-\infty}}$ be the perfect hull
of $K$. Let us set $\overline{M}=M\otimes_{K}\overline{K}$ and likewise
$\overline{L}$. Since separable and relatively perfect extensions
are both invariant under base extension (Fact \ref{fact:basic-descent}),
replacing everything by the overlined data, we may assume that $K$
is perfect. But then it follows immediately from the definition of
relatively perfect extensions that $M$ is perfect. In this situation,
the field $L$ is automatically separable over $M$, as wanted. 
\end{proof}

\section{Transformally Separable Extensions}\label{s:tran separable}

Let $\nicefrac{K}{k}$ be a field extension. Recall that $K$ is said to be \textit{separable} over $k$ if the fields $K$ and $k^{p^{-\infty}}$ are linearly disjoint over $k$. The purpose of this section is to study this notion in the transformal setting.

\subsection{Transformally Separable Extensions}
The following definition will play a central role in this work.
\begin{defn}
Let $\nicefrac{L}{K}$ be an extension of difference fields. 
\begin{enumerate}
\item We say that $L$ is \emph{transformally separable }over $K$ if
the difference fields $K$ and $L^{\sigma}$ are linearly disjoint
over $K^{\sigma}$. 
\item We say that $L$ is \emph{transformally separably algebraic} over
$K$ if it is transformally separable and transformally algebraic
over $K$. 
\item We say that $L$ is \emph{purely transformally inseparably algebraic}
over $K$ if $K^{\sigma^{-\infty}}=L^{\sigma^{-\infty}}$, that is, for all
$a\in L$ we have $a^{\sigma^{n}}\in K$ for $n\gg0$ sufficiently
large.
\end{enumerate}
\end{defn}
 
\begin{rem}
\label{R:equiv def of trans sep} Let $\nicefrac{L}{K}$ be an extension
of difference fields. One checks that the following conditions are
equivalent:
\begin{enumerate}
\item The difference field extension $\nicefrac{L}{K}$ is transformally
separable.
\item The difference fields $K^{\sigma^{-1}}$ and $L$ are linearly disjoint
over $K$.
\item The difference fields $K^{\sigma^{-\infty}}$ and $L$ are linearly
disjoint over $K$.
\end{enumerate}
\end{rem}

\begin{rem}
Let $\nicefrac{L}{K}$ be a transformally separable extension. Then
$K^{\sigma^{-\infty}}$ and $L$ are linearly disjoint over $K$,
hence $K^{\sigma^{-\infty}}\cap L=K$, the intersection taken in $L^{\sigma^{-\infty}}$.
This condition is however not in general sufficient. See \cite[Remark 2.7.6]{FrJaBook}
for an example where $\sigma$ coincides with the Frobenius endomorphism.
\end{rem}

\begin{rem}
\label{rem:failure-of-transitivity}The class of transformally separable
extensions is in general not transitive in towers. More precisely,
let $K\subseteq M\subseteq L$ be a tower of difference fields; then
it may be that $\nicefrac{L}{K}$ is transformally separable but $\nicefrac{L}{M}$
is not. For example, let $K$ be an inversive difference field, let
$M$ be a difference field extension of $K$ which is not inversive,
and let $L$ be the inversive hull of $M$. Since $K$ is inversive,
the extension $\nicefrac{L}{K}$ is transformally separable; but $\nicefrac{L}{M}$
is not. See however \ref{subsec:Transitivity-in-Towers} for a partial
converse.
\end{rem}

The following summarizes some basic formal properties of transformally
separable extensions; it will be used repeatedly throughout this work. 
\begin{prop}
\label{basic-properties-of-tsep}Let $K\subseteq M\subseteq L$ be
a tower of difference fields. 
\begin{enumerate}
\item The composition of transformally separable extensions is again one.
More precisely, if $\nicefrac{M}{K}$ and $\nicefrac{L}{M}$ are transformally
separable, then $\nicefrac{L}{K}$ is transformally separable. 
\item The class of transformally separable extensions of a fixed base goes
downwards. More precisely, if $\nicefrac{L}{K}$ is transformally
separable then $\nicefrac{M}{K}$ is transformally separable. 
\item The class of transformally separable extensions is of finite character.
More precisely, the extension $\nicefrac{L}{K}$ is transformally
separable if and only if it is the directed union of finitely generated
transformally separable extensions. 
\item The class of transformally separable extensions is invariant under base
extension. More precisely, let $\nicefrac{L}{K}$ be a difference
field extension. Let $K_{0}$ be a difference field extension of $K$,
linearly disjoint from $L$ over $K$, and set $L_{0}=L\tensor_{K}K_{0}$;
then $\nicefrac{L}{K}$ is transformally separable if and only if
$\nicefrac{L_{0}}{K_{0}}$ is so.
\end{enumerate}
\end{prop}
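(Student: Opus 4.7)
Throughout, I would work exclusively with the equivalent formulations of transformal separability from Remark \ref{R:equiv def of trans sep}: namely, that $\nicefrac{L}{K}$ is transformally separable iff $K$ and $L^{\sigma}$ are linearly disjoint over $K^{\sigma}$, and iff $K^{\sigma^{-\infty}}$ and $L$ are linearly disjoint over $K$. The entire proposition then reduces to standard manipulations of linear disjointness of (ordinary) fields, using transitivity in towers, base change, and finite character as recalled in \ref{subsec:Basic-Properties}; the only genuine new ingredient is that the difference-field structure lets one transport linear disjointness across the map $\sigma$.

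\textbf{Parts (1)--(3).} Here I would use the characterization via $K^{\sigma^{-\infty}}$. For (1), observe that $K^{\sigma^{-\infty}}\subseteq M^{\sigma^{-\infty}}$. The hypothesis that $\nicefrac{L}{M}$ is transformally separable gives $M^{\sigma^{-\infty}}\indep_M L$, whence $K^{\sigma^{-\infty}}\cdot M \indep_M L$; combining this with the hypothesis $K^{\sigma^{-\infty}}\indep_K M$ via the tower transitivity of linear disjointness applied to $K\subseteq M\subseteq K^{\sigma^{-\infty}}\cdot M$ yields $K^{\sigma^{-\infty}}\indep_K L$, as required. For (2), linear disjointness of $K^{\sigma^{-\infty}}$ from $L$ over $K$ trivially descends to any intermediate $M$. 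For (3), finite character of transformally separable extensions is a direct consequence of finite character of linear disjointness (together with the fact that a finitely generated difference subextension is a directed union of finitely generated difference subfields, each of which is captured by finitely many equations in the disjointness condition).

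\textbf{Part (4), the main obstacle.} For base-change invariance I would switch to the characterization $L^{\sigma}\indep_{K^{\sigma}}K$, which is the one most compatible with moving along $\sigma$. The hypothesis that $K_0$ and $L$ are linearly disjoint over $K$ inside $L_0$, when pushed through the field isomorphism $\sigma$, becomes $L^{\sigma}\indep_{K^{\sigma}}K_0^{\sigma}$ inside $L_0^{\sigma}$; in particular $L_0^{\sigma}=L^{\sigma}\cdot K_0^{\sigma}$ as a linearly free compositum over $K^{\sigma}$. Then I want to prove the equivalence
\[
K\indep_{K^{\sigma}}L^{\sigma} \iff K_0\indep_{K_0^{\sigma}}L_0^{\sigma}.
\]
The right-hand side, by base extension of linear disjointness (since $K_0^{\sigma}\subseteq K_0$), simplifies to $K_0\indep_{K_0^{\sigma}}L^{\sigma}$. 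Now the equivalence is obtained through two tower applications for the chain $K^{\sigma}\subseteq K_0^{\sigma}\subseteq K_0$ and $K^{\sigma}\subseteq K\subseteq K_0$ relative to $L^{\sigma}$:
\[
K_0\indep_{K_0^{\sigma}}L^{\sigma}\ \Longleftrightarrow\ K_0\indep_{K^{\sigma}}L^{\sigma}\ \Longleftrightarrow\ K\indep_{K^{\sigma}}L^{\sigma},
\]
where the first equivalence uses the already-derived $K_0^{\sigma}\indep_{K^{\sigma}}L^{\sigma}$ and the second uses the hypothesis $K_0\indep_K L$ (hence $K_0\indep_K L^{\sigma}$). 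The main care needed is to verify that the auxiliary disjointness statements consumed by the tower transitivity are indeed provided by the hypothesis (applied once directly, and once after pushing through $\sigma$); that bookkeeping is what makes (4) the most intricate step.
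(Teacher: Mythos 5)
Your proof is correct and takes essentially the same approach as the paper's, which likewise reduces everything to tower transitivity and base change of linear disjointness (and only writes out part (4) in detail). The one cosmetic difference is in the direction where $\nicefrac{L}{K}$ transformally separable implies $\nicefrac{L_0}{K_0}$ transformally separable: the paper does a short explicit computation (applying $\sigma$ to a dependence relation over $K_0^{\sigma^{-1}}$ and pulling back), whereas you apply the transitivity lemma a second time to the tower $K^{\sigma}\subseteq K_0^{\sigma}\subseteq K_0$ against $L^{\sigma}$ --- both work, provided your expressions such as ``$K_0$ linearly disjoint from $L^{\sigma}$ over $K_0^{\sigma}$'' are read as referring to the compositum $L^{\sigma}K_0^{\sigma}$, since $L^{\sigma}$ need not contain $K_0^{\sigma}$.
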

\begin{proof}
This follows from formal properties of linear disjointness. Let us
prove, for example, that the class of transformally separable extensions
descends along base extension; we use the notation of (4). 

Let us assume first that $\nicefrac{L_{0}}{K_{0}}$ is transformally separable. We will show that $\nicefrac{L}{K}$ is transformally separable. The assumption that $\nicefrac{L_{0}}{K_{0}}$ is transformally separable means that $L_{0}$ and $K_{0}^{\sigma^{-\infty}}$ are linearly disjoint over $K_{0}$. By transitivity of linear disjointness, the fields $L$ and $K_{0}^{\sigma^{-\infty}}$ are linearly disjoint over $K$. In view of the embedding $K^{\sigma^{-\infty}}\subseteq K_{0}^{\sigma^{-\infty}}$, it follows in particular that $L$ and $K^{\sigma^{-\infty}}$ are linearly disjoint over $K$, which is to say that $\nicefrac{L}{K}$ is transformally separable, as wanted.

Let us assume conversely that $\nicefrac{L}{K}$ is transformally
separable; we must show that $L_0$ is transformally separable over $K_0$. By transitivity of linear disjointness and Remark \ref{R:equiv def of trans sep}, it is enough to prove that $L$ is linearly disjoint from $(K_0)^{\sigma^{-1}}$ over $K$. 

So let $x_1,\dots,x_n\in L$ be linearly dependent over $(K_0)^{{\sigma}^{-1}}$ , i.e. there are $m_i^{\sigma^{-1}}\in (K_0)^{\sigma^{-1}}$, not all zero, such that the equality $\sum m_i^{\sigma^{-1}}x_i=0$ holds. We must show that the $x_i$ are linearly dependent over $K$ already. 
Applying $\sigma$, we get that $\sum m_ix_i^{\sigma}=0$. By linear disjointness of $L$ from $K_0$ over $K$, there are $k_i\in K$ with $\sum k_ix_i^{\sigma}=0$ and hence $\sum k_i^{\sigma^{-1}}x_i=0$. So the $x_i$ are linearly dependent over $K^{\sigma^{-1}}$; since $L$ is transformally separable over $K$, they are linearly dependent over $K$ already, and hence in particular over $K_0$.
\end{proof}

\subsection{Difference Fields with Twists}

Let $K$ be a difference field of characteristic exponent $p$. We
write $\phi x=x^{p}$ for the Frobenius endomorphism of $K$, thus
$\phi$ is the identity map in characteristic exponent $p=1$.

\begin{defn}
\label{def:twisted-difference-fields}
Let $K$ be a difference field.
We say that $K$ is \emph{closed under twists} if every power of
the Frobenius endomorphism of $K$ factors through $\sigma$. In other
words, for every $n\in\mathbf{N}$ there is an endomorphism $\tau\colon K\to K$
of fields such that $\tau\circ\phi^{n}=\sigma$.
\end{defn}

In the situation of Definition \ref{def:twisted-difference-fields},
the endomorphism $\tau$ is unique, and we write $\tau=\frac{\sigma}{p^{n}}$.
\begin{rem}
\label{rem:closed-under-twists-characterization}Let $K$ be a difference
field. Then the following are equivalent: 
\begin{enumerate}
\item The difference field $K$ is closed under twists.
\item For every $x\in K$ and every $n\in\mathbf{N}$, the polynomial $X^{p^{n}}-x^{\sigma}$
splits in $K$.
\item The field $K$ contains the perfect hull of $K^{\sigma}$.
\item The field $K^{\sigma^{-1}}$ contains the perfect hull of $K$.
\end{enumerate}
\end{rem}

\begin{rem}
\label{rem:inversive-twisted-is-perfect}Let $K$ be a difference
field which is closed under twists. If $K$ is inversive, then $K$
is perfect. Indeed, by definition we have the inclusions $K\subseteq K^{p^{-\infty}}\subseteq K^{\sigma^{-1}}$;
since $K=K^{\sigma^{-1}}$, the inclusion $K\subseteq K^{p^{-\infty}}$
is an equality.
\end{rem}

\begin{rem}\label{R:comp closed under twist}
The compositum of difference fields closed under twists inside an
ambient difference field is closed under twists, regardless of linear
disjointness assumptions. 
\end{rem}

\begin{prop}
\label{prop:elementary-twisted}The following holds:

(1) Let $K$ be a difference field. Then there is a difference field
$\widetilde{K}$ over $K$ closed under twists which embeds uniquely,
over $K$ in every other extension with this property. The field $\widetilde{K}$
is purely inseparably algebraic over $K$.

(2) Let $K$ be a difference field which is closed under twists. Let
$L$ be an abstract purely inseparably algebraic field extension of
$K$. Then $\sigma$ lifts uniquely to $L$, and equipped with this
lift, the difference field $L$ is closed under twists.

(3) The class of difference fields closed under twists is stable in
directed unions, relatively perfect extensions and algebraic extensions.
\end{prop}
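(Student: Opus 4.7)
For part (1), the plan is to construct $\widetilde{K}$ inside the perfect hull $K^{p^{-\infty}}$, equipped with the unique extension of $\sigma$ (which exists because $p^n$-th roots are unique in characteristic $p$). Set $\widetilde{K}_0 = K$ and recursively $\widetilde{K}_{n+1} = \widetilde{K}_n \cdot (\widetilde{K}_n^{\sigma})^{p^{-\infty}}$; take $\widetilde{K} = \bigcup_n \widetilde{K}_n$. A direct induction shows each $\widetilde{K}_n$ is stable under $\sigma$ and purely inseparable over $K$, and the directed union structure yields $(\widetilde{K}^\sigma)^{p^{-\infty}} = \bigcup_n (\widetilde{K}_n^\sigma)^{p^{-\infty}} \subseteq \bigcup_n \widetilde{K}_{n+1} = \widetilde{K}$, so $\widetilde{K}$ is closed under twists via Remark \ref{rem:closed-under-twists-characterization}. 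For the universal property, given any difference field extension $F/K$ closed under twists, I would build embeddings $\iota_n\colon \widetilde{K}_n \hookrightarrow F$ inductively: the $p^k$-th roots required to extend $\iota_n$ to $\widetilde{K}_{n+1}$ lie in $F$ since $\iota_n(\widetilde{K}_n^\sigma) \subseteq F^\sigma$ and $(F^\sigma)^{p^{-\infty}} \subseteq F$. Uniqueness is immediate because purely inseparable field embeddings in characteristic $p$ are rigid.

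For part (2), uniqueness of a lift is straightforward: if $\sigma_1, \sigma_2$ are two extensions of $\sigma$ to $L$ and $a \in L$ satisfies $a^{p^n}\in K$, then $\sigma_i(a)^{p^n} = \sigma(a^{p^n})$ is independent of $i$, forcing $\sigma_1(a) = \sigma_2(a)$. For existence, I would embed $L$ into $K^{p^{-\infty}}$ (uniquely over $K$) and restrict the unique extension of $\sigma$. The central verification is $\sigma(L) \subseteq L$: for $a \in L$ with $a^{p^n} \in K$, the identity $\sigma(a) = \sigma(a^{p^n})^{1/p^n}$ exhibits $\sigma(a)$ as an element of $(K^\sigma)^{p^{-n}} \subseteq (K^\sigma)^{p^{-\infty}} \subseteq K \subseteq L$, using that $K$ is closed under twists. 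The same calculation in fact gives the stronger inclusion $L^\sigma \subseteq (K^\sigma)^{p^{-\infty}}$, from which $(L^\sigma)^{p^{-\infty}} \subseteq (K^\sigma)^{p^{-\infty}} \subseteq L$ follows at once, so $L$ is itself closed under twists.

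For part (3), the directed union case is immediate since $(-)^\sigma$ and $(-)^{p^{-\infty}}$ both commute with directed unions. For a relatively perfect extension $L/K$, the defining identity $L^{p^{-\infty}} = L \cdot K^{p^{-\infty}}$ holds; transporting it through the $\sigma$-isomorphism $L/K \cong L^\sigma/K^\sigma$ (which preserves relative perfectness by Fact \ref{fact:basic-descent}) yields $(L^\sigma)^{p^{-\infty}} = L^\sigma \cdot (K^\sigma)^{p^{-\infty}}$. Since $L^\sigma \subseteq L$ and $(K^\sigma)^{p^{-\infty}} \subseteq K \subseteq L$, this is contained in $L$, proving closure under twists. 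For a general algebraic extension the same argument applies once we observe that $L \cdot K^{p^{-\infty}}$ is algebraic over the perfect field $K^{p^{-\infty}}$ and is therefore itself perfect (every algebraic extension of a perfect field is separable, and separable algebraic extensions of perfect fields are perfect by a degree count); hence $L^{p^{-\infty}} = L \cdot K^{p^{-\infty}}$, and the reasoning concludes as before.

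The main obstacle is the verification in part (2) that $\sigma$ actually restricts to $L$, together with the parallel descent computation in (3); both rest essentially on the explicit characterization $(K^\sigma)^{p^{-\infty}} \subseteq K$ of closure under twists from Remark \ref{rem:closed-under-twists-characterization}, without which the required purely inseparable roots would escape the ambient field. Once this characterization is exploited, the remainder reduces to routine bookkeeping with linear disjointness and invariance of relative perfectness under base extension.
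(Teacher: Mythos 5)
Your proof is correct and follows essentially the same route as the paper's: both construct $\widetilde{K}$ as (in effect) the compositum $K\cdot\left(K^{\sigma}\right)^{p^{-\infty}}$ inside the perfect hull of $K$ (your tower in fact stabilizes already at $\widetilde{K}_1$), and both parts (2) and (3) rest on the same inclusion $\left(K^{\sigma}\right)^{p^{-\infty}}\subseteq K$ together with rigidity of purely inseparable embeddings. The only cosmetic difference is in (3), where you treat general algebraic extensions directly via perfectness of algebraic extensions of perfect fields, whereas the paper factors through the separable/purely-inseparable decomposition and reduces to the relatively perfect case.
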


\begin{proof}
(1) Let $\widetilde{K}$ be the splitting field of the polynomials
of the form $X^{p^{n}}-x^{\sigma}$ over $K$, for $x\in K$ and $n\in\mathbf{N}$
varying. Then $\widetilde{K}$ is a normal purely inseparably algebraic
extension of $K$. By construction, if the polynomial $f\in K\left[x\right]$
splits in $\widetilde{K}$, then the same is true of the polynomial
obtained by applying
$\sigma$ to the coefficients. By uniqueness of splitting fields,
the endomorphism $\sigma$ of $K$ lifts to $\widetilde{K}$. Since
$\widetilde{K}$ is purely inseparably algebraic over $K$, the lifting
is unique. We claim that $\widetilde{K}$ is closed under twists,
and that it embeds uniquely over $K$ in every other extension with
this property.

First we prove that $\widetilde{K}$ is closed under twists. Fix an
element $a\in\widetilde{K}$; we must show that $a^{\sigma}$ has
all $p$-th power roots in $\widetilde{K}$. It is enough to prove this after replacing $a$ by
$a^{p^{n}}$ for $n\gg0$. Since $\widetilde{K}$
is purely inseparably algebraic over $K$, we may therefore assume
without loss of generality that the element $a$ lies in $K$. But
then it follows from the construction that the polynomials $X^{p^{n}}-a^{\sigma}$
all split in $\widetilde{K}$.

Now let $L$ be an extension of $K$ closed under twists. By Remark
\ref{rem:closed-under-twists-characterization}, for every $x\in K$
and every $n\in\mathbf{N}$, the polynomial $X^{p^{n}}-x^{\sigma}$
splits in $L$. By uniqueness of splitting fields, there is an abstract
field embedding of $\widetilde{K}$ in $L$ over $K$; since $\widetilde{K}$
is purely inseparably algebraic over $K$, this embedding is unique
and compatible with the action of $\sigma$.

(2) The lift of $\sigma$ is unique if it exists, as $L$ is purely
inseparably algebraic over $K$. Moreover, by assumption, we have
the series of inclusions $K\subseteq L\subseteq K^{p^{-\infty}}\subseteq K^{\sigma^{-1}}$;
if $a\in L$ then $a^{\sigma}$ lies in $K$, hence in $L$. So $L$
is a difference field. Moreover, we have the inclusions $L\subseteq L^{p^{-\infty}}=K^{p^{-\infty}}\subseteq K^{\sigma^{-1}}\subseteq L^{\sigma^{-1}}$,
so $L$ is closed under twists.

(3) Stability under directed unions is clear. Next assume that $L$
is relatively perfect over $K$ and that $K$ is closed under twists;
we want to prove that $L$ is closed under twists as well. By assumption
we have $L^{\sigma\cdot p^{-\infty}}=L^{\sigma}\otimes_{K^{\sigma}}K^{\sigma\cdot p^{-\infty}}$;
so the inclusion $K^{\sigma\cdot p^{-\infty}}\subseteq K$ gives an
inclusion $L^{\sigma\cdot p^{-\infty}}\subseteq L$. Finally, assume
that $L$ is algebraic over $K$. There is a canonical factorization
$K\subseteq M\subseteq L$ where $\nicefrac{M}{K}$ is separably algebraic
and $\nicefrac{L}{M}$ purely inseparable; in view of (2), we may
assume that $L$ is separably algebraic over $K$. In this situation,
the field $L$ is relatively perfect over $K$; this case has already been dealt with, so we are done.
\end{proof}

\subsection{Twisted Difference Polynomials}

In this subsection we introduce twisted difference polynomials.

\subsubsection{ }

Recall that $\mathbf{N}\left[\sigma\right]$ is the commutative semiring
of finite formal sums of the form $\lambda_{0}+\lambda_{1}\cdot\sigma+\ldots+\lambda_{N}\cdot\sigma^{N}$
where the $\lambda_{n}$ are natural numbers, with pointwise addition and multiplication
given by the rule $\sigma^{n}\cdot\sigma^{m}=\sigma^{n+m}$ for $n,m\in\mathbf{N}$.
We write $\mathbf{N}\left[\frac{\sigma}{p^{\infty}}\right]$ for the
union $\bigcup_{n=0}^{\infty}\mathbf{N}\left[\frac{\sigma}{p^{n}}\right]$,
so every element of $\mathbf{N}\left[\frac{\sigma}{p^{\infty}}\right]$
can be written in the form $\lambda_{0}+\lambda_{1}\cdot\frac{\sigma}{q_{1}}+\ldots+\lambda_{N}\cdot\frac{\sigma^{N}}{q_{N}}$
where the $q_{i}$ are powers of $p$.

\subsubsection{\label{subsec:The-Ring-of-Twisted}The Ring of Twisted Difference
Polynomials}

Let $K$ be a difference field which is closed under twists. We now
define the ring of \emph{twisted difference polynomials} over $K$,
denoted by $A=K\left[x^{\mathbf{N}\left[\frac{\sigma}{p^{\infty}}\right]}\right]$.
The elements of $A$ are formal finite $K$-linear combinations of
twisted difference monomials of the form $x^{\nu}$ where $\nu\in \mathbf{N}\left[\frac{\sigma}{p^{\infty}}\right]$.
Multiplication is given by the rule $x^{\nu}\cdot x^{\mu}=x^{\nu+\mu}$,
and we have an action of $\sigma$ on $K\left[x^{\mathbf{N}\left[\frac{\sigma}{p^{\infty}}\right]}\right]$ given by $\left(x^{\nu}\right)^{\sigma}=x^{\sigma\cdot\nu}$.
The difference ring $A$ is transformally integral in the sense that the underlying ring is an integral domain and $\sigma$ is injective. It is moreover closed under
twists in the sense that for every $f\in A$ and $n\in\mathbf{N}$
there is a unique $g\in A$ with $g^{p^{n}}=f^{\sigma}$. Moreover,
for an element $f\in A$ there is an evident evaluation map $K\to K$
denoted by $a\mapsto fa$.

\subsubsection{\label{subsec:Derivatives-of-Twisted}Derivatives of Twisted Difference
Polynomials}

Let $K$ be a difference field, closed under twists. Let $A=K\left[x^{\mathbf{N}\left[\frac{\sigma}{p^{\infty}}\right]}\right]$
be the difference ring of twisted difference polynomials over $K$.
The conditions\footnote{In exponential characteristic $p\neq1$ the requirement that $dx^{\sigma}=0$
is redundant; we have $x^{\sigma}=y^{p}$ where $y=x^{\frac{\sigma}{p}}$,
so $dx^{\sigma}=p\cdot dy=0$} $dx=1$ and $dx^{\sigma^{n}}=0$ for $n>0$ specify a unique $K$-derivation
$d\colon A\to A$. For $f\in A$ we write $df=f'$ and refer to $f'$
as the \emph{transformal derivative} or simply the \emph{derivative}
of $f$. 

\subsection{The Theory $\FF$}\label{ss:FF}

We will be interested in the following class of difference fields,
for which the notion of a transformally separable extension is particularly
well behaved. In \cite{SCFEe} this theory is denoted by $T_\sigma$.
\begin{defn}\label{D:FE}
Let $\FF$ denote the following first order theory of difference fields
$K$: 
\begin{enumerate}
\item The difference field $K$ is closed under twists in the sense of Definition
\ref{def:twisted-difference-fields}; that is, the field $K$ contains
the perfect hull of $K^{\sigma}$. 
\item The field $K$ is a primary extension of $K^{\sigma}$; that is, the
field $K^{\sigma}$ is relatively separably algebraically closed in
$K$. 
\end{enumerate}
\end{defn}

\begin{rem}
We shall regard models of $\FF$ as structures for the language of difference fields; in this language an embedding of models of $\FF$ is an embedding of difference fields, that is, a ring homomorphism compatible with the action of $\sigma$.
\end{rem}

It is clear that the EC models of $\FF$ are inversive; to rule this out we introduce the following definition:

\begin{defn}\label{D:FE-lambda}

We let $\FF_{\lambda}$ be the theory whose models are difference fields as in \ref{D:FE} but embeddings are required to be transformally separable. This can be made first order either with relations expressing linear independence of tuples in $F^n$ over $F^{\sigma}$ or in a functional language, namely with the transformal $\lambda$-functions (see \ref{s:trans lambda functions}). The exact choice of language will not matter for us; regardless of the choice, the theory $\FF_{\lambda}$ is a conservative expansion by definitions of $\FF$, i.e, every model of $\FF$ admits a unique expansion to a model of $\FF_{\lambda}$.
\end{defn}

\begin{rem}
Let $K$ be a difference field. Then the following are equivalent: 
\begin{enumerate}
\item The field $K^{\sigma}$ is separably algebraically closed in $K$. 
\item The field $K$ is separably algebraically closed in $K^{\sigma^{-1}}$. 
\item The field $K$ is separably algebraically closed in its inversive
hull $K^{\sigma^{-\infty}}$.
\end{enumerate}
\end{rem}

\begin{lem}
\label{fe-hull} Let $K$ be a difference field. Then there is a difference
field extension $E$ of $K$ which is a model of $\FF$ and which
embeds uniquely over $K$ in any other. Moreover, if $K$ is closed
under twists, then $E$ is separably algebraic over $K$, and at all
events it is algebraic over $K$. 
\end{lem}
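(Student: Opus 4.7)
The plan is to reduce to the case where $K$ is closed under twists and then to build $E$ inside the inversive hull $K^{\sigma^{-\infty}}$ by iteratively adjoining $\sigma^{-1}$-preimages of the separably algebraic elements. First I would replace $K$ by its twist hull $\widetilde K$ from Proposition \ref{prop:elementary-twisted}(1). Since $\widetilde K/K$ is purely inseparably algebraic and $\widetilde K$ embeds uniquely over $K$ in every difference field closed under twists, in particular in every model of $\FF$ extending $K$, it suffices to construct $E$ over $\widetilde K$: the result will be algebraic over $K$, and the uniqueness of $E \hookrightarrow M$ over $K$ follows by composing with the unique $\widetilde K \hookrightarrow M$.

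Assume now $K$ is closed under twists. Inside $K^{\sigma^{-\infty}}$, where $\sigma$ is bijective, I would set $E_0 := K$ and recursively define $E_{n+1} := E_n \cdot \sigma^{-1}(F_n)$, with $F_n$ the relative separable algebraic closure of $E_n^{\sigma}$ in $E_n$, the compositum being taken inside the inversive hull. Let $E := \bigcup_n E_n$. The extension $E_{n+1}/E_n$ is separably algebraic because $F_n/E_n^{\sigma}$ is, and $\sigma^{-1}$ restricts to a field isomorphism $F_n \to \sigma^{-1}(F_n)$ carrying $E_n^{\sigma}$ onto $E_n$; hence $E/K$ is separably algebraic. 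Since $\sigma(\sigma^{-1}(F_n)) = F_n \subseteq E_n$, each $E_{n+1}$ is closed under $\sigma$, and so is $E$. Each $E_{n+1}$ is closed under twists by Proposition \ref{prop:elementary-twisted}(3), being an algebraic extension of the closed-under-twists $E_n$; the property passes to the directed union $E$. The primary condition for $E$ over $E^{\sigma}$ is forced by construction: if $a \in E$ is separably algebraic over $E^{\sigma}$, pick $n$ large enough that $a$ and the coefficients of its minimal polynomial all lie in $E_n$; then $a \in F_n$, so $\sigma^{-1}(a) \in E_{n+1} \subseteq E$, i.e.\ $a \in E^{\sigma}$. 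Thus $E$ is a model of $\FF$.

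For the universal property, let $M$ be any model of $\FF$ extending $K$ and build $\iota : E \to M$ over $K$ inductively, starting with $\iota_0$ the inclusion. Given $\iota_n : E_n \to M$, each $a \in F_n$ maps to $\iota_n(a) \in M$, which is separably algebraic over $\iota_n(E_n)^{\sigma} \subseteq M^{\sigma}$, hence over $M^{\sigma}$; the primary property of $M$ then supplies a unique $b_a \in M$ with $b_a^{\sigma} = \iota_n(a)$, and I would set $\iota_{n+1}(\sigma^{-1}(a)) := b_a$. The crux — the only real obstacle — is to check that this prescription extends consistently to a difference field embedding $E_{n+1} \to M$: any polynomial relation among the $\sigma^{-1}(a)$ over $E_n$ transforms under $\sigma$ to a relation among the $a$ over $E_n^{\sigma}$, which is preserved by $\iota_n$, and the injectivity of $\sigma$ on $M$ pulls it back to the desired relation among the $b_a$. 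Uniqueness of $\iota_{n+1}$ is forced by the injectivity of $\sigma$ on $M$; passing to the direct limit yields the unique embedding $\iota : E \to M$ over $K$.
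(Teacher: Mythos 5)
Your proof is correct, but it takes a genuinely different route from the paper's. Both arguments begin with the same reduction to the twist-closed case via Proposition \ref{prop:elementary-twisted} and both work inside the inversive hull $K^{\sigma^{-\infty}}$; but where you build $E$ \emph{from below} as an increasing union $\bigcup_n E_n$ obtained by iteratively adjoining $\sigma^{-1}$-preimages of the relative separable algebraic closure of $E_n^{\sigma}$ in $E_n$, the paper defines $E$ in one stroke \emph{from above}, as the relative separable algebraic closure of $K$ inside $K^{\sigma^{-\infty}}$, and verifies the axioms of $\FF$ using the observation that ``$E$ is primary over $E^{\sigma}$'' is equivalent to ``$E$ is separably algebraically closed in its inversive hull $E^{\sigma^{-\infty}} = K^{\sigma^{-\infty}}$.'' The universal properties are also established differently: the paper lifts the embedding $K \hookrightarrow L$ uniquely to the inversive hulls and then restricts, using that $L$ is separably algebraically closed in $L^{\sigma^{-\infty}}$, whereas you construct the embedding $\iota_n \colon E_n \to M$ inductively, invoking only the primary condition on $M$ and the injectivity of $\sigma$ on $M$, never the inversive hull of the target. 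Your version is more hands-on and self-contained (it does not lean on the equivalent reformulations of the primary condition), at the cost of being longer; the paper's is slicker but hides the induction inside the uniqueness of lifts to inversive hulls. One cosmetic point: in your verification of the primary condition you should choose $n$ so that the coefficients of the minimal polynomial of $a$ lie in $E_n^{\sigma}$ (not merely in $E_n$); since $E^{\sigma} = \bigcup_n E_n^{\sigma}$ this is immediate, so the gap is only in the phrasing.
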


In the situation of Lemma \ref{fe-hull}, we will say (for lack of
better terminology) that $E$ is the \emph{smallest model of $\FF$
over $K$}.
\begin{proof}
We may assume that $K$ is closed under twists, by Proposition \ref{prop:elementary-twisted}.

Let $K^{\sigma^{-\infty}}$ denote the inversive hull of $K$, and
let $E$ be the separable algebraic closure of $K$ inside. Then $E$
is a difference field: if $a\in K^{\sigma^{-\infty}}$ is separably
algebraic over $K$, then $a^{\sigma}$ is separably algebraic over
$K^{\sigma}$ and hence over $K$. We claim that $E$ is advertised;
for this, we must prove $E$ is a model of $\FF$ which embeds uniquely
over $K$ in any other.

First we check that $E$ is a model of the theory $\FF$. By construction,
the field $E$ is separably algebraically closed in $K^{\sigma^{-\infty}}$;
on the other hand, we have $E^{\sigma^{-\infty}}=K^{\sigma^{-\infty}}$,
so $E$ is separably algebraically closed in its inversive hull. Since
$E$ is an algebraic extension of $K$, which is closed under twists,
it is again closed under twists, using Proposition \ref{prop:elementary-twisted}
again. It follows that $E$ is a model of the theory $\FF$.

To finish, we must prove that $E$ embeds uniquely over $K$ in every
model $L$ of $\FF$ over $K$. So fix such a model $L$. The embedding
of $K$ in $L$ lifts uniquely to an embedding of $K^{\sigma^{-\infty}}$
in $L^{\sigma^{-\infty}}$. Since the field $L$ is a model of $\FF$,
it is separably algebraically closed in its inversive hull; so the
restriction of this embedding to $E$ factors through $L$, by definition
of the relative separable algebraic closure. So $E$ embeds uniquely
over $K$ in $L$, and the proof is finished.
\end{proof}

\subsubsection{Closure Properties}
\begin{lem}
\label{lem:purely inseparable FE}Let $K$ be a model of $\FF$ and
let $L$ be an abstract purely inseparably algebraic extension of
$K$. Then $\sigma$ lifts uniquely to $L$, and this lift renders
$L$ a model of $\FF$. 
\end{lem}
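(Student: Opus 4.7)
The first assertion, that $\sigma$ lifts uniquely to $L$ and renders $L$ closed under twists, is an immediate application of Proposition~\ref{prop:elementary-twisted}(2): models of $\FF$ are closed under twists by definition, and that proposition addresses precisely the case of purely inseparably algebraic extensions of a difference field closed under twists.

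It remains to check the second axiom of $\FF$ for $L$, namely that $L^\sigma$ is relatively separably algebraically closed in $L$. My plan is to take an arbitrary element $a \in L$ which is separably algebraic over $L^\sigma$ and descend to an element of $K$ where the hypothesis on $K$ can be exploited. Concretely, let $e \geq 0$ be the inseparability exponent of $a$ over $K^\sigma$, so that $b := a^{p^e}$ is separable over $K^\sigma$; such $e$ is supplied by the standard factorization of the minimal polynomial of $a$ over $K^\sigma$ in the form $R(X^{p^e})$ with $R$ separable. On the one hand $b$ is separable over $K^\sigma$, hence over $K$; on the other hand $b \in L$ is purely inseparable over $K$, since $L/K$ is so. An algebraic element which is simultaneously separable and purely inseparable over the base must lie in the base, so $b \in K$. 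Invoking the $\FF$ axiom for $K$, that is, relative separable algebraic closedness of $K^\sigma$ in $K$, we conclude $b \in K^\sigma \subseteq L^\sigma$. Thus $a^{p^e} \in L^\sigma$, which is to say $a$ is purely inseparable over $L^\sigma$; combined with the hypothesis that $a$ is separable over $L^\sigma$, this forces $a \in L^\sigma$, as required.

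I do not anticipate any serious obstacle. Every nontrivial ingredient, namely the existence and uniqueness of the lift of $\sigma$ to $L$ and the relative separable closedness of $K^\sigma$ in $K$, is supplied either by the hypotheses or by Proposition~\ref{prop:elementary-twisted}; the remainder is a routine manipulation of minimal polynomials in positive characteristic (and vacuous when $p = 1$). If any pitfall lurks, it would be in conflating separability over $L^\sigma$ with separability over $K^\sigma$, which fails in general because $L^\sigma/K^\sigma$ is itself purely inseparable; the descent via the inseparability exponent $e$ is precisely what navigates around this.
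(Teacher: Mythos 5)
Your proof is correct. The first half coincides with the paper's: both invoke Proposition~\ref{prop:elementary-twisted}(2) to lift $\sigma$ uniquely and obtain closure under twists. For the second axiom, however, you take a genuinely different (though equally elementary) route. The paper verifies the equivalent formulation that $L$ is separably algebraically closed in its inversive hull: since $K$ is closed under twists one has $L \subseteq K^{p^{-\infty}} \subseteq K^{\sigma^{-1}}$, whence $L^{\sigma^{-\infty}} = K^{\sigma^{-\infty}}$, and any element of this common inversive hull separably algebraic over $L$ is algebraic over $K$, has its separable part already in $K$ (as $K \models \FF$), and is therefore purely inseparable over $L$. You instead attack the primariness of $\nicefrac{L}{L^{\sigma}}$ head-on, descending along the inseparability exponent to reduce to the primariness of $\nicefrac{K}{K^{\sigma}}$. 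Both arguments ultimately rest on the same two facts --- the $\FF$ axiom for $K$ and the triviality of an extension that is simultaneously separable and purely inseparable --- and both are short; the paper's version exploits the equivalence of the several formulations of axiom (2) and the identification of inversive hulls, while yours is self-contained and never leaves $L$. One point deserving an explicit sentence in your write-up: the minimal polynomial of $a$ over $K^{\sigma}$ exists because $a$ is algebraic over $L^{\sigma}$ and $L^{\sigma}$ is (purely inseparably) algebraic over $K^{\sigma}$; as written this is left implicit, though it is not a gap.
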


\begin{proof}
Since $K$ is closed under twists, it follows from Proposition \ref{prop:elementary-twisted}
that $\sigma$ lifts uniquely to $L$, and that $L$ is again closed
under twists when equipped with this lift. It remains to be shown
that the algebraic closure of $L$ in $L^{\sigma^{-\infty}}$ is purely
inseparable over $L$. But $L$ is purely inseparably algebraic over $K$; in
view of the isomorphism $K^{\sigma^{-\infty}}=L^{\sigma^{-\infty}}$, the claim follows
from the fact that $L$ is purely inseparably algebraic over $K$.
\end{proof}
\begin{lem}
\label{lem:transformally-transcendental-FE}
Let $K$ be a model of $\FF$ and let $x$ be an element which is
transformally transcendental over $K$, in an ambient extension. Then the difference field
$E=K\left(x^{\mathbf{N}\left[\frac{\sigma}{p^{\infty}}\right]}\right)$
is a model of $\FF$.
\end{lem}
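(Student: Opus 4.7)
We verify the two defining conditions of $\FF$ separately: (1) $E$ is closed under twists, and (2) $E^\sigma$ is relatively separably algebraically closed in $E$.

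Condition (1) is inherited from the ring of twisted difference polynomials. By \ref{subsec:The-Ring-of-Twisted}, $A = K[x^{\mathbf{N}[\frac{\sigma}{p^\infty}]}]$ is closed under twists, and the property transfers to the field of fractions $E$ by taking quotients of $p^n$-th roots. More concretely, the perfect hull of $E^\sigma$ is generated over $K^\sigma$ by $p^m$-th roots of $x^{\sigma^n}$ for $n \geq 1$—namely the monomials $x^{\sigma^n/p^m}$ already lying in $E$—together with the perfect hull of $K^\sigma$, which lies in $K \subseteq E$ since $K$ itself is closed under twists.

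For condition (2), the main obstacle is the asymmetry in the exponent semigroup $\mathbf{N}[\frac{\sigma}{p^\infty}]$: it contains $\sigma/p^m$ but not $1/p^m$, so that $E^\sigma$ fails to contain the $p$-th roots $x^{\sigma/p^m}$ for $m \geq 1$ even though $E$ does; this prevents a direct appeal to the descent machinery and forces us to detour through the perfect hull. Set $L := (E^\sigma)^{p^{-\infty}}$; explicitly, $L = K^{\sigma \cdot p^{-\infty}}(x^{\sigma^n/p^m} : n \geq 1,\ m \geq 0)$, which is purely inseparable over the purely transcendental extension $F := K^{\sigma \cdot p^{-\infty}}(x^{\sigma^n} : n \geq 1)$ of $K^{\sigma \cdot p^{-\infty}}$. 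The plan then rests on two claims. (a) $K$ is primary over $K^{\sigma \cdot p^{-\infty}}$: for $a \in K$ separably algebraic over $K^{\sigma \cdot p^{-\infty}}$, raising the coefficients of its minimal polynomial to a large $p^N$-th power produces a separable polynomial over $K^\sigma$ vanishing at $a^{p^N}$, whence $a^{p^N} \in K^\sigma$ by primarity of $K/K^\sigma$, i.e.\ $a \in K^{\sigma \cdot p^{-\infty}}$. (b) $K$ and $L$ are linearly disjoint over $K^{\sigma \cdot p^{-\infty}}$: the tuple $(x^{\sigma^n})_{n \geq 1}$ is algebraically independent over $K$ by transformal transcendence of $x$, so $K$ and $F$ are linearly disjoint over $K^{\sigma \cdot p^{-\infty}}$; this lifts to $L$ by raising a hypothetical linear relation to a $p^N$-th power to reduce the coefficients into $F$, invoking the $F$-disjointness, and finally extracting $p^N$-th roots of the resulting relation using that $K^{\sigma \cdot p^{-\infty}}$ is perfect.

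Claims (a) and (b) combined with the invariance of primary extensions under base extension (Fact \ref{fact:basic-descent}) yield that $KL$ is primary over $L$. Since $x$ is transcendental over $KL$ (again by transformal transcendence), $E = KL(x)$ is primary over $KL$, and hence over $L$ by transitivity of primarity. Finally, if $b \in E$ is separably algebraic over $E^\sigma$, then $b$ is separably algebraic over $L$, forcing $b \in L$; but $b$ is then simultaneously separably and purely inseparably algebraic over $E^\sigma$ (the latter since $L/E^\sigma$ is purely inseparable), so $b \in E^\sigma$, establishing (2).
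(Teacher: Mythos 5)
Your proof is correct and follows essentially the same route as the paper's: both reduce primarity over $E^{\sigma}$ to primarity over its perfect hull $E^{\sigma\cdot p^{-\infty}}$, invoke descent for primary extensions along the linearly disjoint base extension $\nicefrac{E^{\sigma\cdot p^{-\infty}}}{K^{\sigma\cdot p^{-\infty}}}$, and conclude because $E$ is generated over the resulting field by the single transcendental element $x$. The only difference is that you spell out the steps the paper leaves implicit — that $K$ is primary over $K^{\sigma\cdot p^{-\infty}}$, the linear disjointness claim, and the passage between $E^{\sigma}$ and its perfect hull — and these verifications are all correct.
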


\begin{proof}
Implicit in the choice of notation is the fact that $E$ is closed
under twists. To finish, we must prove that $E$ is primary over $E^{\sigma}$,
or equivalently over $E^{\sigma\cdot p^{-\infty}}$. The fields $K$
and $E^{\sigma\cdot p^{-\infty}}$ are linearly disjoint over $K^{\sigma\cdot p^{-\infty}}$ so
by descent for primary extensions, it is enough to prove that $E$ is primary over $K\tensor_{K^{\sigma\cdot p^{-\infty}}}E^{\sigma\cdot p^{-\infty}}$.
But this is evident from the construction: the field $E$ is generated
over the latter by the single element $x$ which is algebraically
transcendental.
\end{proof}
\begin{example}
    Let $L,M$ be models of $\FF$ linearly disjoint over a base model $K$ of $\FF$; then the tensor product $L\tensor_{K}M$ need not be a model of $\FF$. For example, let $K = \mathbf{Q} \left(x^\Ns\right)$ where $x$ is transformally transcendental, let $L$ be the inversive hull of $K$, and let $M = \mathbf{Q}\left(y^\Ns\right)$ where $y^2 = x$. Then $N = L \tensor_{K}M$ is not a model of $\FF$; its inversive hull is an algebraic extension, obtained by adjoining square roots of the elements $x^{\sigma^n}$ for all $n \in \mathbf{Z}$.
    See, however, Proposition \ref{prop:amalgamation-for-FE} for a partial converse when $K$ is separably algebraically closed and $L,M$ are transformally separable over $K$.
\end{example}

We will make use of the following (see \cite{ChHr-Dif}):
\begin{fact}
\label{fact:(The-Independence-Theorem)}(The Independence Theorem)
Let $E$ be an algebraically closed field and let $A,B,C$ be algebraically
closed field extensions of $E$, linearly disjoint over $E$ in some
ambient extension. Then the field $\left(AC\right)^{\alg}\left(BC\right)^{\alg}$
is a regular field extension of $AB$.
\end{fact}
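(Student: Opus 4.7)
My plan is to prove this classical \emph{Independence Theorem} for ACF, cited from \cite{ChHr-Dif}, via transcendence-degree and Galois-theoretic computations. Since $A$ and $B$ are algebraically closed and hence perfect, their compositum $AB$ is also perfect (Frobenius is surjective on $A \otimes_E B$), so every extension of $AB$ is automatically separable. Thus regularity of $F := (AC)^{\alg}(BC)^{\alg}$ over $AB$ reduces to showing $AB$ is relatively algebraically closed in $F$, equivalently $F \cap (AB)^{\alg} = AB$.

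As preliminary, using that over an algebraically closed base field algebraic independence of two extensions implies linear disjointness, a transcendence-degree count yields:
\begin{enumerate}
\item $(AC)^{\alg}$ and $(BC)^{\alg}$ are linearly disjoint over $C$ (since $\tdeg_C F \leq \tdeg_E A + \tdeg_E B$ with equality forced by $F \supseteq ABC$), so $F = (AC)^{\alg} \otimes_C (BC)^{\alg}$;
\item $(AB)^{\alg}$ and $C$ are linearly disjoint over $E$, and hence by base change $(AB)^{\alg}$ and $ABC$ are linearly disjoint over $AB$, giving $N := (AB)^{\alg}\cdot ABC = (AB)^{\alg}\otimes_E C$.
\end{enumerate}

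The canonical isomorphism $F \otimes_{AB} (AB)^{\alg} \cong F \otimes_{ABC} N$ then reduces the required equality $F \cap (AB)^{\alg} = AB$ to the assertion that $F$ and $N$ are linearly disjoint over $ABC$, equivalently $F \cap N = ABC$. In characteristic zero, both $F/ABC$ and $N/ABC$ are Galois; I would verify this by showing that the joint restriction map $\Gal((ABC)^{\alg}/ABC) \to \Gal(F/ABC) \times \Gal(N/ABC)$ is surjective. Each factor is individually surjective, and the joint surjectivity amounts to the mutual independence of the three subfields $(AC)^{\alg}$, $(BC)^{\alg}$ and $(AB)^{\alg}$ of $(ABC)^{\alg}$, which is already encoded in (1)--(2). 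In positive characteristic the purely inseparable part is controlled by the perfectness of $AB$ invoked above.

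The technical heart is the joint surjectivity in the Galois step, which requires delicate bookkeeping with the three mutually independent Galois extensions. An alternative and conceptually cleaner route is to invoke the abstract \emph{Independence Theorem} for the stable theory ACF: since types over algebraically closed sets are stationary and non-forking in ACF coincides with linear disjointness, this stationarity directly yields the regularity assertion, as is done in \cite{ChHr-Dif}.
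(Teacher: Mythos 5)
The paper offers no proof of this Fact; it is quoted directly from \cite{ChHr-Dif}, so your proposal has to stand on its own. Your chain of reductions is sound as far as it goes: $AB$ and $ABC$ are perfect, so regularity of $F=(AC)^{\alg}(BC)^{\alg}$ over $AB$ is equivalent to $F$ being linearly disjoint from $(AB)^{\alg}$ over $AB$; by transitivity of linear disjointness and your item (2) this reduces to $F\cap N=ABC$ with $N=(AB)^{\alg}\cdot ABC$; and since $ABC$ is perfect and both $F$ and $N$ are generated over it by normal extensions, this is the joint surjectivity of $\Gal\left((ABC)^{\alg}/ABC\right)\to\Gal\left(F/ABC\right)\times\Gal\left(N/ABC\right)$. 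Items (1) and (2) are also correct, being instances of "algebraic disjointness over an algebraically closed base upgrades to linear disjointness" (Fact \ref{fact:alg-disjointness-equiv-lin-disjointness-for-reg}).

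The gap is the sentence asserting that the joint surjectivity "amounts to the mutual independence of the three subfields \ldots which is already encoded in (1)--(2)." That joint surjectivity \emph{is} the theorem, and it does not follow formally from (1) and (2): surjectivity of each factor together with any collection of pairwise trivial intersections never implies joint surjectivity. Compare $\mathbf{Q}(\sqrt{2}),\mathbf{Q}(\sqrt{3}),\mathbf{Q}(\sqrt{6})$ over $\mathbf{Q}$ — pairwise intersections are trivial, yet the third field lies in the compositum of the first two; group-theoretically this is $(\mathbf{Z}/2)^{2}$ with its three index-two subgroups. What rules this out in the present configuration is that $(AB)^{\alg}$ does not involve $C$ while every finite subextension of $F/ABC$ is built from finite Galois extensions of $AC$ and of $BC$; the standard proofs make this precise either by specializing $C$ to an $E$-rational point of its locus (a place over $(AB)^{\alg}$ under which the $(AC)^{\alg}$- and $(BC)^{\alg}$-parts collapse into $A$ and $B$ while $(AB)^{\alg}$ is fixed pointwise), or by the stationarity calculus carried out in \cite{ChHr-Dif}. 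Your fallback paragraph does not repair this: stationarity of $\tp(C/E)$ gives that $\tp\left(C/(AB)^{\alg}\right)$ is $\Gal\left((AB)^{\alg}/AB\right)$-invariant, but to conclude you must extend each $\tau\in\Gal\left((AB)^{\alg}/AB\right)$ to an automorphism fixing $F$ pointwise, and the well-definedness of $\mathrm{id}_{F}\cup\tau$ is exactly the linear disjointness you are trying to establish. Either carry out the specialization argument in full or simply cite the result, as the paper does.
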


\begin{prop}\label{prop:amalgamation-for-FE}
(The Amalgamation Property) 
Let $A$ be a model of $\FF$ which is
separably algebraically closed. Let $B,C$ be models of $\FF$ transformally
separable over $A$. Then $B$ and $C$ are linearly disjoint over
$A$ and the difference field $B\otimes_{A}C$ is a model of $\FF$.
\end{prop}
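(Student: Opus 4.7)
The plan is to construct an explicit amalgam $F$ that realizes $B \otimes_{A} C$ as a linearly free compositum, by first amalgamating the inversive hulls over $A^{\sigma^{-\infty}}$ and then descending. Let $\bar A = A^{\sigma^{-\infty}}$, $\bar B = B^{\sigma^{-\infty}}$, $\bar C = C^{\sigma^{-\infty}}$. Since $A$ is a model of $\FF$, it is separably algebraically closed in $\bar A$, so $\bar A/A$ is purely inseparable algebraic; combined with $A$ being closed under twists, $\bar A$ is inversive and closed under twists, hence perfect by Remark~\ref{rem:inversive-twisted-is-perfect}. As $A$ is assumed separably algebraically closed and any purely inseparable algebraic extension preserves this property, $\bar A$ is both perfect and separably algebraically closed, i.e.\ algebraically closed. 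Every field extension of an algebraically closed field is regular, so $\bar B \otimes_{\bar A} \bar C$ is an integral domain; since $\bar A, \bar B, \bar C$ are inversive, $\sigma \otimes \sigma$ endows this ring with an automorphism, and its field of fractions $F$ is an inversive difference field containing $\bar B$ and $\bar C$, linearly disjoint over $\bar A$ in $F$.

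Next I would descend the linear disjointness to $B$ and $C$ over $A$ inside $F$. The key step is that any $A$-linearly independent family $(b_i)$ in $B$ remains $\bar A$-linearly independent in $\bar B$. Given a relation $\sum \alpha_i b_i = 0$ with $\alpha_i \in \bar A$, choose $N$ so large that every $\alpha_i^{\sigma^N}$ lies in $A$; applying $\sigma^N$ yields $\sum \alpha_i^{\sigma^N} b_i^{\sigma^N} = 0$ in $B$. Transformal separability of $B$ over $A$ (equivalently, linear disjointness of $A$ and $B^\sigma$ over $A^\sigma$) ensures that $\sigma$ sends $A$-linearly independent families in $B$ to $A$-linearly independent families; iterating gives that $(b_i^{\sigma^N})$ is still $A$-linearly independent in $B$, so $\alpha_i^{\sigma^N} = 0$ and hence $\alpha_i = 0$. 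Combined with $\bar B, \bar C$ linearly disjoint over $\bar A$ in $F$, this shows that $(b_i)$ is $\bar C$-linearly independent, and in particular $C$-linearly independent in $F$. Therefore $B$ and $C$ are linearly disjoint over $A$ in $F$, so $B \otimes_A C$ embeds as an integral domain into $F$, and the linearly free compositum $D := BC = \mathrm{Frac}(B \otimes_A C)$ of Definition~\ref{def:field-tensor} is a difference subfield of $F$.

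It remains to check that $D$ is a model of $\FF$. Closure under twists is inherited from $B$ and $C$ by Remark~\ref{R:comp closed under twist}. For the primarity condition, observe that the inversive hull of $D$ in $F$ coincides with the compositum $\bar B \bar C$; it suffices to show $\bar B \bar C / BC$ is purely inseparable, which I would prove by factoring as $BC \subseteq B \bar C \subseteq \bar B \bar C$. By the same descent argument one obtains $B, \bar C$ linearly disjoint over $A$ in $F$; the tower criterion for linear disjointness then gives $BC$ and $\bar C$ linearly disjoint over $C$, so $B \bar C = BC \otimes_C \bar C$ is obtained from the purely inseparable extension $\bar C / C$ by base change along $C \hookrightarrow BC$, whence $B\bar C / BC$ is purely inseparable. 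For the second step, $\bar B \bar C = \bar B \cdot B\bar C$ is a compositum of the purely inseparable extension $\bar B / B$ with $B\bar C \supseteq B$, and pure inseparability is preserved under compositum, so $\bar B \bar C / B\bar C$ is purely inseparable as well.

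I expect the main obstacle will be the careful bookkeeping of the interlocking linear disjointness statements among the six fields $A, B, C, \bar A, \bar B, \bar C$ inside $F$; once the core descent calculation ($\bar A$-linear independence $\Rightarrow$ $A$-linear independence from transformal separability) is established, the rest reduces to standard facts about preservation of purely inseparable algebraic extensions under base change and compositum.
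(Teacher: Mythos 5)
Your construction of the amalgam $F=\mathrm{Frac}\left(\bar B\otimes_{\bar A}\bar C\right)$ and your descent of linear disjointness from $\bar A$ to $A$ are sound; the latter is essentially a direct proof of the equivalence in Remark \ref{R:equiv def of trans sep}, and the first assertion of the proposition (linear disjointness of $B$ and $C$ over $A$) comes out correctly. The proof breaks down, however, at the primarity axiom for $B\otimes_{A}C$.

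The fatal step is the claim that $\bar B/B$ and $\bar C/C$ are purely inseparable algebraic extensions. You infer this from the fact that a model of $\FF$ is relatively separably algebraically closed in its inversive hull, but that property does not force the extension to be algebraic: the inversive hull is "purely transformally inseparable" in the transformal sense ($B^{\sigma^{-\infty}}=\bar B^{\sigma^{-\infty}}$), yet as a field extension it is typically transcendental. Already in characteristic zero, where purely inseparable means trivial, your claim would say that every model of $\FF$ is inversive; taking $A=\mathbf{Q}^{\alg}$ with the identity and $B=A\left(x^{\Ns}\right)$ for $x$ transformally transcendental gives a non-inversive model of $\FF$ with $\bar B/B$ of infinite transcendence degree. (The same slip occurs at the outset when you assert $\bar A/A$ is purely inseparable; there the conclusion that $\bar A$ is algebraically closed is still true, but because $\bar A$ is a directed union of isomorphic copies of the separably algebraically closed, closed-under-twists field $A$, not for the reason you give.) Once pure inseparability fails, the factorization $BC\subseteq B\bar C\subseteq\bar B\bar C$ via base change and composita of purely inseparable extensions collapses, and nothing in your argument shows that $BC$ is relatively separably algebraically closed in $\bar B\bar C$. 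This is precisely the hard content of the proposition: the paper's proof reduces in several stages (perfect hulls, separable closures, then inversive hulls of $A$ and of $B$), and the step of passing to the inversive hull of $A$ requires the Independence Theorem for algebraically closed fields (Fact \ref{fact:(The-Independence-Theorem)}) to guarantee that $\left(B\bar A\right)^{\alg}\otimes_{\bar A}\left(C\bar A\right)^{\alg}$ is a \emph{regular} extension of $B\otimes_{A}C$, with Remark \ref{rem:The-following-Remark} then transporting the primarity condition back down. Your proposal contains no substitute for this input, so the second assertion of the proposition remains unproved.
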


\begin{rem}
\label{rem:The-following-Remark}The following Remark will be used
in the proof without further mention. Let $\mathcal{C}$ be the class
of difference fields $F$ with the property that $F$ is relatively
separably algebraically closed in its inversive hull $F^{\sigma^{-\infty}}$.
Then $\mathcal{C}$ is downwards closed in transformally separable
extensions and in primary extensions. More precisely, let $\nicefrac{E}{F}$
be an extension of difference fields which is transformally separable
or primary. If $E$ is a member of $\mathcal{C}$, then $F$ is also
a member of $\mathcal{C}$.
\end{rem}

\begin{proof}
The category of difference fields admits linearly disjoint amalgamation
over a base which is inversive and algebraically closed. Since $A$
is closed under twists and separably algebraically closed, the inversive
hull of $A$ is algebraically closed. By transitivity of linear disjointness,
using the fact that $B,C$ are transformally separable over $A$,
we find that they are linearly disjoint over $A$. 

It remains to be shown that $B\otimes_{A}C$ is a model of $\FF$.
The tensor product of difference fields closed
under twists is again closed under twists; so we must prove that $B\otimes_{A}C$
is relatively separably algebraically closed in its inversive hull.
We prove this in several steps.

\textbf{Step 1.} We may assume that $A$ is perfect. Let $A_{0}=A^{p^{-\infty}}$
be the perfect hull of $A$. Since transformally separable extensions
of models of $\FF$ are algebraically separable (see Lemma \ref{closed-under-twists-algebraically-separable}), the fields $B,C$
are algebraically separable over $A$. Let $B_{0}=B\otimes_{A}A_{0}$
and likewise $C_{0}$. Since purely inseparably algebraic extensions
of models of $\FF$ are models of $\FF$ (Lemma \ref{lem:purely inseparable FE}) we have that $B_{0},C_{0}$ are models of $\FF$.
Using Remark \ref{rem:The-following-Remark} we can replace everything
by the subscripted data. By a similar argument we can assume that
$B$ is perfect.

\textbf{Step 2.} We may assume that the field $B$ is separably algebraically
closed. Namely let $B_{0}=B^{\sep}$. Then $B_{0}$ is a model of $\FF$ and
it is transformally separable over $B$. Using Remark \ref{rem:The-following-Remark}
it is enough to prove the claim after $B$ has been replaced by $B_{0}$.

Combining these two steps and using symmetry we may assume that the
fields $A,B,C$ are all algebraically closed.

\textbf{Step 3.} We may assume that $A$ is inversive. Let $A_{0}=A^{\sigma^{-\infty}}$
be the inversive hull of $A$. Let $B_{0}=\left(B\otimes_{A}A_{0}\right)^{\alg}$
and likewise $C_{0}=\left(C\otimes_{A}A_{0}\right)^{\alg}$. Then
$B_{0},C_{0}$ are algebraically closed and hence models of $\FF$.
Moreover, the fields $B_{0},C_{0}$ are transformally separable over
$A_{0}$, as the latter is inversive. By Fact \ref{fact:(The-Independence-Theorem)}
the field $B_{0}\otimes_{A_{0}}C_{0}$ is regular over $B\otimes_{A}C$
so using Remark \ref{rem:The-following-Remark} it is enough to prove
the claim for the subscripted data.

\textbf{Step 4.} We may assume that $B$ is inversive. Let $F=B\otimes_{A}C$,
let $B_{0}=B^{\sigma^{-\infty}}$ be the inversive hull of $B$ and
let $F_{0}=B_{0}\otimes_{A}C$. Since $B$ is algebraically closed
the field $B_{0}$ is regular over $B$. Since regular extensions
are invariant under base extension (Fact \ref{fact:basic-descent}), the field $F_{0}$
is regular over $F$. Using Remark \ref{rem:The-following-Remark},
we can replace $B$ by $B_{0}$.

Combining all these steps and using symmetry we may assume that $A,B,C$
are all inversive (and hence perfect). So $B\otimes_{A}C$ is perfect
and inversive as the tensor product of fields with this property;
it is then a model of $\FF$, which is what we wanted.
\end{proof}

\begin{rem}\label{rem:purely-insep}
Fix $e\in\mathbf{N}$ and write $\FF_{\leq e}$
for the class of models of $\FF$ of degree of imperfection at most
$e$. Let $A$ be a model of $\FF_{\leq e}$ which is separably algebraically
closed. Let $A_{1},A_{2}$ be models of $\FF_{e}$ transformally separable
over $A$ and set $B=A_{1}\tensor_{A}A_{2}$. Then $B$ is a model
of $\FF$ transformally separable over the $A_{i}$, hence over $A$.
If $\left(x_{i}\right)$ and $\left(y_{j}\right)$ are relative $p$-bases
of $A_{1}$ and $A_{2}$ over $A$ respectively, then $\left(x_{i},y_{j}\right)$
is a relative $p$-basis of $B$ over $A$; so the Ershov invariant
of $B$ will exceed $e$ in general (see \ref{subsec:Relative-and-Absolute}). However there is model $\widetilde{B}$
of $\FF$ purely inseparably algebraic over $B$ and transformally
separable over the $A_{i}$ which is a model of $\FF_{\leq e}$. To prove
this assume for simplicity of notation that $A$ is perfect and that
the $A_{i}$ are of Ershov invariant exactly $e$. Then the purely
inseparably algebraic extension $\widetilde{B}$ of $B$ obtained
by adjoining all $p$-th power roots to the elements $z_{1},\ldots,z_{e}$
is of Ershov invariant equal to $e$; here $z_{i}=x_{i}y_{i}$ for
$i=1,\ldots,e$. The field $\widetilde{B}$ is then evidently as promised.
\end{rem}

\subsection{Almost Transformally Separable Extensions}

The notion of an \emph{almost transformally separable} extension
is a soft variant of the notion of a transformally separable extension.
This allows breaking up many arguments into two parts, using the criterion
of Proposition \ref{criterion-algsep-almosttsep-for-fe}.

\subsubsection{\label{subsec:Algebraic-Disjointness}Algebraic Disjointness}

Let $B\hookleftarrow A\hookrightarrow C$ be fields, jointly embedded
over $A$ in an ambient extension $\Omega$. We say that $B$ and
$C$ are \emph{algebraically disjoint} over $A$ if whenever the
sequence $a_{1},\ldots,a_{n}\in B$ is algebraically independent over
$A$ then it remains algebraically independent over $C$. If the transcendence
degree of $B$ over $A$ is finite, then algebraic disjointness is
equivalent to the requirement that $\tdeg_{A}B=\tdeg_{C}CB$.

\begin{fact}\cite[Lemma 2.6.7]{FrJaBook}
\label{fact:alg-disjointness-equiv-lin-disjointness-for-reg}Let $L\hookleftarrow K\hookrightarrow M$
be field extensions, jointly embedded over $K$ in an ambient extension
$\Omega$. If $L$ and $M$ are linearly disjoint over $K$, then they are algebraically disjoint over $K$; the converse holds provided that $L$ is regular over $K$.
\end{fact}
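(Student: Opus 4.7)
The plan is to prove the two implications separately.

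For the forward direction, I would argue directly from the definitions. Suppose $L$ and $M$ are linearly disjoint over $K$ in $\Omega$, and let $a_1,\dots,a_n \in L$ be algebraically independent over $K$. The monomials $a_1^{i_1}\cdots a_n^{i_n}$ are then $K$-linearly independent in $L$, so by linear disjointness they remain $M$-linearly independent in $\Omega$. Hence no nonzero polynomial in $M[X_1,\ldots,X_n]$ can vanish at $(a_1,\ldots,a_n)$, which is exactly the assertion that the $a_i$ remain algebraically independent over $M$.

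For the converse, I would pass to tensor products. Assume $L$ is regular over $K$ and algebraically disjoint from $M$. By finite character of both properties (together with the fact that subextensions of a regular extension are regular), I reduce to the case where $L$ is finitely generated over $K$ of transcendence degree $d$. Regularity gives that $R = L \otimes_K M$ is an integral domain. The multiplication map $\mu \colon R \to \Omega$ has image the compositum $LM$ and kernel a prime ideal $\mathfrak{p}$; the goal becomes showing $\mathfrak{p} = 0$, which is equivalent to linear disjointness of $L$ and $M$ over $K$ in $\Omega$.

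To conclude, I would compare Krull dimensions. The ring $R$ is a finitely generated $M$-algebra whose Krull dimension coincides with $\tdeg_K L = d$. On the other hand, $R/\mathfrak{p}$ is a finitely generated $M$-subalgebra of $\Omega$ with fraction field $LM$, so its Krull dimension equals $\tdeg_M LM$; by algebraic disjointness this too equals $d$. Since $R$ is a finitely generated algebra over the field $M$, it is universally catenary, and therefore any nonzero prime ideal strictly drops the Krull dimension. Thus $\mathfrak{p} = 0$, as required. The main obstacle is this dimension-drop step which, although standard, is the only point at which both hypotheses — regularity giving the domain structure of $R$, and algebraic disjointness pinning down its dimension — are genuinely combined. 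An alternative that sidesteps Krull dimension entirely is to choose a separating transcendence basis $\bar x$ of $L/K$ (available because $L/K$ is regular, hence separable and finitely generated), check linear disjointness of $K(\bar x)$ from $M$ directly from algebraic independence, and then use regularity to show that the minimal polynomial of a primitive generator of $L/K(\bar x)$ remains irreducible over $M(\bar x)$; but the tensor-dimension route seems to me the cleanest.
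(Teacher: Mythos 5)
The paper offers no proof of this statement: it is imported verbatim as Fact \ref{fact:alg-disjointness-equiv-lin-disjointness-for-reg} with a citation to Fried--Jarden, so the only benchmark is the classical argument there, which is essentially the "alternative" you sketch at the end (separating transcendence basis, then irreducibility of the minimal polynomial of a primitive element over $M(\bar x)$ via primariness). Your forward direction is correct and standard: algebraic independence of $a_1,\dots,a_n$ over $K$ is exactly $K$-linear independence of the monomials in the $a_i$, and linear disjointness transports this to $M$-linear independence.

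Your converse, as written, has a genuine gap at the dimension-drop step. The ring $R=L\otimes_K M$ is \emph{not} a finitely generated $M$-algebra once $\tdeg_K L>0$ (already $K(x)\otimes_K M$ is only a localization of $M[x]$), so the claims that $\dim R=\tdeg_K L$, that $R/\mathfrak{p}$ is a finitely generated $M$-subalgebra of $\Omega$ of dimension $\tdeg_M LM$, and the catenarity argument all fail as stated; in fact $\dim(L\otimes_K M)=\min(\tdeg_K L,\tdeg_K M)$ by a theorem of Sharp, and for a domain that is not finitely generated over $M$ the Krull dimension need not equal the transcendence degree of its fraction field. The argument is repairable along the lines you intend: write $L=\mathrm{Frac}(A)$ with $A=K[a_1,\dots,a_m]$, set $B=A\otimes_K M$, and note $R=S^{-1}B$ for $S=A\setminus\{0\}$, so that $\mathfrak{p}=0$ if and only if $\mathfrak{q}=\mathfrak{p}\cap B=0$. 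Now $B$ really is a finitely generated $M$-algebra domain; $\dim B=\tdeg_M\mathrm{Frac}(B)=d$ (apply your forward direction inside $\mathrm{Frac}(B)$, where $L$ and $M$ are linearly disjoint by construction), while $B/\mathfrak{q}\cong M[a_1,\dots,a_m]\subseteq\Omega$ has fraction field $LM$ and hence dimension $\tdeg_M LM=d$ by algebraic disjointness; the formula $\mathrm{ht}(\mathfrak{q})+\dim(B/\mathfrak{q})=\dim B$ for finitely generated domains over a field then gives $\mathfrak{q}=0$. With that localization step inserted your route is correct and genuinely different from the Fried--Jarden proof, though it leans on the nontrivial (but not circular) characterization of regular extensions by "$L\otimes_K F$ is a domain for every field extension $F$ of $K$".
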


\subsubsection{Almost Transformally Separable Extensions}
\begin{defn}
Let $\nicefrac{L}{K}$ be an extension of difference fields. We say
that $L$ is \emph{almost transformally separable }over $K$ if
the fields $K$ and $L^{\sigma}$ are algebraically disjoint over
$K^{\sigma}$ in $L$ the sense of \ref{subsec:Algebraic-Disjointness}. 
\end{defn}

\begin{rem}
\label{tsep-implies-almost-tsep}Let $\nicefrac{L}{K}$ be an extension
of difference fields. If $L$ is transformally separable over $K$,
then it is almost transformally separable over $K$; this follows
immediately from Fact \ref{fact:alg-disjointness-equiv-lin-disjointness-for-reg}.
\end{rem}

\begin{rem}
Let $\nicefrac{L}{K}$ be an extension of difference fields. Then
the following conditions are equivalent: 
\begin{enumerate}
\item The difference field extension $\nicefrac{L}{K}$ is almost transformally
separable. 
\item The difference fields $K^{\sigma^{-1}}$ and $L$ are algebraically
disjoint over $K$ in $L^{\sigma^{-1}}$.
\item The difference fields $K^{\sigma^{-\infty}}$ and $L$ are algebraically
disjoint over $K$ in $L^{\sigma^{-\infty}}$.
\end{enumerate}
\end{rem}

\begin{rem}
\label{insens-alg-almost-tsep} Let $K$ be an algebraically closed
difference field; then an extension of $K$ is almost transformally
separable if and only if it is transformally separable. This is an
immediate consequence of Fact \ref{fact:alg-disjointness-equiv-lin-disjointness-for-reg},
as every field extension of an algebraically closed field is a regular
extension. More generally, let $\nicefrac{L}{K}$ be an extension
 of difference fields. Then $\nicefrac{L}{K}$ is almost transformally
 separable if and only if $L^{\alg}$ is an almost transformally separable
 extension of $K^{\alg}$, for some (any) choice of an extension of $\sigma$ from $L$ to $L^{\alg}$.
\end{rem}

\begin{prop}
\label{basic-properties-almost-tsep} Let $K\subseteq M\subseteq L$
be a tower of difference fields. 
\begin{enumerate}
\item The composition of almost transformally separable extensions is again
one. More precisely, if $\nicefrac{M}{K}$ and $\nicefrac{L}{M}$
are almost transformally separable, then $L$ is almost transformally
separable over $K$.
\item The class of almost transformally separable extensions is downwards
closed. More precisely, if $\nicefrac{L}{K}$ is almost transformally
separable, then $\nicefrac{M}{K}$ is almost transformally separable.
\item The class of almost transformally separable extensions is of finite
character. More precisely, the extension $\nicefrac{L}{K}$ is almost transformally
separable if and only if it is the directed union of finitely generated
almost transformally separable extensions. 
\item The class of almost transformally separable extensions is invariant under base extension. More precisely, let $\nicefrac{L}{K}$ be a difference
field extension. Let $\overline{K}$ be a difference field extension
of $K$, linearly disjoint from $L$ over $K$, and set $\overline{L}=L\tensor_{K}\overline{K}$;
then $\nicefrac{L}{K}$ is almost transformally separable if and only
if $\nicefrac{\overline{L}}{\overline{K}}$ is almost transformally
separable. 
\end{enumerate}
\end{prop}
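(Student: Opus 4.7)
The plan is to parallel the proof of Proposition \ref{basic-properties-of-tsep} with linear disjointness replaced by algebraic disjointness throughout. The extra input, beyond the formal properties of algebraic disjointness recorded in \cite[Chapter 2]{FrJaBook} (finite character, transitivity in towers, going down, base monotonicity), is Remark \ref{insens-alg-almost-tsep}: an extension $\nicefrac{L}{K}$ of difference fields is almost transformally separable if and only if, for some (any) extension of $\sigma$ from $L$ to $L^{\alg}$, the extension $\nicefrac{L^{\alg}}{K^{\alg}}$ is transformally separable (since $K^{\alg}$ is algebraically closed, almost tsep coincides with tsep there).

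\textbf{Parts (1)--(3).} These reduce cleanly to the corresponding assertions of Proposition \ref{basic-properties-of-tsep}. Fix compatible extensions of $\sigma$ to a tower $K^{\alg}\subseteq M^{\alg}\subseteq L^{\alg}$ of algebraic closures and translate ``almost tsep'' to ``tsep'' via Remark \ref{insens-alg-almost-tsep}. Parts (1) and (2) then follow directly from the tsep analogues \ref{basic-properties-of-tsep}(1) and (2) applied to the tower of algebraic closures. For (3), one may argue at the level of algebraic disjointness directly: finite character of algebraic disjointness is classical, and a finitely generated sub-extension of $\nicefrac{L}{K}$ inherits almost transformal separability from $\nicefrac{L}{K}$ by (2).

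\textbf{Part (4).} This is the main obstacle. Linear disjointness of $L$ and $\overline{K}$ over $K$ does not in general lift to linear disjointness of $L^{\alg}$ and $\overline{K}^{\alg}$ over $K^{\alg}$, so the naive reduction to \ref{basic-properties-of-tsep}(4) via Remark \ref{insens-alg-almost-tsep} is not available. I would instead argue directly in terms of algebraic disjointness. The key preliminary observation is that $\sigma$, being injective on $\overline{L}=L\tensor_{K}\overline{K}$, induces a field identification $\overline{L}^{\sigma}=L^{\sigma}\cdot \overline{K}^{\sigma}$ inside $\overline{L}$ with $L^{\sigma}$ and $\overline{K}^{\sigma}$ linearly free over $K^{\sigma}$; in particular $\overline{K}\cdot \overline{L}^{\sigma}=\overline{K}\cdot L^{\sigma}$. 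Writing $\indep$ for nonforking in $\ACF$, the hypothesis that $\nicefrac{L}{K}$ is almost transformally separable reads $K\indep_{K^{\sigma}}L^{\sigma}$, the linear disjointness hypothesis reads $L\indep_{K}\overline{K}$, and the sought-after conclusion that $\nicefrac{\overline{L}}{\overline{K}}$ is almost transformally separable reduces (using $\overline{K}^{\sigma}\subseteq\overline{K}$ and the compositum identity above) to $\overline{K}\indep_{\overline{K}^{\sigma}}L^{\sigma}$. Chaining the two given independences across the nested bases $K^{\sigma}\subseteq\overline{K}^{\sigma}\subseteq\overline{K}$ and $K^{\sigma}\subseteq K\subseteq\overline{K}$, and invoking transitivity and base monotonicity of $\indep$ in the stable theory $\ACF$, yields the forward implication; the converse is symmetric. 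The main subtlety is careful bookkeeping across these nested bases, but no genuinely new algebraic-disjointness tool is required beyond those already in hand.
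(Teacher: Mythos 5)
Your proof is correct and takes essentially the same route as the paper, whose own proof simply asserts that the statement ``follows from formal properties of algebraic disjointness'' in parallel with the linear-disjointness case; your part (4) is a faithful unpacking of that via transitivity and base monotonicity of independence in $\ACF$. One small caveat: the converse in (4) is not literally ``symmetric'' in the $\sigma$-formulation (base change from $\overline{K}^{\sigma}$ down to $K^{\sigma}$ runs against base monotonicity), but it goes through at once in the equivalent $\sigma^{-1}$-formulation by chaining $L\indep_K\overline{K}$ with $\overline{L}\indep_{\overline{K}}\overline{K}^{\sigma^{-1}}$ and then restricting along $K^{\sigma^{-1}}\subseteq\overline{K}^{\sigma^{-1}}$.
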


\begin{proof}
As in Proposition \ref{basic-properties-of-tsep}, this
follows from formal properties of algebraic disjointness.
\end{proof}

\subsection{A Criterion for Transformally Separable Extensions}

In this subsection we give a criterion for extensions to be transformally
separable when the base is a model of $\FF$.

We start with the following; see also \cite[Lemma 5.1]{hrushovski2004elementary}. 
\begin{lem}
\label{L:trans-alg-over inversive is algebraic over sigma} Let $\nicefrac{M}{K}$
be a transformally algebraic extension of difference field with $K$
inversive. Then $M$ is algebraic over $M^{\sigma}$. 
\end{lem}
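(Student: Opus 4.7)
The plan is to prove that an arbitrary $a \in M$ is algebraic over $M^{\sigma}$ via a minimality-plus-shift argument exploiting the inversivity of $K$. Since $M/K$ is transformally algebraic, I can choose a nonzero difference polynomial $P$ over $K$ of minimal $\sigma$-order $n$ vanishing at $a$; that is, $P(a, a^\sigma, \ldots, a^{\sigma^n}) = 0$ and no nonzero difference polynomial over $K$ of $\sigma$-order at most $n-1$ annihilates $a$. The minimality of $n$ forces $(a, a^\sigma, \ldots, a^{\sigma^{n-1}})$ to be algebraically independent over $K$, since any nontrivial polynomial relation among these elements would translate into a nonzero difference polynomial of smaller $\sigma$-order annihilating $a$. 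Writing $P = \sum_{d} p_d(x_0, \ldots, x_{n-1}) x_n^d$ with leading coefficient $p_D \neq 0$ in $x_n$, this independence yields $p_D(a, a^\sigma, \ldots, a^{\sigma^{n-1}}) \neq 0$, hence $a^{\sigma^n}$ is algebraic over $K(a, a^\sigma, \ldots, a^{\sigma^{n-1}})$, and therefore $\tdeg_K K(a, a^\sigma, \ldots, a^{\sigma^n}) = n$.

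The key use of the hypothesis that $K$ is inversive enters in the next step, where I claim that the shifted tuple $(a^\sigma, a^{\sigma^2}, \ldots, a^{\sigma^n})$ is likewise algebraically independent over $K$. Indeed, a hypothetical nonzero relation $Q(a^\sigma, \ldots, a^{\sigma^n}) = 0$ with $Q \in K[x_1, \ldots, x_n]$ can be rewritten by applying $\sigma^{-1}$ to each coefficient of $Q$ (well-defined precisely because $K$ is inversive) and shifting indices; the resulting $Q' \in K[x_0, \ldots, x_{n-1}]$ is still nonzero, and the original relation reads $\sigma(Q'(a, a^\sigma, \ldots, a^{\sigma^{n-1}})) = 0$. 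Injectivity of $\sigma$ on $M$ then gives $Q'(a, a^\sigma, \ldots, a^{\sigma^{n-1}}) = 0$, contradicting the algebraic independence from the previous step.

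Combining the two observations, $(a^\sigma, \ldots, a^{\sigma^n})$ is a family of $n$ algebraically independent elements inside the transcendence-degree-$n$ field $K(a, a^\sigma, \ldots, a^{\sigma^n})$ over $K$; it therefore forms a transcendence basis of that extension, and $a$ is algebraic over $K(a^\sigma, \ldots, a^{\sigma^n})$. Since $K$ is inversive, $K = K^\sigma \subseteq M^\sigma$, and $a^{\sigma^i} \in M^\sigma$ for every $i \geq 1$, so $K(a^\sigma, \ldots, a^{\sigma^n}) \subseteq M^\sigma$ and $a$ is algebraic over $M^\sigma$, as desired. I expect the only delicate point to be the shift step: without the inversivity of $K$ there is no mechanism to convert polynomial relations at higher transforms back into relations at lower transforms, and the argument would break down.
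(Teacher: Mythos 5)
Your proof is correct, but it takes a more hands-on route than the paper's. The paper's argument (credited to the referee) is a two-line transcendence degree count: reduce by a limit argument to $M$ finitely generated over $K$, so that $\tdeg_K M$ is finite; since $K$ is inversive one has the tower $K \subseteq M^{\sigma} \subseteq M$, and $\tdeg_K M^{\sigma} = \tdeg_{K^{\sigma}} M^{\sigma} = \tdeg_K M$ because $\sigma$ is a field embedding, whence $\tdeg_{M^{\sigma}} M = 0$. Your version works element by element: you extract a minimal-order difference polynomial for a fixed $a$, show the prefix tuple $(a, \ldots, a^{\sigma^{n-1}})$ is algebraically independent, and then use inversivity of $K$ to transport a hypothetical relation on the shifted tuple back down — which is really a local instance of the same fact that $\sigma$ preserves transcendence degree over an inversive base. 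What your approach buys is explicitness (you locate a specific finite set $a^{\sigma}, \ldots, a^{\sigma^n}$ over which $a$ is algebraic, and you see exactly where inversivity is indispensable, namely in the shift step); what the paper's approach buys is brevity and the avoidance of any case analysis about leading coefficients or the degenerate case $n=0$, which your argument handles only implicitly. Both are sound.
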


\begin{proof}
The following elegant proof is due to the anonymous referee. By a limit argument we may assume that $M$ is finitely generated over $K$; since $M$ is transformally algebraic over $K$, the transcendence degree $\tdeg_{K}{M}$ is finite. Since $K$ is inversive, we have the inclusions $K \subseteq M^{\sigma} \subseteq M$; the claim follows by comparing transcendence degrees in the displayed tower. 
\end{proof}

\begin{lem}
\label{closed-under-twists-algebraically-separable}Let $K$ be a
difference field which is closed under twists. Then a transformally
separable extension of $K$ is algebraically separable. 
\end{lem}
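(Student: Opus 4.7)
The strategy is to unwind the two definitions and observe that transformal separability already subsumes algebraic separability once the base is closed under twists. Recall that $L/K$ is algebraically separable precisely when $L$ and $K^{p^{-\infty}}$ are linearly disjoint over $K$, while the characterization in Remark \ref{R:equiv def of trans sep} tells us that transformal separability of $L/K$ is equivalent to linear disjointness of $L$ and $K^{\sigma^{-\infty}}$ over $K$. Thus what I need is an inclusion $K^{p^{-\infty}}\subseteq K^{\sigma^{-\infty}}$, together with the standard fact that linear disjointness over $K$ is inherited by intermediate subfields.

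For the inclusion, I invoke the characterization of being closed under twists given by Remark \ref{rem:closed-under-twists-characterization}: condition (4) says exactly that $K^{p^{-\infty}}\subseteq K^{\sigma^{-1}}$. Since $K^{\sigma^{-1}}\subseteq K^{\sigma^{-\infty}}$, I obtain a canonical embedding $K^{p^{-\infty}}\hookrightarrow K^{\sigma^{-\infty}}$; note that this embedding is the unique one over $K$, because $K^{p^{-\infty}}$ is purely inseparably algebraic over $K$, so there is no ambiguity in how to place $K^{p^{-\infty}}$ inside any ambient extension of $K$.

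With the inclusion in hand, the rest is formal. Working inside the ambient difference field $L^{\sigma^{-\infty}}$, transformal separability gives that $L$ is linearly disjoint from $K^{\sigma^{-\infty}}$ over $K$; restricting to the subfield $K^{p^{-\infty}}$, the fields $L$ and $K^{p^{-\infty}}$ are linearly disjoint over $K$ as well. By definition this says $L$ is algebraically separable over $K$, completing the proof.

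There is no real obstacle here: the argument is essentially a bookkeeping exercise in inclusions and the downward inheritance of linear disjointness in a tower. The only subtle point worth being explicit about is the identification of the copies of $K^{p^{-\infty}}$ sitting inside $L^{p^{-\infty}}$ and inside $L^{\sigma^{-\infty}}$, which is handled by the uniqueness of purely inseparable closures over $K$.
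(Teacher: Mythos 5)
Your proof is correct and follows essentially the same route as the paper: the paper uses the one-step inclusions $K \subseteq K^{p^{-1}} \subseteq K^{\sigma^{-1}}$ coming from closure under twists and passes linear disjointness down the tower, whereas you run the identical argument at the level of the full hulls $K^{p^{-\infty}} \subseteq K^{\sigma^{-\infty}}$. The only difference is cosmetic, and your remark on the uniqueness of the embedding of the purely inseparable closure over $K$ correctly disposes of the one point of ambiguity.
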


\begin{proof}
Let $\nicefrac{L}{K}$ be transformally separable with $K$ closed
under twists. Then $K^{\sigma^{-1}}$ and $L$ are linearly disjoint
over $K$. Since $K$ is closed under twists, we have the inclusions
$K\subseteq K^{p^{-1}}\subseteq K^{\sigma^{-1}}$; by formal properties
of linear disjointness, we find that $K^{p^{-1}}$ and $L$ are linearly
disjoint over $K$, whence $\nicefrac{L}{K}$ is algebraically separable.
\end{proof}

\begin{lem}
\label{L:trans sep alg- perf hull splits} Let $\nicefrac{L}{K}$
be a transformally separably algebraic extension of difference fields, with
$K$ and $L$ closed under twists. Then the perfect hull of $L$ splits
over $K$. 
\end{lem}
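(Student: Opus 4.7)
The plan is to reduce to the case where the base difference field is inversive, hence perfect, and then show the top field is perfect. Set $K' = K^{\sigma^{-\infty}}$; by Proposition \ref{prop:elementary-twisted}(3) (stability under directed unions), $K'$ is closed under twists, and being inversive, Remark \ref{rem:inversive-twisted-is-perfect} forces $K'$ to be perfect. Since $\nicefrac{L}{K}$ is transformally separable, $L$ and $K'$ are linearly disjoint over $K$, so I can form the linearly free compositum $L' = L \otimes_K K'$. The property ``perfect hull splits'' (i.e.\ relatively perfect) is invariant under base extension by Fact \ref{fact:basic-descent}, so it suffices to prove that the perfect hull of $L'$ splits over $K'$; as $K'$ is perfect, this amounts to showing $L'$ is itself perfect.

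I next verify that $L'$ inherits the relevant structure over $K'$. The extension $\nicefrac{L'}{K'}$ is transformally separable by Proposition \ref{basic-properties-of-tsep}(4) and it is trivially transformally algebraic. The field $L' = L \cdot K'$ is a compositum (inside $L^{\sigma^{-\infty}}$) of difference fields closed under twists, hence itself closed under twists by Remark \ref{R:comp closed under twist}. Since $K'$ is inversive, Lemma \ref{L:trans-alg-over inversive is algebraic over sigma} applies to give that $L'$ is algebraic (as a field) over $\left(L'\right)^{\sigma}$.

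The crux is then to show that $\left(L'\right)^{\sigma}$ is perfect. Since $K'$ is inversive, $K' = \sigma(K') \subseteq \sigma(L') = \left(L'\right)^\sigma$, so $\nicefrac{L'}{\left(L'\right)^\sigma}$ is a subextension of $\nicefrac{L'}{K'}$; the latter is algebraically separable by Lemma \ref{closed-under-twists-algebraically-separable} (applied to $K'$, which is closed under twists), and subextensions of algebraically separable extensions remain algebraically separable. On the other hand, $L'$, being closed under twists, contains the perfect hull of $\left(L'\right)^\sigma$; this perfect hull is both purely inseparable over $\left(L'\right)^\sigma$ and (as a subextension of $\nicefrac{L'}{\left(L'\right)^\sigma}$) algebraically separable over it, so it must equal $\left(L'\right)^\sigma$. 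Hence $\left(L'\right)^\sigma$ is perfect, and $L'$, being algebraic over a perfect field, is itself perfect (since in any finite subextension the Frobenius is an injective, and therefore bijective, endomorphism; take a directed union).

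The main subtlety, and in some sense the entire point of the reduction, is the simultaneous exploitation in the last step of perfection of the base (which comes from being closed under twists plus inversive) together with the field-theoretic algebraicity over $\left(L'\right)^\sigma$ provided by Lemma \ref{L:trans-alg-over inversive is algebraic over sigma}; neither ingredient alone suffices.
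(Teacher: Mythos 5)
Your reduction to the case where $K$ is inversive (hence perfect) is exactly the paper's, and the use of Lemma \ref{L:trans-alg-over inversive is algebraic over sigma} to get that $L'$ is algebraic over $\left(L'\right)^{\sigma}$ is also the intended key step. The problem is in your argument that $\left(L'\right)^{\sigma}$ is perfect. You claim that $\nicefrac{L'}{\left(L'\right)^{\sigma}}$ is algebraically separable because it is "a subextension" of the separable extension $\nicefrac{L'}{K'}$. Separability passes to the \emph{lower} part of a tower (if $\nicefrac{L}{K}$ is separable and $K\subseteq M\subseteq L$, then $\nicefrac{M}{K}$ is separable), but not to the \emph{upper} part: in the tower $\mathbf{F}_{p}\subseteq\mathbf{F}_{p}\left(t^{p}\right)\subseteq\mathbf{F}_{p}\left(t\right)$ the total extension is separable while the top step is purely inseparable of degree $p$. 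So the inclusion $K'\subseteq\left(L'\right)^{\sigma}\subseteq L'$ gives you nothing about separability of $L'$ over $\left(L'\right)^{\sigma}$. Worse, since $\sigma$ is a field isomorphism of $L'$ onto $\left(L'\right)^{\sigma}$, the claim that $\left(L'\right)^{\sigma}$ is perfect is literally equivalent to the conclusion that $L'$ is perfect, so any correct justification of your intermediate step would have to contain the whole lemma; as written the step is a genuine gap.

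The detour is also unnecessary. The paper's proof never shows $L^{\sigma}$ is perfect: it observes that $L$, being closed under twists, contains $L^{\sigma\cdot p^{-\infty}}$, the perfect hull of $L^{\sigma}$, which is perfect \emph{by construction}; since $L$ is algebraic over $L^{\sigma}$ it is a fortiori algebraic over $L^{\sigma\cdot p^{-\infty}}$, and an algebraic extension of a perfect field is perfect. Replacing your analysis of $\left(L'\right)^{\sigma}$ by this one-line appeal to $\left(L'\right)^{\sigma\cdot p^{-\infty}}$ repairs the argument and recovers the paper's proof.
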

\begin{proof}
Everything here is invariant under base extension (see Fact \ref{fact:basic-descent}
and Proposition \ref{basic-properties-of-tsep}), so assume that $K$
is inversive (Remark \ref{rem:inversive-twisted-is-perfect}). By
Lemma \ref{L:trans-alg-over inversive is algebraic over sigma},
the field $L$ is algebraic over $L^{\sigma}$, hence in particular
algebraic over $L^{\sigma\cdot p^{-\infty}}$. But algebraic extensions
of perfect fields are perfect, so $L$ is perfect, as wanted.
\end{proof}

The following criterion will be used repeatedly:
\begin{prop}
\label{criterion-algsep-almosttsep-for-fe}
Let $\nicefrac{L}{K}$
be an extension of difference fields with $K$ a model of $\FF$. Then $\nicefrac{L}{K}$
is transformally separable if and only if it is almost transformally
separable and algebraically separable. 
\end{prop}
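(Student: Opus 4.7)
The plan is to treat the two implications separately.

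The forward direction is immediate from earlier results. If $L/K$ is transformally separable, then Remark \ref{tsep-implies-almost-tsep} gives almost transformal separability, and Lemma \ref{closed-under-twists-algebraically-separable} gives algebraic separability, using that $K$, being a model of $\FF$, is closed under twists.

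For the reverse direction, assume $L/K$ is both almost transformally separable and algebraically separable; I want to show that $K$ and $L^\sigma$ are linearly disjoint over $K^\sigma$, which is the definition of transformal separability. Three ingredients are at hand. First, $K/K^\sigma$ is primary, since $K$ is a model of $\FF$. Second, the algebraic separability of $L/K$ asserts that $L$ and $K^{p^{-\infty}}$ are linearly disjoint over $K$; applying $\sigma$ to this statement yields that $L^\sigma$ and $(K^\sigma)^{p^{-\infty}}$ are linearly disjoint over $K^\sigma$, so $L^\sigma/K^\sigma$ is algebraically separable. Third, almost transformal separability is precisely the statement that $K$ and $L^\sigma$ are algebraically disjoint over $K^\sigma$.

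The proof is then completed by invoking the following classical field-theoretic fact: if $A/B$ is primary, $C/B$ is separable, and $A, C$ are algebraically disjoint over $B$ in some ambient field, then $A$ and $C$ are linearly disjoint over $B$ (see \cite{FrJaBook}). Applying it with $A = K$, $B = K^\sigma$ and $C = L^\sigma$ yields the desired linear disjointness. The main obstacle is this classical fact, which packages together the only delicate step in the argument; everything else is formal manipulation of the hypotheses via the standard properties of linear disjointness summarized in Section \ref{subsec:Basic-Properties}.
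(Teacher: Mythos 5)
Your proof is correct, and the forward direction coincides with the paper's. The converse, however, is organized differently. The paper first base-changes to the perfect hull: algebraic separability of $\nicefrac{L}{K}$ is exactly what makes $L\otimes_{K}K^{p^{-\infty}}$ well defined, and after this reduction $K^{\sigma^{-1}}$ is a \emph{regular} extension of $K$ (primary because $K\models\FF$, separable because $K$ is now perfect), so the recorded Fact \ref{fact:alg-disjointness-equiv-lin-disjointness-for-reg} converts the algebraic disjointness of $K^{\sigma^{-1}}$ and $L$ over $K$ into linear disjointness, and the conclusion descends. You instead work directly with $K$ and $L^{\sigma}$ over $K^{\sigma}$ and invoke a sharper disjointness criterion in which the regularity hypothesis is split across the two sides: $A/B$ primary, $C/B$ separable, algebraic disjointness implies linear disjointness. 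That statement is true (and your identification of the three hypotheses --- $K/K^{\sigma}$ primary from the $\FF$ axioms, $L^{\sigma}/K^{\sigma}$ separable by transporting separability of $\nicefrac{L}{K}$ along the isomorphism $\sigma$, algebraic disjointness from almost transformal separability --- is accurate), so your argument is shorter and avoids the descent step entirely.

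The one point to tighten is the black box itself. The only version the paper records, Fact \ref{fact:alg-disjointness-equiv-lin-disjointness-for-reg} (\cite[Lemma 2.6.7]{FrJaBook}), requires one of the two extensions to be regular over the base, which is strictly weaker than what you use. You should either cite the split primary/separable form of the lemma precisely, or note that it reduces to the regular form by exactly the manoeuvre the paper performs: base-change along $B\hookrightarrow B^{p^{-\infty}}$ on the separable side (alternatively, pass to a separating transcendence basis $x$ of a finitely generated piece of $C$ over $B$, observe that $A(x)/B(x)$ is primary by descent and that a separably algebraic extension of $B(x)$ is automatically linearly disjoint from a primary one, then conclude by the tower property). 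Once that is supplied, the two proofs are essentially the same argument packaged at different levels of generality.
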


\begin{proof}
If $K$ is a model of $\FF$, then it is closed under twists; a transformally
separable extension is then algebraically separable by Lemma \ref{closed-under-twists-algebraically-separable}.
Moreover, by Remark \ref{tsep-implies-almost-tsep}, an extension
which is transformally separable is almost transformally separable,
no matter the assumption on the ground field. Thus an extension of
a model of $\FF$ which is transformally separable is algebraically
separable and almost transformally separable; it remains to show the
converse.

We first reduce this to the case where $K$ is perfect, using the
discussion of \ref{subsec:descent}. By elementary properties of transformally
separable and almost transformally separable extensions (Proposition
\ref{basic-properties-almost-tsep} and Proposition \ref{basic-properties-of-tsep}),
the hypothesis and the conclusion are both invariant under base extension.
Furthermore, by Lemma \ref{lem:purely inseparable FE}, a purely inseparably
algebraic extension of a model of $\FF$ is a model of $\FF$. We
may therefore assume without loss of generality that $K$ is perfect.

It remains to be shown that if $K$ is a perfect model of $\FF$,
then an extension is transformally separable if and only if it is
almost transformally separable. Since $K$ is a model of $\FF$, the field $K^{\sigma^{-1}}$
is a primary extension of $K$; our assumption that $K$ is perfect
implies that $K^{\sigma^{-1}}$ is in fact regular over $K$. By Fact
\ref{fact:alg-disjointness-equiv-lin-disjointness-for-reg}, linear
disjointness form $K^{\sigma^{-1}}$ over $K$ is equivalent to algebraic
disjointness, which is precisely what we needed.
\end{proof}

\subsection{\label{subsec:Transitivity-in-Towers}Transitivity in Towers}

The class of transformally separable extensions is not in general
transitive in towers; see Remark \ref{rem:failure-of-transitivity}. In this
subsection we investigate to what extent the converse holds.

\begin{prop}
\label{transitive-almost-tsep-if-bottom-is-talg}The class of almost
transformally separable extensions is transitive in towers, provided
that the bottom extension is transformally algebraic. More precisely,
let $K\subseteq M\subseteq L$ be a tower of difference fields and
assume that $M$ is transformally algebraic over $K$. Then $\nicefrac{L}{K}$
is almost transformally separable if and only if $\nicefrac{M}{K}$
and $\nicefrac{L}{M}$ are so. 
\end{prop}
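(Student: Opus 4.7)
The if direction is immediate from Proposition \ref{basic-properties-almost-tsep}(1), and the fact that $\nicefrac{M}{K}$ is almost transformally separable in the only-if direction follows from Proposition \ref{basic-properties-almost-tsep}(2). So the actual content of the statement is: assuming $\nicefrac{L}{K}$ is almost transformally separable and $\nicefrac{M}{K}$ is transformally algebraic, one must show that $\nicefrac{L}{M}$ is almost transformally separable. My plan is to successively enlarge $K$ until the conclusion becomes trivial by appealing to Lemma \ref{L:trans-alg-over inversive is algebraic over sigma}.

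First, I would reduce to the case where $K$ is algebraically closed. By Remark \ref{insens-alg-almost-tsep}, almost transformal separability is insensitive to taking algebraic closures (once a compatible extension of $\sigma$ from $L$ to $L^{\alg}$ has been chosen); and since algebraic extensions of transformally algebraic extensions are again transformally algebraic (transformal transcendence degree being additive in towers), $\nicefrac{M^{\alg}}{K^{\alg}}$ remains transformally algebraic. So I may replace $K$, $M$, $L$ by their algebraic closures inside a common $L^{\alg}$.

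Next, I would further reduce to the case where $K$ is inversive. Since $K$ is now algebraically closed, almost transformal separability coincides with transformal separability (Remark \ref{insens-alg-almost-tsep}, or Fact \ref{fact:alg-disjointness-equiv-lin-disjointness-for-reg}); in particular $K^{\sigma^{-\infty}}$ is linearly disjoint from $L$ over $K$, and by transitivity also from $M$. Invoking the invariance under base extension of both hypotheses (Proposition \ref{basic-properties-almost-tsep}(4), together with the analogous statement for transformally algebraic extensions), I set $K_{0}=K^{\sigma^{-\infty}}$, $M_{0}=M\otimes_{K}K_{0}$, $L_{0}=L\otimes_{K}K_{0}$, and replace the tower accordingly.

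With $K$ now algebraically closed and inversive and $\nicefrac{M}{K}$ transformally algebraic, Lemma \ref{L:trans-alg-over inversive is algebraic over sigma} gives that $M$ is algebraic over $M^{\sigma}$, or equivalently that $\nicefrac{M^{\sigma^{-1}}}{M}$ is an algebraic extension. But then any tuple in $L$ algebraically independent over $M$ is automatically algebraically independent over $M^{\sigma^{-1}}$, so $M^{\sigma^{-1}}$ and $L$ are trivially algebraically disjoint over $M$ inside $L^{\sigma^{-1}}$, which is exactly the definition of $\nicefrac{L}{M}$ being almost transformally separable. The only step requiring real care is the base extension in the second reduction, since one must verify that the base changes are simultaneously defined and that $M_{0}$ really sits inside $L_{0}$ with the correct inclusion --- but this follows formally from the transitivity of linear disjointness applied to the tower $K \subseteq M \subseteq L$ and the extension $K_{0}/K$.
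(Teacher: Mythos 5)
Your proposal is correct and follows essentially the same route as the paper: reduce to $K$ algebraically closed via Remark \ref{insens-alg-almost-tsep}, then to $K$ inversive by base-extending along the inversive hull (using that almost transformal separability over an algebraically closed base upgrades to transformal separability, hence linear disjointness from $K^{\sigma^{-\infty}}$), and finally conclude from Lemma \ref{L:trans-alg-over inversive is algebraic over sigma} that $M$ is algebraic over $M^{\sigma}$, making the disjointness condition for $\nicefrac{L}{M}$ automatic. The paper's own proof performs exactly these steps in the reverse order of presentation (inversive case first, reductions second), so there is nothing to add.
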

\begin{proof}
It is enough to prove that $\nicefrac{L}{M}$ is almost transformally
separable; the other directions follow formally from Proposition \ref{basic-properties-almost-tsep}.

Let us assume first that $K$ is inversive. Since $M$ is transformally algebraic over $K$, it follows from 
Lemma \ref{L:trans-alg-over inversive is algebraic over sigma} that
the field $M$ is an algebraic extension of $M^{\sigma}$. So $L$ is automatically almost transformally separable over $M$, as wanted.

It remains to reduce to the case where $K$ is inversive. Let $L^{\alg}$
be an algebraic closure of $L$ in the category of difference fields;
replacing $L$ by $L^{\alg}$ and $M,K$ by the algebraic closures
inside $L^{\alg}$ does not affect the statement and the conclusion
descends, using Remark \ref{insens-alg-almost-tsep}; thus we may
assume that $K$ is algebraically closed, and hence that $\nicefrac{L}{K}$
is transformally separable. Let $K_{0}$ be the inversive hull of
$K$. Since $\nicefrac{L}{K}$ is transformally separable, the difference
field $L$ is linearly disjoint from $K_{0}$ over $K$.

Let us set $M_{0}=M\tensor_{K}K_{0}$ and $L_{0}=L\tensor_{K}K_{0}$.
By Proposition \ref{basic-properties-almost-tsep}, the class of almost
transformally separable extensions is invariant under base extensions;
one checks moreover that the same is true of the class of transformally
algebraic extensions. It is therefore harmless to replace $K,M$ and
$L$ by $K_{0},M_{0}$ and $L_{0}$ respectively, which brings us
back to the situation already dealt with.
\end{proof}

Recall that an extension $\nicefrac{L}{K}$ of difference fields is said to be transformally separably algebraic if it is transformally separable and transformally algebraic.

\begin{prop}
\label{tsep-alg-of-fe-preserves-ershov} 
Let $\nicefrac{L}{K}$ be
a transformally separably algebraic extension of models of $\FF$. 
\begin{enumerate}
\item The perfect hull of $L$ splits over $K$. 
\item If the difference field $K$ is inversive then so is $L$. 
\end{enumerate}
\end{prop}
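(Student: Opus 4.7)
The plan is to derive both parts from the preceding lemmas with minimal extra work. Part (1) is essentially a direct application of Lemma \ref{L:trans sep alg- perf hull splits}: by Definition \ref{D:FE}, every model of $\FF$ is closed under twists, so both $K$ and $L$ satisfy the hypotheses of that lemma, and we immediately conclude that the perfect hull of $L$ splits over $K$.

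For part (2), assume $K$ is inversive. Since a model of $\FF$ is closed under twists, Remark \ref{rem:inversive-twisted-is-perfect} gives that $K$ is perfect. Part (1) then says $L \otimes_K K^{p^{-\infty}} = L^{p^{-\infty}}$, which under $K^{p^{-\infty}} = K$ collapses to $L = L^{p^{-\infty}}$; that is, $L$ itself is perfect. Next, Lemma \ref{L:trans-alg-over inversive is algebraic over sigma} applied to the transformally algebraic extension $\nicefrac{L}{K}$ with $K$ inversive shows that $L$ is algebraic over $L^{\sigma}$. Because $L$ is a model of $\FF$, the subfield $L^{\sigma}$ is relatively separably algebraically closed in $L$; combined with algebraicity, the extension $\nicefrac{L}{L^{\sigma}}$ has trivial separable part and is therefore purely inseparable. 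However, $L^{\sigma}$, being isomorphic to $L$ as an abstract field via the injective endomorphism $\sigma$, is also perfect, so it admits no nontrivial purely inseparable extension. Hence $L = L^{\sigma}$, i.e.\ $L$ is inversive.

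I do not anticipate any real obstacle here; the statement is essentially a packaging of already established results (Lemma \ref{L:trans sep alg- perf hull splits}, Lemma \ref{L:trans-alg-over inversive is algebraic over sigma}, and Remark \ref{rem:inversive-twisted-is-perfect}) together with the defining property of $\FF$. The only mildly subtle point is to remember that the relevant instance of the ``separably algebraically closed'' clause of Definition \ref{D:FE} is the one applied to $L$ (and not to $K$), since it is the extension $\nicefrac{L}{L^{\sigma}}$ whose separable part must be killed.
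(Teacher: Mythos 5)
Your proof is correct, and part (1) is identical to the paper's (a direct appeal to Lemma \ref{L:trans sep alg- perf hull splits}). For part (2), however, you take a genuinely different and somewhat more economical route. The paper reduces the claim to showing that $L$ is \emph{transformally separable} over $L^{\sigma}$ (which forces $L=L^{\sigma}$), and establishes this via the criterion of Proposition \ref{criterion-algsep-almosttsep-for-fe} together with transitivity of almost transformally separable extensions in towers (Proposition \ref{transitive-almost-tsep-if-bottom-is-talg}). You instead invoke Lemma \ref{L:trans-alg-over inversive is algebraic over sigma} directly to get that $L$ is algebraic over $L^{\sigma}$, then use the primary clause of Definition \ref{D:FE} applied to $L$ to conclude that $\nicefrac{L}{L^{\sigma}}$ is purely inseparable, and finish by noting that $L^{\sigma}\cong L$ is perfect. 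Both arguments ultimately rest on the same two ingredients (the transcendence-degree Lemma \ref{L:trans-alg-over inversive is algebraic over sigma} and the perfection of $L$ obtained from (1)), but yours bypasses the almost-transformally-separable machinery entirely, at the cost of being specific to this situation; the paper's detour through Proposition \ref{criterion-algsep-almosttsep-for-fe} reuses the general criterion that does further work elsewhere. Your closing caveat is well taken: the primary axiom must indeed be applied to $L$ over $L^{\sigma}$, not to $K$, and you apply it correctly.
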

\begin{rem}
It follows in particular that the Ershov invariant does not change
in a transformally separably algebraic extension of models of $\FF$.

Note that for (2), any transformally algebraic extension of an inversive field is transformally separably algebraic.
\end{rem}
\begin{proof}
(1) This is a special case of Lemma \ref{L:trans sep alg- perf hull splits}, since models of $\FF$ are in particular closed under twists.

(2) Assume that $K$ is inversive; we show that $L$ is inversive, or equivalently that the inclusion $L^\sigma \subseteq L$ is an equality. For this, it is enough to show that $L$ is transformally separable over $L^\sigma$.
By the criterion of Proposition \ref{criterion-algsep-almosttsep-for-fe},
in order to prove that $L$ is transformally separable over $L^{\sigma}$,
it will be sufficient to prove that $L$ is algebraically separable
and almost transformally separable over $L^{\sigma}$.

First we show that $L$ is algebraically separable over $L^{\sigma}$.
By Remark \ref{rem:inversive-twisted-is-perfect}, inversive models
of $\FF$ are perfect; so using (1) we find that $L^{\sigma}$ is
perfect, as the isomorphic image of the perfect field $L$. So $L$
is automatically algebraically separable over $L^{\sigma}$. Since
$K$ is inversive, the extension $L$ of $K$ is almost transformally
separably algebraic; this class is transitive in towers, using Proposition
\ref{transitive-almost-tsep-if-bottom-is-talg}, so $L$ is almost
transformally separable over $L^{\sigma}$. This concludes the proof.
\end{proof}

\begin{prop}
\label{P:rel-tsep-alg-c} The class of transformally separable extensions
of models of $\FF$ is transitive in towers, provided that the bottom
extension is transformally algebraic. More precisely, let $K\subseteq M\subseteq L$
be a tower of models of $\FF$ with $\nicefrac{M}{K}$ transformally
algebraic; then $\nicefrac{L}{K}$ is transformally separable if and
only if $\nicefrac{L}{M}$ and $\nicefrac{M}{K}$ are so. 
\end{prop}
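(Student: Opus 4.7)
The ``if'' direction is an instance of Proposition \ref{basic-properties-of-tsep}(1): the composition of transformally separable extensions is transformally separable. So assume now that $\nicefrac{L}{K}$ is transformally separable; by Proposition \ref{basic-properties-of-tsep}(2) the intermediate extension $\nicefrac{M}{K}$ is transformally separable, so the content is to prove that $\nicefrac{L}{M}$ is transformally separable. Since $M$ is a model of $\FF$, by the criterion of Proposition \ref{criterion-algsep-almosttsep-for-fe} it suffices to check that $\nicefrac{L}{M}$ is both almost transformally separable and algebraically separable.

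For almost transformal separability: the extension $\nicefrac{L}{K}$ is transformally separable and hence almost transformally separable (Remark \ref{tsep-implies-almost-tsep}); the bottom extension $\nicefrac{M}{K}$ is transformally algebraic by hypothesis; therefore Proposition \ref{transitive-almost-tsep-if-bottom-is-talg} applies and yields that $\nicefrac{L}{M}$ is almost transformally separable.

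For algebraic separability: this is where the hypothesis that $\nicefrac{M}{K}$ is transformally algebraic between models of $\FF$ is used essentially. By Proposition \ref{basic-properties-of-tsep}(2), $\nicefrac{M}{K}$ is transformally separably algebraic between models of $\FF$, so by Proposition \ref{tsep-alg-of-fe-preserves-ershov}(1) the perfect hull of $M$ splits over $K$, meaning $M^{p^{-\infty}} = M \otimes_{K} K^{p^{-\infty}}$. On the other hand, $K$ is closed under twists (being a model of $\FF$), so the transformally separable extension $\nicefrac{L}{K}$ is algebraically separable by Lemma \ref{closed-under-twists-algebraically-separable}; that is, $L$ and $K^{p^{-\infty}}$ are linearly disjoint over $K$. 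Standard transitivity of linear disjointness in the tower $K \subseteq M \subseteq L$ then gives that $L$ is linearly disjoint from $M \cdot K^{p^{-\infty}} = M^{p^{-\infty}}$ over $M$, which is exactly the algebraic separability of $\nicefrac{L}{M}$.

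The main obstacle is the second bullet: one needs precisely the combination of the two hypotheses (models of $\FF$ and transformal algebraicity of $\nicefrac{M}{K}$) in order to ensure that the perfect hull of $M$ is generated from $K^{p^{-\infty}}$ by base extension, after which linear disjointness transits formally. Without either hypothesis the class of transformally separable extensions genuinely fails to be transitive, as recorded in Remark \ref{rem:failure-of-transitivity}.
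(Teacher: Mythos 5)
Your proof is correct and follows the paper's own argument essentially step for step: reduce to showing $\nicefrac{L}{M}$ is transformally separable, apply the criterion of Proposition \ref{criterion-algsep-almosttsep-for-fe}, get almost transformal separability from Proposition \ref{transitive-almost-tsep-if-bottom-is-talg}, and get algebraic separability from the splitting of the perfect hull of $M$ over $K$ via Proposition \ref{tsep-alg-of-fe-preserves-ershov}. The only cosmetic difference is that where the paper cites Fact \ref{fact:rel-perf-transitive-in-towers} for the last step, you re-derive it in place by the tower property of linear disjointness applied to $K \subseteq M \subseteq L$ and $K^{p^{-\infty}}$, which is exactly the content of that Fact.
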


\begin{rem}
This should be compared with \cite[Lemma 5.2(2)]{hrushovski2004elementary}. 
\end{rem}

\begin{proof}
By formal properties of transformally separable extensions as in Proposition
\ref{basic-properties-of-tsep}, it will be sufficient to prove the
following: if $\nicefrac{L}{K}$ is a transformally separable extension
of models of $\FF$, and $K\subseteq M\subseteq L$ is a model of
$\FF$ transformally separably algebraic over $K$, then $\nicefrac{L}{M}$
is transformally separable. Using the criterion of Proposition \ref{criterion-algsep-almosttsep-for-fe}
it is enough to prove that $\nicefrac{L}{M}$ is almost transformally
separable and algebraically separable.

The fact that $\nicefrac{L}{M}$ is almost transformally separable
follows from Proposition \ref{transitive-almost-tsep-if-bottom-is-talg};
so we must show that $\nicefrac{L}{M}$ is algebraically separable.
By Proposition \ref{tsep-alg-of-fe-preserves-ershov} the perfect
hull of $M$ splits over $K$, so the claim follows from Fact \ref{fact:rel-perf-transitive-in-towers}. 
\end{proof}

\subsection{Simple Roots and Transformally Separably Algebraic Extensions}

Let $\nicefrac{E}{F}$ be a field extension. Then $E$ is separably
algebraic over $F$ if and only if every element of $E$ is a simple
root of a polynomial with coefficients in $F$. The purpose of this
subsection is to generalize this to transformally separably algebraic
extensions of models of $\FF$. Recall the discussion of \ref{subsec:The-Ring-of-Twisted}
on twisted difference polynomials and their derivatives.
\begin{defn}
Let $\nicefrac{L}{K}$ be an extension of difference fields closed
under twists, let $fx\in K\left[x^{\mathbf{N}\left[\frac{\sigma}{p^{\infty}}\right]}\right]$
be a twisted difference polynomial, and let $a\in L$ be an element;
then $a$ is said to be a \emph{simple root} of $f$ if $fa=0$
and $f'a\neq0$.
\end{defn}

\begin{rem}
\label{substitution}Let $\nicefrac{L}{K}$ be an extension of difference
fields closed under twists and let $fx\in K\left[x^{\mathbf{N}\left[\frac{\sigma}{p^{\infty}}\right]}\right]$
be a twisted difference polynomial. Let $a\in L$ be an element and
let $gx\in K\left[x\right]$ be the polynomial obtained from $fx$
via the substitutions $x^{\sigma^{n}\cdot p^{m}}\mapsto a^{\sigma^{n}\cdot p^{m}}$
for $n>0$ and $m\in\mathbf{Z}$; then $f'a=g'a$. In particular,
if we regard an ordinary polynomial over $K$ as a twisted difference
polynomial, then the derivative defined above has its usual meaning. 
\end{rem}

\begin{lem}
\label{L:almost-simple-root-almost-tsep}Let $K$ be a difference
field and let $L=K\left(a^{\mathbf{N}\left[\sigma\right]}\right)$,
where $a$ is a finite tuple. Then the following are equivalent: 
\begin{enumerate}
\item The difference field $L$ is transformally algebraic and almost transformally
separable over $K$. 
\item The tuple $a$ is algebraic over $K\left(a^{\sigma\cdot\mathbf{N}\left[\sigma\right]}\right)$. 
\end{enumerate}
\end{lem}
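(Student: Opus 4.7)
The strategy is to translate both conditions into transcendence degree identities. Let $n$ denote the length of the tuple $a$, set $F := K \cdot L^\sigma = K(a^{\sigma \cdot \mathbf{N}[\sigma]})$, and observe that $L = F(a)$. With this notation, (2) is precisely the statement that $\tdeg_F L = 0$, equivalently $\tdeg_K L = \tdeg_K F$ (with both quantities finite).

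For the direction (1)$\Rightarrow$(2): the transformal algebraicity of $L/K$ together with finite generation give $r := \tdeg_K L < \infty$. Since $\sigma\colon L \to L^\sigma$ is a field isomorphism restricting to $K \to K^\sigma$, applying $\sigma$ to a transcendence basis yields $\tdeg_{K^\sigma} L^\sigma = r$. By the criterion of \ref{subsec:Algebraic-Disjointness}, the finiteness of $\tdeg_{K^\sigma} L^\sigma$ makes almost transformal separability of $L/K$ equivalent to the identity $\tdeg_K K \cdot L^\sigma = \tdeg_{K^\sigma} L^\sigma = r$; this is exactly (2).

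For the direction (2)$\Rightarrow$(1): by finite character, $a$ is algebraic over $K(a^\sigma, \ldots, a^{\sigma^N})$ for some $N$. Applying $\sigma^j$ to a witnessing relation shows that each $a^{\sigma^j}$ is algebraic over $K(a^{\sigma^{j+1}}, \ldots, a^{\sigma^{j+N}})$, so every generator of $L$ lies in $F^{\alg}$ and hence $L$ is algebraic over $F$. The main step is to show $\tdeg_K F < \infty$, which I would handle by a sliding-window induction: for every $m \geq N+1$, using the isomorphism $\sigma$ together with the algebraicity $a \in K(a^\sigma, \ldots, a^{\sigma^{m-1}})^{\alg}$, one obtains
\[
\tdeg_K K(a^\sigma, \ldots, a^{\sigma^m}) \;\leq\; \tdeg_{K^\sigma} K^\sigma(a^\sigma, \ldots, a^{\sigma^m}) \;=\; \tdeg_K K(a, \ldots, a^{\sigma^{m-1}}) \;=\; \tdeg_K K(a^\sigma, \ldots, a^{\sigma^{m-1}}),
\]
so the sequence $\tdeg_K K(a^\sigma, \ldots, a^{\sigma^m})$ is eventually non-increasing and hence bounded by its value at $m = N$, which is at most $nN$. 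Thus $\tdeg_K F \leq nN$, and by the observation of the first paragraph $\tdeg_K L = \tdeg_K F < \infty$, so $L/K$ is transformally algebraic. Finally, the $\sigma$-identity $\tdeg_{K^\sigma} L^\sigma = \tdeg_K L = \tdeg_K K \cdot L^\sigma$ combined with the finite-$\tdeg$ criterion of \ref{subsec:Algebraic-Disjointness} yields almost transformal separability.

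The only slightly delicate point is the finiteness of $\tdeg_K F$ in (2)$\Rightarrow$(1): since $\tdeg_{K^\sigma} K$ can be arbitrary, the naive bound $\tdeg_K K(a^\sigma, \ldots, a^{\sigma^m}) \leq nm$ is too weak to control the limit. The sliding-window comparison via the isomorphism $\sigma$, combined with the recursive algebraic relations supplied by iterating the hypothesis, gives a global bound independent of $m$.
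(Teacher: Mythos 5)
Your proof is correct and follows essentially the same route as the paper's: both directions come down to the observation that almost transformal separability with finite transcendence degree is the equality $\tdeg_{K^{\sigma}}L^{\sigma}=\tdeg_{K}KL^{\sigma}$, and the finiteness of $\tdeg_{K}F$ in $(2)\Rightarrow(1)$ is obtained by iterating the algebraic relation along powers of $\sigma$ (the paper enlarges the tuple to $a,\ldots,a^{\sigma^{N-1}}$ to reduce to window size one, which is the same device as your sliding-window comparison). The only presentational difference is in $(1)\Rightarrow(2)$, where the paper reduces to a single element and invokes Steinitz exchange with a minimal $n$, while you run the equivalent transcendence-degree count directly on the tuple; both are sound.
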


\begin{proof}
$(1)\implies(2)$ We may assume here that $a$ is a single element.
By assumption, the element $a$ is transformally algebraic over $K$,
so for some $n\in\mathbf{N}$ the sequence $a,\ldots,a^{\sigma^{n}}$
is algebraically dependent over $K$; let $n$ be chosen minimal with
respect to this property. Then by minimality of $n$ the sequence
$a,\ldots,a^{\sigma^{n-1}}$ is algebraically independent over $K$.
Since $L$ is almost transformally separable over $K$, the sequence
$a^{\sigma},\ldots,a^{\sigma^{n}}$ must then likewise be algebraically
independent over $K$; by Steinitz exchange, the element $a$ is algebraic
over $K\left(a^{\sigma},\ldots,a^{\sigma^{n}}\right)$, hence in particular
algebraic over $K\left(a^{\sigma\cdot\mathbf{N}\left[\sigma\right]}\right)$.

$(2)\implies(1)$ Since the tuple $a$ is finite there is some large
natural number $N$ with $a\in K\left(a^{\sigma},\ldots,a^{\sigma^{N}}\right)^{\alg}$.
Since $\sigma$ is an endomorphism of fields, it follows inductively
that for all $n\in\mathbf{N}$ we have $a^{\sigma^{n}}\in K^{\sigma^{n}}\left(a^{\sigma^{n+1}},\ldots,a^{\sigma^{n+N}}\right)^{\alg}$
and so in particular that \[a^{\sigma^{n}}\in K\left(a^{\sigma^{n+1}},\ldots,a^{\sigma^{n+N}}\right)^{\alg}.\]

Enlarging $a$ by the tuple $a\ldots a^{\sigma^{N-1}}$ does not affect
the statement; using the analysis of the previous paragraph we then
have that $a\in K\left(a^{\sigma}\right)^{\alg}$ and inductively
that $a,\ldots, a^{\sigma^{n-1}}\in K\left(a^{\sigma^{n}}\right)^{\alg}$
for all $n\in\mathbf{N}$. So the transcendence degree of $K\left(a^{\mathbf{N}\left[\sigma\right]}\right)$
over $K$ is finite, namely it is bounded by the length of the tuple
$a$; this means that $a$ is transformally algebraic over $K$.

It remains to be shown that $\nicefrac{L}{K}$ is almost transformally
separable: in other words, that the fields $K$ and $L^{\sigma}$
are algebraically disjoint over $K^{\sigma}$. By assumption the field
$L$ is algebraic over $KL^{\sigma}$. Moreover, we have shown that
$L$ is transformally algebraic over $K$, so the transcendence degree
$\tdeg_{K}L$ is finite. We then have: 
\[
\tdeg_{K}L=\tdeg_{K}KL^{\sigma}\leq\tdeg_{K^{\sigma}}L^{\sigma}
\]
But $\sigma$ is a field endomorphism, so $\tdeg_{K^{\sigma}}L^{\sigma}=\tdeg_{K}L$;
the displayed inequalities must therefore be equalities, so $L^{\sigma}$
and $K$ are algebraically disjoint over $K^{\sigma}$.
\end{proof}

Here is the key statement:
\begin{prop}
\label{prop-on-simple-roots} Let $\nicefrac{L}{K}$ be a transformally
algebraic extension of difference fields with $K$ a model of $\FF$ and $L$
closed under twists. Then the following are equivalent: 
\begin{enumerate}
\item The difference field $L$ is transformally separably algebraic over
$K$.
\item Every element of $L$ is a simple root of a twisted difference polynomials
with coefficients in $K$. 
\item The difference field $L$ is generated as a twisted difference field
by simple roots of twisted difference polynomials with coefficients
in $K$.
\item The difference field $L$ is a directed union of difference fields
of the form $K\left(a^{\mathbf{N}\left[\frac{\sigma}{p^{\infty}}\right]}\right)$,
where $a$ is a finite tuple separably algebraic over $K\left(a^{\sigma\cdot\mathbf{N}\left[\frac{\sigma}{p^{\infty}}\right]}\right)$. 
\end{enumerate}
\end{prop}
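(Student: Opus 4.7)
The plan is to prove the cycle $(1)\Rightarrow(2)\Rightarrow(3)\Rightarrow(4)\Rightarrow(1)$, using as principal tools Remark~\ref{substitution}, Lemma~\ref{L:almost-simple-root-almost-tsep}, the criterion Proposition~\ref{criterion-algsep-almosttsep-for-fe}, and the transitivity Proposition~\ref{transitive-almost-tsep-if-bottom-is-talg}. For $(1)\Rightarrow(2)$, fix $a\in L$: Proposition~\ref{criterion-algsep-almosttsep-for-fe} decomposes $\nicefrac{L}{K}$ into an almost transformally separable and algebraically separable extension, and both properties descend to the sub-extension $K(a^{\Ns})/K$, which is also transformally algebraic. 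Lemma~\ref{L:almost-simple-root-almost-tsep} then places $a$ algebraic over $K(a^{\sigma\cdot\Ns})$; algebraic separability of $\nicefrac{L}{K}$ upgrades this to separable algebraicity, and clearing denominators in a witnessing separable polynomial produces a (twisted) difference polynomial $f\in K[x^{\Nsphul}]$ with $f(a)=0$ and, by Remark~\ref{substitution}, $f'(a)\ne 0$. The step $(2)\Rightarrow(3)$ is immediate, and $(3)\Rightarrow(4)$ is obtained by applying Remark~\ref{substitution} in the reverse direction: the simple-root condition on each component $a_i$ of a finite tuple $a=(a_1,\ldots,a_r)$ translates into separable algebraicity of $a_i$ over $K(a_i^{\sigma\cdot\Nsphul})\subseteq K(a^{\sigma\cdot\Nsphul})$, hence of the tuple.

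The substantive direction is $(4)\Rightarrow(1)$. Since transformally separably algebraic extensions are stable under directed unions, it suffices to show, for each finite tuple $a$ as in~$(4)$, that $M=K(a^{\Nsphul})$ is transformally separably algebraic over $K$. The criterion Proposition~\ref{criterion-algsep-almosttsep-for-fe} reduces this to verifying transformal algebraicity, almost transformal separability, and algebraic separability of $\nicefrac{M}{K}$. Transformal algebraicity is obtained by clearing the $p^{m}$-th roots in the separable polynomial witnessing hypothesis~$(4)$ to pass to algebraicity of $a$ over $K(a^{\sigma\cdot\Ns})$, and invoking Lemma~\ref{L:almost-simple-root-almost-tsep} for $K(a^{\Ns})/K$; the further extension $M/K(a^{\Ns})$ is purely inseparable algebraic, preserving transformal algebraicity. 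For almost transformal separability, I would invoke the transitivity Proposition~\ref{transitive-almost-tsep-if-bottom-is-talg} on the tower $K\subseteq K(a^{\Ns})\subseteq M$: the bottom step is almost transformally separable by Lemma~\ref{L:almost-simple-root-almost-tsep}, and the top step is vacuously so, being purely inseparable algebraic of transcendence degree zero.

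The main obstacle will be algebraic separability of $\nicefrac{M}{K}$, since $M$ adjoins the $p^{m}$-th roots $a^{\sigma^{n}/p^{m}}$, each of which a priori appears purely inseparable over the preceding subfield. The key is that the hypothesis of~$(4)$ couples these roots to the separable equation for $a$ over $K(a^{\sigma\cdot\Nsphul})$, preventing any purely inseparable element of $K^{p^{-\infty}}$ outside $K$ from entering $M$. Concretely, I would establish $M\cap K^{p^{-\infty}}=K$ in a suitable ambient difference field: given $b\in M$ with $b^{p^{r}}\in K$, expressing $b$ as a rational function in the generators $a^{\nu}$ and exploiting the simple-root identity $f'(a)\ne 0$ should force $b\in K$ via a differential argument on the twisted difference polynomials witnessing the structure of $M$ over $K$.
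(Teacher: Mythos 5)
Your cycle $(1)\Rightarrow(2)\Rightarrow(3)\Rightarrow(4)\Rightarrow(1)$ is the same overall architecture as the paper's, and the steps $(2)\Rightarrow(3)\Rightarrow(4)$, together with the transformal algebraicity and almost transformal separability parts of $(4)\Rightarrow(1)$, match it. But both places where the real content of the proposition sits have gaps. In $(1)\Rightarrow(2)$ you pass from ``$a$ is algebraic over $K\left(a^{\sigma\cdot\Ns}\right)$'' to ``$a$ is \emph{separably} algebraic over it'' by invoking algebraic separability of $\nicefrac{L}{K}$. That inference is invalid: separability of $\nicefrac{L}{K}$ (linear disjointness from $K^{p^{-\infty}}$) says nothing about separability of $L$ over an \emph{intermediate} field --- compare $\mathbf{F}_p\subseteq\mathbf{F}_p\left(t^p\right)\subseteq\mathbf{F}_p\left(t\right)$, which is separable over the bottom but purely inseparable over the middle. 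This is precisely what the paper has to work for: starting from a minimal relation $\sum c_{\nu}a^{\nu}=0$, when every exponent has constant term divisible by $p$ it takes $p$-th roots to get a dependence of the $a^{\nu/p}$ over $K^{p^{-1}}\subseteq K^{\sigma^{-1}}$ (closure under twists), uses \emph{transformal} separability to descend that dependence to $K$, and iterates until some constant term is prime to $p$, which is what makes $f'\neq 0$. Your argument never uses closure under twists or linear disjointness from $K^{\sigma^{-1}}$ at this step, so it cannot exclude the minimal polynomial of $a$ over $K\left(a^{\sigma\cdot\Nsphul}\right)$ being a polynomial in $x^{p}$.

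In $(4)\Rightarrow(1)$ the algebraic-separability step is reduced to showing $M\cap K^{p^{-\infty}}=K$. That condition is necessary but \emph{not} sufficient for $\nicefrac{M}{K}$ to be separable: linear disjointness from $K^{p^{-\infty}}$ is strictly stronger than triviality of the intersection (see the remark following Remark \ref{R:equiv def of trans sep} and \cite[Remark 2.7.6]{FrJaBook}), so even a successful ``differential argument'' --- which is in any case only gestured at --- would prove the wrong statement. The paper instead establishes the stronger claim that $M$ is relatively perfect over $K$: hypothesis (4) makes $M$ separably algebraic over $E=K\otimes_{K^{\sigma\cdot p^{-\infty}}}M^{\sigma\cdot p^{-\infty}}$ (this tensor product being legitimate by Lemma \ref{L:almost-simple-root-almost-tsep} and Proposition \ref{criterion-algsep-almosttsep-for-fe}), and $E$ is relatively perfect over $K$ by descent (Fact \ref{fact:basic-descent}) from the relatively perfect extension $\nicefrac{M^{\sigma\cdot p^{-\infty}}}{K^{\sigma\cdot p^{-\infty}}}$ of perfect fields; relative perfection then propagates through the separably algebraic top layer. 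You need an argument of this strength, producing genuine linear disjointness from $K^{p^{-\infty}}$, to close the proof.
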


\begin{proof}
$(1)\implies(2)$ Fix an element $a\in L$; we must show that it is
a root of a simple twisted difference polynomial over $K$. Since
$L$ is transformally algebraic over $K$, there is a finite nonempty
set $I\subseteq\mathbf{N}\left[\frac{\sigma}{p^{\infty}}\right]$
and nonzero elements $c_{\nu}\in K$ such that $\sum c_{\nu}a^{\nu}=0$;
indeed this is precisely what it means for $L$ to be transformally
algebraic over $K$. If $fx=\sum c_{\nu}x^{\nu}$ then $f'$ is nonzero
precisely in the event that some $\nu\in I$ has constant term prime
to $p$; let us show how to arrange this.

First assume that every element $\nu\in I$ has constant term zero.
Then $\sum c_{\nu}^{\frac{1}{\sigma}}a^{\frac{\nu}{\sigma}}=0$, so
the elements $a^{\frac{\nu}{\sigma}}$ are linearly dependent over
$K^{\sigma^{-1}}$; since $L$ is transformally separable over $K$,
they must have been linearly dependent over $K$ already. Thus inductively
we can assume that some $\nu\in I$ has a nonzero constant term. Now
assume that the constant term of every $\nu\in I$ is divisible by
$p$; then the elements $a^{\frac{\nu}{p}}$ are linearly dependent
over $K^{p^{-1}}$. The latter is contained in $K^{\sigma^{-1}}$
as $K$ is a model of $\FF$, and by assumption $L$ is transformally separable
over $K$; thus we may assume that at least one $\nu\in I$ has constant
term prime to $p$.

We have found a nonzero twisted difference polynomial $f$ such that
$f'\neq0$ and $fa=0$. If $f'a\neq0$, then we are done. Otherwise,
replace $f$ by $f'$ and argue inductively; this cannot go on forever.

$(2) \implies (3)$ This is clear.

$(3) \implies (4)$ Let us assume that $L$ is generated as a twisted
difference field by simple roots of twisted difference polynomials
over $K$. Then $L$ is the directed union of difference fields $K\left(a^{\mathbf{N}\left[\frac{\sigma}{p^{\infty}}\right]}\right)$
where $a=\left(a_{1},\ldots,a_{n}\right)$ is a finite tuple and each
$a_{i}$ is a simple root of a twisted difference polynomials over
$K$. Then each $a_{i}$ is separably algebraic over $K\left(a_{i}^{\sigma\cdot\mathbf{N}\left[\frac{\sigma}{p^{\infty}}\right]}\right)$
by Remark \ref{substitution}; it follows in particular that $a$
is separably algebraic over $K\left(a^{\sigma\cdot\mathbf{N}\left[\frac{\sigma}{p^{\infty}}\right]}\right)$,
hence the claim.

$(4)\implies(1)$ Since transformally separably algebraic extension
are of finite character, we may assume that $L=K\left(a^{\mathbf{N}\left[\frac{\sigma}{p^{\infty}}\right]}\right)$,
where $a$ is a finite tuple separably algebraic over $K\left(a^{\sigma\cdot\mathbf{N}\left[\frac{\sigma}{p^{\infty}}\right]}\right)$.
We have already seen in Lemma \ref{L:almost-simple-root-almost-tsep}
that $L$ is then almost transformally separable over $K$. By Proposition
\ref{criterion-algsep-almosttsep-for-fe}, it is enough to prove that
$\nicefrac{L}{K}$ is algebraically separable. We will show a stronger
statement, namely that $L$ is relatively perfect over $K$.

By Proposition \ref{criterion-algsep-almosttsep-for-fe} and Lemma
\ref{L:almost-simple-root-almost-tsep}, the extension $L^{\sigma\cdot p^{-\infty}}$
of $K^{\sigma\cdot p^{-\infty}}$ is transformally separable. Let
\[E=K\tensor_{K^{\sigma\cdot p^{-\infty}}}L^{\sigma\cdot p^{-\infty}}\]
be the displayed difference field. The field extension $L^{\sigma\cdot p^{-\infty}}$
of $K^{\sigma\cdot p^{-\infty}}$is relatively perfect; by descent,
the field $E$ is relatively perfect over $K$. Since $L$ is separably
algebraic over $E$, it follows from elementary properties of relatively
perfect extensions that $L$ is relatively perfect over $K$, which
is what we wanted.
\end{proof}

\subsection{Transformal $p$-Independence, Transformal $p$-Bases, and Transformal
Separable Generation}

The purpose of this subsection is to generalize the notion of separable generation to the transformal setting. 

 \subsubsection{\label{subsec:Relative-and-Absolute}Relative and Absolute $p$-Bases}
Let $\nicefrac{L}{K}$ be a separable field extension. Recall that
the sequence $\left(x_{i}\right)$ of elements of $L$ is said to
be $p$-\emph{independent} over $K$ in $L$ if the polynomials
$X^{p}-x_{i}^{p}$ are irreducible over $K\otimes_{K^{p}}L^{p}$,
with linearly disjoint splitting fields. This is equivalent to the
requirement that the sequence of monomials $\prod x_{i}^{m_{i}}$
for $0\leq m_{i}\leq p-1$ is linearly independent over $K\otimes_{K^{p}}L^{p}$,
as these monomials span the splitting field. Such a sequence is automatically
algebraically independent over $K$, and $p$-independence enjoys
the Steinitz exchange property.

By a \emph{relative $p$-basis} of $L$ over $K$ we mean a maximal
sequence $\left(x_{i}\right)$ of elements of $L$ which is $p$-independent
over $K$. So if $\left(x_{i}\right)$ is $p$-independent over $K$
in $L$, then $L$ is separable over $K\left(x_{i}\right)$, and if
$\left(x_{i}\right)$ is a relative $p$-basis, then the perfect hull
of $L$ splits over $K\left(x_{i}\right)$. There is also an absolute
version, namely by an (absolute) \emph{$p$-basis} of $L$ we mean
a relative $p$-basis over the prime field, or any other perfect subfield.
Every set of generators of $L$ over a perfect subfield contains an
absolute $p$-basis.

\subsubsection{Transformal Separable Generation}

Let $\nicefrac{L}{K}$ be a field extension. Recall that $L$ is \emph{separably
generated} over $K$ if there is a transcendence basis $x=\left(x_{i}\right)_{i\in I}$
of $L$ over $K$ with $L$ separably algebraic over $K\left(x\right)$;
we then say that $x$ is a \emph{separating transcendence basis} of $L$ over $K$.
If $L$ is finitely generated over $K$ then it is separable if and
only if it is separably generated, in which case a transcendence basis
is separating if and only if it is a relative $p$-basis as in \ref{subsec:Relative-and-Absolute}.
\begin{defn}
Let $\nicefrac{L}{K}$ be an extension of models of $\FF$. Then $L$
is \emph{transformally separably generated} over $K$ if there is
a transformal transcendence basis $x=\left(x_{i}\right)_{i\in I}$
of $L$ over $K$ such that $L$ is transformally separably algebraic
over $K\left(x^{\mathbf{N}\left[\frac{\sigma}{p^{\infty}}\right]}\right)$.
If $x$ can be chosen finite then $L$ is \emph{finitely transformally
separably generated} over $K$. The transformal transcendence basis
$x$ is then said to be a \emph{separating transformal transcendence
basis} of $L$ over $K$.
\end{defn}

\begin{rem}
\label{rem:obvious-that-tsepgen-is-tsep}Let $\nicefrac{L}{K}$ be
an extension of models of $\FF$ which is transformally separably
generated. Then $L$ is transformally separable over $K$. Indeed,
purely transformally transcendental extensions are transformally separable,
and the composition of transformally separable extensions is again
one. 
\end{rem}

\begin{rem}
If $L$ is transformally separable over $K$, then it is not in general
transformally separably generated over $K$. For example, let $K=\mathbf{Q}$
and $L=K\left(x^{\mathbf{N}\left[\sigma^{\pm1}\right]}\right)$ for
$x$ an element transformally transcendental over $K$.
\end{rem}
\begin{lem}\label{lem:lemma-for-tsep-gen}
Let $\nicefrac{L}{K}$ be a transformally separable extension with $K$ a model of $\FF$ and $L$ closed under twists. Let us set $E=K\tensor_{K^{\sigma\cdot p^{-\infty}}}L^{\sigma\cdot p^{-\infty}}$.
\begin{enumerate}
    \item The field $L$ is an algebraically separable field extension
of $E$.
\item Let us assume that $L$ is a model of $\FF$ that is finitely generated over $K$ as a model
of $\FF$, i.e, there is a finite tuple $a$ such that $L$ is the
smallest model of $\FF$ over $K\left(a^{\mathbf{N}\left[\sigma\right]}\right)$;
then $L$ is separably generated over $E$.
\end{enumerate}
\end{lem}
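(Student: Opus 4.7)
The plan is to deduce both parts from already-established tower facts for separability and relative perfection. The key initial observation is that since $L$ is closed under twists we have $L^{\sigma \cdot p^{-\infty}} \subseteq L$, so we may regard $E$ as the compositum $K \cdot L^{\sigma \cdot p^{-\infty}}$ sitting as a subfield of $L$ and work entirely inside $L$.

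For part (1), first invoke Lemma \ref{closed-under-twists-algebraically-separable} to conclude that $L/K$ is algebraically separable; the subextension $E/K$ is therefore separable as well. Next I would verify that $E/K$ is relatively perfect: because $L^{\sigma \cdot p^{-\infty}}$ is already perfect, the perfect hull of $E$ equals $K^{p^{-\infty}} \cdot L^{\sigma \cdot p^{-\infty}}$, and the canonical surjection $E \tensor_K K^{p^{-\infty}} \twoheadrightarrow K^{p^{-\infty}} \cdot L^{\sigma \cdot p^{-\infty}} = E^{p^{-\infty}}$ is injective because $E/K$ is separable, hence an isomorphism. With $E/K$ relatively perfect in the tower $K \subseteq E \subseteq L$, Fact \ref{fact:rel-perf-transitive-in-towers} transfers separability of $L/K$ to separability of $L/E$, giving (1).

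For part (2), write $L = \widetilde{K(a^{\mathbf{N}[\sigma]})}$ for the given finite tuple $a$. The construction of the $\FF$-hull in Lemma \ref{fe-hull} presents $L$ as separably algebraic over the closure under twists of $K(a^{\mathbf{N}[\sigma]})$, which is $K\left(a^{\mathbf{N}\left[\sigma/p^{\infty}\right]}\right)$. Since $L$ is closed under twists, every $a^{\sigma^{k}/p^{m}}$ with $k \geq 1$ and $m \geq 0$ lies in $L^{\sigma \cdot p^{-\infty}} \subseteq E$, so $E(a) \supseteq K\left(a^{\mathbf{N}\left[\sigma/p^{\infty}\right]}\right)$, and hence $L/E(a)$ is separably algebraic. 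By part (1) the extension $L/E$ is separable, so the intermediate $E(a)/E$ is a finitely generated separable extension and therefore admits a separating transcendence basis $y$ by the classical theorem. Then $L/E(y)$ is separably algebraic as the composition of $E(a)/E(y)$ and $L/E(a)$, and $y$ is a transcendence basis of $L/E$ because $L/E(a)$ is algebraic; this exhibits $L/E$ as separably generated.

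The only point that needs care is observing that under the closed-under-twists hypothesis on $L$, the field $E$ sits naturally as a subfield of $L$, so linear disjointness subtleties in interpreting $E$ as a tensor product can be bypassed; once this framing is in place the argument is a direct application of Fact \ref{fact:rel-perf-transitive-in-towers} together with the classical existence of separating transcendence bases in finitely generated separable extensions.
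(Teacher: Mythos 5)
Your proof is correct and follows essentially the same route as the paper: for (1) you establish that $E$ is relatively perfect over $K$ and then invoke Fact \ref{fact:rel-perf-transitive-in-towers}, and for (2) you reduce to the finitely generated separable extension generated by $a$ over $E$, apply the classical separating-transcendence-basis theorem, and use that $L$ is separably algebraic over the twist-closure of $K\left(a^{\mathbf{N}\left[\sigma\right]}\right)$ via Lemma \ref{fe-hull}. The only cosmetic differences are that the paper obtains relative perfection of $\nicefrac{E}{K}$ by base-extension invariance from the perfect extension $\nicefrac{L^{\sigma\cdot p^{-\infty}}}{K^{\sigma\cdot p^{-\infty}}}$ rather than by computing $E^{p^{-\infty}}$ directly, and in (2) it transfers a separating basis from $\nicefrac{L_0}{E_0}$ to $\nicefrac{L}{E}$ instead of working with $E\left(a\right)$ inside $L$.
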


\begin{proof}
(1) We must show that $L$ is algebraically separable over $E$. The
field extension $L^{\sigma\cdot p^{-\infty}}$ of $K^{\sigma\cdot p^{-\infty}}$
is relatively perfect by definition.
By descent for relatively perfect extensions, the field $E$ is
relatively perfect over $K$. Since $L$ is algebraically separable
over $K$, it follows from Fact \ref{fact:rel-perf-transitive-in-towers}
that $L$ is algebraically separable over $E$. 

(2) Fix a finite tuple $a\in L$ which generates $L$ as a model of
$\FF$ over $K$. Let us set $L_{0}=K\left(a^{\mathbf{N}\left[\frac{\sigma}{p^{\infty}}\right]}\right)$
and $E_{0}=K\tensor_{K^{\sigma\cdot p^{-\infty}}}L_{0}^{\sigma\cdot p^{-\infty}}$.
Then by (1), the field $L_{0}$ is algebraically separable over $E_{0}$.
It is moreover finitely generated over $E_{0}$ as a field, namely
by the tuple $a$, hence separably generated over $E_{0}$. Now the
field $L$ is separably algebraic over $L_{0}$, namely it is the
separable algebraic closure of $L_{0}$ in its inversive hull (Lemma
\ref{fe-hull}). Moreover, the field $E$ is separably algebraic over
$E_{0}$. Thus a separating transcendence basis of $L_{0}$ over $E_{0}$
is also a separating transcendence basis of $L$ over $E$, as wanted.
\end{proof}
\begin{prop}
\label{prop:tsep-gen}Let $\nicefrac{L}{K}$ be an extension of models
of $\FF$. Then $L$ is transformally separable over $K$ if and only
if it is the directed union of models of $\FF$ which are finitely
transformally separably generated over $K$.
\end{prop}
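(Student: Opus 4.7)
The $(\Leftarrow)$ implication is immediate from Remark \ref{rem:obvious-that-tsepgen-is-tsep} together with the fact that transformal separability is of finite character (Proposition \ref{basic-properties-of-tsep}(3)).

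For the converse, write $L$ as the directed union of the fields $M_a$, where $a$ ranges over finite tuples in $L$ and $M_a$ denotes the smallest model of $\FF$ inside $L$ containing $K(a^{\Ns})$ (which exists by Lemma \ref{fe-hull} applied inside $L$). Each $M_a$ is finitely generated over $K$ as a model of $\FF$ and is transformally separable over $K$ by downward closure (Proposition \ref{basic-properties-of-tsep}(2)). Hence it suffices to show that every such $M$ is finitely transformally separably generated over $K$.

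Apply Lemma \ref{lem:lemma-for-tsep-gen}(2) to obtain a finite separating transcendence basis $x$ of $M$ over $E := K \otimes_{K^{\sigma \cdot p^{-\infty}}} M^{\sigma \cdot p^{-\infty}}$, so that $M$ is separably algebraic over $E(x)$. I will verify that $x$ is a separating transformal transcendence basis of $M$ over $K$. For transformal independence, note that $x^{\sigma^k} \in M^\sigma \subseteq E$ for each $k \geq 1$; algebraic independence of $x$ over $E$ combined with almost transformal separability (Remark \ref{tsep-implies-almost-tsep}) gives, by induction on $N$, that $x, x^\sigma, \ldots, x^{\sigma^N}$ is algebraically independent over $K$: the inductive step applies $\sigma$ to the hypothesis and uses algebraic disjointness over $K^\sigma$ to transfer the conclusion down to $K$. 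For transformal algebraicity of $M$ over $K(x^{\Ns})$, each element of $M^{\sigma \cdot p^{-\infty}}$ is algebraic over $K(a^{\sigma \cdot \Nsphul})$ (here $K$ closed under twists is essential), so $E(x)$ is algebraic over $K(x, a^{\sigma \cdot \Nsphul})$; clearing $p$-th roots from the resulting relation for $a$ and using transformal independence of $x$ to ensure nontriviality of the coefficients, $a$ satisfies a nonzero difference polynomial over $K(x^{\Ns})$.

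The main obstacle is the remaining claim that $M$ is transformally separably algebraic over $K' := K(x^{\Nsphul})$, which is itself a model of $\FF$ by iteration of Lemma \ref{lem:transformally-transcendental-FE}. Transformal separability is not in general transitive in towers (Remark \ref{rem:failure-of-transitivity}), and because $K'/K$ is not transformally algebraic, Propositions \ref{P:rel-tsep-alg-c} and \ref{transitive-almost-tsep-if-bottom-is-talg} do not apply directly. Instead, I invoke the simple-roots criterion of Proposition \ref{prop-on-simple-roots}, specifically its condition (4): for each finite tuple $m$ in $M$, setting $b = (a, m)$, the field $K'(b^{\Nsphul})$ is transformally separably algebraic over $K'$ once $b$ is separably algebraic over $K'(b^{\sigma \cdot \Nsphul})$, and $M$ is the directed union of such fields. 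The required separable algebraicity follows because applying $\sigma$ and taking perfect hulls in the separable algebraicity of $M$ over $K(a^{\Nsphul})$ shows that $M^{\sigma \cdot p^{-\infty}}$, and hence $E(x)$, is separably algebraic over $K'(a^{\sigma \cdot \Nsphul})$; both $a$ and $m$ are separably algebraic over $E(x)$, so they are separably algebraic over $K'(b^{\sigma \cdot \Nsphul})$ by transitivity of separable algebraic extensions.
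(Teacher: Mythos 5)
Your proof is correct and follows essentially the same route as the paper's: reduce to the case of a finitely generated model of $\FF$ over $K$, extract a separating (algebraic) transcendence basis over $E=K\tensor_{K^{\sigma\cdot p^{-\infty}}}M^{\sigma\cdot p^{-\infty}}$ via Lemma \ref{lem:lemma-for-tsep-gen}(2), and then show it is a separating transformal transcendence basis using Proposition \ref{prop-on-simple-roots}. The only cosmetic difference is that you verify the final step through criterion (4) of Proposition \ref{prop-on-simple-roots} (spelling out the transitivity of separable algebraicity that the paper compresses into "we again see formally"), whereas the paper invokes the simple-roots criterion (2) directly.
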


\begin{rem}
This should be compared with \cite[Lemma 3.3]{ChHr-Dif}.
\end{rem}

\begin{proof}
The easy direction follows from Remark \ref{rem:obvious-that-tsepgen-is-tsep}.
For the converse, fix a finite tuple $a\in L$ and let $L_{0}$ be
the smallest model of $\FF$ over $K$ inside $L$ containing $a$;
we will show that $L_{0}$ is transformally separably generated over
$K$. Relabeling, we may assume that $L=L_{0}$. Using the notation
of Lemma \ref{lem:lemma-for-tsep-gen}, the field $L$ is separably
generated over $E$; after possibly enlarging $a$ within $L$, we can fix a subtuple $b\subseteq a$ which is a separating
transcendence basis for $L$ over $E=K\tensor_{K^{\sigma\cdot p^{-\infty}}}L^{\sigma\cdot p^{-\infty}}$.  

We claim that $b$ is a separating transformal transcendence basis
for $L$ over $K$. For this, we must check that $b$ is transformally
independent over $K$ and that $L$ is transformally separably algebraic
over $M=K\left(b^{\mathbf{N}\left[\frac{\sigma}{p^{\infty}}\right]}\right)$.

First we verify that the tuple $b$ is transformally independent over $K$. By construction the tuple $b$ is algebraically independent over $E$; it follows from the definition of transformal independence that it is transformally independent over $K$ in $L$; thus $M$ is a model of $\FF$, using Lemma \ref{lem:transformally-transcendental-FE}. 

Next we verify that $b$ is a separating transformal transcendence basis of $L$ over $K$, namely that $L$ is transformally separably algebraic over $M$. Since $b$ is an (algebraic) separating transcendence basis for $L$ over $E$, we again see formally that every element of $L$ is simple root of a twisted difference polynomial with coefficients in $M$; by Proposition
\ref{prop-on-simple-roots}, the field $L$ is transformally separably algebraic over $M$.
\end{proof}

\subsection{Summary}
For ease of later reference, we collect here a summary of the main results of the previous section.

\begin{thm}\label{thm:main-thm-on-tsep} 
\begin{enumerate}
\item (The Relative Transformal Separable Algebraic Closure) Let $\nicefrac{E}{F}$
be an embedding of models of $\FF$. Then there is a model $\widetilde{F}$
of $\FF$ with $F\subseteq\widetilde{F}\subseteq E$ transformally
separably algebraic over $F$ characterized uniquely by the following
property: whenever $L$ is a model of $\FF$ transformally separably
algebraic over $F$, then every embedding of $L$ in $E$ over $F$
lands inside $\widetilde{F}$. We say that $\widetilde{F}$ is the
\textbf{relative transformal separable algebraic closure }of $F$
in $E$.
\item (Transformally Separably Algebraic Extensions and Simple Roots) Let
$\nicefrac{E}{F}$ be an embedding of models of $\FF$. Then the set
of elements of $E$ which are simple roots of twisted difference polynomials
over $F$ is a difference field, which is a model of $\FF$. In fact,
it is precisely the relative transformal separable algebraic closure
of $F$ in $E$.
\item (Transitivity) Let $F\subseteq\widetilde{F}\subseteq E$ be a tower
of models of $\FF$ where $\widetilde{F}$ is the relative transformal
separable algebraic closure of $F$ in $E$; then $E$ is transformally
separable over $\widetilde{F}$ if and only if it is transformally
separable over $F$.
\item (Descent over the Inversive Hull) Let $F$ be a model of $\FF$ and
let $F^{\sigma^{-\infty}}$ be its inversive hull. Then there is a
canonical equivalence of categories between models of $\FF$ transformally
separably algebraic over $F$ and perfect, inversive difference fields
transformally algebraic over $F^{\sigma^{-\infty}}$.
\item (Transformal Separable Generation) Let $\nicefrac{E}{F}$ be a transformally
separable extension of models of $\FF$, and assume that $E$ is finitely
generated over $F$ as a model of $\FF$. Then $E$ is transformally
separably generated over $F$. To wit, there is a transformal transcendence
basis $x\in E$ of $E$ over $F$ such that $E$ is transformally
separably algebraic over $F\left(x^{\mathbf{N}\left[\frac{\sigma}{p^{\infty}}\right]}\right)$.
\item (The Amalgamation Property and Closure under Tensor Products) Let
$A$ be a model of $\FF$ which is separably algebraically closed
and let $A_{1},A_{2}$ be models of $\FF$ transformally separable
over $A$. Then $A_{1}$ and $A_{2}$ are linearly disjoint over $A$,
and the difference field $A_{1}\tensor_{A}A_{2}$ is a model of $\FF$.
\end{enumerate}
\end{thm}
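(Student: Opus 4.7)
The plan is to read this theorem as a summary of results already proven, so the proof is mostly assembly. Parts (5) and (6) are immediate: (5) is exactly Proposition \ref{prop:tsep-gen} applied to any finite tuple generating $E$ over $F$ as a model of $\FF$, and (6) is exactly Proposition \ref{prop:amalgamation-for-FE}. Part (3) is exactly Proposition \ref{P:rel-tsep-alg-c}, applied to the tower $F \subseteq \widetilde{F} \subseteq E$ once (1) has been established. So the real work lies in (1), (2), and (4).

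For (1) and (2), I would proceed simultaneously. Define $\widetilde{F}$ to be the set of elements of $E$ which are simple roots of twisted difference polynomials with coefficients in $F$. The first task is to verify that $\widetilde{F}$ is a difference subfield of $E$ closed under twists. Closure under the field operations and under $\sigma$ follows from Proposition \ref{prop-on-simple-roots}: every element of $\widetilde{F}$ is transformally separably algebraic over $F$, and since this class is closed under compositions and finite generation (Proposition \ref{basic-properties-of-tsep}), any element in the difference field generated by finitely many members of $\widetilde{F}$ is itself transformally separably algebraic over $F$, hence in $\widetilde{F}$ by the direction $(1)\Rightarrow(2)$ of Proposition \ref{prop-on-simple-roots}. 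Closure under twists is analogous, using Proposition \ref{prop:elementary-twisted}(3). The same argument shows $\widetilde{F}$ is transformally separably algebraic over $F$, and the converse $(2)\Rightarrow(1)$ of Proposition \ref{prop-on-simple-roots} shows that every transformally separably algebraic element of $E$ over $F$ lies in $\widetilde{F}$. It remains to show $\widetilde{F}$ is itself a model of $\FF$; but this is a direct consequence of Lemma \ref{fe-hull}, since the smallest model of $\FF$ over $\widetilde{F}$ is separably algebraic over $\widetilde{F}$, contained in $E$, and transformally separably algebraic over $F$ by transitivity (Proposition \ref{P:rel-tsep-alg-c}), hence must coincide with $\widetilde{F}$. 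The universal property in (1) then follows from the uniqueness aspect of Lemma \ref{fe-hull}: any embedding over $F$ of a transformally separably algebraic model $L$ of $\FF$ lands in $\widetilde{F}$, since its image consists entirely of simple roots of twisted difference polynomials over $F$.

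For (4), the equivalence of categories, the functor in one direction sends a model $L$ of $\FF$ transformally separably algebraic over $F$ to its inversive hull $L^{\sigma^{-\infty}}$, regarded as a difference field over $F^{\sigma^{-\infty}}$. Since $F$ is closed under twists and its inversive hull is inversive, Remark \ref{rem:inversive-twisted-is-perfect} shows $F^{\sigma^{-\infty}}$ is perfect; by Proposition \ref{tsep-alg-of-fe-preserves-ershov}, applied after base change to $F^{\sigma^{-\infty}}$, the extension $L^{\sigma^{-\infty}}$ is inversive and perfect, and clearly remains transformally algebraic over $F^{\sigma^{-\infty}}$. In the reverse direction, given a perfect inversive $M$ transformally algebraic over $F^{\sigma^{-\infty}}$, I would send it to the relative transformal separable algebraic closure of $F$ inside $M$ supplied by (1) (recalling that $M$ itself is then transformally separable over $F^{\sigma^{-\infty}}$, being perfect inversive, so the image is transformally separably algebraic over $F$). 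The main obstacle here, and the one I would spend the most care on, is showing that these two functors are inverse to each other: that if $L$ is transformally separably algebraic over $F$ then $L$ equals the relative transformal separable algebraic closure of $F$ inside $L^{\sigma^{-\infty}}$, and conversely that if $M$ is perfect inversive and transformally algebraic over $F^{\sigma^{-\infty}}$, then $M$ is the inversive hull of its subfield of simple roots. The first equality follows from the fact that $L$ is already a model of $\FF$ transformally separably algebraic over $F$ inside $L^{\sigma^{-\infty}}$, combined with the universal property of (1). For the second, the point is to use Lemma \ref{L:trans-alg-over inversive is algebraic over sigma}: over the inversive perfect base $F^{\sigma^{-\infty}}$, the extension $M$ is algebraic over $M^\sigma$, hence every element of $M$ eventually arises as a simple root of a twisted difference polynomial over $F$ after applying a sufficiently large power of $\sigma^{-1}$, which identifies $M$ with the inversive hull of the image of the reverse functor.
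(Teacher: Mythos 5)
Your proposal is correct and follows essentially the same route as the paper: both proofs are assemblies of Proposition \ref{prop-on-simple-roots} (for parts (1) and (2), identifying the set of simple roots with the relative transformal separable algebraic closure), Proposition \ref{P:rel-tsep-alg-c} for (3), the inversive hull together with Proposition \ref{tsep-alg-of-fe-preserves-ershov} for (4), Proposition \ref{prop:tsep-gen} for (5), and Proposition \ref{prop:amalgamation-for-FE} for (6). The only cosmetic difference is that in (1)--(2) the paper takes the smallest model of $\FF$ containing the set of simple roots and then shows it equals that set, whereas you verify the closure properties of the set of simple roots directly; the underlying use of Proposition \ref{prop-on-simple-roots} is identical.
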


\begin{proof}
The truth of (1) and (2) follows immediately from Proposition \ref{prop-on-simple-roots}.
Namely let $\mathcal{X}$ be the set of elements of $E$ which are
simple roots of twisted difference polynomials over $F$. The smallest
model $\widetilde{F}$ of $\FF$ inside $E$ containing $\mathcal{X}$
is transformally separably algebraic over $F$; on the other hand,
all elements of $\widetilde{F}$ are simple roots over $F$, so $\mathcal{X}=\widetilde{F}$. 

The truth of (3) follows from Proposition \ref{P:rel-tsep-alg-c}.
In (4), the equivalence is given by associating to a model $E$ of
$\FF$ transformally separably algebraic over $F$ its inversive hull
$E^{\sigma^{-\infty}}$. Indeed, the inversive hull of a model of
$\FF$ is again one, and by Proposition \ref{tsep-alg-of-fe-preserves-ershov},
all models of $\FF$ transformally algebraic over $F^{\sigma^{-\infty}}$
are inversive. We can recover $E$ inside $E^{\sigma^{-\infty}}$
as the relative transformal separable algebraic closure of $F$ inside,
since $E^{\sigma^{-\infty}}$ is by construction purely transformally
inseparably algebraic over $E$. The proof of (5) is given in Proposition
\ref{prop:tsep-gen}, and finally (6) is Proposition \ref{prop:amalgamation-for-FE}.
\end{proof}

\begin{defn}
    Let $\nicefrac{M}{K}$ be an extension of models of $\FF$. We say that $M$ is \emph{relatively inversive} over $K$ if it is transformally separable over $K$ and whenever $L$ is a model of $\FF$ with $K \subseteq M \subseteq L$ and $L$ is transformally separable over $K$, then $L$ is also transformally separable over $M$.
\end{defn}

\begin{rem}
Let $\nicefrac{M}{K}$ be a relatively inversive extension of models of $\FF$.
    \begin{enumerate}
        \item If $K$ is an inversive model of $\FF$, then $M$ must likewise be inversive (in the definition, consider the inversive hull of $M$).
        \item The field $M$ is relatively perfect over $K$.
        \item Transformally separably algebraic extensions of models of $\FF$ are relatively inversive.
    \end{enumerate}
\end{rem}

\subsection{\label{s:trans lambda functions}The Transformal $\lambda$-functions}

Let $K$ be a difference field. We now define the \emph{transformal
$\lambda$-functions} of $K$. Let $n\in\mathbf{N}$ be fixed for
the moment; we will define the function $\lambda=\lambda_{n}$. This
function takes as an input an $n+1$-tuple of elements of $K$ and
returns an $n$-tuple of elements of $K$. To define it,
fix a tuple $\left(a_{1},\ldots,a_{n},b\right)\in K^{n}\times K$.
If the $a_{1},\ldots,a_{n}$ are linearly dependent over $K^{\sigma}$,
or else if $b$ is not a $K^{\sigma}$-linear combination of the $a_{i}$,
then $\lambda\left(a_{1},\ldots,a_{n},b\right)=0$. Otherwise, the
coordinates of $\lambda\left(a_{1},\ldots,a_{n},b\right)$ are the
unique elements $c_{1}, \ldots ,c_{n} \in K$ satisfying $\sum_{i=1}^{n}c_{i}^{\sigma}a_{i}=b$.
Note that the transformal $\lambda$-functions of $K$ are definable
in the pure difference field language.

Let $F\subseteq K$ be an extension of difference fields. Then $K$
is transformally separable over $F$ if and only if $F$ is closed
under the transformal $\lambda$-functions of $K$. For suppose that
$F$ and $K^{\sigma}$ are linearly disjoint over $F^{\sigma}$. Fix
$\left(a_{1},\ldots,a_{n},b\right)\in F^{n}\times F$ with the $a_{1},\ldots,a_{n}$
linearly independent over $K^{\sigma}$ and $b$ in the $K^{\sigma}$-linear
span of the $a_{i}$. Then the tuple $\left(a_{1},\ldots,a_{n},b\right)$
is linearly dependent over $K^{\sigma}$; by linear disjointness,
it must be linearly dependent over $F^{\sigma}$ already, whence $\lambda\left(a_{1},\ldots,a_{n},b\right)$
has all coordinates in $F$. For the converse, assume $F$
and $K^{\sigma}$ are not linearly disjoint over $F^{\sigma}$. Let
$a_{1},\ldots,a_{n},b$ be a tuple of elements of $F$ linearly dependent
over $K^{\sigma}$ but linearly independent over $F^{\sigma}$. We
may assume that $n$ is chosen minimal with respect to this property.
Then one of the coordinates of $\lambda\left(a_{1},\ldots,a_{n},b\right)$
is not in $F$, hence $F$ is not closed under the transformal
$\lambda$-functions of $K$.

\subsubsection{}

Let $E$ be a model of $\FF$. Recall that the \emph{Frobenius twists}
of $E$ are the difference fields $\left(E,\sigma\circ\phi^{m}\right)$
for $m\in\mathbf{Z}$ where $\phi x=x^{p}$ is the Frobenius endomorphism
of $E$.
\begin{defn}\label{def:tsep-alg-closed-trivially}
Let $E$ be a model of $\FF$.
We say that $E$ is a model of $\SCFE$ if, in addition, it obeys the following two conditions:
\begin{enumerate}
    \item The field $E$ is separably algebraically closed as an abstract field.
    \item The difference field $E$ and all of its Frobenius twists obey
the following property. Let $V$ be an absolutely irreducible affine
algebraic variety over $E$. Let $C\subseteq V\times V^{\sigma}$ be
a locally closed absolutely irreducible subvariety and assume that the first projection is dominant and the second projection $C\to V^{\sigma}$ is finite \'etale; then there
is a point $a\in V\left(E\right)$ such that $\left(a,a^{\sigma}\right)\in C\left(E\right)$.
\end{enumerate}
\end{defn}

\begin{rem}
The axiomatization of Definition \ref{def:tsep-alg-closed-trivially} is slightly different from the axiomatization of \cite{SCFEe} but as their proof shows it is sufficient to handle the case where the second projection is finite. Note that by Proposition \ref{prop-on-simple-roots}, if $E$ and $a$ are as in Definition \ref{def:tsep-alg-closed-trivially} then the difference field $E(a^\Ns)$ is transformally separably algebraic over $E$, so models of $\SCFE$ are precisely the models of $\FF$ which are existentially closed for transformally separably algebraic extensions.
\end{rem}

\begin{prop}\label{P:char of SCFE}
Let $E$ be a model of $\FF$.  Then 
\begin{enumerate}
    \item The difference field $E$ is a model of $\SCFE$ if and only if the inversive hull of $E$ is a model of $\ACFA$. Thus if $\nicefrac{L}{E}$ is purely transformally inseparably algebraic, then $L$ is a model of $\SCFE$ if and only if $E$ is so.
    \item If $E$ is relatively transformally separably algebraically closed in a model of of $\SCFE$ then it is a model of $\SCFE$.
\end{enumerate}
\end{prop}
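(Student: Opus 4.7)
My approach for part (1) is to pass between $E$ and its inversive hull via the equivalence of categories in Theorem \ref{thm:main-thm-on-tsep}(4), which matches transformally separably algebraic extensions of $E$ in $\FF$ with perfect, inversive, transformally algebraic extensions of $E^{\sigma^{-\infty}}$. Combined with the characterization of $\SCFE$ as existential closure for transformally separably algebraic extensions (see the remark after Definition \ref{def:tsep-alg-closed-trivially}), this reduces (1) to the classical fact that $E^{\sigma^{-\infty}} \models \ACFA$ iff $E^{\sigma^{-\infty}}$ is algebraically closed and existentially closed for perfect, inversive, transformally algebraic extensions. Algebraic closure of $E^{\sigma^{-\infty}}$ holds in one direction by $\ACFA$ itself and in the other from $E$ being separably algebraically closed (part of $\SCFE$) together with $E^{\sigma^{-\infty}}$ being perfect (inversive plus closed under twists, Remark \ref{rem:inversive-twisted-is-perfect}); in the reverse direction, separable algebraic closure of $E$ follows from the $\FF$ requirement that $E$ is relatively separably algebraically closed in $E^{\sigma^{-\infty}}$.

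For the direction $E^{\sigma^{-\infty}} \models \ACFA \Rightarrow E \models \SCFE$, the key tool is the simple-root criterion of Proposition \ref{prop-on-simple-roots}. Given an instance $V, C$ of the $\SCFE$ scheme, applying the $\ACFA$ axiom to the base change over $E^{\sigma^{-\infty}}$ produces $\alpha \in V(E^{\sigma^{-\infty}})$ with $(\alpha, \alpha^\sigma) \in C$; finite étaleness of $C \to V^\sigma$ yields a polynomial $f \in E[X, Y]$ with $f(\alpha, \alpha^\sigma) = 0$ and $\partial_X f(\alpha, \alpha^\sigma) \neq 0$, and the twisted difference polynomial $g(X) = f(X, X^\sigma)$ then has $\alpha$ as a simple root (by the convention $(X^\sigma)' = 0$ in Subsection \ref{subsec:Derivatives-of-Twisted}). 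Thus $\alpha$ lies in the relative transformal separable algebraic closure $\widetilde{E}$ of $E$ in $E^{\sigma^{-\infty}}$ (Theorem \ref{thm:main-thm-on-tsep}(1)). Since $E \subseteq \widetilde{E} \subseteq E^{\sigma^{-\infty}}$ with the latter inversive, one has $\widetilde{E}^{\sigma^{-\infty}} = E^{\sigma^{-\infty}}$, and the equivalence of categories forces $\widetilde{E} = E$, giving $\alpha \in E$. The Frobenius twist variants are analogous, using that $\ACFA$ is invariant under Frobenius twists over an algebraically closed field. The converse direction is a moving-lemma reduction from the general $\ACFA$ scheme (two dominant projections) to the $\SCFE$ scheme (finite étale second projection) over a suitable Frobenius twist of $E$, combined with parameter descent along $\sigma^n$. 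The ``Thus'' clause of (1) is then immediate, since a purely transformally inseparably algebraic $L/E$ has $L^{\sigma^{-\infty}} = E^{\sigma^{-\infty}}$.

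For (2), let $L \models \SCFE$ with $E$ relatively transformally separably algebraically closed in $L$. I would first show $E^{\sigma^{-\infty}}$ is relatively algebraically closed in $L^{\sigma^{-\infty}}$: any $\beta \in L^{\sigma^{-\infty}}$ algebraic over $E^{\sigma^{-\infty}}$ generates, after closing under the perfect and inversive hulls inside the already perfect-inversive $L^{\sigma^{-\infty}}$, a perfect, inversive, transformally algebraic extension of $E^{\sigma^{-\infty}}$, which corresponds via Theorem \ref{thm:main-thm-on-tsep}(4) to a transformally separably algebraic extension of $E$ inside $L$; the hypothesis forces this extension to be trivial, so $\beta \in E^{\sigma^{-\infty}}$. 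Since $L^{\sigma^{-\infty}} \models \ACFA$ by (1), a standard substructure principle for $\ACFA$ (relatively algebraically closed perfect inversive subfields of $\ACFA$ models are $\ACFA$) yields $E^{\sigma^{-\infty}} \models \ACFA$, whence $E \models \SCFE$ again by (1). The main obstacle is the converse direction of (1): reducing the full $\ACFA$ scheme to the $\SCFE$ scheme with its finite étale condition, which requires careful treatment of purely inseparable phenomena in positive characteristic via Frobenius twists, analogous to the Chatzidakis-Hrushovski axiomatization of $\ACFA$ -- precisely where the Frobenius twist clause of Definition \ref{def:tsep-alg-closed-trivially}(2) plays the crucial role.
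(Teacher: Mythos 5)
Your argument for the direction ``$E^{\sigma^{-\infty}}\models\ACFA$ implies $E\models\SCFE$'' (the direction the paper leaves to the reader) is correct and is essentially the intended one: produce the witness over the inversive hull, use the finite \'etale hypothesis to see it is a simple root of a twisted difference polynomial over $E$, and conclude from Theorem \ref{thm:main-thm-on-tsep}(2) that it lies in the relative transformal separable algebraic closure of $E$ in $E^{\sigma^{-\infty}}$, which is $E$ itself. For the other direction of (1), however, you only gesture at ``a moving-lemma reduction \ldots combined with parameter descent along $\sigma^n$''; that reduction is precisely the substantial content of \cite[Proposition 5.2]{SCFEe}, which the paper quotes rather than reproves, and your sketch does not amount to a proof of it.

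The genuine error is in part (2). The ``standard substructure principle'' you invoke --- that a relatively algebraically closed perfect inversive difference subfield of a model of $\ACFA$ is again a model of $\ACFA$ --- is false: $\overline{\mathbf{Q}}$ with the identity automorphism is algebraically closed, perfect and inversive, and sits as a relatively algebraically closed difference subfield of any characteristic-zero model of $\ACFA$ on which $\sigma$ restricts trivially to $\overline{\mathbf{Q}}$, yet it is not a model of $\ACFA$ (the equation $x^{\sigma}=x+1$ has no solution there). The correct principle, and the one the paper uses after passing to inversive hulls via (1), requires $E^{\sigma^{-\infty}}$ to be relatively \emph{transformally} algebraically closed in $L^{\sigma^{-\infty}}$: after the usual reduction of the $\ACFA$ axioms to correspondences with generically finite second projection, the required witnesses are transformally algebraic over their parameters and so already lie in $E^{\sigma^{-\infty}}$. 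Your intermediate step only establishes relative (field-theoretic) algebraic closedness, which is too weak to feed into any true principle. Fortunately your own mechanism proves the stronger statement verbatim if you run it for $\beta$ transformally algebraic, rather than merely algebraic, over $E^{\sigma^{-\infty}}$: the perfect inversive difference field it generates is transformally algebraic over $E^{\sigma^{-\infty}}$, corresponds under Theorem \ref{thm:main-thm-on-tsep}(4) to a transformally separably algebraic extension of $E$ inside $L$, and the hypothesis collapses that extension to $E$. With this correction your (2) coincides with the paper's one-line proof.
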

\begin{proof}
(1) The forward direction is \cite[Proposition 5.2]{SCFEe}; we leave the other direction to the reader.

(2) We may replace $E$ and $L$ by their inversive hulls, using (1); so $E$ is transformally algebraically closed in a model of $\ACFA$ hence itself a model of $\ACFA$.
\end{proof}

\begin{prop}
\label{P:crieria-for-tsep-and-almost-tsep}
Let $\nicefrac{K}{F}$ be an extension of models of $\FF$.
\begin{enumerate}
    \item Let us assume that $K$ is transformally separable over $F$ and
fix an element $x\in K$ not in $F\otimes_{F^{p}}K^{p}$; then $x$
is transformally transcendental over $F$ and $K$ is transformally
separable over $F\left(x^{\mathbf{N}\left[\frac{\sigma}{p^{\infty}}\right]}\right)$.
\item Let us assume that $K$ is almost transformally separable over
$F$ and fix an element $x\in K$ algebraically transcendental over
$F\otimes_{F^{\sigma\cdot p^{-\infty}}}K^{\sigma\cdot p^{-\infty}}$;
then $x$ is transformally transcendental over $F$ and $K$ is almost
transformally separable over $F\left(x^{\mathbf{N}\left[\frac{\sigma}{p^{\infty}}\right]}\right)$.
\end{enumerate}
\end{prop}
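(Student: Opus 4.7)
I would treat part (2) first, as its algebraic-disjointness hypothesis propagates more cleanly, and then deduce part (1) via the criterion of Proposition \ref{criterion-algsep-almosttsep-for-fe}.

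For part (2), transformal transcendence of $x$ follows by contradiction: were $x$ transformally algebraic over $F$, then since $K/F$ is almost transformally separable so is $F(x^{\Ns})/F$ by Proposition \ref{basic-properties-almost-tsep}, and Lemma \ref{L:almost-simple-root-almost-tsep} would force $x$ to be algebraic over $F(x^{\sigma \cdot \Ns}) \subseteq F \otimes_{F^{\sigma \cdot p^{-\infty}}} K^{\sigma \cdot p^{-\infty}}$, contradicting the hypothesis. For the almost transformal separability of $K/L$, where $L = F(x^{\Nsphul})$ is a model of $\FF$ by Lemma \ref{lem:transformally-transcendental-FE}: one verifies that $L$ is algebraic over $L^\sigma(F)(x)$ (using the transformal transcendence of $x$), so a transcendence basis of $L$ over $L^\sigma$ is given by $\{x\} \cup B$ with $B$ a transcendence basis of $F$ over $F^\sigma$. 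Algebraic independence of $B$ over $K^\sigma$ follows from the almost transformal separability of $K/F$, and algebraic independence of $x$ over $K^\sigma(B) \subseteq F \cdot K^\sigma \subseteq F \otimes_{F^{\sigma \cdot p^{-\infty}}} K^{\sigma \cdot p^{-\infty}}$ is given by hypothesis.

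For part (1), the transformal transcendence of $x$ is proved similarly using Proposition \ref{prop-on-simple-roots}: were $x$ transformally algebraic, we may take it to be a simple root of a twisted difference polynomial over $F$, whence by Remark \ref{substitution} $x$ is separably algebraic over $M = F(x^{\sigma \cdot \Nsphul})$. Since $K$ is closed under twists, $K^{\sigma \cdot p^{-\infty}} \subseteq K^p$, so $M \subseteq F \otimes_{F^p} K^p$; as $F(x)/M$ is separably algebraic and therefore satisfies $F(x) = M \cdot F(x)^p$, we conclude $x \in F \otimes_{F^p} K^p$, contradicting the hypothesis.

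For the transformal separability of $K/L$ in part (1), Proposition \ref{criterion-algsep-almosttsep-for-fe} reduces the claim to showing $K/L$ is both algebraically separable and almost transformally separable. For the almost transformal separability, I would apply part (2) after verifying its hypothesis, namely that $x$ is algebraically transcendental over $F \otimes_{F^{\sigma \cdot p^{-\infty}}} K^{\sigma \cdot p^{-\infty}}$; reducing to $F$ perfect via base extension (using Lemma \ref{lem:purely inseparable FE} and Proposition \ref{basic-properties-of-tsep}) renders the relevant tensor product perfect and contained in $K^p$, so any algebraic relation of $x$ over it would, via separability of algebraic extensions of a perfect field, force $x \in F \cdot K^{\sigma \cdot p^{-\infty}} \subseteq K^p$, contradicting the hypothesis. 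The main obstacle is the algebraic separability of $K/L$: my plan is to use that $\{x\}$, being $p$-independent over $F$ in $K$ (precisely the content of the hypothesis $x \notin F \otimes_{F^p} K^p$), forms a relative $p$-basis of $L$ over $F$ which extends to a relative $p$-basis $X = \{x\} \cup X_0$ of $K$ over $F$. After reducing to $F$ perfect, the generators of $L^{p^{-n}}/L$ remain $p^n$-independent over $K$ for each $n$ --- since $x \notin K^p$ forces the minimal polynomial of $x^{1/p^n}$ over $K$ to have degree exactly $p^n$ --- yielding linear disjointness of $K$ and $L^{p^{-n}}$ over $L$ and hence algebraic separability of $K/L$.
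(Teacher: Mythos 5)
Your argument is correct, but it runs in the opposite direction from the paper's. The paper proves (1) first and directly: it base-extends so that $F$ is inversive, observes that $x\notin F\otimes_{F^{p}}K^{p}$ prevents $x$ from being a simple root of any twisted difference polynomial over $F$ (whence transformal transcendence, via Proposition \ref{prop-on-simple-roots}), and then verifies the two halves of the criterion of Proposition \ref{criterion-algsep-almosttsep-for-fe} for $\nicefrac{K}{E}$ by hand --- algebraic separability from the irreducibility of the minimal polynomial of $x$ over $K^{p}$, and almost transformal separability from the transcendence of $x$ over $K^{\sigma}$. Part (2) is then dispatched in one line by passing to perfect hulls, where almost transformal separability upgrades to transformal separability and the argument of (1) repeats. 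You reverse the dependency: you prove (2) first by an explicit computation with the transcendence basis $\{x\}\cup B$ of $L=F\left(x^{\mathbf{N}\left[\frac{\sigma}{p^{\infty}}\right]}\right)$ over $L^{\sigma}$, and then derive (1) from (2) together with a separate algebraic-separability argument. Both routes draw on the same toolbox (Lemma \ref{L:almost-simple-root-almost-tsep}, Proposition \ref{prop-on-simple-roots}, descent to a perfect or inversive base); yours makes the almost-transformally-separable half fully explicit, at the cost of having to verify the hypothesis of (2) inside the proof of (1), which the paper's ordering avoids.

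One assertion needs a one-line repair. In checking the hypothesis of (2) during the proof of (1), you claim that if $x$ were algebraic over the perfect field $P=F\cdot K^{\sigma\cdot p^{-\infty}}$ (after reducing to $F$ perfect), separability would force $x\in P$. That implication is false in general: an element of $K$ separably algebraic over a subfield of $K^{p}$ need not lie in that subfield. What is true, and suffices, is that $P(x)$ is then a separable algebraic extension of a perfect field, hence itself perfect and contained in $K$, so $x\in P(x)^{p}\subseteq K^{p}\subseteq F\otimes_{F^{p}}K^{p}$, which is the desired contradiction. The remainder, including the $p$-independence computation giving linear disjointness of $K$ from $L^{p^{-n}}$ over $L$, is sound and is essentially the Frobenius-transposed form of the paper's own irreducibility argument.
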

\begin{proof}

Let us set $E = F\left(x^{\mathbf{N}\left[\frac{\sigma}{p^{\infty}}\right]}\right)$.

(1) In fact it will be sufficient to assume that $K$ is merely closed under twists. The requirement that $x$ is not in $F \otimes_{F^p} K^p$ means that the sequence $1, x, 
\ldots, x^{p-1}$ is linearly independent over $F \otimes_{F^p} K^p$. This condition is invariant under base extension; using descent for the various properties considered here, we may thus assume that $F$ is inversive.

We want to show that $x$ is transformally transcendental over $F$. The element $x$ is not a simple root of any polynomial with coefficients in $K^p$. Since $F$ is inversive, the element $x$ is not a simple root of any twisted difference polynomial over $F$; in other words the element $x$ is transformally transcendental over $F$.

Now we want to show that $K$ is transformally separable over $E$. Since $E$ is a model of $\FF$ it is enough to show that $K$ is algebraically separable and almost transformally separable over $E$ (Proposition \ref{criterion-algsep-almosttsep-for-fe}). 

The fact that $K$ is algebraically separable over $E$ follows immediately from our choice of $x$; the extension $E$ of $E^p$ is normal of degree $p$, and since $x$ is not in $K^p$ the minimal polynomial of $x$ remains irreducible over $K^p$, thus $E$ and $K^p$ are linearly disjoint over $E^p$. Furthermore, since $x$ is transformally transcendental over $F$, it is algebraically transcendental over $K^{\sigma}$; so $E$ and $K^{\sigma}$ are algebraically disjoint over $E^{\sigma}$. It follows that $K$ is almost transformally separable over $E$, and we are done.

(2) We are free to replace $F$ and $K$ by their perfect hulls; now $K$ is transformally separable over $F$ and the proof is similar to (1).
\end{proof}

\begin{lem} \label{lem:existence-of-generics}
Let $E$ be a sufficiently saturated model of
$\FF$ which fails to be inversive. Let $F\subseteq E$ be a small
model of $\FF$ contained in $E$ and with $E$ almost transformally
separable over $F$. Then we can find an element $a\in E$ transcendental
over $F\tensor_{F^{\sigma\cdot p^{-\infty}}}E^{\sigma\cdot p^{-\infty}}$.
Moreover, assume $E$ is of infinite degree of imperfection and that
$F$ is transformally separable over $E$ outright; then we can find
$a\in E$ not in $F\otimes_{F^{p}}E^{p}$.
\end{lem}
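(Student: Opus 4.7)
The plan is to phrase the conclusion as a partial type $p(x)$ over the small set $F$ of size $\leq |F|$ and to realize it by saturation of $E$.

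For the main claim, note that $F\cdot E^{\sigma\cdot p^{-\infty}}$ is purely inseparably algebraic over $F\cdot E^{\sigma}$, so it suffices to find $a\in E$ transcendental over $F\cdot E^{\sigma}$. Every element of $F\cdot E^{\sigma}$ is a finite sum $\sum_{j} f_{j}\, g_{j}^{\sigma}$ with $f_{j}\in F$ and $g_{j}\in E$. The condition "$x$ transcendental over $F\cdot E^{\sigma}$" is therefore the $\Pi_1$-type $p(x)$ over $F$ whose typical formula, indexed by a finite tuple $(f_{ij})\in F^{<\omega}$, reads
\[
\forall (g_{ij})\in E^{<\omega}\;\Bigl[\,\sum_{i,j} f_{ij}\, g_{ij}^{\sigma}\, x^{i} = 0 \;\longrightarrow\; \bigwedge_{i}\Bigl(\sum_{j} f_{ij}\, g_{ij}^{\sigma} = 0\Bigr)\,\Bigr].
\]

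To verify consistency of $p(x)$ with $\Th(E,F)$, I would produce by compactness an elementary extension $E^{*}\succeq E$ containing an element $a$ transformally transcendental over $E$; such a theory is finitely satisfiable in $E$ because the zero set of any nonzero difference polynomial over $E$ is a proper definable subset. The tuple $a,a^{\sigma},a^{\sigma^{2}},\ldots$ is then algebraically independent over $E$, so $a$ is algebraically transcendental over $E(a^{\sigma},a^{\sigma^{2}},\ldots)$. The field $F\cdot(E^{*})^{\sigma}$ lies in the algebraic closure of $E(a^{\sigma},a^{\sigma^{2}},\ldots)$ inside $E^{*}$: the closure-under-twists structure afforded by Proposition~\ref{prop:elementary-twisted} contributes only the $p^{-\infty}$-th roots of $a^{\sigma^{k}}$ for $k\geq 1$, which are purely inseparable algebraic over $E(a^{\sigma},a^{\sigma^{2}},\ldots)$. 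Hence $a$ is transcendental over $F\cdot(E^{*})^{\sigma}$ and realizes $p(x)$ in $E^{*}$. Sufficient saturation of $E$ then yields a realization in $E$ itself, which is the required $a$.

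For the moreover claim, I would assume---correcting an apparent typographical slip---that $E$ is transformally separable over $F$ and has infinite Ershov invariant. By Lemma~\ref{closed-under-twists-algebraically-separable}, $E$ is algebraically separable over $F$; equivalently, $F$ and $E^{p}$ are linearly disjoint over $F^{p}$, and so $F\otimes_{F^{p}}E^{p}$ coincides with the compositum $F\cdot E^{p}$ in $E$. The condition "$x\notin F\cdot E^{p}$" is the $\Pi_1$-type over $F$ whose typical formula, indexed by $(f_{i})\in F^{<\omega}$, reads $\forall (g_{i})\in E^{<\omega},\; x\neq\sum_{i} f_{i}\, g_{i}^{p}$. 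Infinite Ershov invariance of $E$ renders the type consistent (in any sufficiently large elementary extension a $p$-independent element lies outside any $|F|$-dimensional $E^{p}$-subspace); saturation of $E$ then delivers the required $a\in E\setminus F\cdot E^{p}$.
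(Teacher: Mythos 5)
Your reduction to finding $a$ transcendental over the compositum $F\cdot E^{\sigma}$, the encoding of that condition as a small $\Pi_1$-type over $F$ to be realized by saturation, and your treatment of the ``moreover'' clause (including the corrected reading of the hypothesis) all match the paper's strategy and are fine. The gap is in the consistency argument for the main claim. You produce by compactness an elementary extension $E^{*}\succeq E$ together with an element $a$ transformally transcendental over $E$, and then assert that $F\cdot(E^{*})^{\sigma}$ lies in the algebraic closure of $E(a^{\sigma},a^{\sigma^{2}},\ldots)$ inside $E^{*}$. This is false: $E^{*}$ is an arbitrary elementary extension, not the difference field generated by $E$ and $a$, so $(E^{*})^{\sigma}=\sigma(E^{*})$ is in general vastly larger than $E^{\sigma}(a^{\sigma},a^{\sigma^{2}},\ldots)$ and its twists. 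Indeed nothing in your construction prevents $a$ from lying in $(E^{*})^{\sigma}$ itself: if $b\in E^{*}$ is transformally transcendental over $E$ then so is $b^{\sigma}$, so the pair $(E^{*},b^{\sigma})$ is an equally valid output of your compactness argument, and for that choice the chosen element belongs to $(E^{*})^{\sigma}$ and is certainly not transcendental over it. Since your $\Pi_1$-formulas quantify over all of $E^{*}$, the witness must be transcendental over $F\cdot(E^{*})^{\sigma}$, and transformal transcendence over $E$ alone does not deliver this. A secondary issue: your assertion that the zero set of a nonzero difference polynomial over $E$ is always a proper subset is unjustified and is false for general difference fields (for instance $x^{\sigma}-x^{p}$ vanishes identically on a Frobenius difference field); ruling this out for non-inversive models of $\FF$ already requires the kind of argument you are trying to bypass.

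The paper closes exactly this gap by verifying finite satisfiability of the type inside $E$ itself rather than in an extension. Since $E$ fails to be inversive there is $a\in E\setminus E^{\sigma}$; when $E$ is perfect the primary axiom of $\FF$ makes $E^{\sigma}$ relatively algebraically closed in $E$, so this $a$ is transcendental over $E^{\sigma}$, and the iterates $a,a^{\sigma},\ldots,a^{\sigma^{m}}$ approximate, for each $m$, the type of an $m$-tuple of elements of $E$ algebraically independent over $E^{\sigma}$; saturation then yields a long algebraically independent sequence over $E^{\sigma}$ inside $E$, and since $F$ is small one of its members is transcendental over $F\cdot E^{\sigma\cdot p^{-\infty}}$ (the imperfect case is handled with elements of $E^{\sigma\cdot p^{-\infty}}$ of large purely inseparable degree over $E^{\sigma}$). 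You need some such argument carried out inside $E$, using the non-inversivity of $E$ together with the $\FF$-axioms, in place of the excursion to $E^{*}$.
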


\begin{proof}
If $E$ is of infinite degree of imperfection then by saturation $E$
is of large dimension as a vector space over $E^{p}$; as $F$ is
small, the inclusion $F\otimes_{F^{p}}E^{p}\subseteq E$ must be strict.
Now we claim that in any case we can find an element $a\in E$ not
algebraic over $F\tensor_{F^{\sigma\cdot p^{-\infty}}}E^{\sigma\cdot p^{-\infty}}$.
Since $E$ fails to be inversive, there is an element $a\in E$ not
in $E^{\sigma}$. If $E$ is perfect, then as it is a model of $\FF$,
the field $E^{\sigma}$ is algebraically closed in $E$, so $a$ is
transcendental over $E$. In this situation for $m$ fixed the sequence
$a,a^{q},a^{q^{2}},\ldots,a^{q^{m}}$ approximates the type of a sequence
of $m$ elements of $E$ algebraically independent over $E^{\sigma}$
and we conclude by compactness and saturation. Similarly if $E$ is
imperfect by taking $a$ to be an element of $E^{\sigma\cdot p^{-\infty}}$
of large degree over $E^{\sigma}$.
\end{proof}

\section{Transformal Valued Fields}

\begin{defn}
Let $\wOGE$ be the the theory of ordered $\Zsphul$-modules; that is ordered abelian groups $\Gamma$ equipped with an order preserving endomorphism $\sigma \colon \Gamma \to \Gamma$ of abelian groups such that $n\alpha < \sigma\alpha$ for all $0 < \alpha \in \Gamma$ and all $n \in \mathbf{N}$ with the further property that $\sigma\alpha$ has all $p$-th power roots in $\Gamma$.
\end{defn}

\begin{fact}\cite[Section 2]{dor-hrushovskiVFA}
The theory $\wOGE$ admits a model companion $\TwOGA$ axiomatized by the requirement that $\Gamma$ is nonzero and divisible as a module over $\Zsphul$ (in case we say that $\Gamma$ is \textit{transformally divisible}). The theory $\TwOGA$ is complete and $o$-minimal. If $\Gamma$ is nonzero then there is a model of $\TwOGA$ over $\Gamma$ which embeds uniquely over $\Gamma$ in any other, namely the transformal divisible hull $\Gamma \otimes \Qs$.
\end{fact}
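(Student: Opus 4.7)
The plan is to verify the three claims in order: existence and uniqueness of the transformal divisible hull $\Gamma \tensor \Qs$, model completeness of $\TwOGA$ (hence model companionship over $\wOGE$), and the combination of completeness with $o$-minimality.

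I would first construct $\Gamma \tensor \Qs$ in three stages from a nonzero $\Gamma \models \wOGE$. Stage one: pass to the ordinary $\mathbf{Q}$-divisible hull $\Gamma_0 = \Gamma \tensor_{\mathbf{Z}} \mathbf{Q}$, which carries a unique compatible linear order and a unique endomorphism extending $\sigma$ (since $\Gamma$ is torsion-free as an ordered abelian group). Stage two: invert $\sigma$ by taking the directed colimit $\Gamma_1$ of $\Gamma_0 \xrightarrow{\sigma} \Gamma_0 \xrightarrow{\sigma} \cdots$; since $\sigma$ is injective and strictly order preserving, $\Gamma_1$ inherits a compatible order in which $\Gamma_0$ embeds, and the $\omega$-increasing property is preserved. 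Stage three: invert each nonzero $r(\sigma) \in \mathbf{Z}[\sigma^{\pm}]$ as an operator on $\Gamma_1$. Here the $\omega$-increasing axiom shows that for $\alpha > 0$ and $r(\sigma) = \lambda_N \sigma^N + \cdots + \lambda_0$ with $\lambda_N > 0$, iterated application of $\sigma\beta > n\beta$ yields $r(\sigma)\alpha > 0$; in particular $\Gamma_1$ is torsion-free as a $\mathbf{Z}[\sigma^{\pm}]$-module. Localizing gives the $\mathbf{Q}(\sigma)$-module $\Gamma \tensor \Qs$ with a canonical linear order extending that of $\Gamma$. Uniqueness of the embedding over $\Gamma$ into any model of $\TwOGA$ is immediate because torsion-free transformally divisible modules admit unique solutions $\beta$ to $r\beta = \alpha$.

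For model completeness of $\TwOGA$, I would run a standard back-and-forth in the language enriched by $\sigma^{-1}$ and division-by-$r$ symbols for nonzero $r \in \Zsphul$. Let $M_1, M_2 \models \TwOGA$ be $\aleph_1$-saturated and $f \colon A_1 \to A_2$ an isomorphism between countable $\Qs$-submodules. Given $\gamma \in M_1 \setminus A_1$, it is $\Qs$-transcendental over $A_1$ (else divisibility would place $\gamma$ inside $A_1$), so the $\Qs$-submodule $A_1\langle\gamma\rangle$ is isomorphic to $A_1 \oplus \Qs\cdot\gamma$, with order determined by the cut $\gamma$ induces on $\Qs \cdot A_1$. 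By $\aleph_1$-saturation and density of the linear order on any nonzero transformally divisible ordered $\Qs$-module, we can realize the image cut by some $\delta \in M_2$, and $f$ extends. This yields quantifier elimination in the enriched language, hence model completeness. Completeness then follows by embedding any two nonzero models via back-and-forth from the common substructure $\Qs$ generated by a single positive element. Existence of extensions of $\wOGE$-models to $\TwOGA$-models is handled by the transformal divisible hull, which completes the verification that $\TwOGA$ is a model companion.

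$o$-minimality is then automatic from quantifier elimination: an atomic formula in one variable $x$ over a parameter set reduces, after clearing denominators, to $r(x) \star a$ with $r \in \Qs$, $a$ a parameter, and $\star \in \{=, <, >\}$; since $\Qs$ acts on positive elements in an order-preserving manner (this is the same sign argument as in stage three), each such formula defines either a point or an open ray, and Boolean combinations give finite unions of points and intervals.

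The step I expect to be the main obstacle is stage three of the hull construction, namely verifying that $\Gamma_1$ is torsion-free as a $\mathbf{Z}[\sigma^{\pm}]$-module and that the extended order on $\Gamma \tensor \Qs$ remains $\omega$-increasing; both rely on the same delicate sign analysis using the dominance of the leading $\sigma^N$-term under the $\omega$-increasing axiom. Once this sign computation is secured, the back-and-forth, completeness, and $o$-minimality arguments follow classical DOAG-style templates.
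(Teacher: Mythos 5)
This statement is a \emph{Fact} imported verbatim from \cite[Section 2]{dor-hrushovskiVFA}; the paper under review gives no proof of it, so there is nothing internal to compare your argument against. Judged on its own terms, your proof is sound and follows the route one would expect (and which the cited reference essentially takes): it is the classical divisible-ordered-abelian-group template transported to ordered $\Qs$-modules, with the single genuinely new ingredient being the sign lemma that a nonzero $r\in\mathbf{Z}[\sigma^{\pm}]$ with positive leading coefficient sends positive elements to positive elements, which you correctly isolate as the crux; that lemma simultaneously gives torsion-freeness, the well-definedness and $\omega$-increasingness of the order on the localization $\Gamma\otimes\Qs$, the uniqueness of the embedding of the hull, and the reduction of one-variable atomic formulas to conditions $x \star a$ for $o$-minimality. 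The only points worth tightening are cosmetic: in the back-and-forth step the set you call $\Qs\cdot A_1$ is just $A_1$ (it is already a $\Qs$-submodule), and one should note explicitly that any realization of a proper cut of $A_2$ automatically lies outside $A_2$, so no separate omitting argument is needed.
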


\begin{defn}
Let $\Gamma$ be a model of $\wOGE$. We say that $\Gamma$ is \emph{tamely transformally divisible} if we have $\nu \Gamma = \Gamma$ for all $\nu \in \Zsphul$ with a nonzero constant term. Equivalently, the module $\Gamma$ is tamely transformally divisible if and only if its inversive hull is transformally divisible.
\end{defn}
\begin{defn}\label{def:transformal-valued-fields}
By a \emph{transformal valued field} we mean a valued field $F$
equipped with an endomorphism $\sigma\colon F\to F$ of fields with
the property that $\sigma^{-1}\left(\mathcal{O}\right)=\mathcal{O}$
and $\sigma^{-1}\left(\mathcal{M}\right)=\mathcal{M}$. By an \emph{embedding}
of transformal valued fields we mean an embedding of fields which
is simultaneously an embedding of the underlying difference fields
and of the underlying valued fields. The transformal valued field
$F$ is said to be $\omega$-\emph{increasing} if the induced action
of $\sigma$ on the value group is $\omega$-increasing; to wit,
for all $0<\alpha\in\Gamma$ and $n\in\mathbf{N}$ we have the inequality
$n\alpha<\sigma\alpha$. We say that $K$ is a \emph{Frobenius transformal valued field} if it is of positive characteristic $p > 0$ and $\sigma = p^n$ is a power of the Frobenius endomorphism of $K$.
\end{defn}

The class of $\omega$-increasing transformal valued fields which
are perfect and inversive was studied in \cite{dor-hrushovskiVFA}. Here we seek to omit these assumptions.

\begin{rem}
In Definition \ref{def:transformal-valued-fields}, assume that $F$ is inversive; then the compatibility condition on $\mathcal{M}$ is automatic. However outside the inversive realm the condition cannot be dispensed with.

More precisely, let $\mathcal{O}$ be a valuation ring and let $\sigma$ be an \emph{automorphism} of $\mathcal{O}$ in the category of rings. Then $\sigma$ is automatically a local homomorphism, i.e, it carries non-units to non-units, or equivalently we have $\sigma^{-1}\left(\mathcal{M}\right) = \mathcal{M}$.

However, the condition that $\sigma$ is an automorphism is necessary for this to hold, even when $\sigma$ is injective.

For example, let $\mathcal{O}$ be a valuation ring which is abstractly isomorphic to its own localization $\widetilde{\mathcal{O}}$ at a prime ideal $\mathfrak{p}$. Thus after relabeling $\widetilde{\mathcal{O}}$ as $\mathcal{O}$ we obtain an endomorphism $\sigma$ of $\mathcal{O}$ which carries elements of positive but "small" valuation (i.e lying outside $\mathfrak{p}$) to elements of valuation zero; the field of fractions $K$ when equipped with this endomorphism is \emph{not} a transformal valued field according to Definition \ref{def:transformal-valued-fields}. For instance, the residue field does not inherit the action of $\sigma$.
\end{rem}

\subsection{The Theory $\wVFE$}\label{ss:wvfe}

We will be interested in the following class of $\omega$-increasing
transformal valued fields:
\begin{defn}
\begin{enumerate}
    \item Let $\wVFE$ be the first order theory of $\omega$-increasing
transformal valued fields $F$ with underlying difference field a
model of $\FF$. The language is an expansion of the language of the language of valued fields by a unary function symbol for the action of $\sigma$, so an embedding is an embedding of transformal valued fields as in \ref{def:transformal-valued-fields}, i.e, an embedding need not be transformally separable.
\item The definition of $\wVFE_{\lambda}$ is as in \ref{D:FE-lambda}; in this language an embedding is an embedding of transformal valued fields which is required to be transformally separable. 
\item For $e \in \mathbf{N}_{\infty}$,  we let $\VFE_{\leq e}$ be the theory of models of $\wVFE$ of degree of imperfection at most $e$
\item Finally, we write $\wVFA$ for the theory models of $\wVFE$ which are perfect and inversive.
\end{enumerate}
\end{defn}


In \cite{dor-hrushovskiVFA} the theory $\TVFA$ was studied in detail; we summarize below the main results that we will need.

\begin{fact}\label{fact:wvfa}
\begin{enumerate}
    \item The class of models of $\wVFA$ admits a model companion $\TVFA$.
    \item In $\TVFA$ the residue field and the value group are stably embedded and fully orthogonal; the induced structure is that of a model of $\ACFA$ and a pure model of $\TwOGA$, respectively.
    \item Let $K$ be a model of $\wVFA$ which is algebraically Henselian and assume that $K$ has no nontrivial finite $\sigma$-invariant Galois extensions, that is, no difference field extensions which are finite and Galois as abstract field extensions. Then the theory of models of $\TVFA$ over $K$ is complete and in particular $K$ admits a unique algebraic closure in the category of models of $\wVFA$, up to a (generally non unique) isomorphism. The converse holds if the various powers $\sigma^n$ of $\sigma$ for $0 < n \in \mathbf{N}$ are considered (that is the lack of such extensions is equivalent to the uniqueness of the lifts of the maps $\sigma^n$ for all $n$).
    \item The theory $\TVFA$ is precisely the asymptotic theory of algebraically closed and nontrivially valued Frobenius transformal valued fields, that is, the set of sentences true over all algebraically closed and nontrivially valued Frobenius transformal valued fields outside a finite set of exceptional prime powers.
    \item Let $K$ be a model of $\wVFA$ which is algebraically closed and transformally Henselian (see Definition \ref{transformally-henselian-def}). Then the category of models of $\wVFA$ admits linearly disjoint amalgamation over $K$, that is if $L$ and $M$ are models of $\wVFA$ over $K$ then there is a model $N$ of $\wVFA$ over $K$ in which $L$ and $M$ are jointly embedded over $K$ and linearly disjoint over $K$. Moreover, the residue fields in the amalgamation can be taken linearly disjoint, and the transformal divisible hulls of the value groups rationally independent.
    \item Let $K$ be an algebraically closed model of $\wVFA$ and let $L=K\left(x^\Ns\right)$. Assume the residue class of $x$ is transformally transcendental over $k$ or else the valuation of $x$ is not in $\Gamma \otimes \Qs$. Then the inversive, algebraically Henselian hull of $L$ has no nontrivial finite $\sigma$-invariant Galois extension. Moreover the quantifier free type of $x$ is determined in the first case by the fact that the residue class is transformally transcendental, and in the second case by the cut of $vx$ over $\Gamma \otimes \Qs$.
    \item Let $K$ be a model of $\TVFA$. Then $K$ is transformally algebraically maximal, that is, if $L$ is a model of $\wVFA$ transformally algebraic over $K$ with the same value group and residue field, then $K = L$. Furthermore, there is an elementary immediate extension of $K$ which is spherically complete.
    \item The theory $\TVFA$ admits the amalgamation property over an algebraically closed base. Thus if $K$ is a model of $\VFA$ then completions of the theory $\TVFA_{K}$ of models of $\TVFA$ over $K$ are determined by an isomorphism type of an extension of the transformal valued field structure to an algebraic closure of $K$ and $\TVFA$ eliminates quantifiers in the language of transformal valued fields provided that formulas ranging over the field theoretic algebraic closure are permitted.
\end{enumerate}

\begin{proof}

All references are from \cite{dor-hrushovskiVFA}.
    (1) See Theorem 9.20,
    (2) See Theorem 9.14,
(3) See Proposition 4.29,
(4) See Theorem 9.10,
(5) See Theorem 9.1,
(6) See Proposition 6.16,
(7) See Proposition 9.9.
\end{proof}

\end{fact}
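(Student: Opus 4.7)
Since Fact \ref{fact:wvfa} is a compendium of results established in \cite{dor-hrushovskiVFA}, the plan is to extract each item from the indicated location in that reference, as the proof block indicates. Nevertheless, a unifying outline is worth sketching. The backbone is item (1): the construction of a model companion $\TVFA$ for the class of $\omega$-increasing, inversive, perfect transformal valued fields. I would axiomatize $\TVFA$ by (a) algebraic Henselianity of the underlying valued field, (b) a transformal Henselian lemma solving twisted difference equations whose reductions admit simple roots, (c) the residue field being a model of $\ACFA$, and (d) the value group being a nontrivial, transformally divisible ordered $\Zsphul$-module. From this axiomatization one derives model completeness, and the various amalgamation and transfer statements comprising items (5), (7), and (8) are then obtained by a staged amalgamation: amalgamate residue fields using the independence theorem for $\ACFA$, amalgamate value groups using $o$-minimality of $\TwOGA$, and finally lift the result back to the valued field level using transformal Henselianity. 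Spherical completeness is secured by iterating an immediate-extension construction.

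For items (2), (3), and (6), the path is standard once (5)--(8) are in place. Stable embeddedness and full orthogonality of the residue field and value group (item (2)) follow from strong amalgamation: two completions of a type that agree on the residue and value reducts can be amalgamated to witness that they agree outright. Completeness over an algebraically Henselian base without nontrivial $\sigma$-invariant Galois extensions (item (3)) reduces to the observation that the extension of $\sigma$ to the field-theoretic algebraic closure is then unique, so the amalgamation class is a singleton. The genericity statement in (6) follows by computing the Galois group of the relevant Henselian hull and checking that the residue field or value group constraint kills all nontrivial covers.

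For item (4), the identification of $\TVFA$ with the asymptotic theory of algebraically closed nontrivially valued Frobenius transformal valued fields, the decisive input is Hrushovski's twisted Lang--Weil estimates: these guarantee that the geometric axioms underlying transformal Henselianity are verified by the Frobenius action on sufficiently large prime powers, which is exactly what is needed to match the axiomatization of $\TVFA$. The main obstacle in reconstructing the whole package from scratch would be to pin down the correct notion of transformal Henselianity and to verify its closure and lifting properties; since we invoke \cite{dor-hrushovskiVFA} as a black box here, the present task reduces to pointing at the correct theorem numbers, which is exactly what the proof block does.
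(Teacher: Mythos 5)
Your proposal is correct and takes essentially the same approach as the paper: this Fact is proved there purely by citation to \cite{dor-hrushovskiVFA} (Theorems 9.20, 9.14, 9.10, 9.1, Propositions 4.29, 6.16, 9.9), which is exactly what you conclude the task reduces to. Your supplementary outline of how those results are established is consistent with the axiomatization implicit elsewhere in the paper, but it is not part of the paper's own argument and need not be reproduced.
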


\begin{rem}
\label{rem-henselian-hull-not-wVFE}
\begin{enumerate}
    \item Let $K$ be a model of $\wVFE$. Then the residue field $k$ of $K$ need not be a model of $\FF$. It follows from Lemma \ref{lem:hens-residue-is-FE} below that $\wVFE$ is not in general closed under Henselian hulls.
    Indeed, Example 9.1 of \cite{haskell2008stable} shows that if $F$ is a countable field, then every separably algebraic extension $\widetilde{F}$ of $F$ occurs as the residue field of $F\left(x,y\right)$ where $x,y$ are elements of $\widetilde{F} \left(\left(\mathbf{Z}\right)\right)$ algebraically independent over $F$. One can easily modify this example to show that if $F$ is a countable (inversive) difference field of characteristic zero and $\widetilde{F}$ is a countable transformally algebraic extension of $F$, then $\widetilde{F}$ is the residue field of a difference subfield $L$ of $\widetilde{F}\left(\left(\mathbf{Z}\left[\sigma\right]\right)\right)$, where $L$ is generated by a pair of transformally independent elements over $F$. By \ref{lem:transformally-transcendental-FE}, the field $L$ is a model of $\VFE$; since $\widetilde{F}$ is essentially arbitrary it need not be a model of $\FF$.
    \item Let $K$ be a non-principal ultraproduct of Frobenius transformal valued fields. Then $K$ is a model of $\wVFE$. For $K$ of this form the pathology of the previous item cannot occur since $K$ is stationary over $K^{\sigma}$ in the sense of $\ACVF$, that is, for all separably algebraic valued field extensions $L$ of $K^{\sigma}$, the valuation lifts uniquely to $K \otimes_{K^\sigma} L$.
    \end{enumerate}
\end{rem}

We do have however:
\begin{lem}
\label{lem:hens-residue-is-FE}Let $K$ be a model of $\VFE$
which is algebraically Henselian, then the residue field $k$ of $K$
is a model of $\FF$.
\end{lem}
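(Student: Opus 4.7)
The plan is to unwind the two conditions defining $\FF$ for the residue field, using the definition of a model of $\FF$ for $K$ together with algebraic Henselianness.

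First, I verify that $k$ is closed under twists, i.e., that for every $\bar{x} \in k$ and every $n \in \mathbf{N}$, the polynomial $X^{p^{n}} - \bar{x}^{\bar{\sigma}}$ splits in $k$. Since $\sigma$ preserves $\mathcal{O}$ and $\mathcal{M}$, it induces an endomorphism $\bar{\sigma}$ on the residue field, and the residue field of $K^{\sigma}$ as a valued subfield of $K$ is precisely $k^{\sigma} = \bar{\sigma}(k)$. Lift $\bar{x}$ to $x \in \mathcal{O}$. Since $K$ is closed under twists, the polynomial $X^{p^{n}} - x^{\sigma}$ has a root $y \in K$; because the constant term lies in $\mathcal{O}$, so does $y$, and reducing modulo $\mathcal{M}$ produces a root $\bar{y}$ of $X^{p^n} - \bar{x}^{\bar\sigma}$ in $k$.

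Next, I show that $k^{\sigma}$ is relatively separably algebraically closed in $k$. Let $\bar{b} \in k$ be separably algebraic over $k^{\sigma}$, with minimal polynomial $f(X) = X^{n} + c_{n-1}X^{n-1} + \cdots + c_{0} \in k^{\sigma}[X]$, necessarily separable. For each $i$, choose $a_{i} \in \mathcal{O}$ with $\bar{a}_{i} = \bar{\sigma}^{-1}(c_{i})$ (possible since $c_{i} \in \bar{\sigma}(k)$), and form the lift
\[
F(X) \;=\; X^{n} + \sigma(a_{n-1})X^{n-1} + \cdots + \sigma(a_{0}) \;\in\; K^{\sigma}[X].
\]
Then $F$ reduces to $f$ modulo $\mathcal{M}$, and has $\bar{b}$ as a simple residue root. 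Algebraic Henselianness provides a root $b \in K$ with $\bar{b}$ as its residue; since $f$ is separable the discriminant of $F$ is a unit in $\mathcal{O}$, so $b$ is a simple root of $F \in K^{\sigma}[X]$, hence separably algebraic over $K^{\sigma}$.

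Finally, because $K$ is a model of $\FF$, $K^{\sigma}$ is relatively separably algebraically closed in $K$, so $b \in K^{\sigma}$. Write $b = \sigma(a)$ for some $a \in K$; since $b$ has residue $\bar{b}$ and $\sigma$ preserves the valuation, $a$ lies in $\mathcal{O}$ and $\bar{b} = \overline{\sigma(a)} = \bar{\sigma}(\bar{a}) \in k^{\sigma}$, as required. No step is really an obstacle here; the one point that warrants attention is verifying that $k^{\sigma}$, viewed as the residue field of the valued subfield $\sigma(K)$, coincides with $\bar{\sigma}(k)$, which is where the compatibility $\sigma^{-1}(\mathcal{M}) = \mathcal{M}$ in the definition of a transformal valued field is used.
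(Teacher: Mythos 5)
Your proof is correct and follows essentially the same route as the paper: the closure-under-twists condition is checked by lifting and reducing, and the primarity of $\nicefrac{k}{k^{\sigma}}$ is obtained by Hensel-lifting a separable polynomial over $K^{\sigma}$ and invoking primarity of $\nicefrac{K}{K^{\sigma}}$. The paper states this in two sentences; you have simply written out the same Hensel lifting argument in full, including the (correct) observation that $\sigma^{-1}(\mathcal{M})=\mathcal{M}$ is what identifies the residue field of $K^{\sigma}$ with $\bar{\sigma}(k)$.
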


\begin{proof}
It is clear that $k$ is closed under twists; for this we do not need
$K$ to be algebraically Henselian. Furthermore, the embedding $K^{\sigma}\hookrightarrow K$
is a primary extension of Henselian valued fields; the induced
extension $k^{\sigma}\hookrightarrow k$ of residue fields must therefore
also be primary, by Hensel lifting.
\end{proof}

\begin{defn}\label{defn:strict-amalgamation-basis}
Let $F$ be a model of $\VFE$. We say that $F$ is a $\textit{strict amalgamation basis}$ if the perfect, inversive, algebraically Henselian hull of $F$ admits no nontrivial finite $\sigma$-invariant Galois extensions.
\end{defn}

\begin{rem}\label{rem:strict-amalgamation-vs-unique-extension}
    Let $F$ be a model of $\VFE$. Let us assume that $F$ is a strict amalgamation basis. Then a separable algebraic closure of $F$ in the category of models of $\VFE$ is uniquely determined up to isomorphism. Indeed, by \cite[Proposition 4.29]{dor-hrushovskiVFA},  an inversive, algebraically closed hull $E$ of $F$ is uniquely determined up to isomorphism; the relative separable algebraic closure of $F$ in $E$ is then a separably algebraically closed hull of $F$, and is likewise uniquely determined up to isomorphism. Conversely, assume that for all $0 < n \in \mathbf{N}$, a separable algebraic closure of $\left(F, \sigma^n\right)$ is uniquely determined up to isomorphism; then $F$ is a strict amalgamation basis. This is again a consequence of \cite[Proposition 4.29]{dor-hrushovskiVFA}. 
\end{rem}

 \begin{rem}
\label{rem:strict-amalgamation-base-extension} Let $F$ be a model
of $\wVFE$. Let $E$ be a model of $\wVFE$ which is primary over $F$. Fix separable algebraic closures $F^{\sep}$ and $E^{\sep}$ of $F$ and $E$, respectively, in the category of valued fields. Let us assume:

\begin{enumerate}
    \item The field $E$ is a strict amalgamation basis; i.e, for each $n$, the group $\Aut\left(\nicefrac{E^{\sep}}{E}\right)$ acts transitively on the set of lifts of $\sigma^n$ from $E$ to $E^{\sep}$.
    \item The valued field structure on the amalgamation $F^{\sep} \otimes_{F} E$ is uniquely determined.
\end{enumerate}

Then $F$ must likewise be a strict amalgamation basis. Indeed, under the assumptions, the restriction map $\Aut\left(\nicefrac{E^{\sep}}{E}\right) \to \Aut\left(\nicefrac{F^{\sep}}{F}\right)$ is well defined and surjective.

Condition (2) need not hold in general assuming only that $E$ is primary over $F$ (for example, the residue field of $E$ might be a proper separably algebraic extension of the residue field of $F$, while $E$ is still primary over $F$). However, it holds whenever $\qftp_{\widetilde \VFA}(\nicefrac{E}{F})$ admits a global quantifier free extension which does not split over $F$.
%
\end{rem}

\begin{rem}\label{rem:strict-amalgamation-basis-vs-complete-vfa-type}
    Let $F$ be a model of $\VFE$ which is separably algebraically closed and let $a$ be a tuple in an ambient extension of $F$. Let $E$ be the smallest model of $\VFE$ over $F$ containing $a$. Then the following conditions are equivalent:
    \begin{enumerate}
        \item The field $E$ is a strict amalgamation basis in the sense of Definition \ref{defn:strict-amalgamation-basis}
        \item Fix $0< n \in \mathbf{N}$. Then the type $\qftp_{\TwVFA_n}\left(\nicefrac{a, \ldots, a^{{\sigma}^{n-1}}}{F}\right)$ is complete.
    \end{enumerate}
    Here the notation $\TwVFA_{n}$ refers to the theory of models of $\TwVFA$ over $\left(F, \sigma^n\right)$. This follows from Remark \ref{rem:strict-amalgamation-vs-unique-extension} and Fact \ref{fact:wvfa}(3).
\end{rem}

\subsection{The Transformal Henselization}

The following Definition will play an important role in this work.
\begin{defn}
\label{transformally-henselian-def}Let $K$ be a model of $\VFE$.
We say that $K$ is \emph{transformally Henselian} if it obeys the
following transformal analogue of Hensel's lemma. Let $fx\in\mathcal{O}\left[x^{\mathbf{N}\left[\frac{\sigma}{p^{\infty}}\right]}\right]$
be a twisted difference polynomial and let $a\in\mathcal{O}$ be such
that $vfa>0$ and $vf'a=0$; then an element $b\in\mathcal{O}$ is
found with $fb=0$ and whose residue class coincides with that of
$a$.
\end{defn}

In the situation of Definition \ref{transformally-henselian-def},
we say that $b$ is the \emph{canonical Hensel lift} of its residue
class. Hensel lifts are unique if they exist.

\begin{rem}
Let $K$ be a model of $\VFE$. Then $K$ is transformally Henselian if and only if the Frobenius twists of $K$ all obey the transformal analogue of Hensel's lemma for ordinary difference polynomials (not necessarily twisted). In particular for models of $\VFA$, Definition \ref{transformally-henselian-def} agrees with the Definition given in \cite{dor-hrushovskiVFA}.
\end{rem}
\begin{prop}
\label{prop:rel-tsep-thens}Let $\nicefrac{L}{K}$ be an extension
of models of $\VFE$. Let us assume that $L$ is transformally
Henselian and that $K$ is transformally separably algebraically closed
in $L$; then $K$ is transformally Henselian. Moreover, the difference
field $k$ is transformally separably algebraically closed in $l$.
\end{prop}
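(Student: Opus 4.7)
Both parts of the proposition are reduced to the same mechanism: lift via transformal Hensel in $L$, recognize the lift as a simple root of a twisted difference polynomial over $K$, then apply Theorem \ref{thm:main-thm-on-tsep}(2), which identifies the relative transformal separable algebraic closure of $K$ in $L$ with the set of such simple roots, together with the hypothesis that this closure equals $K$.

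For transformal Henselianity of $K$, fix $f\in\mathcal{O}_K[x^{\mathbf{N}[\frac{\sigma}{p^\infty}]}]$ and $a\in\mathcal{O}_K$ with $v(fa)>0$ and $v(f'a)=0$. Viewing $f$ as a twisted difference polynomial over $\mathcal{O}_L$, the transformal Henselianity of $L$ produces $b\in\mathcal{O}_L$ with $fb=0$ and $\bar b=\bar a$ in $l$. Since $f'b\equiv f'a\pmod{\mathcal{M}_L}$ and $f'a$ is a unit, $f'b$ is also a unit; in particular $f'b\neq 0$, so $b$ is a simple root of $f$ over $K$. By Theorem \ref{thm:main-thm-on-tsep}(2), $b$ lies in the relative transformal separable algebraic closure of $K$ inside $L$, which is $K$ by hypothesis. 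Thus $b\in K$, the unique Hensel lift of $\bar a$, and $K$ is transformally Henselian.

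For the residue-field statement, note that ordinary polynomials are special cases of twisted difference polynomials, so the already-established transformal Henselianity of $K$ implies in particular that $K$ is algebraically Henselian; the same holds for $L$. By Lemma \ref{lem:hens-residue-is-FE}, $k$ and $l$ are models of $\FF$. Let $\bar a\in l$ be transformally separably algebraic over $k$. By Theorem \ref{thm:main-thm-on-tsep}(2) applied to $\nicefrac{l}{k}$, the element $\bar a$ is a simple root of some $\bar f\in k[x^{\mathbf{N}[\frac{\sigma}{p^\infty}]}]$. Choose coefficient-wise lifts to form $f\in\mathcal{O}_K[x^{\mathbf{N}[\frac{\sigma}{p^\infty}]}]$ and pick any lift $a\in\mathcal{O}_L$ of $\bar a$; since residuation commutes with the formal derivative of twisted difference polynomials (Section \ref{subsec:Derivatives-of-Twisted}), $v(fa)>0$ and $v(f'a)=0$. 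Transformal Henselianity of $L$ (or now of $K$) gives $b\in\mathcal{O}_L$ with $fb=0$ and $\bar b=\bar a$, and the argument of the previous paragraph places $b$ in $K$. Therefore $\bar a=\bar b\in k$, as required.

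The argument is essentially a packaging of earlier structural results; the only point that requires care is that the Hensel hypothesis $v(f'a)=0$ is preserved under the coefficient lift $\bar f\mapsto f$ and any lift of $\bar a$, which is immediate because the transformal derivative is defined by a formula and commutes with passage to the residue.
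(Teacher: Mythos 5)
Your proof is correct and follows essentially the same route as the paper's: in both parts one Hensel-lifts in $L$, observes that the lift is a simple root of a twisted difference polynomial over $K$, and concludes membership in $K$ from Theorem \ref{thm:main-thm-on-tsep}(2) together with the hypothesis that $K$ is transformally separably algebraically closed in $L$. The extra details you supply (that $f'b$ is a unit, and that the Hensel hypothesis survives lifting $\bar f$ and $\bar a$) are correct and merely make explicit what the paper leaves implicit.
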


\begin{rem}
Implicit in the statement of Proposition \ref{prop:rel-tsep-thens}
is the assumption that the residue field of a transformally Henselian
model of $\VFE$ is a model of $\FF$. Since a transformally
Henselian model is algebraically Henselian, Lemma \ref{lem:hens-residue-is-FE}
applies.
\end{rem}

\begin{proof}
Let $fx\in\mathcal{O}_{K}\left[x^{\mathbf{N}\left[\frac{\sigma}{p^{\infty}}\right]}\right]$
be a twisted difference polynomial and let $a\in\mathcal{O}$ be such
that $vfa>0$ and $vf'a=0$; we must find $b\in\mathcal{O}_{K}$ with
$fb=0$ and whose residue class coincides with that of $a$. Since
$L$ is transformally Henselian, such $b$ can be found in $\mathcal{O}_{L}$.
By assumption however the field $K$ is transformally separably algebraically
closed in $L$; since $b$ is a simple root of a twisted difference
polynomial with coefficients in $K$, the element $b$ lies in $K$ by Theorem \ref{thm:main-thm-on-tsep}(2).
It follows that $K$ is transformally Henselian.

We have seen that $K$ is transformally Henselian; we now show that
$k$ is transformally separably algebraically closed in $l$. Fix
an element $\alpha\in l$ which is transformally separably algebraic
over $k$; then $g\alpha=0$ and $g'\alpha\neq0$ for some twisted
difference polynomial $gx\in k\left[x^{\mathbf{N}\left[\frac{\sigma}{p^{\infty}}\right]}\right]$.
Let $fx\in\mathcal{O}_{K}\left[x^{\mathbf{N}\left[\frac{\sigma}{p^{\infty}}\right]}\right]$
be a twisted difference polynomial lifting $gx$. Since $L$ is transformally
Henselian there is an element $a\in\mathcal{O}_{L}$ with residue
class equal to $\alpha$ such that $fa=0$ and $f'a\neq0$. Since
$K$ is transformally separably algebraically closed in $L$, the
element $a$ lies in $K$, hence its residue class lies in $k$, as
wanted.
\end{proof}

\begin{lem}
\label{lem:thens-insep-invariant}Let $\nicefrac{L}{K}$ be a purely
transformally inseparably algebraic extension of models of $\VFE$.
Then $K$ and $L$ are simultaneously transformally Henselian.
\end{lem}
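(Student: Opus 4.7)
The proof naturally splits into two directions, and both hinge on the equivalence between transformally separably algebraic extensions of a model of $\FF$ and perfect, inversive, transformally algebraic extensions of its inversive hull (Theorem \ref{thm:main-thm-on-tsep}(4)).

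For the easy direction, assume $L$ is transformally Henselian. I claim $K$ is transformally separably algebraically closed in $L$: any model $M$ of $\FF$ with $K \subseteq M \subseteq L$ that is transformally separably algebraic over $K$ also sits inside $L^{\sigma^{-\infty}} = K^{\sigma^{-\infty}}$, so $M^{\sigma^{-\infty}} = K^{\sigma^{-\infty}}$, and the equivalence of Theorem \ref{thm:main-thm-on-tsep}(4) forces $M = K$. Proposition \ref{prop:rel-tsep-thens} then delivers that $K$ is transformally Henselian.

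For the converse, assume $K$ is transformally Henselian and fix $f \in \mathcal{O}_L[x^{\mathbf{N}[\sigma/p^\infty]}]$ and $a \in \mathcal{O}_L$ with $vfa > 0$ and $vf'a = 0$. Since $L \subseteq K^{\sigma^{-\infty}}$ and $f$ has only finitely many coefficients $c_\nu$, I can choose $n \geq 0$ so that $\alpha := a^{\sigma^n}$ and all $c_\nu^{\sigma^n}$ lie in $\mathcal{O}_K$. A key technical point: applying $\sigma^n$ directly to $f$ would annihilate the transformal derivative, since every monomial of $f^{\sigma^n}$ has vanishing constant part. I will instead define the twisted difference polynomial $\tilde g(x) := \sum c_\nu^{\sigma^n} x^\nu \in \mathcal{O}_K[x^{\mathbf{N}[\sigma/p^\infty]}]$, obtained by shifting only the coefficients. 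A direct calculation using $(a^{\sigma^n})^\nu = a^{\sigma^n \nu}$ together with the formula $dx^\nu = \lambda_0(\nu) x^{\nu-1}$ yields the identities $\tilde g(\alpha) = (fa)^{\sigma^n}$ and $\tilde g'(\alpha) = (f'a)^{\sigma^n}$, so $v\tilde g(\alpha) > 0$ and $v\tilde g'(\alpha) = 0$. Transformal Henselianness of $K$ then supplies $\beta \in \mathcal{O}_K$ with $\tilde g\beta = 0$ and $\beta \equiv \alpha \pmod{\mathcal{M}_K}$, and setting $b := \sigma^{-n}(\beta)$ in the inversive hull $K^{\sigma^{-\infty}}$, the same identity gives $(fb)^{\sigma^n} = \tilde g\beta = 0$, so $fb = 0$ by injectivity of $\sigma$, and $b \equiv a$ modulo the maximal ideal of the inversive hull.

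The main obstacle is showing that $b$ actually lies in $L$. Here the equivalence of Theorem \ref{thm:main-thm-on-tsep}(4) again does the work. Since $b$ is a simple root of the twisted difference polynomial $f$ with coefficients in $L$, Theorem \ref{thm:main-thm-on-tsep}(2) places $b$ in the relative transformal separable algebraic closure $\widetilde L$ of $L$ inside $K^{\sigma^{-\infty}}$, which is a model of $\FF$ transformally separably algebraic over $L$. But $\widetilde L \subseteq K^{\sigma^{-\infty}} = L^{\sigma^{-\infty}}$, so $\widetilde L^{\sigma^{-\infty}} = L^{\sigma^{-\infty}}$, and the equivalence forces $\widetilde L = L$; hence $b \in L$. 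The congruence $b \equiv a \pmod{\mathcal{M}_L}$ then follows from $\mathcal{M}_L = L \cap \mathcal{M}_{L^{\sigma^{-\infty}}}$.
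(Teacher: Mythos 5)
Your proof is correct in substance, and its two halves have different relationships to the paper's argument. The direction ``$L$ transformally Henselian implies $K$ transformally Henselian'' coincides with the paper's: you observe that $K$ is transformally separably algebraically closed in $L$ (a transformally separable subextension of the purely transformally inseparably algebraic extension $\nicefrac{L}{K}$ is linearly disjoint from $K^{\sigma^{-\infty}}$ over $K$ while being contained in it, hence trivial) and invoke Proposition \ref{prop:rel-tsep-thens}. For the other direction you genuinely diverge. The paper reduces to the case where $L$ is the inversive hull of $K$ and then simply notes that this hull is the directed union of the subfields $\sigma^{-n}(K)$, each isomorphic to $K$ as a transformal valued field; since a Hensel configuration involves only finitely many elements, transformal Henselianity passes to directed unions and the claim is immediate, with no computation. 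Your route instead transports a Hensel configuration over $L$ into $K$ by applying $\sigma^{n}$ to the coefficients only --- correctly dodging the trap that twisting the variable as well would annihilate the transformal derivative --- solves in $K$, and pulls the root back; the identities $\tilde g(\alpha)=(fa)^{\sigma^{n}}$ and $\tilde g'(\alpha)=(f'a)^{\sigma^{n}}$ hold by commutativity of $\mathbf{N}\left[\frac{\sigma}{p^{\infty}}\right]$, and descending the root into $L$ via Theorem \ref{thm:main-thm-on-tsep}(2) is the right mechanism. This costs more bookkeeping than the directed-union argument, but it does exhibit the Hensel lift in $L$ explicitly in terms of one in $K$.

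One step is asserted without justification: that $b$ is a \emph{simple} root of $f$, i.e., that $f'b\neq 0$; you only know $vf'a=0$. The repair is routine but should be written: by the same coefficient-shifting identity $(f'b)^{\sigma^{n}}=\tilde g'(\beta)$, and since $\beta\equiv\alpha\pmod{\mathcal{M}}$ one gets $\tilde g'(\beta)\equiv\tilde g'(\alpha)\pmod{\mathcal{M}}$ (for a twisted monomial $x^{\nu}$ with $p^{m}\nu\in\mathbf{N}\left[\sigma\right]$ one has $(\beta^{\nu}-\alpha^{\nu})^{p^{m}}=\beta^{p^{m}\nu}-\alpha^{p^{m}\nu}$ in characteristic $p$, so congruence modulo $\mathcal{M}$ is preserved), whence $v\tilde g'(\beta)=0$ and $f'b\neq0$. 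Without this the appeal to Theorem \ref{thm:main-thm-on-tsep}(2) to place $b$ in $L$ is not licensed.
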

\begin{proof}
It is enough to prove this when $L$ is the inversive hull of $K$.
First assume that $K$ is transformally Henselian; then $L$ is the
directed union of copies of $K$ and hence transformally Henselian
as the directed union of transformally Henselian models of $\wVFE$.
The converse follows from Proposition \ref{prop:rel-tsep-thens}.
\end{proof}

\begin{defn}
Let $\nicefrac{E}{F}$ be an extension of models of $\wVFE$.
We say that $E$ is a \emph{separably algebraically closed, transformally
Henselian hull} of $F$ if the following conditions are satisfied:
\begin{enumerate}
    \item The field $E$ is separably algebraically closed and transformally
Henselian.
\item Let $F\subseteq E_{0}\subseteq E$ be a model of $\wVFE$
which is separably algebraically closed and transformally Henselian;
then $E_{0}=E$.
\end{enumerate}
\end{defn}

\begin{lem}
\label{lem:amalgamation-over-strictly}Let $F$ be a model of $\wVFE$
which is a strict amalgamation basis. Then there is up to isomorphism a
unique separably algebraically closed and transformally Henselian
hull $E$ of $F$. Moreover, the field $E$ is transformally separable
over $F$.
\end{lem}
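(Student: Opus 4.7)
I will construct $E$ as the relative transformal separable algebraic closure of $F$ inside a suitably saturated ambient transformally Henselian model, and reduce the remaining minimality and uniqueness assertions to the corresponding statements for $\wVFA$ via the equivalence of Theorem \ref{thm:main-thm-on-tsep}(4).

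\emph{Construction.} Since $F$ is closed under twists, its inversive hull $F^{\sigma^{-\infty}}$ is perfect by Remark \ref{rem:inversive-twisted-is-perfect}, hence a model of $\wVFA$; by Fact \ref{fact:wvfa}(1) it embeds in a sufficiently saturated model $M$ of $\TVFA$, which is algebraically closed and transformally Henselian. Let $E$ be the relative transformal separable algebraic closure of $F$ in $M$ as in Theorem \ref{thm:main-thm-on-tsep}(1); then $E$ is a model of $\FF$ transformally separably algebraic over $F$, which immediately yields the trailing ``moreover'' clause.

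\emph{$E$ is a hull.} By Theorem \ref{thm:main-thm-on-tsep}(2), $E$ consists of all simple roots of twisted difference polynomials over $F$ lying in $M$; in particular it contains the separable algebraic closure of $F$ in $M$. Any element of $M$ separably algebraic over $E$ is then a simple root of a twisted difference polynomial over $E$, hence belongs to the relative transformal separable algebraic closure of $E$ in $M$. By transitivity of transformally separably algebraic extensions of models of $\FF$ (Theorem \ref{thm:main-thm-on-tsep}(3), via Proposition \ref{P:rel-tsep-alg-c}), this closure coincides with $E$, so $E$ is separably algebraically closed. Since $E$ is transformally separably algebraically closed in the transformally Henselian $M$, Proposition \ref{prop:rel-tsep-thens} gives that $E$ is transformally Henselian.

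\emph{Minimality and uniqueness.} The plan is to descend to inversive hulls. Any separably algebraically closed, transformally Henselian submodel $F \subseteq E_{0} \subseteq E$ has an inversive hull $E_{0}^{\sigma^{-\infty}}$ which is perfect (Remark \ref{rem:inversive-twisted-is-perfect}), algebraically closed, and transformally Henselian (Lemma \ref{lem:thens-insep-invariant})---and likewise for $E^{\sigma^{-\infty}}$. Via the equivalence of Theorem \ref{thm:main-thm-on-tsep}(4), the minimality statement $E_{0} = E$, and the analogous uniqueness assertion for a second hull $E'$ of $F$, both reduce to showing that the relevant inversive hulls all coincide (up to isomorphism) with the algebraic closure of the algebraic Henselianization of $F^{\sigma^{-\infty}}$ in $\wVFA$. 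By the strict amalgamation basis hypothesis, this latter field has no nontrivial finite $\sigma$-invariant Galois extension, and by Fact \ref{fact:wvfa}(3) its algebraic closure in $\wVFA$ is unique up to isomorphism.

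\emph{Main obstacle.} The technical crux is this last identification: a priori, an inversive hull $E_{0}^{\sigma^{-\infty}}$ could contain field-theoretically transcendental elements that are only transformally algebraic over $F^{\sigma^{-\infty}}$, obstructing a direct application of Fact \ref{fact:wvfa}(3). Ruling out such ``extraneous transcendental'' elements---equivalently, showing that sufficiently closed algebraically Henselian models of $\wVFA$ admit no proper transformally algebraic extensions inside $M$ under the strict amalgamation basis hypothesis---is the essential content of the minimality argument, and is where transformal Henselianness is genuinely needed beyond mere algebraic Henselianness.
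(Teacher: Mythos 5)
There is a genuine gap, and it is located exactly where you flag your ``main obstacle'': your construction of $E$ does not produce a hull in the sense of the definition, because it violates the minimality clause (2). You take $E$ to be the relative transformal separable algebraic closure of $F$ inside a \emph{sufficiently saturated} model $M$ of $\TVFA$. By Theorem \ref{thm:main-thm-on-tsep}(2) this is the set of \emph{all} simple roots in $M$ of twisted difference polynomials over $F$; in a saturated $M$ this set is enormous. For instance, every element $a$ of the fixed field of $M$ is a simple root of $x^{\sigma}-x$ (the transformal derivative is $-1$), and for $a$ transcendental over $F$ the extension $F(a)$ is transformally separable over $F$, so the entire fixed field of $M$ lies in your $E$. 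Taking $E_{0}$ to be the relative transformal separable algebraic closure of $F$ inside a small algebraically closed, transformally Henselian $N$ with $F \subseteq N \subseteq M$ gives a proper subfield of your $E$ which is separably algebraically closed (by transitivity of transformally separably algebraic extensions) and transformally Henselian (Proposition \ref{prop:rel-tsep-thens}), so condition (2) fails. Consequently the ``extraneous transcendental'' elements you hope to rule out in the last paragraph genuinely occur for your $E$; that step is not merely missing, it is false for the object you built, so no completion of your minimality argument can succeed without changing the construction.

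The repair is to replace the saturated ambient model by the \emph{minimal} one: since $F$ is a strict amalgamation basis, its inversive hull admits an algebraically closed, transformally Henselian hull $N$, unique up to isomorphism by \cite[Corollary 5.4]{dor-hrushovskiVFA} (this is where the strict amalgamation hypothesis enters, via the absence of nontrivial finite $\sigma$-invariant Galois extensions). One then takes $E$ to be the relative transformal separable algebraic closure of $F$ inside $N$; Lemma \ref{lem:thens-insep-invariant} transfers transformal Henselianity between $E$ and its inversive hull, and minimality and uniqueness of $E$ now descend from the minimality and uniqueness of $N$, since the inversive hull of any intermediate separably algebraically closed, transformally Henselian $E_{0}$ is an algebraically closed, transformally Henselian model between $F^{\sigma^{-\infty}}$ and $N$, hence equals $N$, forcing $E_{0}=E$. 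Your opening reductions (closure under twists, Remark \ref{rem:inversive-twisted-is-perfect}, and the use of Proposition \ref{prop:rel-tsep-thens} and Theorem \ref{thm:main-thm-on-tsep}) are fine and are also what the paper uses; it is only the choice of ambient model that must change.
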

\begin{proof}
Let us assume first that $F$ is inversive. In this situation, an algebraically closed, transformally Henselian hull of $F$ is unique up to isomorphism, using \cite[Corollary 5.4]{dor-hrushovskiVFA}. Thus $F$ admits an inversive, algebraically closed, transformally Henselian hull $E$ in the category of models of $\wVFE$ which is unique up to isomorphism. Using Lemma \ref{lem:thens-insep-invariant}, the relative transformal separable algebraic closure of $F$ in $E$ is a separably algebraically closed, transformally Henselian hull of $F$.
\end{proof}

\begin{prop}
\label{prop:lin-disjoint-amalgamation}(Amalgamation for Models of
$\VFE$) 
Let $F$ be a model of $\VFE_{\leq e}$ which is separably
algebraically closed and transformally Henselian. Let $E,K$ be models
of $\VFE_{\leq e}$ transformally separable over $F$. Then there
is a model $L$ of $\VFE_{\leq e}$ transformally separable over
$F$ in which $K$ and $E$ jointly embed over $F$. The fields $E$
and $K$ can be taken linearly disjoint over $F$ in $L$ and $L$
can be taken transformally separable over both $E$ and $K$.
\end{prop}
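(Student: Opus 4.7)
The strategy is to reduce to the perfect, inversive setting where the amalgamation theorem for $\TVFA$ (Fact \ref{fact:wvfa}(5)) applies, and then descend back to the category of models of $\VFE_{\leq e}$. First I would pass to inversive hulls: set $\widehat F = F^{\sigma^{-\infty}}$, $\widehat E = E^{\sigma^{-\infty}}$, $\widehat K = K^{\sigma^{-\infty}}$, equipped with the unique extensions of the valuations. Since $F, E, K$ are closed under twists, each of these inversive hulls is perfect by Remark \ref{rem:inversive-twisted-is-perfect}, and so is a model of $\VFA$. The field $\widehat F$ is algebraically closed (as $F$ is separably algebraically closed and $\widehat F$ is its purely inseparable completion) and transformally Henselian by Lemma \ref{lem:thens-insep-invariant}. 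By Fact \ref{fact:wvfa}(5), $\widehat E$ and $\widehat K$ can be amalgamated over $\widehat F$: there is a model $\widehat L$ of $\VFA$ in which $\widehat E, \widehat K$ are jointly embedded over $\widehat F$ and linearly disjoint there.

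Next, I would consider the subfield $L_0 = E \otimes_F K \subseteq \widehat L$, that is, the difference subfield of $\widehat L$ generated by $E$ and $K$. Since $F$ is a separably algebraically closed model of $\FF$ and $E, K$ are transformally separable over $F$, Theorem \ref{thm:main-thm-on-tsep}(6) guarantees that $L_0$ is a model of $\FF$; the valuation inherited from $\widehat L$ turns it into a model of $\VFE$. To see that $E$ and $K$ are actually linearly disjoint over $F$ in $\widehat L$ (not just over $\widehat F$), I would argue as follows: an $F$-linearly independent tuple in $E$ remains $\widehat F$-linearly independent by transformal separability of $E/F$ (i.e., $E$ and $F^{\sigma^{-\infty}}$ are linearly disjoint over $F$); it then remains $\widehat K$-linearly independent by the amalgamation, hence $K$-linearly independent. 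By Proposition \ref{basic-properties-of-tsep}, the extensions $L_0/E$ and $L_0/K$ are transformally separable (invariance under base extension), and hence so is $L_0/F$ (composition).

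The only remaining point is the control on the degree of imperfection: a priori $L_0$ may have Ershov invariant exceeding $e$ (as noted in Remark \ref{rem:purely-insep}). Sitting inside the perfect field $\widehat L$, I would invoke the explicit construction of Remark \ref{rem:purely-insep} to produce a purely inseparably algebraic extension $L$ of $L_0$ of degree of imperfection $\leq e$ which is a model of $\FF$ and is still transformally separable over $E$ and over $K$. The valuation extends uniquely to this purely inseparable extension (and agrees with the restriction of the one on $\widehat L$), so $L$ is a model of $\VFE_{\leq e}$. Transformal separability of $L$ over $F$ then follows by composition from the transformal separability of $L/E$ and $E/F$ via Proposition \ref{basic-properties-of-tsep}(1), finishing the proof.

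The main delicate point is the descent of linear disjointness from $\widehat F$ to $F$, which is precisely where the transformal separability hypothesis on $E/F$ and $K/F$ is used; without it, one would only get an amalgamation over the inversive hull. The secondary technical step is the adjustment of the degree of imperfection, which is a purely field-theoretic manipulation carried out inside the ambient perfect model $\widehat L$ and does not interact with the valuation.
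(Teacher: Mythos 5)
Your proof is correct and follows essentially the same route as the paper's: reduce to the inversive hulls where Fact \ref{fact:wvfa}(5) applies, handle the difference-field structure and linear disjointness via Proposition \ref{prop:amalgamation-for-FE} together with the descent afforded by transformal separability, and fix the Ershov invariant by the construction of Remark \ref{rem:purely-insep} (you merely spell out the descent step that the paper leaves terse). One small inaccuracy in a justification: the inversive hull $\widehat F$ is \emph{not} in general a purely inseparable extension of $F$ (it is typically transcendental over $F$); the correct reason it is algebraically closed is that it is a perfect directed union of isomorphic copies of the separably algebraically closed field $F$ -- the conclusion you need still holds, so this does not affect the argument.
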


\begin{proof}
By Proposition \ref{prop:amalgamation-for-FE} and Remark \ref{rem:purely-insep},
the only problem is to equip the difference field $E\tensor_{F}K$
with the structure of a model of $\VFE$ subject to the evident compatibility constraints. If $F$ is inversive,
then it is algebraically closed and transformally Henselian; the result
then follows from Fact \ref{fact:wvfa}. In general, by Lemma \ref{lem:thens-insep-invariant},
the inversive hull of $F$ is algebraically closed and transformally
Henselian; since $E$ and $K$ are transformally separable over $F$,
the truth of the statement with all fields replaced by their inversive
hulls descends to the original settings, whence the proposition.
\end{proof}

\begin{prop}
\label{prop:th-rel}
Let $K$ be a model of $\wVFE$ whose residue field $k$ is a model of $\FF$. Let us assume that we are given a model $\widetilde{k}$ of $\FF$ transformally separably algebraic over $k$. Then there is a transformally Henselian model $\widetilde{K}$ of $\wVFE$ transformally separably algebraic over $K$, reproducing the embedding $\nicefrac{{\widetilde{k}}}{k}$ at the level of residue fields, and enjoying the following universal mapping property. Let $L$ be a transformally Henselian model of $\wVFE$ over $K$; then every embedding of $\widetilde{k}$ in $l$ over $k$ lifts to an embedding of $\widetilde{K}$ in $L$ over $K$, and the lifting is unique. This extension does not enlarge the value group. Furthermore, if $K$ is a strict amalgamation basis and $\widetilde{k}$ is separably algebraically closed, then $\widetilde{K}$ is a strict amalgamation basis.
\end{prop}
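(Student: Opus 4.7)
The plan is to construct $\widetilde K$ directly inside a suitably saturated transformally Henselian ambient. Both the existence and the universal property commute with directed unions, so by Theorem \ref{thm:main-thm-on-tsep}(2) I may reduce to the case where $\widetilde k$ is the smallest model of $\FF$ over $k(\alpha^{\mathbf{N}\left[\sigma\right]})$ for a single element $\alpha$ which is a simple root of some twisted difference polynomial $g \in k\left[x^{\mathbf{N}\left[\sigma/p^{\infty}\right]}\right]$; by a harmless linear change of variable I also arrange $\alpha \neq 0$.

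Next I embed $K$ into a sufficiently saturated model $M$ of $\TwVFA$ via Fact \ref{fact:wvfa}(1); by Lemma \ref{lem:thens-insep-invariant} the field $M$ is transformally Henselian, and its residue field $m$ is a correspondingly saturated model of $\ACFA$ into which $\widetilde k$ can be embedded over $k$. Lifting $g$ coefficient-wise to a twisted difference polynomial $f \in \mathcal{O}_K\left[x^{\mathbf{N}\left[\sigma/p^{\infty}\right]}\right]$ and applying the transformal Hensel property of $M$, I obtain the canonical Hensel lift $b \in \mathcal{O}_M^{\times}$ of $\alpha$. Let $\widetilde K_0$ be the smallest model of $\FF$ over $K(b^{\mathbf{N}\left[\sigma\right]})$ inside $M$, and let $\widetilde K$ be the transformal Henselization of $\widetilde K_0$ inside $M$, constructed by iteratively adjoining canonical Hensel lifts of simple roots of twisted difference polynomials defined over the current stage. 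By Theorem \ref{thm:main-thm-on-tsep}(2) each such adjunction is transformally separably algebraic, so Proposition \ref{P:rel-tsep-alg-c} yields that $\widetilde K/K$ is transformally separably algebraic; transformal Henselianness is by construction.

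The principal obstacle is to verify that neither the initial passage from $K$ to $K(b^{\mathbf{N}\left[\sigma\right]})$ nor the subsequent transformal Hensel closure enlarges the value group or produces residues outside $\widetilde k$. Concretely, I need a transformal unramified Hensel root lemma: if $c \in M$ is the canonical Hensel lift of $\overline c \in m$ relative to a twisted difference polynomial over a transformally Henselian intermediate field $F$, then $F(c^{\mathbf{N}\left[\sigma/p^{\infty}\right]})/F$ has residue extension exactly $k_F(\overline c^{\mathbf{N}\left[\sigma/p^{\infty}\right]})/k_F$ and unchanged value group. This generalizes the classical unramified Hensel root statement and follows from iterated classical Hensel together with a Newton polygon analysis, applied to each Frobenius twist of the underlying structure. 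Granting it, the value group of $\widetilde K$ is $\Gamma$ and its residue field is $\widetilde k$.

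The universal mapping property is then routine: given $L$ transformally Henselian over $K$ and $\iota\colon \widetilde k \hookrightarrow l$ over $k$, each $a \in \widetilde K$ is a simple root of some twisted difference polynomial $h$ over $K$ with prescribed residue in $\widetilde k$, so its image under any admissible lift is forced to be the canonical Hensel lift of $\iota(\overline a)$ in $L$ relative to $h$; existence follows from transformal Henselianness of $L$, and uniqueness of canonical Hensel lifts gives both that this specification is well-defined as a ring homomorphism and that the resulting $\widetilde\iota\colon \widetilde K \hookrightarrow L$ is unique. Finally, for the strict amalgamation basis claim, when $\widetilde k$ is separably algebraically closed the residue field of the perfect, inversive, algebraically Henselian hull $\widetilde K^{*}$ of $\widetilde K$ is algebraically closed, and its value group coincides with that of the corresponding hull $K^{*}$ of $K$; any nontrivial finite $\sigma$-invariant Galois extension of $\widetilde K^{*}$ must therefore be totally ramified and correspond to a finite-index $\sigma$-invariant subgroup of this common value group, which would likewise produce a nontrivial $\sigma$-invariant finite Galois extension of $K^{*}$, contradicting the strict amalgamation basis hypothesis on $K$.
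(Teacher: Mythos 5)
Your overall strategy --- reduce to a single simple root, find the canonical Hensel lift $b$ in a saturated transformally Henselian ambient, and close off under further Hensel lifts --- is a reasonable skeleton, but it diverges from the paper, which disposes of the proposition in two lines by reducing to the inversive case (where the statement \emph{is} \cite[Theorem 5.2]{dor-hrushovskiVFA}) and then taking the relative transformal separable algebraic closure of $K$ inside that extension, using descent. The difference matters, because the step you isolate as ``the principal obstacle'' --- your ``transformal unramified Hensel root lemma'' asserting that $F\left(c^{\mathbf{N}\left[\frac{\sigma}{p^{\infty}}\right]}\right)/F$ has unchanged value group and residue extension exactly $k_F\left(\overline{c}^{\mathbf{N}\left[\frac{\sigma}{p^{\infty}}\right]}\right)/k_F$ --- is precisely the hard content of the cited external theorem, and you do not prove it. Saying it ``follows from iterated classical Hensel together with a Newton polygon analysis'' is not an argument here: unlike the classical case, $F\left(c^{\mathbf{N}\left[\sigma\right]}\right)$ is not a finite field extension of $F$ (it is transformally algebraic, hence of finite but typically positive transcendence degree, since only $c$ --- not its iterates $c^{\sigma},c^{\sigma^{2}},\dots$ --- is pinned down algebraically over the fields above it). So the degree-counting that makes the classical unramified root lemma work is unavailable, and there is genuine room for the value group to grow by finitely many rationally independent elements. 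Ruling this out requires the specialization/dimension theory of difference varieties developed in \cite{hrushovski2004elementary, dor-hrushovskiVFA}; it cannot be waved through with one-variable Newton polygons applied to Frobenius twists. Everything downstream (the residue field computation, the existence half of the universal property, transformal Henselianity of the union) leans on this unproved lemma.

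There is a second, independent error in your final paragraph. You argue that a nontrivial finite $\sigma$-invariant Galois extension of $\widetilde{K}^{*}$ ``must be totally ramified'' because the residue field is algebraically closed, and hence is detected by the value group. In positive characteristic this ignores \emph{defect} extensions: an algebraically Henselian field with algebraically closed residue field and fixed value group can still admit nontrivial finite (e.g.\ Artin--Schreier) immediate Galois extensions, since $\widetilde{K}^{*}$ is not assumed algebraically maximal. Your dichotomy ``unramified or totally ramified'' is therefore incomplete, and the descent of such an extension to $K^{*}$ is exactly the point that needs an argument. Even in the tame case, transporting a $\sigma$-invariant Kummer extension from $\widetilde{K}^{*}$ down to $K^{*}$ requires choosing radicands in $K^{*}$ compatibly with the $\sigma$-action, which you do not address. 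The paper avoids all of this by inheriting the ``strict amalgamation basis'' clause from the inversive-case theorem and descending.
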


\begin{proof}
When $K$ is inversive, this follows from \cite[Theorem 5.2]{dor-hrushovskiVFA}. In general, the usual descent argument shows that the relative transformal separable algebraic closure of $K$ inside the promised extension of the inversive hull will do.
\end{proof}

\begin{defn}\label{def:sth}
Let $K$ be a model of $\wVFE$. 
\begin{enumerate}
\item We say that $K$ is \emph{strictly transformally Henselian} if it is transformally Henselian, and the residue field $k$ of $K$ is a model of $\SCFE$. 
\item In the situation of Proposition \ref{prop:th-rel}, assume that $\widetilde{k}$ is a model of $\SCFE$; then $\widetilde{K}$ is said to be a \emph{strict transformal Henselization} of $K$ (with respect to the embedding $\nicefrac{\widetilde{k}}{k}$).
\end{enumerate}
\end{defn}

\begin{rem}
\label{rem:strictly-hens-purely-insep}
Let $\nicefrac{L}{K}$ be a purely transformally inseparably algebraic extensions of models of $\wVFE$. Then $L$ is strictly transformally Henselian if and only if $K$ is so; this follows immediately from Proposition \ref{P:char of SCFE} and Proposition \ref{prop:rel-tsep-thens}.
\end{rem}

\subsection{The Theory $\widetilde{\VFE}$}\label{ss:Twvfe}
\begin{defn}
\label{def:tsep-alg-closed}
Let $K$ be a model of $\VFE$. We say that $K$ is a model of $\TwVFE$ if the following requirements are satisfied:
\begin{enumerate}
    \item The field $K$ is strictly transformally Henselian in the sense of Definition \ref{def:sth}.
    \item The valuation group $\Gamma$ of $K$ is nonzero and \emph{tamely transformally divisible}, i.e, for all $\nu\in\mathbf{Z}\left[\frac{\sigma}{p^{\infty}}\right]$
with nonzero constant term we have $\nu\Gamma=\Gamma$.
\item Let $\tau x\in K\left[x^{\mathbf{N}\left[\frac{\sigma}{p^{\infty}}\right]}\right]$
be a twisted additive difference operator with $\tau'\neq 0$; then
$\tau$ is onto on $K$-points.
\end{enumerate}

 For $e\in\mathbf{N}_{\infty}$ we let $\widetilde{\VFE_{e}}$
be the theory of models of $\widetilde{\VFE}$ which fail to
be inversive, of imperfection degree equal to $e$.
\end{defn}

\begin{rem}
\label{rem-on-val-gp-and-res-of-twvfe}
It will follow from Lemma \ref{lem:deep-ramification} that if $K$ is a model of $\TwVFE$ then it lies dense in its inversive hull; so the residue field is in fact a model of $\ACFA$, and the value group is a model of $\TwOGA$, using Remark \ref{rem:strictly-hens-purely-insep}.
\end{rem}

\begin{prop}\label{P:pure insep inside twvfe}
\begin{enumerate}
\item Let $K$ be a model of $\wVFE$. Then $K$ is a model of $\TwVFE$ if and only if the inversive hull of $K$ is a model of $\TVFA$. Thus if $L$ is a model of $\wVFE$ purely transformally inseparably algebraic over $K$, then $L$ is a model of $\TwVFE$ if and only if $K$ is so.
\item Let us assume that $K$ is nontrivially valued and relatively transformally separably algebraically closed in a model of $\TwVFE$; then $K$ is itself a model of $\TwVFE$.
\end{enumerate}
\end{prop}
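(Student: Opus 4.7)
The plan is to verify the defining axioms of $\TwVFE$ (Definition \ref{def:tsep-alg-closed}) axiom by axiom, using the corresponding $\TVFA$ axioms from Fact \ref{fact:wvfa} after passing to inversive hulls. Recall that the inversive hull of a model of $\FF$ is inversive and, being closed under twists, automatically perfect by Remark \ref{rem:inversive-twisted-is-perfect}.

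For (1), I would match Definition \ref{def:tsep-alg-closed} for $K$ with the axioms of $\TVFA$ for $K^{\sigma^{-\infty}}$. Strict transformal Henselianity is invariant under purely transformally inseparable extensions: transformal Henselianity itself is invariant by Lemma \ref{lem:thens-insep-invariant}, and $k \models \SCFE$ is equivalent to $k^{\sigma^{-\infty}} \models \ACFA$ by Proposition \ref{P:char of SCFE}(1). For the value group, $\Gamma_{K^{\sigma^{-\infty}}} = \Gamma_K[\sigma^{-1}]$; since every $\widetilde\nu \in \mathbf{Z}[\sigma^{\pm}/p^\infty]$ factors as $\sigma^{-n}\mu$ with $\mu$ of nonzero constant term and $\sigma^{-n}$ acts bijectively on the inversive hull, tame transformal divisibility of $\Gamma_K$ matches transformal divisibility of $\Gamma_{K^{\sigma^{-\infty}}}$, and nontriviality transfers directly. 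Finally, the twisted-additive surjectivity axiom on $K$ is equivalent to the corresponding additive surjectivity axiom on $K^{\sigma^{-\infty}}$, the twisted operators collapsing in the perfect inversive setting. The second sentence of (1) then follows since $L$ and $K$ share the same inversive hull whenever $L/K$ is purely transformally inseparable.

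For (2), by (1) it suffices to show $K^{\sigma^{-\infty}} \models \TVFA$. The inversive hull is inversive, perfect, and $\omega$-increasing. Proposition \ref{prop:rel-tsep-thens} gives transformal Henselianity of $K$ and that the residue field $k$ is transformally separably algebraically closed in the residue field $l$ of $F$; since $l \models \ACFA$ by Remark \ref{rem-on-val-gp-and-res-of-twvfe}, Proposition \ref{P:char of SCFE}(2) yields $k \models \SCFE$, so passing to inversive hulls gives $K^{\sigma^{-\infty}}$ algebraically Henselian with residue field $k^{\sigma^{-\infty}} \models \ACFA$. Nontriviality of the valuation descends from $K$. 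Moreover, Theorem \ref{thm:main-thm-on-tsep}(4) implies $K^{\sigma^{-\infty}}$ is transformally algebraically closed in $F^{\sigma^{-\infty}}$, which is a model of $\TVFA$ by (1).

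It remains to verify the value-group and additive-surjectivity axioms for $K^{\sigma^{-\infty}}$. The key input is that $F^{\sigma^{-\infty}} \models \TVFA$ is algebraically closed, being algebraically Henselian, perfect, with algebraically closed residue field and divisible value group. Given $a \in K^{\sigma^{-\infty}}$ with $a \neq 0$ and nonzero $\nu \in \mathbf{Z}[\sigma^{\pm}/p^\infty]$ of nonzero constant term (respectively, an additive difference operator $\tau$ with $\tau' \neq 0$), the equation $b^{\nu} = a$ (respectively $\tau b = a$) admits a solution $b \in F^{\sigma^{-\infty}}$, produced by combining transformal divisibility of $\Gamma_{F^{\sigma^{-\infty}}}$ with Hensel lifting through the $\ACFA$ residue field (after factoring $p$-powers from $\nu$ to arrange a simple root). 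Any such $b$ is transformally algebraic over $K^{\sigma^{-\infty}}$, hence by transformal algebraic closedness lies in $K^{\sigma^{-\infty}}$; this gives both axioms. The main obstacle is precisely this last step: one has to convert the abstract divisibility/surjectivity axioms into actual root-extraction problems in $F^{\sigma^{-\infty}}$, so as to leverage the transformal algebraic closedness of $K^{\sigma^{-\infty}}$ inside it.
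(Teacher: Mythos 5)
Your argument is correct and follows essentially the same route as the paper, which disposes of the proposition in one line by citing Remark \ref{rem:strictly-hens-purely-insep} and Proposition \ref{prop-on-simple-roots}: pass to inversive hulls, where the axioms become the untwisted $\TVFA$ axioms (tame transformal divisibility of $\Gamma_K$ versus transformal divisibility of its inversive hull is even built into the definition), and pull solutions back down to $K$ using the fact that simple roots of twisted difference polynomials over $K$ lying in a purely transformally inseparable (or relatively transformally separably algebraically closed) extension already lie in $K$. One small caveat: your justification that $F^{\sigma^{-\infty}}$ is algebraically closed (Henselian, perfect, algebraically closed residue field, divisible value group) does not suffice in positive characteristic because of defect extensions, but models of $\TVFA$ are algebraically closed for other reasons (e.g.\ transformal algebraic maximality, Fact \ref{fact:wvfa}(7)), and in any case you only need solvability of the specific twisted equations, which the $\TVFA$ axioms give directly, so nothing breaks.
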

\begin{proof}
Clear using Remark \ref{rem:strictly-hens-purely-insep} and Proposition \ref{prop-on-simple-roots}.
\end{proof}

\begin{defn}
Let $F$ be a model of $\VFE$. We say that $F$ is \emph{deeply
transformally ramified} if is nontrivially valued and lies dense in
its inversive hull.
\end{defn}

\begin{lem}
\label{lem:deep-ramification}Let $F$ be a model of $\VFE$ which is nontrivially valued.
Let us assume that for all nonzero $t\in\mathcal{M}$, the operator
$x^{\sigma}-tx$ is surjective on $F$-points. Then $F$ is deeply
transformally ramified. In particular, if $F$ is a model of $\widetilde{\VFE}$,
then $F$ is deeply transformally ramified. 
\end{lem}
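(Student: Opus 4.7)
The plan is to reduce the lemma to the statement that $F^{\sigma}$ is dense in $F$, and then to prove the latter by a single application of the surjectivity hypothesis for $v(t)$ chosen sufficiently large. Since $F$ is nontrivially valued by hypothesis, all that requires proof is density of $F$ inside $F^{\sigma^{-\infty}} = \bigcup_{n} F^{\sigma^{-n}}$. The map $\sigma^{n}\colon F \to F^{\sigma^{n}}$ respects the valuations (via $v\sigma^{n}(x) = \sigma^{n}v(x)$), so density of $F$ in $F^{\sigma^{-n}}$ is equivalent, after applying $\sigma^{n}$, to density of $F^{\sigma^{n}}$ in $F$. An inductive tower using transitivity of density and iterated application of $\sigma$ then reduces everything to the single case $n=1$, namely $F^{\sigma}$ dense in $F$.

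For this main step, given $a \in F^{\sigma^{-1}}$ with $a^{\sigma}\in F$ and $\gamma \in \Gamma^{\sigma^{-1}}$ (the case $a=0$ being trivial), I pick $t\in \mathcal{M}\setminus\{0\}$ with
\[
v(t) > \max\bigl((\sigma - 1)v(a),\; \sigma\gamma - v(a)\bigr),
\]
which is possible because $\Gamma$ is cofinal in $\Gamma^{\sigma^{-1}}$ (any positive $\alpha$ in the inversive hull satisfies $\sigma\alpha > \alpha$ and $\sigma\alpha\in \Gamma$). Invoking the hypothesis, I solve $b^{\sigma} - tb = a^{\sigma}$ for some $b\in F$. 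Then $(b-a)^{\sigma} = b^{\sigma}-a^{\sigma} = tb$, so $\sigma v(b-a) = v(t) + v(b)$. The substantive input is the valuation bound $v(b) = v(a)$, which I establish by ultrametric casework on $b^{\sigma} = a^{\sigma}+tb$: if $v(tb) < v(a^{\sigma})$ then $(\sigma-1)v(b) = v(t)$, whence $v(b)>v(a)$ by strict monotonicity of $\sigma-1$ on $\Gamma^{\sigma^{-\infty}}$ (a direct consequence of the $\omega$-increasing condition) and therefore $v(tb) > v(a^{\sigma})$, a contradiction; while if $v(tb) = v(a^{\sigma})$ then $v(t)\leq (\sigma-1)v(a)$, again contradicting the choice of $t$. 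Consequently $v(tb) > v(a^{\sigma})$, the ultrametric equality $v(b^{\sigma}) = v(a^{\sigma})$ holds, and injectivity of $\sigma$ on the value group yields $v(b) = v(a)$. Substituting back, $\sigma v(b-a) = v(t) + v(a) > \sigma\gamma$, whence $v(b-a)>\gamma$ by strict monotonicity of $\sigma$.

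The final sentence of the lemma follows at once: condition (3) in the definition of $\TwVFE$ guarantees surjectivity of every twisted additive difference operator with nonzero derivative, and the operator $x\mapsto x^{\sigma}-tx$ has derivative $-t\neq 0$ for $t\neq 0$. The main obstacle, such as it is, is the casework pinning down $v(b)$; once strict monotonicity of $\sigma-1$ is in hand, the estimate emerges without fuss, and the rest is formal juggling of the compatibility of $\sigma$ with the valuation and the cofinality of $\Gamma$ in its inversive hull.
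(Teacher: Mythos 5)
Your proof is correct and follows essentially the same route as the paper: both reduce to density of $F$ in $F^{\sigma^{-1}}$ and approximate a root of $x^{\sigma}=a^{\sigma}$ by roots of the perturbed operators $x^{\sigma}-tx-a^{\sigma}$ as $v(t)\to\infty$. The only difference is that the paper compresses the root-location step into the phrase ``nearby polynomials have nearby roots,'' whereas you carry out the ultrametric casework pinning down $v(b)=v(a)$ explicitly, which is a welcome elaboration rather than a new idea.
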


\begin{proof}
By induction it is enough to prove that $F$ lies dense in $F^{\sigma^{-1}}$.
Fix an element $a\in F$; we need to show that $a^{\sigma^{-1}}$
can be approximated to arbitrary precision by elements of $F$. Let
$fx=x^{\sigma}-a$ be the displayed difference polynomial. Then $fx$
can be regarded as the limit of the difference polynomials $f_{t}\left(x\right)=x^{\sigma}-tx-a$
for $t\in\mathcal{M}$ nonzero in the sense of the valuation topology.
Namely, by assumption, we can find $b_{t}\in F$ with $f_{t}b_t=0$; since $f_t \to f$ coefficient-wise and nearby polynomials have nearby roots, as $vt\to\infty$ we will have $vfb_{t}\to\infty$, whence
the statement.
\end{proof}

\begin{prop}
\label{prop:ec-tsep-alg}Let $K$ be a saturated model of $\widetilde{\VFE}$.
Then $K$ is existentially closed for transformally separably algebraic
extensions in the following sense. Let $F$ be a strictly amalgamative
model of $\VFE$ contained in $K$ and such that $K$ is transformally
separable over $F$. Let $E$ be a small model of $\VFE$ transformally
separably algebraic over $F$. Then there is an embedding of $E$
in $K$ over $F$. Moreover, such an embedding automatically renders
$K$ transformally separable over the image.
\end{prop}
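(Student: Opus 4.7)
The strategy is to construct, inside $K$, a maximal model of $\VFE$ transformally separably algebraic over $F$ via Zorn's lemma, to argue that maximality forces this model to be a separably algebraically closed, transformally Henselian hull of $F$, and then to conclude by the universal property of such a hull in the category of $\VFE$-models over the strict amalgamation basis $F$.

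First, the collection of intermediate models $F \subseteq F' \subseteq K$ of $\VFE$ transformally separably algebraic over $F$ is nonempty and closed under directed unions by the finite-character properties of Proposition \ref{basic-properties-of-tsep}, yielding a maximal element $F^*$ by Zorn's lemma. I would then verify that $F^*$ is separably algebraically closed, transformally Henselian, and has residue field a model of $\SCFE$. Each of these follows from a common template: an allegedly-missing feature yields a specific element $b \in K$ witnessing it, and the smallest $\FF$-hull of $F^*\left(b^{\Nsphul}\right)$ inside $K$ (taken using Lemma \ref{fe-hull} and the separable algebraic closedness of $K$) is transformally separably algebraic over $F^*$ by Proposition \ref{prop-on-simple-roots}, hence over $F$ by Proposition \ref{P:rel-tsep-alg-c}, contradicting maximality.

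The three verifications use, respectively: the transformal Henselianity of $K$ to produce Hensel lifts $b$ of twisted difference polynomials with coefficients in $F^*$; the existential closedness of the residue field $k$ of $K$ (a model of $\SCFE$ by strict transformal Henselianity) for transformally separably algebraic extensions of the residue field $k^*$ of $F^*$ (itself a model of $\FF$ by Lemma \ref{lem:hens-residue-is-FE}), lifted to $K$ via the universal property of Proposition \ref{prop:th-rel}; and the separable algebraic closedness of $K$, which holds since $K$ is a model of $\FF$ whose inversive hull is algebraically closed by Proposition \ref{P:pure insep inside twvfe}. Once these are in place, $F^*$ is a separably algebraically closed, transformally Henselian hull of $F$ in the sense of Lemma \ref{lem:amalgamation-over-strictly}.

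Finally, I would embed $E$ into $F^*$ (hence into $K$) as a $\VFE$-model over $F$ by factoring through inversive hulls: since $F$ is a strict amalgamation basis, Fact \ref{fact:wvfa}(3) gives a unique-up-to-isomorphism algebraically closed, transformally Henselian hull of $F^{\sigma^{-\infty}}$ in $\wVFA$, and $E^{\sigma^{-\infty}}$ (being transformally algebraic over $F^{\sigma^{-\infty}}$) embeds into this hull as a valued difference field; restricting to the relative transformal separable algebraic closure of $F$ recovers an embedding of $E$ into $F^*$. The ``moreover'' clause then follows from Proposition \ref{P:rel-tsep-alg-c} applied to the tower $F \subseteq E \subseteq K$, since $E$ is transformally algebraic over $F$ and $K$ is transformally separable over $F$. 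I expect the main subtlety to be this last embedding step: ensuring that the valued difference field structure on $E$ coincides with that induced from $F^*$, which is precisely where the strict amalgamation hypothesis on $F$ is essential (cf. Remark \ref{rem:strict-amalgamation-vs-unique-extension}).
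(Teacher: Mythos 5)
Your reduction steps are fine as far as they go: the relative transformal separable algebraic closure $F^{*}$ of $F$ in $K$ exists (Zorn is not even needed, by Theorem \ref{thm:main-thm-on-tsep}(1)--(2)), it is separably algebraically closed, transformally Henselian with residue field a model of $\SCFE$ (Proposition \ref{prop:rel-tsep-thens} and Proposition \ref{P:char of SCFE}), and the ``moreover'' clause does follow from Proposition \ref{P:rel-tsep-alg-c}. The gap is in the final embedding step, and it is fatal to the strategy as stated. A separably algebraically closed, transformally Henselian \emph{hull} of $F$ is defined by a minimality condition and does \emph{not} receive embeddings from arbitrary transformally separably algebraic extensions $E$ of $F$; Lemma \ref{lem:amalgamation-over-strictly} only gives uniqueness of the hull, not such a mapping property. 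Concretely, the algebraic closure enlarges the value group only to its ordinary divisible hull $\mathbf{Q}\otimes\Gamma_{F}$, and the transformal Henselization of Proposition \ref{prop:th-rel} does not enlarge the value group at all; yet a transformally (separably) algebraic extension of $F$ can force new value group elements. For instance, over an inversive $F$ adjoin a root $y$ of $x^{\sigma}=tx$: then $\left(\sigma-1\right)vy=vt$, and if this equation has no solution in $\mathbf{Q}\otimes\Gamma_{F}$ the extension $F\left(y^{\Ns}\right)$ is transformally separably algebraic over $F$ but does not embed into the algebraically closed, transformally Henselian hull of $F$. So your claim that ``$E^{\sigma^{-\infty}}$, being transformally algebraic over $F^{\sigma^{-\infty}}$, embeds into this hull'' is false, and with it the identification of the maximal $F^{*}$ with ``a hull'' (maximality and the hull's minimality point in opposite directions).

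What the argument actually requires is that $F^{*}$ be \emph{existentially closed} for transformally separably algebraic extensions of $F$, and Henselization-type closure operations cannot deliver this: one must use that the inversive hull of $K$ (equivalently of $F^{*}$) is a model of the model companion $\TVFA$, combined with the completeness of the theory of models of $\TVFA$ over the strict amalgamation basis $F$ (Fact \ref{fact:wvfa}(3)). That is the paper's route: by compactness one reduces to realizing in $K$ a single quantifier-free formula $\varphi$ over $F$ satisfied by a simple root $a$ of a twisted difference polynomial $f$ over $F$; completeness plus model completeness of $\TVFA_{F}$ produce a solution in the inversive hull of $K$; and the solution descends to $K$ because simple roots of $f$ over $K$ and over its inversive hull coincide (Theorem \ref{thm:main-thm-on-tsep}(2)). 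You would need to replace your last paragraph with an argument of this kind; the hull's universal property alone cannot close the gap.
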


\begin{proof}
If an embedding exists, then it automatically renders $K$ transformally
separable over the image, using Theorem \ref{thm:main-thm-on-tsep}(3).
It is therefore enough to find an embedding. By compactness, this
means that if $a\in E$ is a finite tuple and $\varphi$ is a quantifier
free formula in the language of transformal valued fields with parameters
in $F$ such that $\models\varphi\left(a\right)$, then $\varphi$
has a solution in $K$.

For simplicity of notation we assume that $\varphi$ has a single
free variable. Replacing $K$ by the relative transformal separable
algebraic closure of $F$ inside, we may assume that $K$ is transformally
separably algebraic over $F$. So let $E$ be a model of $\VFE$
transformally separably algebraic over $F$ and let $a\in E$ be an
element such that $\models\varphi\left(a\right)$. Then $a$ is a
simple root of a twisted difference polynomial $fx\in F\left[x^{\mathbf{N}\left[\frac{\sigma}{p^{\infty}}\right]}\right]$;
strengthening $\varphi$ we may assume that all solutions of $\varphi$
are simple roots of $fx$. 

Since $F$ is strictly amalgamative, the
theory of models of $\widetilde{\VFA}$ over $F$ is complete (Fact \ref{fact:wvfa}); the inversive hull of $K$ is a model of $\TVFA$, 
so by model completeness, the formula $\varphi$ has a solution
over the inversive hull of $K$. But the simple roots of $f$ over $K$
and over its inversive hull coincide, using Theorem \ref{thm:main-thm-on-tsep}, thus there is a solution in $K$.
\end{proof}

\section{Genericity in a Ball}

In this section we fix a large enough saturated model $\mathbf{K}$ of $\widetilde{\VFA}$; all transformal valued fields in this section live inside $\mathbf{K}$.

\subsubsection{The Model Theoretic Settings}

We work in the language of transformal valued fields, that is, the
expansion of the language $\mathcal{L}_{\VF}$ by a unary function
symbol for the action of $\sigma$. Let $\VF$ denote the valued field
sort. By a \emph{definable set} we mean definable in the language
of transformal valued fields.

\subsubsection{Split and Definable Balls}
Let $B\subseteq\VF=\VF^{1}$ be a definable set, in one variable.
We say that $B$ is a \emph{closed ball} if $B=a+\gamma\mathcal{O}$
for some $a\in\VF$ and $\gamma\in\Gamma$. We say that $B$ is an
\emph{open ball} if $B=\VF$ or $B=a+\gamma\mathcal{M}$ for some
$\gamma\in\Gamma$ and $a\in\VF$. By a \emph{possibly degenerate
ball} we mean either a ball or a singleton. Finally, by an $\infty$-definable
ball we mean the intersection of a (small) family of balls linearly
ordered under reverse inclusion, regarded as a partial quantifier
free type.

Let $F\subseteq\mathbf{K}$ be a small model of $\VFE$ contained
in $\mathbf{K}$. Let $B\subseteq\VF$ be a ball. We say that $B$
is \emph{definable} over $F$ if it is definable with parameters
in $F$ and in the language of transformal valued fields or equivalently
if it is setwise fixed by automorphisms of $\mathbf{K}$ over $F$.
If $B$ is an $\infty$-definable ball then we say that it is $\infty$-definable
over $F$ if it is the intersection of balls definable over $F$.
\begin{rem}
Let $F$ be a model of $\VFE$ which is separably algebraically
closed and transformally Henselian. The theory of models of $\widetilde{\VFA}$
over $F$ is complete using Fact \ref{fact:wvfa}, so the notion of a ball definable over $F$
makes sense intrinsically, independently of the choice of $\mathbf{K}$. 
In order to avoid ambiguity we will only discuss definable balls over
a model of $\VFE$ which is separably algebraically closed and
transformally Henselian.
\end{rem}

Let $B\subseteq\VF$ be a ball. We say that $B$ is \emph{split}
over $F$ if it takes the form $a+\gamma\mathcal{O}$ for some $a\in F$
and $\gamma\in\Gamma_{F}$. If $B$ is split over $F$ then it is
evidently definable over $F$ but the converse is of course false.
Nevertheless we have the following:
\begin{prop}
\label{prop:over-tilde-wvfe-balls-are-triv}Let $F$ be a model of
$\widetilde{\VFE}$. Then all $F$-definable balls are split
over $F$.
\end{prop}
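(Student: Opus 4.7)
The plan is to exploit the fact that $F$ lies dense in its inversive hull $F^{\sigma^{-\infty}}$, which by Proposition \ref{P:pure insep inside twvfe}(1) is a model of $\TwVFA$, together with the fact that $\Gamma_F$ is already a model of $\TwOGA$ (both by Remark \ref{rem-on-val-gp-and-res-of-twvfe}). Fix an $F$-definable ball $B$; I will focus on the closed ball case $B=c+\gamma\mathcal{O}$, since the open ball case is handled identically and $B=\VF$ is trivial.

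The first step will be to show that the radius $\gamma$ belongs to $\Gamma_F$. The element $\gamma$ is definable from the set $B$ alone, for instance as $\inf\{v(x-y):x,y\in B\}$, hence $\TwVFA$-definable over $F$. Invoking the stable embeddedness of the value group in $\TwVFA$ with induced structure $\TwOGA$ (Fact \ref{fact:wvfa}(2)) places $\gamma$ in $\mathrm{dcl}_{\TwOGA}(\Gamma_F)$. Since $\Gamma_F$ is a model of $\TwOGA$, it is its own $\TwOGA$-definable closure, so $\gamma\in\Gamma_F$.

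The second step will be to produce a center for $B$ lying in $F^{\sigma^{-\infty}}$. Being a ball, $B$ is nonempty in $\mathbf{K}$, so $\mathbf{K}\models\exists x(x\in B)$. As $F^{\sigma^{-\infty}}\models\TwVFA$ and $\TwVFA$ is model complete (Fact \ref{fact:wvfa}(1)), the inclusion $F^{\sigma^{-\infty}}\hookrightarrow\mathbf{K}$ is elementary, so the same existential sentence already holds in $F^{\sigma^{-\infty}}$, producing a point $c_0\in B\cap F^{\sigma^{-\infty}}$ and an equality $B=c_0+\gamma\mathcal{O}$.

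In the final step, using density of $F$ in $F^{\sigma^{-\infty}}$, I would find $c\in F$ with $v(c-c_0)>\gamma$; then $c\in B$, so $B=c+\gamma\mathcal{O}$, which is the desired splitting over $F$. The main obstacle I anticipate is the radius step: making sure $\gamma$ really lies in $\Gamma_F$ and not merely in its transformal divisible hull. This is precisely what the tame transformal divisibility axiom in the definition of $\TwVFE$ is designed to buy us; it forces $\Gamma_F$ itself to be a full model of $\TwOGA$, and hence definably closed.
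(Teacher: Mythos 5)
Your proof is correct and follows essentially the same route as the paper's: both arguments obtain a point of the ball in the inversive hull $F^{\sigma^{-\infty}}$ (a model of $\TwVFA$, elementary in $\mathbf{K}$ by model completeness) and then descend to $F$ using the density of $F$ in its inversive hull from Lemma \ref{lem:deep-ramification}. The paper handles the radius implicitly via $\Gamma_F=\Gamma_{F^{\sigma^{-\infty}}}$ (a consequence of the same density), whereas you route it through stable embeddedness of $\Gamma$ and the fact that $\Gamma_F\models\TwOGA$; this is a minor variation, not a different method.
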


\begin{proof}
Let us assume first that $F$ is inversive. Then $F$ is a model of
$\widetilde{\VFA}$; by model completeness every $F$-definable
ball has a point over $F$, and in particular, all $F$-definable balls
are split over $F$.  For the general case, let $E$ be the inversive
hull of $F$. Then every element of $E$ is definable over $F$, so
there is no distinction between $F$-definable balls and $E$-definable
balls. Moreover, by Lemma \ref{lem:deep-ramification}, the field
$F$ lies dense in $E$; so a ball is split over $F$ if and only
if it is split over $E$; this reduces us to the inversive case already
dealt with, and the proof is finished.
\end{proof}

\subsubsection{Genericity in a Ball}
\begin{lem}
\label{lem:closed-balls-are-compact}Let $F$ be a model of $\VFE$ contained in $\mathbf{K}$. Let $B\subseteq\VF$
be a closed ball definable over $F$. Then $B$ enjoys the following
compactness property. Let $fx\in F\left[x^{\mathbf{N}\left[\sigma\right]}\right]$
be a difference polynomial; then the function $x\mapsto vfx$ admits
a minimum on $B$.
\end{lem}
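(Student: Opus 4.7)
The plan is to prove the lemma by Taylor-expanding $fx$ around a center of $B$ and then attaining the resulting lower bound via a residue-field genericity argument in the saturated ambient $\mathbf{K}$.

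Pick any $a \in B \subseteq \mathbf{K}$, so $B = a + \gamma\mathcal{O}$ with $\gamma \in \Gamma(\mathbf{K})$, and view the difference polynomial $f$ as an ordinary polynomial $P(X_0,\ldots,X_n) \in F[X_0,\ldots,X_n]$ evaluated at $X_i = x^{\sigma^i}$. Taylor-expanding $P$ around $(a, a^\sigma, \ldots, a^{\sigma^n})$ and substituting $x = a + y'$ gives a finite expansion
\[ fx = \sum_\nu c_\nu(a)\, y'^{\alpha(\nu)} \]
where $\nu$ ranges over $\mathbf{N}^{n+1}$, $\alpha(\nu) = \sum_i \nu_i \sigma^i \in \mathbf{N}[\sigma]$, and the $c_\nu(a) \in F(a) \subseteq \mathbf{K}$ are the ordinary Taylor coefficients (well-defined integer-linear combinations of $F$-coefficients and products of $a^{\sigma^i}$, with no division, so valid in any characteristic). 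For $x \in B$ we have $vy' \geq \gamma$; since $\sigma$ acts order-preservingly on $\Gamma(\mathbf{K})$, this gives $\alpha(\nu)\cdot vy' \geq \alpha(\nu)\cdot\gamma$ termwise, and hence by the ultrametric inequality
\[ vfx \geq \delta_0 := \min_\nu\bigl(vc_\nu(a) + \alpha(\nu)\cdot\gamma\bigr), \]
a minimum over finitely many elements, which lies in $\Gamma(\mathbf{K})$.

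To realize $\delta_0$, choose $\pi \in \mathbf{K}$ with $v\pi = \gamma$ and parametrize $y' = \pi u$ for $u \in \mathcal{O}$. The expansion rewrites as $fx = \sum_\nu \beta_\nu\, u^{\alpha(\nu)}$ with $\beta_\nu = c_\nu(a)\,\pi^{\alpha(\nu)}$ and $v\beta_\nu \geq \delta_0$. Let $M = \{\nu : v\beta_\nu = \delta_0\}$, which is nonempty and finite, and fix $\nu_0 \in M$. Then $fx/\beta_{\nu_0} \in \mathcal{O}$, and its residue modulo $\mathcal{M}$ equals
\[ \bar g(\bar u) = \sum_{\nu \in M} \overline{\beta_\nu/\beta_{\nu_0}}\,\bar u^{\alpha(\nu)} \in k\bigl[\bar u^{\mathbf{N}[\sigma]}\bigr]. \]
This is a nonzero difference polynomial in one transformal variable, because the map $\nu \mapsto \alpha(\nu)$ is injective on $\mathbf{N}^{n+1}$, so the listed monomials are formally distinct, and each coefficient is a unit residue. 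The residue field $k(\mathbf{K})$ is the residue field of a saturated model of $\widetilde{\VFA}$, hence a saturated model of $\ACFA$ by Fact \ref{fact:wvfa}, so it contains elements transformally transcendental over any small difference subfield; pick such a $\bar u$ (or more modestly any $\bar u$ not annihilating the nonzero $\bar g$), lift to $u \in \mathcal{O}^\times$, and set $x = a + \pi u \in B$. Then $v(fx/\beta_{\nu_0}) = 0$, so $vfx = \delta_0$, and the minimum is attained.

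The only step where anything is really happening is the noncollapsing of the leading-order residue $\bar g$: the injectivity of $\nu \mapsto \alpha(\nu)$ ensures that distinct terms $\beta_\nu u^{\alpha(\nu)}$ of valuation exactly $\delta_0$ contribute distinct monomials in $\bar u$ and therefore cannot cancel. Once this combinatorial point is isolated, both the lower bound and its attainability reduce to familiar valuation-theoretic bookkeeping and a saturation argument in the residue sort.
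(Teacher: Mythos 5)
Your proof is correct and is essentially the paper's argument: the Taylor expansion around a center with the substitution $y'=\pi u$ is exactly the paper's affine reparametrization of $B$ onto $\mathcal{O}$, the division by $\beta_{\nu_0}$ is its normalization of $f$ to have integral coefficients with one a unit, and attaining the minimum at a point whose residue is transformally transcendental (so that the nonzero leading difference polynomial $\bar g$ does not vanish) is the same final step. The only difference is presentational: the paper first enlarges $F$ to a model of $\TVFA$ so that the ball splits and the substituted polynomial again has coefficients in $F$, whereas you keep $F$ fixed and carry the coefficients $c_\nu(a)$, $\beta_\nu$ along explicitly inside $\mathbf{K}$.
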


\begin{proof}
If $F$ is increased then the conclusion descends; so assume that
$F$ is a model of $\widetilde{\VFA}$. Then the ball $B$ is
split over $F$, hence there is an affine map $\varphi$ with
coefficients in $F$ which carries $B$ to the unit ball $\mathcal{O}$.
Replacing $f$ by $f\circ\varphi$ we may therefore assume that $B=\mathcal{O}$.
If $f=0$ then the claim is trivial so assume that $f\neq0$. Replacing
$f$ by a scalar multiple shifts the function $x\mapsto vfx$ by an
additive constant, so we may assume that $f$ has coefficients in
$\mathcal{O}_{F}$ and at least one of the coefficients is of valuation
zero. Then $vfa\geq0$ for all $a\in\mathcal{O}$, with equality in
the case where the residue class of $a$ is transformally transcendental
over $k_{F}$.
\end{proof}

\begin{defn}
Let $F$ be a model of $\VFE$ which is separably algebraically
closed and transformally Henselian. Let $B$ be an $F$-definable
ball or a properly infinite intersection. Let $a$ be an element of
$B$.
\begin{enumerate}
    \item Let us assume that $B$ is closed. Then $a$ is \emph{generic}
in $B$ over $F$ if for all difference polynomials $fx\in F\left[x^{\mathbf{N}\left[\sigma\right]}\right]$,
the quantity $vfa$ is the generic value of $vfx$ on the ball $B$ (i.e, the minimum, which is attained as explained in in Lemma \ref{lem:closed-balls-are-compact}).

\item Let us assume that $B$ is open or a properly infinite intersection;
then $a$ is \emph{generic} in $B$ over $F$ if it avoids $U$,
whenever $U\subseteq B$ is a (possibly degenerate) closed subball
definable over $F$.
\end{enumerate}
\end{defn}

\begin{prop}
\label{prop:generic-type-of-a-ball-is-complete-and-amalgamative}Let
$F$ be a model of $\VFE$ which is separably algebraically
closed and transformally Henselian. Let $B$ be an $F$-definable
ball or a properly infinite intersection. Then genericity in $B$
over $F$ determines a complete, consistent quantifier free type over
$F$ in the language of transformal valued fields, a realization of
which is transformally transcendental over $F$. Furthermore let $a$
be generic in $B$ over $F$ and set $E=F\left(a^{\mathbf{N}\left[\frac{\sigma}{p^{\infty}}\right]}\right)$;
then $E$ is a model of $\VFE$ which is a strict amalgamation basis.
\end{prop}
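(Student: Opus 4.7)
The plan is to reduce to Fact \ref{fact:wvfa}(6) applied over the inversive hull. Set $E = F^{\sigma^{-\infty}}$. Since $F$ is a model of $\FF$ that is separably algebraically closed and transformally Henselian, the field $E$ is an algebraically closed, algebraically Henselian, inversive model of $\wVFA$ (combining Proposition \ref{prop:amalgamation-for-FE} Step 3, Remark \ref{rem:inversive-twisted-is-perfect} to get perfection, and Lemma \ref{lem:thens-insep-invariant} to get transformal Henselianity); moreover $k_E$ and $\Gamma_E$ are the inversive hulls of $k_F$ and $\Gamma_F$ respectively.

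I would first translate \emph{generic in $B$ over $F$} into the hypotheses of Fact \ref{fact:wvfa}(6) over $E$. For a closed ball, after an affine split to $\mathcal{O}$, genericity over $F$ says the residue is transformally transcendental over $k_F$; a direct $\sigma^N$-shift argument---applying $\sigma^N$ to any putative annihilating difference polynomial with coefficients in $k_E$ to clear the $\sigma^{-n}$ expressions in the coefficients---shows this is equivalent to transformal transcendence over $k_E$. For an open ball or properly infinite intersection, genericity over $F$ translates analogously to the condition that $v(a-c)$ realizes a cut outside $\Gamma_F \otimes \Qs$, equivalently outside $\Gamma_E \otimes \Qs$ (the two divisible hulls coincide, since $\Qs$ absorbs $\sigma^{-1}$). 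Consistency of the generic partial type and transformal transcendence of any realization over $F$ both follow from saturation of $\mathbf{K}$ together with this translation.

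For completeness, by Fact \ref{fact:wvfa}(6) the translated genericity condition determines the quantifier free type of $a$ over $E$ in the language of transformal valued fields; since any two realizations of the generic type over $F$ are also generic over $E$ by the previous step, they share a quantifier free type over $E$, and \emph{a fortiori} over $F$. Lemma \ref{lem:transformally-transcendental-FE} then immediately gives that $E' = F(a^{\mathbf{N}[\sigma/p^\infty]})$ is a model of $\FF$, hence of $\VFE$ as a transformal valued subfield of $\mathbf{K}$.

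For the final assertion, observe that the perfect, inversive, algebraically Henselian hull of $E'$ agrees, up to a purely inseparable algebraic extension, with the inversive, algebraically Henselian hull of $E(a^{\mathbf{N}[\sigma]})$: the inversive closure absorbs the passage from $F$ to $E$, while the algebraically Henselian closure absorbs the $p$-th power roots internal to $\mathbf{N}[\sigma/p^\infty]$. By Fact \ref{fact:wvfa}(6) the latter admits no nontrivial finite $\sigma$-invariant Galois extension, and purely inseparable extensions are invisible to Galois theory, so $E'$ is a strict amalgamation basis. The main technical obstacle I foresee is keeping the translation of genericity in the open-ball and infinite-intersection cases honest, since one must verify that the cut in $\Gamma_F$ extends unambiguously to a cut outside $\Gamma_E \otimes \Qs$; the closed-ball case is transparent through the residue map.
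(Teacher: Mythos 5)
Your overall strategy (pass to the perfect inversive hull and quote the results of \cite{dor-hrushovskiVFA}) is the same as the paper's, but there is a genuine gap at the very first reduction: you assume the closed ball $B$ can be carried to $\mathcal{O}$ by an affine map, i.e.\ that $B$ is \emph{split} over $F$ or over its inversive hull. This is false in general. An $F$-definable ball only has its radius in $\mathrm{dcl}(F)\cap\Gamma$, which for these structures is the $\Qs$-span of $\Gamma_F$ rather than $\Gamma_F$ itself (e.g.\ the ball $\{x : v(x^\sigma)\geq\gamma\}$ has radius $\gamma/\sigma$, typically not in $\Gamma_F$ or $\Gamma_{F^{\sigma^{-\infty}}}$), and $B$ need not contain a point of $F$; the paper states explicitly that "split over $F$" implies "definable over $F$" but that the converse is false. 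This is precisely obstacle (2) that the paper's proof isolates: the generic types treated in \cite[Corollary 6.19]{dor-hrushovskiVFA} are generics of \emph{split} balls. The paper's fix, which your argument is missing, is to observe via Remark \ref{rem:strict-amalgamation-basis-vs-complete-vfa-type} that both the completeness claim and the strict-amalgamation-basis claim amount to completeness of certain quantifier-free $\TwVFA_n$-types, and that completeness of a type can be checked after base extension; one then replaces $F$ by a sufficiently saturated model of $\TwVFA$ over which $B$ does split, and only then quotes the split-ball result.

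A second, related problem is your translation of genericity in an open ball or properly infinite intersection to "$v(a-c)$ realizes a cut outside $\Gamma_F\otimes\Qs$". Besides presupposing a center $c$ in the base, this does not capture the general case: a properly infinite intersection can be an immediate (spherically incomplete) chain whose generic is not described by any single valuation cut, and Fact \ref{fact:wvfa}(6) as stated only covers the two special generics (residue transformally transcendental, or $vx\notin\Gamma\otimes\Qs$). The paper instead invokes the stronger \cite[Corollary 6.19]{dor-hrushovskiVFA}, which treats generics of arbitrary split balls and $\infty$-definable intersections; your argument needs that statement, not Fact \ref{fact:wvfa}(6), and needs it only after the base-extension step above. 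The parts of your proposal that do work --- the equivalence of transformal transcendence over $k_F$ and over its inversive hull, the identity $\Gamma_F\otimes\Qs=\Gamma_{F^{\sigma^{-\infty}}}\otimes\Qs$, and the reduction of the strict-amalgamation-basis property to the perfect inversive hull --- match the paper's handling of its obstacle (1).
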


\begin{proof}
By Remark \ref{lem:transformally-transcendental-FE}, the field $E$ is a model of $\VFE$. Now \cite[Corollary 6.19]{dor-hrushovskiVFA} essentially gives the result. However, it is not directly quotable in the present setting for the following reasons:
\begin{enumerate}
    \item The paper \cite{dor-hrushovskiVFA} deals exclusively with the case of models of $\VFA$ which are perfect and inversive, whereas here we deal with models of $\VFE$. Thus it does not apply to $E$ or $F$ directly, but rather to their perfect, inversive hulls.
    \item The definition of a "generic type" in \cite[Definition 6.13]{dor-hrushovskiVFA} is slightly more restrictive compared to the definition of generic considered here. More precisely, a generic type in the sense of \cite{dor-hrushovskiVFA} corresponds to the generic type of a \emph{split} ball, while the statement here refers to definable (or $\infty$-definable) balls.
\end{enumerate}

Now (1) is completely formal: the field $E$ is a strict amalgamation basis if and only if the inversive, perfect hull of $E$ is a strict amalgamation basis. For (2), using Remark \ref{rem:strict-amalgamation-basis-vs-complete-vfa-type} (and the notation there), the statement here refers to the completeness of a type in the theory $\TwVFA$ (and the various reducts $\TwVFA_n$ for $0 < n \in \mathbf{N}$ varying). The completeness of a type can be checked after base extension; replacing $F$ by a sufficiently saturated model of $\TwVFA$ we can assume the ball $B$ splits over $F$. In this situation,  \cite[Corollary 6.19]{dor-hrushovskiVFA} applies.\qedhere

\end{proof}

\begin{prop}
\label{prop:over-tilde-wvfe-transcendental-is-generic-in-ball}Let
$F$ be a model of $\widetilde{\VFE}$. Let $E$ be a model
of $\VFE$ transformally separable over $F$. Let us assume
that $k_{F}$ is transformally separably algebraically closed in $k_{E}$;
then $F$ is transformally separably algebraically closed in $E$.
In fact every element $a\in E$ not in $F$ is generic over $F$ in
an $F$-definable ball or a properly infinite intersection.
\end{prop}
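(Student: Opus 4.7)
The first assertion follows from the second: if $a \in E \setminus F$ were transformally separably algebraic over $F$, then by the second assertion $a$ would be a generic realization of the quantifier free type of an $F$-definable ball or properly infinite intersection, and by Proposition~\ref{prop:generic-type-of-a-ball-is-complete-and-amalgamative} every such realization is transformally transcendental over $F$.

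To prove the second assertion, fix $a \in E \setminus F$ and let $B$ be the intersection of all $F$-definable balls, open or closed, containing $a$. Then $B$ is either a (possibly degenerate) ball or a properly infinite intersection, and I claim $a$ is generic in $B$; the singleton case does not arise since it would force $a \in F$ by a direct argument (the centers of shrinking closed $F$-balls form a Cauchy sequence in $F$ approaching $a$, which the strict transformal Henselianity of $F$ forces to converge to $a$ inside $F$). When $B$ is an open ball or a properly infinite intersection, genericity of $a$ in $B$ is automatic: any proper $F$-definable closed subball $U \subsetneq B$ containing $a$ would lie in the defining family of $B$, forcing $B \subseteq U \subsetneq B$, a contradiction.

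The main case is that $B$ is a closed ball. By Proposition~\ref{prop:over-tilde-wvfe-balls-are-triv} the ball $B$ is split over $F$; an $F$-definable affine change of variable reduces to $B = \mathcal{O}$ and $a \in \mathcal{O}_E$. If $a$ is not generic in $\mathcal{O}$ then some difference polynomial $f \in F[x^{\mathbf{N}[\sigma]}]$, which can be normalized to have coefficients in $\mathcal{O}_F$ with at least one unit, satisfies $vfa > 0$; reducing modulo $\mathcal{M}$ yields a nonzero $\bar f \in k_F[\bar x^{\mathbf{N}[\sigma]}]$ with $\bar f(\bar a) = 0$, so $\bar a \in k_E$ is transformally algebraic over $k_F$. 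Now $k_F$ is a model of $\SCFE$ by the definition of $\TwVFE$ and is by hypothesis transformally separably algebraically closed in $k_E$; the structure theorem (Theorem~\ref{thm:main-thm-on-tsep}) then forces the extension $k_F(\bar a^{\mathbf{N}[\sigma]})/k_F$ to be purely transformally inseparably algebraic, producing an integer $n \ge 0$ with $\bar a^{\sigma^{n}} \in k_F$.

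The hard part — and the reason the argument is more delicate in positive characteristic — is that $n$ can be strictly positive, so we must lift a residue equation through the Frobenius twist. Choose $c \in \mathcal{O}_F$ with $\bar c = \bar a^{\sigma^n}$. Because $F$ is deeply transformally ramified (Lemma~\ref{lem:deep-ramification}), $F$ lies dense in its inversive hull, so we may select $b \in F$ with $v(b - c^{\sigma^{-n}}) > 0$; then $v(b^{\sigma^n} - c) > 0$ and hence $v((a - b)^{\sigma^n}) = v(a^{\sigma^n} - b^{\sigma^n}) > 0$. The $\omega$-increasing and order-preserving action of $\sigma^n$ on $\Gamma$ upgrades this to $v(a - b) > 0$, placing $a$ in the proper $F$-definable open subball $b + \mathcal{M}$ of $B = \mathcal{O}$ and contradicting the minimality of $B$.
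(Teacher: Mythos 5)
Your overall skeleton matches the paper's: reduce to the case where the intersection $B$ of all $F$-definable balls containing $a$ is a closed ball, split it over $F$ via Proposition \ref{prop:over-tilde-wvfe-balls-are-triv}, normalize to $B=\mathcal{O}$, and derive a contradiction from the residue class $\overline{a}$ being transformally algebraic over $k_F$. But the key step of your main case has a genuine gap. You claim that because $k_F$ is transformally separably algebraically closed in $k_E$, Theorem \ref{thm:main-thm-on-tsep} forces $k_F\left(\overline{a}^{\Ns}\right)$ to be purely transformally inseparable over $k_F$, yielding $\overline{a}^{\sigma^{n}}\in k_F$ for some $n$. The theorem gives nothing of the sort: it identifies the relative transformal separable algebraic closure with the set of simple roots of twisted difference polynomials, but a transformally algebraic element that is not a simple root need not satisfy $\overline{a}^{\sigma^{n}}\in k_F$ (there is no general separable/purely-inseparable factorization in the transformal setting), and in any case the theorem concerns extensions of models of $\FF$, whereas $k_E$ is not known to be one (cf. Remark \ref{rem-henselian-hull-not-wVFE}). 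The fact that rescues the argument --- and which you never invoke --- is that $k_F$ is \emph{inversive}: $F$ is deeply transformally ramified (Lemma \ref{lem:deep-ramification}), hence dense in its inversive hull, hence $k_F=k_F^{\sigma^{-\infty}}$ (Remark \ref{rem-on-val-gp-and-res-of-twvfe}). Every extension of an inversive difference field is transformally separable, so ``transformally algebraic over $k_F$'' already means ``transformally separably algebraic over $k_F$''; the hypothesis then gives $\overline{a}\in k_F$ outright, $a$ lands in a proper residue-class subball, and your entire $n>0$ detour --- which is where the work of your last paragraph goes --- never arises. This is exactly how the paper argues, via Remark \ref{rem:triv-on-generic}.

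A second, lesser, gap is your dismissal of the degenerate case. Strict transformal Henselianity is a statement about roots of twisted difference polynomials and does not imply any form of spherical completeness, so a pseudo-Cauchy sequence of centers in $F$ has no reason to converge inside $F$. The correct argument is that the only elements of the ambient model definable over $F$ are those of the inversive hull $F^{\sigma^{-\infty}}$ (the latter being a model of $\TVFA$), while $E\cap F^{\sigma^{-\infty}}=F$ by transformal separability of $E$ over $F$; hence $a$ lies in no degenerate $F$-definable ball. Note that you also need this observation, and do not supply it, when verifying genericity in an open ball or a properly infinite intersection, since there $a$ must avoid all \emph{possibly degenerate} closed $F$-definable subballs, not only the proper nondegenerate ones your containment argument handles.
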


\begin{rem}
\label{rem:triv-on-generic}In the situation of Proposition \ref{prop:over-tilde-wvfe-transcendental-is-generic-in-ball}. Recall that we work in an ambient $\TVFA$; so "definable" means in the sense of this ambient model.
Fix an element $a\in E$ not in $F$. Note that $a$ is not definable
over $F$, i.e, it is not in any degenerate $F$-definable subball:
the only elements definable over $F$ and not in $F$ are the elements
of the inversive hull (as the latter is a model of $\TVFA$), whereas by assumption the field $E$ is transformally
separable over $F$. Also, since $F$ lies dense in its inversive
hull, the difference field $k_{F}$ is inversive, so the assumption
implies that no element of $k_{E}$ is transformally algebraic over
$k_{F}$, unless it already lies in $k_{F}$.
\end{rem}

\begin{proof}
Let $\mathcal{B}$ be the family of all $F$-definable balls containing
$a$, and $B$ the intersection of the members of $\mathcal{B}$.
If $\mathcal{B}$ has a maximal element under reverse inclusion, which
is open, or else no minimal element under reverse inclusion, then
$a$ is generic over $F$ in $B$, using Remark \ref{rem:triv-on-generic}.
We may therefore assume that $B$ is closed. Since $F\models\widetilde{\VFE}$,
the ball $B$ is split over $F$, so without loss we have $B=\mathcal{O}$.
By Remark \ref{rem:triv-on-generic} the residue class of $a$ must
then be transformally transcendental over $k_{F}$, which precisely
means that $a$ is generic over $F$ in $\mathcal{O}$.
\end{proof}

\section{The Model Companion}\label{S:model companion}

The purpose of this section is to prove that the theory $\TwVFEe$ is the model companion of $\wVFEe$ in an expansion of the language of transformal valued fields by the transformal $\lambda$-functions.

\subsection{Density of Generic Types and Deep Ramification}

In this subsection we prove a key statement towards the proof of model
completeness of $\widetilde{\VFE_{e}}$. Before we turn to the
statement and its proof let us explain the motivation. 

Fix a saturated model $K$ of $\widetilde{\VFE_{e}}$. Let us
consider definable subsets of $K$ in one variable. There are at least
two notions of largeness for such definable sets. The first notion
of largeness comes from the valuation topology: if $B\subseteq K$
is an open ball, then a definable subset $X$ of $B$ is large if
it is not contained in any proper definable subball. The second notion
of largeness comes from the failure of $\sigma$ to be surjective:
the field $K$ is an infinite dimensional vector space over $K^{\sigma}$,
so a definable subset of $K$ is large if it is not contained in any
finite dimensional $K^{\sigma}$-subspace of $K$. 

The main observation is that these two notions of genericity are not
in conflict with each other, in the sense made precise below:
\begin{prop}
\label{prop:density-of-generic-types}
Let $K$ be a saturated model
of $\widetilde{\VFE_{e}}$. Let $F\subseteq K$ be a small model
of $\VFE$ contained in $K$, separably algebraically closed
and transformally Henselian. Let us assume that $K$ is almost transformally
separable over $F$. Let $B$ be an $F$-definable ball or a properly
infinite intersection of $F$-definable balls. Let $E=F\left(a^{\mathbf{N}\left[\frac{\sigma}{p^{\infty}}\right]}\right)$
where $a$ is generic over $F$ in $B$.
\begin{enumerate}
    \item Then there is an embedding of $E$ in $K$ over $F$ so as to
render $K$ almost transformally separable over the image.
\item Let us assume that $K$ is transformally separable over $F$ outright
and that the perfect hull of $K$ does not split over $F$; then there
is an embedding of $E$ in $K$ over $F$ so as to render $K$ transformally
separable over the image.
\end{enumerate}
\end{prop}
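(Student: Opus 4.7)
The plan is to produce, for each part, an element $a' \in K$ that realizes the complete generic type of $B$ over $F$ and satisfies an additional transcendence condition. Given such $a'$, the completeness of the generic type (Proposition \ref{prop:generic-type-of-a-ball-is-complete-and-amalgamative}) extends $a \mapsto a'$ to an embedding $E \hookrightarrow K$ over $F$, while the transcendence condition --- algebraic transcendence of $a'$ over $M := F \otimes_{F^{\sigma \cdot p^{-\infty}}} K^{\sigma \cdot p^{-\infty}}$ in part (1), and $a' \notin F \otimes_{F^p} K^p$ in part (2) --- yields the desired (almost) transformal separability of $K$ over $F(a'^{\Nsphul})$ via the criteria of Proposition \ref{P:crieria-for-tsep-and-almost-tsep}. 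The existence of $a'$ reduces, via saturation of $K$, to the finite satisfiability in $K$ of the combined partial type consisting of ball-genericity conditions over $F$ together with polynomial non-vanishing conditions over $M$ (respectively linear non-vanishing conditions for part (2)).

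To establish finite satisfiability, I would first amalgamate $K$ and $E$ over $F$ using Proposition \ref{prop:lin-disjoint-amalgamation}: since $F$ is separably algebraically closed and transformally Henselian, and $E/F$ is purely transformally transcendental while $K/F$ is (almost) transformally separable, there is a model $L$ of $\VFE$ into which both embed as linearly disjoint extensions over $F$, with $L$ transformally separable over $K$. Inside $L$, the image $\tilde a$ of $a$ is generic in $B$ over $F$; the linear disjointness of $K$ and $E$ over $F$, combined with $K$'s (almost) transformal separability over $F$ and base extension invariance (Proposition \ref{basic-properties-almost-tsep}(4) or Proposition \ref{basic-properties-of-tsep}(4)), renders $\tilde a$ algebraically transcendental over $M$ inside $L^{\sigma \cdot p^{-\infty}}$. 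For part (2), the assumption that $F \otimes_{F^p} K^p$ is a proper subset of $K$, combined with the analogous analysis, places $\tilde a$ outside this proper subset. Thus the combined conditions are witnessed inside $L$.

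To transfer back to $K$, I would extend $L^{\sigma^{-\infty}}$ to a model $L^*$ of $\TwVFA$ containing $K^{\sigma^{-\infty}}$; the embedding $K^{\sigma^{-\infty}} \hookrightarrow L^*$ is elementary by the model completeness of $\TwVFA_\lambda$ (Fact \ref{fact:wvfa}(1)). Hence the existential statement witnessed by $\tilde a$, whose parameters lie in $K^{\sigma \cdot p^{-\infty}} \subseteq K^{\sigma^{-\infty}}$, has a witness in $K^{\sigma^{-\infty}}$; the density of $K$ in its inversive hull (Lemma \ref{lem:deep-ramification}) --- available since $K \models \TwVFE$ --- allows one to perturb this witness into $K$ itself, as all the relevant conditions (membership in $B$, avoidance of finitely many closed subballs, polynomial non-vanishing) are open in the valuation topology. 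Hence the combined type is finitely satisfiable in $K$ and, by saturation, realized. The main obstacle is this last descent step, which crucially relies on the deep transformal ramification of $K$ to bring a witness from the inversive hull down to $K$ while simultaneously preserving both the ball-genericity and the transcendence conditions.
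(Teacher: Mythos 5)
There is a genuine gap at the final step of your reduction. You propose to realize, by saturation of $K$, the combined partial type consisting of the ball-genericity conditions over $F$ together with all polynomial non-vanishing conditions over $M=F\otimes_{F^{\sigma\cdot p^{-\infty}}}K^{\sigma\cdot p^{-\infty}}$ (respectively all linear non-vanishing conditions over $F\otimes_{F^{p}}K^{p}$). But $M$ contains $K^{\sigma\cdot p^{-\infty}}$, which has the same cardinality as $K$; a saturated model of cardinality $\kappa$ only realizes finitely satisfiable types over parameter sets of cardinality strictly less than $\kappa$. (Compare: in a saturated algebraically closed field $K$, the type ``$x$ is transcendental over $K$'' is finitely satisfiable in $K$ but certainly not realized.) So finite satisfiability, which your amalgamation-plus-model-completeness-plus-density argument does essentially establish for each finite fragment, does not yield a realization of the whole type. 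The paper's proof is built precisely to avoid this: it observes that the set $X_{\lambda}$ of elements algebraically transcendental over $M$ is stable under affine translation with coefficients in $K^{\sigma}$, hence (being nonempty by Lemma \ref{lem:existence-of-generics}, whose proof itself requires a careful compactness argument rather than a naive one) is \emph{dense} in $K$ for the valuation topology by deep ramification; meanwhile the set $X_{v}$ of generics of $B$ over $F$ contains a nonempty open set once one reduces to $F\prec K$ and $B=\mathcal{O}$. A dense set meets a nonempty open set, and any point of the intersection works. This group-theoretic/topological argument is what replaces the illegitimate compactness step.

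A secondary issue: in part (1) you invoke Proposition \ref{prop:lin-disjoint-amalgamation} with $K$ only \emph{almost} transformally separable over $F$, whereas that proposition requires transformal separability of both extensions over $F$. This is repairable by first passing to perfect hulls (as in the proof of Proposition \ref{P:crieria-for-tsep-and-almost-tsep}), where almost transformal separability upgrades to transformal separability, but as written the hypothesis of the amalgamation lemma is not met. The rest of your transfer mechanism (elementarity of $K^{\sigma^{-\infty}}\hookrightarrow L^{*}$ by model completeness of $\TwVFA$, followed by perturbation into $K$ using openness of the conditions and density of $K$ in its inversive hull) is sound for finite fragments, but it cannot be promoted to the full statement without the density argument above.
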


\begin{proof}
We prove the first point; the second point is similar, using the fact
that $K^{p}$ lies dense in $K$. By compactness, we may assume that
$B$ is an honest ball, rather than an $\infty$-definable ball. Moreover,
we deal with the case of a closed ball; the open case is similar. 

Using Proposition \ref{P:crieria-for-tsep-and-almost-tsep} our task
is to find an element $x\in K$ simultaneously obeying the following
two properties. First, we want $x$ to be algebraically transcendental
over $F\tensor_{F^{\sigma\cdot p^{-\infty}}}K^{\sigma\cdot p^{-\infty}}$.
Second, we want $x$ to be generic over $F$ in $B$. 

Increasing $F$ only makes these tasks more difficult, so assume $F$
is an elementary substructure of $K$. Then $F$ is a model of $\widetilde{\VFE}$,
and $K$ is transformally separable over $F$. By Proposition \ref{prop:over-tilde-wvfe-balls-are-triv},
the ball $B$ splits over $F$. We may therefore assume without loss
of generality that $B=\mathcal{O}$.

For the sake of the argument, say that $y\in K$ is \emph{topologically
generic} over $F$ if $vy=0$ and the residue class is transformally
transcendental over $k_{F}$, i.e, it is generic over $F$ in $\mathcal{O}$.
Moreover, say that $z\in K$ is \emph{$\lambda$-generic} over $F$
if it is algebraically transcendental over $F\tensor_{F^\sphul}K^{\sphul}$.
Finally, say that $x$ is \emph{generic} over $K$ if it is simultaneously
$\lambda$-generic and topologically generic over $F$. Our task,
then, is to find an element of $K$ which is generic over $F$.

\textbf{Claim 1.} The set $X_{v}$ of elements of $K$ which are topologically
generic over $F$ is nonempty. Indeed, it contains an open ball.

\textbf{Proof.} The fact that $X_{v}$ is nonempty follows from the
fact that $k$ is a saturated model of $\ACFA$. Given that it is
nonempty, it is by definition the union of residue classes, and hence open.

\textbf{Claim 2.} The set $X_{\lambda}$ of elements of $K$ which
are $\lambda$-generic over $F$ is nonempty. Indeed, it lies dense
in $K$ for the valuation topology.

\textbf{Proof.} Since $K$ fails to be inversive, it follows from
saturation that $X_{\lambda}$ is nonempty; see Lemma \ref{lem:existence-of-generics}.
It is clear that $X_{\lambda}$ is stable under affine translation
with coefficients in $K^{\sigma}$. By Lemma \ref{lem:deep-ramification},
the field $K^{\sigma}$ lies dense in $K$ for the valuation topology;
the same must then be true of $X_{\lambda}$.

Using the notation of the above claims, the set $X_{v}$ contains
an open ball, and the set $X_{\lambda}$ is dense for the valuation
topology; the intersection $X_{v}\cap X_{\lambda}$ is therefore nonempty.
By definition, every element of the intersection is generic over $F$,
whence the proposition.
\end{proof}

\subsection{The Model Companion} \label{ss: final}
\begin{prop}
\label{prop:every-wvfe-embed-in-tilde}Let $F$ be a model of $\VFE_{\leq e}$.
Then there is a model of $\widetilde{\VFE_{e}}$ transformally
separable over $F$.
\end{prop}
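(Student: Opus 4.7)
The plan is to construct the desired extension $K$ as a directed union of a countable chain of transformally separable extensions of $F$, each itself a model of $\VFE_{\leq e}$, by iteratively closing under the three defining axioms of $\widetilde{\VFE_e}$.

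As a preliminary step, I would arrange that $F$ has imperfection degree exactly $e$ and fails to be inversive. If $F$ is inversive, I adjoin a transformally transcendental element $x$ of valuation zero and transformally transcendental residue class, which by Lemma \ref{lem:transformally-transcendental-FE} yields a model of $\VFE$ that is non-inversive and automatically transformally separable over $F$ (every extension of an inversive base is transformally separable); combined with purely inseparable modifications via Lemma \ref{lem:purely inseparable FE} and further transcendental adjunctions, we may reach imperfection degree exactly $e$.

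I would then build a chain $F = F_0 \subseteq F_1 \subseteq F_2 \subseteq \cdots$ of models of $\VFE_{\leq e}$, each transformally separable over its predecessor, with bookkeeping that cofinally addresses the three required tasks. \emph{Strict transformal Henselization} is carried out using Proposition \ref{prop:th-rel} with $\widetilde{k_{F_n}}$ taken to be the residue field of a model of $\SCFE$ over $k_{F_n}$; the extension is transformally separably algebraic, so the Ershov invariant is preserved by Proposition \ref{tsep-alg-of-fe-preserves-ershov}. \emph{Tame transformal divisibility} of the value group is handled by adjoining, for each $c \in F_n^{\times}$ and each $\nu \in \Zsphul$ with nonzero constant term, a solution $y$ of $y^{\nu} = c$: the transformal derivative of $x^{\nu} - c$ at $y$ is (the constant term of $\nu$) times $y^{\nu - 1}$, which is nonzero, so $y$ is a simple root, and $F_n(y^{\Nsphul})$ is transformally separably algebraic over $F_n$ by Proposition \ref{prop-on-simple-roots}. \emph{Surjectivity of twisted additive operators} $\tau$ with $\tau' \neq 0$ is handled analogously by adjoining solutions $y$ of $\tau y = c$ for each $c \in F_n$, which are simple roots since $\tau'$ is a nonzero scalar. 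In each case the required solution is extracted from a fixed ambient model of $\widetilde{\VFA}$ containing the inversive hull of $F_n$, available by Fact \ref{fact:wvfa}.

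The main technical obstacle is to preserve both the imperfection degree bound and non-inversiveness throughout the iteration. Each of the three closure operations above is transformally separably algebraic, so by Proposition \ref{tsep-alg-of-fe-preserves-ershov} the Ershov invariant is preserved at each step; non-inversiveness is preserved automatically, since in any transformally separable extension $L$ of $F_n$ one has $L \cap F_n^{\sigma^{-1}} = F_n$ by linear disjointness, whence any $a \in F_n^{\sigma^{-1}} \setminus F_n$ yields an element $a^{\sigma} \in L \setminus L^{\sigma}$ by injectivity of $\sigma$. Passing to $K = \bigcup_n F_n$, transformal separability over $F$ survives by the finite character property of Proposition \ref{basic-properties-of-tsep}(3), all three axioms of $\widetilde{\VFE_e}$ hold by construction, and the bounds on Ershov invariant and non-inversiveness are preserved in the limit; hence $K$ is the desired model.
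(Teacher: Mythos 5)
Your overall strategy -- iteratively closing under the three axioms of $\widetilde{\VFE_e}$ along a chain of transformally separable extensions -- is genuinely different from the paper's argument, which instead embeds $F$ into an ambient model $\mathbf{K}$ of $\TVFA$, performs three preliminary adjustments (non-inversiveness, correct Ershov invariant, \emph{and nontrivial valuation}), and then simply takes the relative transformal separable algebraic closure of $F$ in $\mathbf{K}$, invoking Proposition \ref{P:pure insep inside twvfe}(2) to see that the result is a model of $\widetilde{\VFE}$ in one stroke. Unfortunately your version has a genuine gap at the very first closure step. You invoke Proposition \ref{prop:th-rel} to produce a strict transformal Henselization of $F_n$, but that proposition carries the hypothesis that the residue field $k_{F_n}$ is a model of $\FF$. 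For an arbitrary model of $\VFE_{\leq e}$ this can fail: Remark \ref{rem-henselian-hull-not-wVFE} exhibits models of $\VFE$ whose residue field is essentially an arbitrary transformally algebraic extension, and the same remark shows you cannot repair this by passing to the algebraic Henselization, since $\VFE$ is not closed under Henselian hulls. So the first application of your Henselization step is not justified, and it is precisely to sidestep this that the paper works with the relative transformal separable algebraic closure inside an ambient $\TVFA$ rather than with Proposition \ref{prop:th-rel} directly.

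Two further points. First, your preliminary step arranges non-inversiveness and the Ershov invariant but never arranges that the valuation is nontrivial; axiom (2) of Definition \ref{def:tsep-alg-closed} requires $\Gamma \neq 0$, and none of your three closure operations can create a nontrivial valuation from a trivial one (they are all transformally separably algebraic, or adjoin roots of elements that would all have valuation zero). The paper's Step 3 handles this by adjoining a transformally transcendental element of positive valuation. Second, your simple-root computation for tame transformal divisibility is wrong in positive characteristic: for $\nu$ with constant term divisible by $p$ (e.g. $\nu = p$, which does have nonzero constant term in $\Zsphul$), the transformal derivative of the relevant twisted difference polynomial is the constant term times a monomial, which vanishes in characteristic $p$; and adjoining an honest $p$-th root $c^{1/p}$ is a purely inseparable, hence not transformally separable, extension. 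One must instead realize $v(c)/p$ in the value group via a separable equation (Artin--Schreier-type tricks), which your argument as written does not do. The non-inversiveness-in-the-limit argument via $L \cap F_n^{\sigma^{-1}} = F_n$ is correct and is a nice observation, but the gaps above mean the construction does not go through as stated.
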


\begin{proof}
We work inside an ambient model $\mathbf{K}$ of $\TVFA$
over $F$. We gradually replace $F$ by a model $E$ of $\VFE_{\leq e}$
inside $\mathbf{K}$, transformally separable over $F$, so as to
make sure that all the axioms are satisfied. Basic closure properties
of the category of models of $\FF$ and elementary properties of transformally
separable extensions will be used without mention.

\textbf{Step 1.} We may assume that $F$ fails to be inversive. Indeed, in
this case the field $F$ must also be perfect. Let $x\in\mathbf{K}$
be transformally transcendental over $F$ and let $E=F\left(x^{\mathbf{N}\left[\sigma\right]}\right)^{\alg}$
be the displayed transformal valued field. Then $E$ is a perfect
model of $\VFE$ which by construction fails to be inversive.

\textbf{Step 2.} We may assume that $F$ has the correct Ershov invariant,
i.e that $\left[F\colon F^{p}\right]=p^{e}$. Indeed let $x=\left(x_{1},\ldots,x_{d}\right)\in\mathbf{K}$
be a tuple transformally transcendental over over $F$ and set $E=F\left(x^{\mathbf{N}\left[\frac{\sigma}{p^{\infty}}\right]}\right)$.
Then $x$ is a relative $p$-basis of $E$ over $F$ hence $\left[E\colon E^{p}\right]=\left[F\colon F^{p}\right]\cdot p^{d}$.
In this way we can increase the Ershov invariant which settles the
case $e<\infty$; in the case $e=\infty$ take a directed union.

\textbf{Step 3.} We may assume that $F$ is nontrivially valued. Indeed
let $x\in\mathbf{K}$ be transformally transcendental over $F$ with
$vx>0$. Then $E=F\left(x^{\mathbf{N}\left[\sigma^{\pm1},p^{\pm1}\right]}\right)$
is a model of $\VFE$ transformally separable over $F$; by
construction it is relatively perfect over $F$ hence a model of $\VFE_{\leq e}$
as well.
\\

Now let $E$ be the relative transformal separable algebraic closure
of $F$ in $\mathbf{K}$. Then $E$ is nontrivially valued and transformally
separably algebraically closed in $\mathbf{K}$; by Proposition \ref{P:pure insep inside twvfe},
the field $E$ is a model of $\widetilde{\VFE}$. By Proposition
\ref{tsep-alg-of-fe-preserves-ershov}, the perfect hull of $E$ splits
over $F$, so the imperfection degrees are the same; and of course
$E$ fails to be inversive if $F$ does. Thus $E$ is a model of $\widetilde{\VFE_{e}}$,
and we are done.
\end{proof}

\begin{thm}
\label{thm:wVFE-is-EC}Let $K$ be a saturated model of $\widetilde{\VFE_{e}}$.
Then $K$ is existentially closed for transformally separable extensions
of the same Ershov invariant in the following sense. Let $F\subseteq K$
be a small strictly amalgamative model of $\VFE_{\leq e}$ with $K$
transformally separable over $F$. Let us assume that we are given
a small model $E$ of $\VFE_{\leq e}$ which is transformally separable
over $F$. Then there is an embedding of $E$ in $K$ over $F$ so
as to render $K$ transformally separable over the image.
\end{thm}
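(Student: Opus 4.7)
The plan is to split the embedding $E\hookrightarrow K$ into a purely transformally transcendental part, handled by the density of generic types in a ball (Proposition \ref{prop:density-of-generic-types}), and a transformally separably algebraic part, handled by the existential closure result of Proposition \ref{prop:ec-tsep-alg}.

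First I would enlarge $F$ inside $K$ to be separably algebraically closed and transformally Henselian. The corresponding hull $F'$ exists and is a strict amalgamation basis by Lemma \ref{lem:amalgamation-over-strictly}, is transformally separably algebraic over $F$ (hence of the same Ershov invariant, by Proposition \ref{tsep-alg-of-fe-preserves-ershov}), and embeds in $K$ over $F$ with $K$ still transformally separable over it, via Proposition \ref{prop:ec-tsep-alg}. Relabelling, I may assume $F=F'$. By the finite character of transformally separable extensions (Proposition \ref{basic-properties-of-tsep}) and saturation of $K$, I reduce further to the case in which $E$ is a model of $\FF$ finitely generated over $F$. Theorem \ref{thm:main-thm-on-tsep}(5) then supplies a separating transformal transcendence basis $x=(x_{1},\ldots,x_{n})$ of $E$ over $F$, so that $E$ is transformally separably algebraic over $F_{0}:=F\left(x^{\mathbf{N}\left[\frac{\sigma}{p^{\infty}}\right]}\right)$.

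The heart of the proof is the inductive embedding of the transcendence basis into $K$. Assume by induction that $x_{1},\ldots,x_{i-1}$ have been embedded into $K$ as $y_{1},\ldots,y_{i-1}$, and let $F_{i-1}^{*}$ denote the separably algebraically closed transformally Henselian hull, inside $K$, of the image of $F(x_{1},\ldots,x_{i-1})^{\mathbf{N}\left[\frac{\sigma}{p^{\infty}}\right]}$. Iterating Proposition \ref{prop:generic-type-of-a-ball-is-complete-and-amalgamative} and Lemma \ref{lem:amalgamation-over-strictly}, the field $F_{i-1}^{*}$ is a strict amalgamation basis over which $K$ is transformally separable. An argument along the lines of the proof of Proposition \ref{prop:over-tilde-wvfe-transcendental-is-generic-in-ball}, applied in a common ambient extension of $F_{i-1}^{*}$ and $E$, shows that the transformally transcendental element $x_{i}$ is generic over $F_{i-1}^{*}$ in the intersection $B_{i}$ of all $F_{i-1}^{*}$-definable balls containing it, which is either an $F_{i-1}^{*}$-definable ball or a properly infinite intersection. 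Proposition \ref{prop:density-of-generic-types} then produces $y_{i}\in K$ generic in $B_{i}$ over $F_{i-1}^{*}$, keeping $K$ transformally separable over the enlarged base; completeness of the generic qftp (Proposition \ref{prop:generic-type-of-a-ball-is-complete-and-amalgamative}) ensures that $x_{i}\mapsto y_{i}$ extends the partial embedding.

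After the full basis has been embedded, $E$ is transformally separably algebraic over the final hull $F_{n}^{*}$ (by Theorem \ref{thm:main-thm-on-tsep}(3)), which is strictly amalgamative with $K$ transformally separable over it, so Proposition \ref{prop:ec-tsep-alg} delivers the final embedding $E\hookrightarrow K$ over $F_{n}^{*}$, hence over $F$. The main obstacle lies in the ball analysis at each induction step: one must adapt Proposition \ref{prop:over-tilde-wvfe-transcendental-is-generic-in-ball} to bases that are merely separably algebraically closed and transformally Henselian rather than models of $\widetilde{\VFE}$, and verify that the variant of Proposition \ref{prop:density-of-generic-types} yielding genuine transformal separability (rather than only almost transformal separability) is available throughout the induction, which in turn requires tracking Ershov invariants and using Remark \ref{rem:purely-insep} to stay within $\VFE_{\leq e}$.
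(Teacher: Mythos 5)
Your overall decomposition --- a transcendental phase handled by genericity in balls together with Proposition \ref{prop:density-of-generic-types}, followed by an algebraic phase handled by Proposition \ref{prop:ec-tsep-alg} --- is the same as the paper's, but the induction step as you set it up has a genuine gap, and it is precisely the one you flag at the end. The ``adaptation'' of Proposition \ref{prop:over-tilde-wvfe-transcendental-is-generic-in-ball} to a base $F_{i-1}^{*}$ that is merely separably algebraically closed and transformally Henselian is not available: such a base need not be transformally separably algebraically closed in $K$ (its residue field need not be a model of $\SCFE$, its value group need not be tamely transformally divisible), and correspondingly $k_{F_{i-1}^{*}}$ need not be relatively transformally separably algebraically closed in the residue field on the $E$-side. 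Concretely, $x_{i}$ can be transformally transcendental over $F_{i-1}^{*}$ while $\mathrm{res}(x_{i})$ is transformally separably algebraic over $k_{F_{i-1}^{*}}$ without lying in it; then the intersection of the $F_{i-1}^{*}$-definable balls containing $x_{i}$ is a closed ball in which $x_{i}$ is \emph{not} generic, and Proposition \ref{prop:density-of-generic-types} cannot be invoked. The paper's remedy is not an adaptation of the genericity lemma but a different induction scheme (Remark \ref{rem:find-proper-inductively}): at each stage one first absorbs the entire relative transformal separable algebraic closure of the current base in $E$ via Proposition \ref{prop:ec-tsep-alg}, which by Proposition \ref{P:pure insep inside twvfe}(2) upgrades the base to a model of $\TwVFE$ and, via Proposition \ref{prop:th-rel}, forces the residue-field hypothesis of Proposition \ref{prop:over-tilde-wvfe-transcendental-is-generic-in-ball}; only then is a single transcendental element adjoined, and the two kinds of steps are interleaved transfinitely. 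These absorption steps cannot be front-loaded once and for all, as new transformally separably algebraic elements appear after each transcendental adjunction.

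There is a second, related problem with maintaining genuine transformal separability of $K$ over the growing base at every step. This requires part (2) of Proposition \ref{prop:density-of-generic-types}, whose hypothesis is that the perfect hull of $K$ does not split over the current base; in characteristic zero this hypothesis can never hold, so the ``variant yielding genuine transformal separability'' you ask for does not exist there. The paper instead runs two phases: it first peels off the relative $p$-basis elements of $E$ over $F$ (where part (2) does apply, by Remark \ref{rem:perfect-hull-splits}), reducing to the case where $E$ is relatively perfect over $F$; in that case algebraic separability of $K$ over the image is automatic by Fact \ref{fact:rel-perf-transitive-in-towers}, so by the criterion of Proposition \ref{criterion-algsep-almosttsep-for-fe} it suffices to achieve \emph{almost} transformal separability, and the remaining induction is carried out with that weaker, more robust invariant using part (1). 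Your single uniform induction would need to be restructured along these lines to close.
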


Before we turn to the proof of Theorem \ref{thm:wVFE-is-EC}, we wish
to make several elementary remarks.
\begin{rem}
\label{rem:perfect-hull-splits}In the situation of Theorem \ref{thm:wVFE-is-EC},
assume that the perfect hull of $E$ does not split over $F$. Then
the perfect hull of $K$ does not descend to $F$, either. Indeed
a transformally separable extension of models of $\FF$ is also algebraically
separable. In the case $e=\infty$, the vector space $K$ is of large
dimension over $K^{p}$; the inclusion $F\otimes_{F^{p}}K^{p}\subseteq K$
must therefore be proper as $F$ is small. If $e<\infty$ then the
inequality $\dim_{E^{p}}E\leq\dim_{K^{p}}K$ holds by assumption,
and again we are in the clear.
\end{rem}

\begin{rem}
\label{rem:increase-harmless}Let us assume that $L$ is a model of
$\VFE_{\leq e}$ transformally separable over $E$. Suppose we are given
given an embedding of $L$ in $K$ over $F$ so as to render $K$
transformally separable over the image. By abuse of notation, let
us regard $L$ - and hence $E$ - as a transformal valued subfield
of $K$ over $F$. Then as $\nicefrac{K}{L}$ and $\nicefrac{L}{E}$
are transformally separable, the field extension $\nicefrac{K}{E}$
is transformally separable as the composition of such. It is therefore
harmless to increase $E$ by a transformally separable extension;
using Proposition \ref{prop:every-wvfe-embed-in-tilde}, we may therefore
assume that $E$ is a model of $\widetilde{\VFE_{e}}$, and
in particular that it is separably algebraically closed and transformally
Henselian.
\end{rem}

\begin{rem}
\label{rem:find-proper-inductively}We may assume, of course, that
$E$ properly extends $F$, for otherwise the claim is trivial. Now
assume further $F\subseteq E_{0}\subseteq E$ is a model of $\VFE$
which is strictly amalgamative and that $E$ is transformally separable
over $E_{0}$. If $E_{0}$ properly extends $F$, then the truth of
the statement for $\nicefrac{E}{E_{0}}$ and $\nicefrac{E_{0}}{F}$
separately implies it for $\nicefrac{E}{F}$. By transfinite induction
and compactness, we can therefore assume that no such $E_{0}$ can
be found, unless it coincides with $E$.
\end{rem}

\textbf{Proof of Theorem \ref{thm:wVFE-is-EC}.} Let $F,K$ and $E$
be as in the statement. By Remark \ref{rem:increase-harmless}, it
is harmless to assume that $E$ is a model of $\widetilde{\VFE_{e}}$.

We first reduce ourselves to the case where $F$ is transformally
separably algebraically closed in $E$. Indeed, let $F\subseteq E_{0}\subseteq E$
be the relative transformal separable algebraic closure of $F$ in
$E$. The field $E_{0}$ is separably algebraically closed, as it
is relatively separably algebraically closed inside the separably
algebraically closed field $E$; in particular the field $E_{0}$
is a strict amalgamation basis. By Proposition \ref{prop:ec-tsep-alg},
the models of $\widetilde{\VFE_{e}}$ are existentially closed
for transformally separably algebraic extensions, so there is an embedding
of $E_{0}$ in $K$ over $F$, and such an embedding automatically
renders $K$ transformally separable over the image. Thus inductively
using Remark \ref{rem:find-proper-inductively}, we may assume that
$F$ is transformally separably algebraically closed in $E$. 

By Proposition \ref{P:pure insep inside twvfe}(2), $F$ is a model of $\TVFE$ and so using Proposition \ref{prop:th-rel}, Proposition \ref{prop:over-tilde-wvfe-transcendental-is-generic-in-ball} implies
that an element $a\in E$ lying outside $F$ is generic over $F$
in an $F$-definable ball or a properly infinite intersection of $F$-definable
balls.

Next we claim that $E$ can be taken to be a relatively perfect extension
of $F$. If $E$ is not relatively perfect over $F$, fix an element
$a\in E$ not in $F\otimes_{F^{p}}E^{p}$ and set $E_{0}=F\left(a^{\mathbf{N}\left[\frac{\sigma}{p^{\infty}}\right]}\right)$.
Then $a$ is generic over $F$ in a ball, so by Proposition \ref{prop:generic-type-of-a-ball-is-complete-and-amalgamative},
the field $E_{0}$ is a strict amalgamation basis. By Proposition \ref{P:crieria-for-tsep-and-almost-tsep},
the field $E$ is transformally separable over $E_{0}$. Using Remark
\ref{rem:find-proper-inductively}, we can replace $E$ by $E_{0}$;
in this situation however Proposition \ref{prop:density-of-generic-types}
on the density of the generic types applies.

Now $E$ is relatively perfect over $F$. In this situation, an embedding
of $E$ in $K$ over $F$ will \emph{automatically} render $K$
algebraically separable over the image by Fact \ref{fact:rel-perf-transitive-in-towers};
using the criterion of Proposition \ref{criterion-algsep-almosttsep-for-fe}
it is therefore enough to exhibit an embedding of $E$ in $K$ over
$F$ with $K$ almost transformally separable over the image. We now
turn to this task. In order to achieve it, we reinstall our settings
and weaken our assumption to allow ourselves to argue inductively.
Thus we forsake the assumption that $K$ is transformally separable
over $F$, and merely assume that it is almost transformally separable
over $F$. Moreover, we do not make any assumptions on the degree
of imperfection of $E$, and allow $E$ to be an arbitrary model of
$\VFE$ transformally separable over $F$.

By compactness and finite character of almost transformally separable
extensions, we may assume that $E$ is transformally separably generated
over $F$, as Theorem \ref{thm:main-thm-on-tsep}. Fix an element
$a\in E$ forming a part of a separating transformal transcendence
basis of $E$ over $F$; then $E$ is transformally separable over
$E_{0}$, where $E_{0}=F\left(a^{\mathbf{N}\left[\frac{\sigma}{p^{\infty}}\right]}\right)$.
The field $E_{0}$ is strictly amalgamative; arguing as in Remark
\ref{rem:find-proper-inductively} and using the fact that almost
transformally separable extensions are closed under composition, we
may assume that $E=E_{0}$. But Proposition
\ref{prop:density-of-generic-types}(2) applies, and the proof is finished.\qed

\begin{cor}
\label{cor:qe}Let $F$ be a model of $\VFE_{\leq e}$ which is a strict amalgamation basis. Then there is up to elementary equivalence a unique
model of $\widetilde{\VFE_{e}}$ which is transformally separable
over $F$.
\end{cor}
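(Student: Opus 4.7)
Existence is immediate from Proposition \ref{prop:every-wvfe-embed-in-tilde}. For uniqueness up to elementary equivalence, fix two models $K_1, K_2$ of $\TwVFEe$ transformally separable over $F$, pass to $\kappa$-saturated elementary extensions $K_1^*, K_2^*$ of the same cardinality for sufficiently large $\kappa$, and build an $F$-isomorphism $K_1^* \to K_2^*$ by a back-and-forth in the expansion of the language of transformal valued fields by the transformal $\lambda$-functions of Section \ref{s:trans lambda functions}. This yields $(K_1, F) \equiv (K_1^*, F) \cong (K_2^*, F) \equiv (K_2, F)$.

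The invariant at stage $\alpha$ is a small $F$-isomorphism $\phi_\alpha \colon A_\alpha \to B_\alpha$ between $\lambda$-closed substructures $A_\alpha \subseteq K_1^*$ and $B_\alpha \subseteq K_2^*$, each a strict amalgamation basis and a model of $\wVFEe$; by the characterization of the $\lambda$-functions at the end of Section \ref{s:trans lambda functions}, this automatically makes each $K_i^*$ transformally separable over the corresponding piece. For the forth step, given $a \in K_1^*$, let $A_{\alpha+1}$ be the relative transformal separable algebraic closure in $K_1^*$ of the $\lambda$-closure of $A_\alpha \cdot \{a\}$. By Proposition \ref{prop:rel-tsep-thens} $A_{\alpha+1}$ is transformally Henselian, and its residue field is transformally separably algebraically closed in the $\SCFE$-residue field of $K_1^*$; by Proposition \ref{P:char of SCFE}(2), this residue is itself a model of $\SCFE$, so $A_{\alpha+1}$ is strictly transformally Henselian. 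Proposition \ref{P:pure insep inside twvfe}(2) then shows that $A_{\alpha+1}$ is a model of $\TwVFE$, hence separably algebraically closed, hence a strict amalgamation basis; its Ershov invariant remains $\leq e$ (preserved under transformally separably algebraic extensions by Proposition \ref{tsep-alg-of-fe-preserves-ershov}, and bounded at the $\lambda$-closure step by that of the ambient $K_1^*$). Invoking Theorem \ref{thm:wVFE-is-EC} with the saturated model $K_2^*$, base $B_\alpha$, and transformally separable extension $A_{\alpha+1}$ transported via $\phi_\alpha$, we obtain an embedding $\phi_{\alpha+1} \colon A_{\alpha+1} \to K_2^*$ extending $\phi_\alpha$ whose image $B_{\alpha+1}$ satisfies the invariant. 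The back step is symmetric and limits are taken as directed unions.

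The main obstacle, and the place where the preceding sections are really used, is verifying at each stage that $A_{\alpha+1}$ can be arranged to be a strict amalgamation basis (while keeping the Ershov invariant and the $\lambda$-closure compatibility). The essential trick is to close under relative transformal separable algebraic closure inside the ambient $K_i^*$: because $K_i^*$ is strictly transformally Henselian and this property descends through such closures, one automatically lands inside a model of $\TwVFE$ — which is separably algebraically closed and therefore vacuously a strict amalgamation basis. With this invariant secured, Theorem \ref{thm:wVFE-is-EC} drives the back-and-forth to completion, proving that any two models of $\TwVFEe$ transformally separable over $F$ are elementarily equivalent over $F$.
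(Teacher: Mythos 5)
Your proof is correct and is exactly the argument the paper intends: the paper's proof is the single line ``follows immediately from Theorem \ref{thm:wVFE-is-EC} and Proposition \ref{prop:every-wvfe-embed-in-tilde}'', i.e.\ existence from the latter and uniqueness by the back-and-forth you spell out, driven by the former. The only quibble is that Proposition \ref{P:pure insep inside twvfe}(2) requires nontrivial valuation, which an intermediate $A_{\alpha+1}$ may lack when $F$ is trivially valued; but this is harmless, since $A_{\alpha+1}$ is relatively separably algebraically closed in the separably algebraically closed field $K_1^*$, hence separably algebraically closed and therefore a strict amalgamation basis directly.
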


\begin{proof}
This follows immediately from Theorem \ref{thm:wVFE-is-EC} and Proposition
\ref{prop:every-wvfe-embed-in-tilde}.
\end{proof}

Let $\mathcal{L}_{\lambda}$ be the language of models of $\VFE$ with the transformal $\lambda$-functions. Thus for  $K \models \TwVFE_{\lambda}$, an $\mathcal{L}_{\lambda}$-substructure of $K$ is a model $F$ of $\VFE$ contained in $K$ such that $K$ is transformally separable over $F$. 

\begin{cor}\label{cor:qe-explained}
    In the theory $\TwVFE_{\lambda}$ every formula is equivalent to a boolean combination of formulas of the form $\exists y \varphi\left(x,y\right)$ where $\varphi \in \mathcal{L}_{\lambda}$ is quantifier free and each $y_i$ obeys a monic (algebraic) polynomial with coefficients in $\mathbf{Z}\left[x\right]$.
\end{cor}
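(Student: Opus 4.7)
The plan is to deduce the corollary from Corollary~\ref{cor:qe} via the standard saturated-model quantifier-elimination criterion. Fix sufficiently saturated models $K, K'$ of $\TwVFE_\lambda$ of the same completion, and tuples $a \in K^n$, $a' \in (K')^n$ which agree on every formula of the asserted form. I will show $a \equiv a'$.

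To this end I construct $\mathcal{L}_\lambda$-elementary substructures $F \preceq K$ and $F' \preceq K'$, each a model of $\TwVFE_\lambda$, together with an $\mathcal{L}_\lambda$-isomorphism $F \to F'$ extending $a \mapsto a'$. Take $L \subseteq K$ to be the $\mathcal{L}_\lambda$-substructure generated by $a$, which by definition is a model of $\VFE$ over which $K$ is transformally separable. Let $F$ be the relative transformal separable algebraic closure of $L$ in $K$ (Theorem~\ref{thm:main-thm-on-tsep}(1)); then $F$ is a model of $\VFE$, $K$ is transformally separable over $F$ by Theorem~\ref{thm:main-thm-on-tsep}(3), and $F$ is relatively transformally separably algebraically closed in $K$ by construction. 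After harmlessly enlarging $a$ to ensure $F$ is nontrivially valued, Proposition~\ref{P:pure insep inside twvfe}(2) gives $F \models \TwVFE$; the inclusion $F \hookrightarrow K$ is therefore an $\mathcal{L}_\lambda$-embedding of models of $\TwVFE_\lambda$, hence elementary by model completeness (Theorem~\ref{introthm:vfeec}(2)). Construct $F' \preceq K'$ analogously.

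The isomorphism $F \cong F'$ is extracted from the hypothesis by saturation: by Theorem~\ref{thm:main-thm-on-tsep}(2), every element of $F$ is a simple root of a twisted difference polynomial with coefficients in $L$. Unpacking the $\mathcal{L}_\lambda$-terms in $a$ that generate these coefficients and clearing denominators, the $\mathcal{L}_\lambda$-qf diagram of each finite subtuple of $F$ over $a$ can be expressed as a finite conjunction of formulas of the asserted restricted existential form; the hypothesis that $a$ and $a'$ agree on such formulas, combined with the saturation of $K'$, then yields the isomorphism by a standard back-and-forth. We conclude $a \equiv a'$ from $F \preceq K$ and $F' \preceq K'$. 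The main technical obstacle is precisely this translation step: one must verify that the $\mathcal{L}_\lambda$-qf diagram of $F$ over $a$ admits an expression in the restricted form, where the witnesses $y_i$ obey \emph{monic} polynomials with coefficients in the bare polynomial ring $\mathbf{Z}[x]$ rather than in the larger $\mathcal{L}_\lambda$-closure $L$. This uses the observation that the field-theoretic algebraic closure of $\mathbf{Q}(a)$ (or $\mathbf{F}_p(a)$) inside $K$ coincides with the fraction field of the integral closure of $\mathbf{Z}[a]$ in $K$ via the standard trick of multiplying by the leading coefficient, iterated with the closure of $L$ under the $\mathcal{L}_\lambda$-operations.
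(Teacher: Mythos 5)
There is a genuine gap at the step where you extract the isomorphism $F \cong F'$ from the hypothesis. You take $F$ to be the relative \emph{transformal} separable algebraic closure of $L$ in $K$, and you claim that, since every element of $F$ is a simple root of a twisted difference polynomial over $L$ (Theorem \ref{thm:main-thm-on-tsep}(2)), the quantifier-free $\mathcal{L}_{\lambda}$-diagram of a finite subtuple of $F$ over $a$ can be packaged into formulas of the restricted form. But the restricted form requires each witness $y_i$ to satisfy a monic \emph{algebraic} polynomial with coefficients in $\mathbf{Z}\left[x\right]$, i.e., to be integral over $\mathbf{Z}\left[a\right]$ and in particular field-theoretically algebraic over $\mathbf{Q}\left(a\right)$ (or $\mathbf{F}_p\left(a\right)$). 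A simple root of a twisted \emph{difference} polynomial over $L$ --- say a solution of $y^{\sigma}=y^{2}+c$ with $c\in L$ --- is transformally algebraic but in general transcendental over $L$ as a field element, so it cannot occur as a witness in any formula of the restricted class, and no boolean combination of such formulas evaluated at $a$ asserts its existence or pins down its quantifier-free type over $a$. Consequently the back-and-forth you describe cannot get past the first field-theoretically transcendental element of $F$, and the isomorphism $F\cong F'$ is not available from the hypothesis. The restricted formulas only control the relative \emph{field-theoretic} separable algebraic closure of the generated substructure, which is a much smaller object than your $F$.

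The repair is essentially the paper's argument: use the restricted formulas only to determine the isomorphism type of the action of $\sigma$ (together with the valuation and the $\lambda$-functions) on the relative separable algebraic closure of the generated substructure, extend the partial map that far, and then observe that the resulting base is separably algebraically closed, hence a strict amalgamation basis; Corollary \ref{cor:qe} then says that the transformally separable extension $K$ of that base is determined up to elementary equivalence, which absorbs all the transformally algebraic and transcendental data that you were trying to match by hand. A secondary, more minor issue: your appeal to Proposition \ref{P:pure insep inside twvfe}(2) needs $F$ to be nontrivially valued, and the proposed ``harmless enlargement'' of $a$ is not harmless as stated --- you must enlarge $a'$ compatibly so that the enlarged tuples still agree on the restricted formulas, and producing such a compatible enlargement already requires a completeness argument of the kind you are trying to establish.
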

\begin{proof}
Let $K$ be a saturated model of $\TwVFE_{\lambda}$. By definition, an $\mathcal{L}_{\lambda}$-substructure of $K$ is a model $F$ of $\VFE$ contained in $K$ such that $K$ is transformally separable over $F$.
Let $\mathcal{S}$ be the set of formulas in the statement. We are given an isomorphism $f \colon  F \cong F'$ between models of $\VFE$ contained in $K$, with $K$ transformally separable over both, and that the isomorphism preserves the truth value of formulas in $\mathcal{S}$; we must show that $f$ lifts to a global automorphism.
Now using formulas in $\mathcal{S}$ we can determine the isomorphism type of the action of $\sigma$ on the separable algebraic closure of both $F$ and $F'$ inside $K$ (see, e.g., the proof of \cite[Proposition 9.17]{dor-hrushovskiVFA}). Thus we may extend $f$ to the relative separable algebraic closures inside $K$. By elementary properties of transformally separable extensions, this does not disturb the property that $K$ is transformally over $F$, and likewise $F'$; and now Corollary \ref{cor:qe} applies.
\end{proof}

\begin{thm}\label{thm:model-companion}
    Let $T = \VFE_{\leq e}$ and $\widetilde{T} = \TwVFE_{e}$. Then $\widetilde{T}_{\lambda}$ is the model companion of $T_{\lambda}$.
\end{thm}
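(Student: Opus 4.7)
The plan is to verify the two conditions defining a model companion: mutual model-consistency of $T_\lambda$ and $\widetilde T_\lambda$, and model completeness of $\widetilde T_\lambda$. Mutual consistency is essentially automatic. In one direction, every model of $\widetilde T_\lambda$ is a model of $T_\lambda$ by construction. In the other direction, given any $F \models T_\lambda$, Proposition \ref{prop:every-wvfe-embed-in-tilde} produces an extension $F \hookrightarrow E$ with $E \models \TwVFEe$ and $E$ transformally separable over $F$; since $\mathcal{L}_\lambda$-embeddings between models of $\VFE$ coincide with transformally separable field embeddings (see \ref{s:trans lambda functions}), this is an embedding in the category of models of $T_\lambda$.

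For model completeness, I will apply the standard criterion: every sufficiently saturated $K \models \widetilde T_\lambda$ is existentially closed in every $\mathcal{L}_\lambda$-extension $L$ which is also a model of $\widetilde T_\lambda$. Let $\varphi(\bar x, \bar y)$ be a quantifier-free $\mathcal{L}_\lambda$-formula, $\bar a \in K$, and $\bar b \in L$ with $L \models \varphi(\bar b, \bar a)$; I must produce a witness in $K$. By downward L\"owenheim--Skolem applied to the model $K$, choose a small elementary substructure $F_0 \preceq K$ containing $\bar a$. As $F_0 \models \TwVFEe$, its inversive hull is a model of $\TwVFA$ by Proposition \ref{P:pure insep inside twvfe}, and is thus algebraically closed; in particular its perfect, inversive, algebraically Henselian hull admits no nontrivial finite $\sigma$-invariant Galois extensions, so $F_0$ is a strict amalgamation basis in the sense of Definition \ref{defn:strict-amalgamation-basis}. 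Form $E_0$ as the $\mathcal{L}_\lambda$-substructure of $L$ generated by $F_0 \cup \bar b$: a small model of $\VFE_{\leq e}$, transformally separable over $F_0$. Theorem \ref{thm:wVFE-is-EC} then yields an $\mathcal{L}_\lambda$-embedding $j \colon E_0 \hookrightarrow K$ fixing $F_0$, and $j(\bar b) \in K$ witnesses $\varphi(\bar x, \bar a)$ in $K$ since $j$ preserves quantifier-free $\mathcal{L}_\lambda$-formulas.

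The main conceptual work has already been absorbed into Theorem \ref{thm:wVFE-is-EC} and Proposition \ref{prop:every-wvfe-embed-in-tilde}; once the correct language $\mathcal{L}_\lambda$ is used, model completeness is merely a repackaging of the existential closedness statement together with the fact that elementary substructures of models of $\TwVFEe$ are themselves strict amalgamation bases. The only delicate point is the latter, but it is extracted formally from Proposition \ref{P:pure insep inside twvfe} combined with the algebraic closedness of models of $\TwVFA$ recorded in Fact \ref{fact:wvfa}.
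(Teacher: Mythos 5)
Your proof is correct and takes essentially the same route as the paper: the paper deduces the theorem in one line from Proposition \ref{prop:every-wvfe-embed-in-tilde} together with Corollary \ref{cor:qe}, and Corollary \ref{cor:qe} is itself an immediate consequence of Theorem \ref{thm:wVFE-is-EC}, which is exactly the ingredient you invoke. The only difference is cosmetic — you re-run the existential-closedness argument over a saturated model rather than quoting the completeness-over-a-strict-amalgamation-basis statement, but both arguments bottom out in the same two results.
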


\begin{proof}
This follows immediately from Corollary \ref{cor:qe} and Proposition \ref{prop:every-wvfe-embed-in-tilde}.
\end{proof}

\begin{thm}\label{T:model theoretic acl}
Let $K$ be a model of $\widetilde{\VFE_{e}}$ and let $F\subseteq K$
be a model of $\VFE$ which is contained in $K$. Then $F$
is model theoretically algebraically closed in $K$ if and only if
it is separably algebraically closed, transformally Henselian, and
closed under the transformal $\lambda$-functions of $K$.
\end{thm}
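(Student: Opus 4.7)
The easy direction ($\Rightarrow$) unpacks each property individually. If $F = \acl(F) \cap K$, then: the transformal $\lambda$-functions are definable in the language of difference fields (Section \ref{s:trans lambda functions}), so their values on tuples from $F$ lie in $\mathrm{dcl}(F) \subseteq F$; canonical transformal Hensel lifts are uniquely determined when they exist, hence in $\mathrm{dcl}(F) \subseteq F$; and any separable polynomial over $F$ has a root in the inversive hull of $K$ (a model of $\TVFA$ by Proposition \ref{P:pure insep inside twvfe}, and in particular algebraically closed), and that root actually lies in $K$ since $K$, as a model of $\FF$, is relatively separably algebraically closed in its inversive hull. The root then lies in $\acl(F) \cap K = F$, so $F$ is separably algebraically closed.

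For the converse direction, assume $F$ is separably algebraically closed, transformally Henselian, and closed under the $\lambda$-functions of $K$; the last condition is equivalent to $K$ being transformally separable over $F$. I first check that $F$ is a strict amalgamation basis. Since $F$ is a separably algebraically closed model of $\FF$, its inversive hull $F^{\sigma^{-\infty}}$ is a directed union of separably algebraically closed copies of $F$, hence itself separably algebraically closed; it is inversive and closed under twists (Proposition \ref{prop:elementary-twisted}), and therefore perfect by Remark \ref{rem:inversive-twisted-is-perfect}, which together with being separably algebraically closed gives algebraically closed. Algebraic Henselization preserves algebraic closedness, so the perfect, inversive, algebraically Henselian hull of $F$ has no nontrivial finite Galois extensions and a fortiori none that are $\sigma$-invariant.

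Now fix $a \in K \setminus F$ and let $E$ be the smallest model of $\FF$ inside $K$ over the difference subfield $F\langle a\rangle$ (Lemma \ref{fe-hull}), equipped with the induced valuation; $E$ is a small model of $\VFE_{\leq e}$ transformally separable over $F$. For each $n \in \mathbf{N}$, iterating the amalgamation property (Proposition \ref{prop:lin-disjoint-amalgamation}) over the separably algebraically closed and transformally Henselian base $F$ produces a model $L_n \models \VFE_{\leq e}$ transformally separable over $F$ and containing $n$ linearly disjoint copies of $E$ over $F$, yielding $n$ pairwise distinct conjugates $a_1, \ldots, a_n$ of $a$. By Theorem \ref{thm:wVFE-is-EC}, I embed $L_n$ over $F$ into a sufficiently saturated elementary extension $K'$ of $K$, with $K'$ transformally separable over the image; this is legitimate because $F$ is a strict amalgamation basis and $K'$ inherits transformal separability over $F$ from $K$ (the $\lambda$-functions are definable, so $F$ remains $\lambda$-closed in $K' \succeq K$). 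The elements $a_1, \ldots, a_n \in K'$ are then pairwise distinct and realize the same quantifier-free $\mathcal{L}_\lambda$-type over $F$ as the original $a \in K \subseteq K'$.

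The main obstacle, and the final step, is to upgrade agreement of quantifier-free $\mathcal{L}_\lambda$-types over $F$ to agreement of full types in $K'$. Following the argument behind Corollary \ref{cor:qe-explained}, the $F$-isomorphism $F\langle a\rangle \to F\langle a_i\rangle$ extends to an isomorphism between appropriately chosen $\mathcal{L}_\lambda$-substructures of $K'$, namely their minimal separably algebraically closed $\lambda$-closed extensions inside $K'$ (which are strict amalgamation bases by the same argument already given for $F$). By Corollary \ref{cor:qe} and homogeneity of the saturated $K'$, this lifts to an automorphism of $K'$ fixing $F$ pointwise and sending $a$ to $a_i$. Hence $\tp(a/F)$ admits at least $n$ realizations in $K'$ for every $n \in \mathbf{N}$, so $a \notin \acl(F)$, and I conclude $F = \acl(F) \cap K$.
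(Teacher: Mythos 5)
Your forward direction is fine (and more detailed than the paper's, which simply notes the conditions are clearly necessary once one recalls that closure under the transformal $\lambda$-functions of $K$ is equivalent to $K$ being transformally separable over $F$). Your overall strategy for the converse --- produce linearly disjoint copies over $F$ via Proposition \ref{prop:lin-disjoint-amalgamation}, realize them using Theorem \ref{thm:wVFE-is-EC} and saturation, and lift the resulting isomorphisms to global automorphisms via Corollary \ref{cor:qe} --- is exactly the paper's. The difference is where you start the amalgamation, and that is where a genuine gap appears.

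You amalgamate copies of $E = $ the smallest model of $\FF$ over $F\langle a\rangle$, which is in general \emph{not} a strict amalgamation basis; the amalgamation only guarantees that the copies are isomorphic to $E$ over $F$ as transformal valued fields, i.e.\ that the $a_i$ realize $\qftp_{\mathcal{L}_\lambda}(a/F)$. Your final step --- extending this isomorphism to the ``minimal separably algebraically closed $\lambda$-closed extensions inside $K'$'' by ``the argument behind Corollary \ref{cor:qe-explained}'' --- is not justified: that argument needs the truth values of the bounded existential formulas to be preserved, and those encode precisely the isomorphism type of the $\sigma$-action on the separable algebraic closure, which is additional data \emph{not} determined by the quantifier-free $\mathcal{L}_\lambda$-type over $F$ (this is the content of Fact \ref{fact:wvfa}(3) and the whole point of the notion of a strict amalgamation basis). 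Concretely, the separable algebraic closures of $F\langle a\rangle$ and $F\langle a_i\rangle$ inside $K'$ may carry non-isomorphic lifts of $\sigma$, so the $a_i$ need not realize $\tp(a/F)$; and infinitely many realizations of the quantifier-free type do not by themselves rule out $a\in\acl(F)$, since $\tp(a/F)$ could be an exceptional completion of it. The repair is the paper's opening move: before amalgamating, replace $a$ by an enumeration of a small separably algebraically closed and transformally Henselian model of $\VFE$ over $F$ containing $a$ and closed under the $\lambda$-functions of $K$ (such a model exists inside $K$ by closing off under the relevant operations). The amalgamated copies $E_n$ are then strict amalgamation bases, Corollary \ref{cor:qe} applies to the isomorphisms $E_n\cong E_m$ directly, and distinctness of the conjugates of $a$ still follows from $E_n\cap E_m=F$. (Your finding the copies in an elementary extension $K'$ rather than inside the saturated $K$ itself, as the paper does, is harmless.)
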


\begin{proof}
We may assume that $K$ is sufficiently saturated and that $F$ is
small.

Recall that $K$ is transformally separable over $F$ if and only
if $F$ is closed under the transformal $\lambda$-functions of $K$,
and that the transformal $\lambda$-functions of $K$ are definable
in the language of difference fields. With this description it is
clear that the conditions are necessary. For the converse, assume
the conditions of the theorem, and fix an element $a\in K$ not in
$F$; we will show that it admits infinitely many conjugates over
$F$.

We may assume that $a$ enumerates a small separably algebraically closed and
transformally Henselian model $E$ of $\VFE$ over $F$, closed
under the transformal $\lambda$-functions of $K$; so $K$ is transformally
separable over $E$. By model completeness, Proposition \ref{prop:lin-disjoint-amalgamation}
and saturation there is a sequence $\left(E_{n}\right)_{n=0}^{\infty}$
of linearly disjoint copies of $E=E_{0}$ over $F$ inside $K$ all
closed under the transformal $\lambda$-functions of $K$. By linear
disjointness we have $E_{n}\cap E_{m}=F$ for $n\neq m$. By Corollary
\ref{cor:qe} the isomorphism $E_{n}\cong E_{m}$ for $n\neq m$ lifts
to a global automorphism over $F$. This concludes the proof.
\end{proof}

\begin{lem}
    Let $K$ be a model of $\TwVFEe$ which is $\omega$-saturated and let $F = K^{\sigma^{\infty}}$ be the maximal inversive difference subfield of $K$. Then $F$ is a model of $\TVFA$ which lies dense in $K$.
\end{lem}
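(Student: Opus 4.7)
The plan is to verify that $F$ satisfies the axioms of $\TwVFE$ from Definition~\ref{def:tsep-alg-closed}; once $F$ is seen to be inversive, Proposition~\ref{P:pure insep inside twvfe}(1) then identifies these axioms with those of $\TVFA$ and yields the claim.

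First I would check that $F$ is perfect and inversive, and consequently a model of $\VFA$. For inversiveness, fix $a\in F$: since $\sigma$ is injective on $K$ there is a unique $b\in K$ with $\sigma(b)=a$, and for every $n$ the containment $a\in K^{\sigma^{n+1}}$ forces $b\in K^{\sigma^n}$, so $b\in F$. For perfectness, write $a=\sigma^{n+1}(c)$ with $c\in K$; closure of $K$ under twists gives $\sigma(c)^{1/p}\in K$, and $a^{1/p}=\sigma^n(\sigma(c)^{1/p})\in K^{\sigma^n}$ for all $n$, whence $a^{1/p}\in F$. The inherited transformal valuation on $F$ is $\omega$-increasing and nontrivial, so $F\models\VFA$.

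Next I would show that $F$ is dense in $K$. By Lemma~\ref{lem:deep-ramification}, $K$ is dense in $K^{\sigma^{-\infty}}$; applying $\sigma^n$, which is a valuation-preserving injection, shows $K^{\sigma^n}$ is dense in $K$ for every $n$. Given $a\in K$ and $\gamma\in\Gamma_K$, the countable partial type over the single parameter $a$ consisting of $v(x-a)\geq\gamma$ together with $\exists y\,(x=\sigma^n(y))$ for each $n\in\mathbf{N}$ is therefore finitely satisfiable in $K$, hence realized by $\omega$-saturation; its realization lies in $F$. From this density it follows that $\Gamma_F=\Gamma_K$ and $k_F=k_K$, and Remark~\ref{rem-on-val-gp-and-res-of-twvfe} identifies these as a nonzero tamely transformally divisible model of $\TwOGA$ and a model of $\ACFA$ respectively, which takes care of the value group and residue field clauses of Definition~\ref{def:tsep-alg-closed}.

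It remains to verify that $F$ is transformally Henselian and that every twisted additive operator $\tau$ over $F$ with $\tau'\neq 0$ is surjective on $F$-points. Both rest on the same conjugation trick: given data $(f,a)$ with coefficients and base point in $F$, inversiveness of $F$ lets us form $(\sigma^{-n}f,\sigma^{-n}a)$, and since $\sigma$ preserves the valuation and commutes formally with evaluation and the transformal derivative (cf.\ Remark~\ref{substitution}), this conjugated pair satisfies the analogous hypothesis in $K$. Any witness $b_n\in K$ for the conjugated problem then pulls back to $\sigma^n(b_n)\in K^{\sigma^n}$ witnessing the original. For Hensel's lemma, uniqueness of the Hensel lift in $K$ forces the single lift $b\in K$ to coincide with $\sigma^n(b_n)$ for every $n$, so $b\in F$. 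For the additive axiom uniqueness fails, since $\tau$ may have a nontrivial kernel; here I would instead invoke $\omega$-saturation applied to the partial type $\tau(x)=c\wedge\bigwedge_n\exists y\,(x=\sigma^n(y))$, which has finitely many parameters and is finitely satisfiable by the witnesses $\sigma^n(b_n)$ just constructed, hence realized by some $b\in F$.

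The main obstacle is the additive axiom: in the Henselian case uniqueness of the lift provides a clean descent, but in the additive case the per-$n$ witnesses $\sigma^n(b_n)$ need not agree, and the $\omega$-saturation hypothesis is used essentially as the only available tool to patch them into a single element of $F$.
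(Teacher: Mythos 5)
Your argument is correct, but for the harder half --- that $F\models\TVFA$ --- it takes a genuinely different route from the paper. The paper proves density exactly as you do (deep ramification via Lemma~\ref{lem:deep-ramification}, then compactness and saturation to intersect the conditions $x\in K^{\sigma^n}$ with a ball around a given point), and then disposes of the rest in two lines: since $F$ is inversive, its relative transformal algebraic closure in $K$ is a transformally separably algebraic extension of $F$, hence itself inversive by Proposition~\ref{tsep-alg-of-fe-preserves-ershov}; maximality of $F$ then forces $F$ to be relatively transformally (separably) algebraically closed in $K$, and Proposition~\ref{P:pure insep inside twvfe}(2) immediately yields $F\models\TwVFE$, hence $F\models\TVFA$. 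You instead verify the axioms of Definition~\ref{def:tsep-alg-closed} one by one via the $\sigma^{-n}$-conjugation trick, using uniqueness of Hensel lifts to descend axiom~(1) and a second application of $\omega$-saturation to patch the witnesses for axiom~(3); this is more work but is self-contained at the level of the axioms and makes explicit exactly where saturation versus uniqueness is needed, whereas the paper leans on the already-established closure properties of $\TwVFE$. Two cosmetic points: you assert that the valuation on $F$ is nontrivial before you have proved density --- it follows from $\Gamma_F=\Gamma_K\neq 0$, so that claim should be deferred; and $\sigma^n$ is not literally valuation-preserving (it acts $\omega$-increasingly on $\Gamma$), but it is a homeomorphism of the inversive hull for the valuation topology, which is all the density transfer needs.
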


\begin{proof}
    Since $K$ lies dense in its inversive hull it follows by compactness and saturation that $F$ lies dense in $K$, in particular it is nontrivially valued.   The transformal algebraic closure of $F$ in $K$ is inversive by Proposition \ref{tsep-alg-of-fe-preserves-ershov} so by maximality it is relatively transformally algebraically closed in $K$. Thus by Proposition \ref{P:pure insep inside twvfe} the field $F$ is a model of $\TVFA$.
\end{proof}

\begin{rem}
    The field $F$ is not stably embedded in $K$. Indeed:

    \textbf{Claim 1.} Every element of $K$ is the unique realization of its type over $F$ in $K$; thus in the language of \cite{hrushovski2006stable}, every imaginary of $K$ is $F$-comprehended.
    
    \textbf{Proof.} The field $F$ lies dense in $K$ for the valuation topology, so if $a \in K$ lies outside $F$, it is the unique element of $K$ inside the intersection of all $F$-definable balls containing $a$.
    
    \textbf{Claim 2.} No element of $K$ is definable over $F$ unless it already lies in $F$.
    
    \textbf{Proof.} This follows from the characterization of the model theoretic algebraic closure in $\TwVFE$ above (Theorem \ref{T:model theoretic acl}).
    By \cite{hrushovski2006stable}, a sort $D$ in a theory $T$ is stably embedded if and only if every imaginary which is $D$-comprehended is definable over $D$; the embedding of $F$ in $K$ provides an extreme example of the opposite scenario. Thus $F$ is not stably embedded in $K$.
\end{rem}

In the following we wish to understand the induced structure on the residue field and on the value group. For the latter, recall the theory $\widetilde{\omega\mathrm{OGA}}$ of $\omega$-increasing transformally divisible ordered abelian groups from \cite[Section 2]{dor-hrushovskiVFA} It is complete and admits quantifier elimination.

\begin{thm}
\label{thm:stably-embedded}In the theory $\widetilde{\VFE_{e}}$,
the residue field and the value group are stably embedded and
fully orthogonal. The induced structure is the pure difference field
structure and the pure ordered transformal module structure, respectively.
\end{thm}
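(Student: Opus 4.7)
The plan is to deduce the theorem from the model completeness of $\widetilde{\VFE}_{\lambda}$ (Corollary \ref{cor:qe} and Theorem \ref{thm:model-companion}) together with the model completeness of $\ACFA$ (in the language with $\sigma^{-1}$) and of $\widetilde{\omega\mathrm{OGA}}$ in their respective languages. I will reduce to a sufficiently saturated $K\models\widetilde{\VFE_{e}}$ and a small elementary $\mathcal{L}_{\lambda}$-substructure $F\preceq K$; by Remark \ref{rem-on-val-gp-and-res-of-twvfe} one has $k_F\models\ACFA$ and $\Gamma_F\models\widetilde{\omega\mathrm{OGA}}$. The target is to show that for any $\bar a\in k_K^{n}$ and $\bar\alpha\in\Gamma_K^{m}$,
\[
\tp_K(\bar a\,\bar\alpha/F) \; = \; \tp_{\ACFA}(\bar a/k_F) \;\cup\; \tp_{\widetilde{\omega\mathrm{OGA}}}(\bar\alpha/\Gamma_F),
\]
which delivers stable embeddedness of both sorts, the identification of their induced structures with the pure reducts, and full orthogonality in one stroke (saturation of $K$ converts the type-equation into the definability formulation).

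The heart of the argument is a clean decomposition of the $\mathcal{L}_{\lambda}$-quantifier-free type of $\bar a\,\bar\alpha$ over $F$. In the natural many-sorted presentation with sorts $\VF$, $k$ and $\Gamma$, no symbol links $k$ to $\Gamma$ directly; the only inter-sort maps are $v\colon \VF^{\times}\to\Gamma$ and $\mathrm{res}\colon \mathcal{O}\to k$. Hence no term mixes a $k$-variable with a $\Gamma$-variable; every $\VF$-parameter from $F$ appearing in a $k$-formula enters through $\mathrm{res}$ and therefore contributes only as an element of $k_F$; and the symmetric statement holds for $\Gamma$-formulas. A small check is also needed that the transformal $\lambda$-functions restricted to tuples from $k_K$ reduce to the already-definable $\sigma^{-1}$ on $k_K$ (well-defined since $k_K$ is inversive by Remark \ref{rem-on-val-gp-and-res-of-twvfe}), so they remain internal to the $\ACFA$-reduct and introduce no new inter-sort interaction.

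Consequently, every quantifier-free $\mathcal{L}_{\lambda}$-formula in $\bar a\,\bar\alpha$ over $F$ is equivalent to a Boolean combination of a quantifier-free formula in $\bar a$ over $k_F$ (in the $\ACFA$-language) and one in $\bar\alpha$ over $\Gamma_F$ (in the $\widetilde{\omega\mathrm{OGA}}$-language). By the model completeness of $\widetilde{\VFE}_{\lambda}$, the type $\tp_K(\bar a\,\bar\alpha/F)$ is determined by this quantifier-free part; by the model completeness of $\ACFA$ and $\widetilde{\omega\mathrm{OGA}}$, the two pieces of the decomposition coincide with $\tp_{\ACFA}(\bar a/k_F)$ and $\tp_{\widetilde{\omega\mathrm{OGA}}}(\bar\alpha/\Gamma_F)$ respectively, yielding the displayed identity. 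The principal obstacle is the sort-decomposition step at the quantifier-free level -- including the verification that the $\lambda$-functions on $k_K$ add nothing beyond the $\ACFA$-reduct; beyond that, the conclusion is a formal consequence of the model completeness results already established.
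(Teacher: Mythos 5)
Your proposal has a genuine gap at its central step. You assert that, by model completeness of $\TwVFE_{\lambda}$, the type $\tp_K(\bar a\,\bar\alpha/F)$ is determined by its quantifier-free $\mathcal{L}_{\lambda}$-part, and then decompose that quantifier-free part sort by sort. But $\TwVFE_{\lambda}$ does not eliminate quantifiers down to quantifier-free formulas: the quantifier reduction available (Corollary \ref{cor:qe-explained}) only goes down to formulas $\exists y\,\varphi(x,y)$ with $y$ ranging over the field-theoretic algebraic closure, and the completeness statement (Corollary \ref{cor:qe}) is about valued difference \emph{subfields} that are strict amalgamation bases, not about tuples from the residue field and value group. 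A tuple $\bar a\,\bar\alpha\in k_K^n\times\Gamma_K^m$ is not an $\mathcal{L}_{\lambda}$-substructure, and its type over $F$ is governed by which valued-field configurations over $F$ can realize it via $\mathrm{res}$ and $v$; a formula such as $\exists x\in\VF\,(v(x)=\gamma\wedge\mathrm{res}(x/c)=a\wedge\ldots)$ links the two sorts through a valued-field quantifier, so the syntactic observation that no \emph{term} mixes $k$- and $\Gamma$-variables does not yield the claimed Boolean decomposition. This is precisely the part of the theorem that carries the content, and it is where your argument stops short.

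What the paper actually does is an automorphism-lifting argument: given automorphisms of $k_K$ over $k_F$ and of $\Gamma_K$ over $\Gamma_F$, it builds, step by step, a transformally separable extension $E$ of $F$ inside $K$ realizing the prescribed residue and value-group data, and then invokes Theorem \ref{thm:wVFE-is-EC} to extend to an automorphism of $K$. The three cases (value-group elements outside the transformal divisible hull of $\Gamma_F$; residue elements transformally transcendental over $k_F$; residue elements transformally algebraic over $k_F$) each require specific inputs your sketch does not supply: generics of definable balls and the completeness of their quantifier-free types (Proposition \ref{prop:generic-type-of-a-ball-is-complete-and-amalgamative}), density of $K^{\sigma^{\infty}}$ in $K$ (Lemma \ref{lem:deep-ramification}) to place realizations in the maximal inversive subfield so that transformal separability of $K$ over $E$ is preserved, and, for the transformally algebraic residue elements, saturation of $K^{\inv}$ together with stable embeddedness of the residue field in $\TVFA$. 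Finally, your argument would at best give stable embeddedness with parameters from $k_F$ and $\Gamma_F$; identifying the induced structure as the \emph{pure} structures over $\varnothing$ requires the additional step in the paper of embedding a Hahn series field $k_0((t^{\Gamma_0}))$ into $K$ and lifting its automorphisms.
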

\begin{proof}
Let $K$ be a sufficiently saturated model of $\TwVFE_e$. We first verify that the residue field and value group are stably embedded and fully orthogonal as a difference field structure and an ordered transformal module structure, respectively, but possibly with some added constants. After doing that we show that indeed the $\emptyset$-induced structure is that of pure structures.

It will be sufficient to prove the following: given a strict amalgamation basis $F \models \VFE_{\leq e}$, with $\nicefrac{K}{F}$ transformally separable, a tuple of residue elements $\alpha$ and a tuple of value group elements $\gamma$ we have 
\[\tp_{\mathrm{ACFA}}(\alpha/k_F)\cup \tp_{\widetilde{\omega \mathrm{OGA}}}(\gamma/\Gamma_F)\vdash \tp(\alpha,\gamma/F).\] 

We show this by proving that any pair of automorphisms of the residue field and value group (as a difference field and an ordered transformal module) lifts to an automorphism of  $K$ by the aid of Theorem \ref{thm:wVFE-is-EC}. To this end it will be sufficient to show that these automorphisms arise as an automorphism (over $F$) of some model $E$ of $\wVFE_e$ extending $F$, which is transformally separable over $F$.

We build this extension $E$ step by step by repeatedly taking care of the following kind of elements:  (1) elements constituting $\gamma$,  (2) elements of $\alpha$ which are transformally transcendental over $k_F$ and (3) elements of $\alpha$ which are transformally algebraic over $k_F$. Starting with $E=F$, at each step we will extend $E$ and the automorphism on it to induce the desired automorphism on the residue field and value group; moreover, when we take care of residue elements we will not extend the value group and vice-versa. 

(1) Let $\gamma_1,\gamma_2 \in \Gamma_K$ (singletons) be value group elements with $\tp_{\widetilde{\omega \mathrm{OGA}}}(\gamma_1/\Gamma_F)=\tp_{\widetilde{\omega \mathrm{OGA}}}(\gamma_2/\Gamma_F)$ and $\gamma_i$ not in the transformal divisible hull of $\Gamma_F$. For $i=1,2$, let $p_{\gamma_i}$ be the quantifier-free type of a generic in the closed ball of radius $\gamma_i$ around $0$, as supplied by Proposition \ref{prop:generic-type-of-a-ball-is-complete-and-amalgamative}. By density of $K^\sigma$ in $K$, Lemma \ref{lem:deep-ramification}, the type $p_{\gamma_i}$ is consistent with the maximal inversive subfield $K^{\sigma^{\infty}}$ of $K$. Let $a_i\in K^{\sigma^\infty}$ be a realization of $p_{\gamma_i}$ inside the maximal inversive subfield of $K$. It follows from the proof of Proposition \ref{prop:generic-type-of-a-ball-is-complete-and-amalgamative}, that the field $E_i=F\left(a_i^{\mathbf{N}\left[\sigma^{\pm1},p^{\pm1}\right]}\right)$ is strictly amalgamative. It is obviously a model of $\wVFE_e$ and as $(E_i)^\inv=F^\inv\left(a_i^{\mathbf{N}\left[\sigma^{\pm1},p^{\pm1}\right]}\right)$, $K$ is transformally separable over $E_i$. Finally, since  $\tp_{\widetilde{\omega \mathrm{OGA}}}(\gamma_1/\Gamma_F)=\tp_{\widetilde{\omega \mathrm{OGA}}}(\gamma_2/\Gamma_F)$, the fields $E_1$ and $E_2$ are isomorphic as difference valued fields over $F$. Note that by construction, $\Gamma_{E_i}$ lies in the transformal divisible hull of $\Gamma_F\cup \{\gamma_i\}$ and that $k_{E_i}=k_F$.

If the $\gamma_i$ are in the transformal divisible hull of $\Gamma_F$, then they are in its definable closure, so are automatically preserved.

(2) Let $\alpha_1,\alpha_2 \in k_K$ (singletons) with $\tp_{\mathrm{ACFA}}(\alpha_1/k_F)=\tp_{\mathrm{ACFA}}(\alpha_2/k_F)$ and $\alpha_i$ transformally transcendental over $k_F$. We now proceed similarly as in (1), realizing the generic of the closed ball of radius $0$ around $0$. Note that, in the notation of (1), we get $k_{E_i}=k_F(\alpha_i^{\mathbf{N}\left[\sigma^{\pm1},p^{\pm1}\right]})$ and $\Gamma_{E_i}=\Gamma_F$.

(3) Let $\alpha_1,\alpha _2\in k_K$ be singletons with $\tp_{\mathrm{ACFA}}(\alpha_1/k_F)=\tp_{\mathrm{ACFA}}(\alpha_2/k_F)$ and $\alpha_i$ are in the relative transformal algebraic closure of $k_F$ in $k_K$. Let $\widetilde{k_F}$ be the relative transformal separable algebraic closure of $k_F$ in $k_K$.  By \cite[Lemma 3.6]{dor-hrushovskiVFA}, the $\alpha_i$ are in the inversive hull of $\widetilde{k_F}$. Hence, there exists a twisted difference polynomial over $k_F$ and some natural number $n$ such that $\alpha_1^{\sigma^n}$ and $\alpha_2^{\sigma^n}$ are both simple roots of this twisted difference polynomial. Since $\alpha_i^{\sigma^n}$  and $\alpha_i$ are interdefinable, there is not harm in the replacing the latter with the former. So assume that the $\alpha_i$ are simple roots of twisted difference polynomials over $k_F$.

By \cite[Proposition 9.13]{dor-hrushovskiVFA}, the difference field $K^\inv$ is saturated in $\kappa^+$ if $K$ is. Indeed, their residue fields and value groups are equal, respectively, and a nested collection of balls of cofinality $\kappa$ in $K^\inv$ have centers in $K$ by Lemma \ref{lem:deep-ramification} so since they have non empty intersection in $K$ it follows for $K^\inv$ as well. As a result, $K^\inv$ is saturated as well.  By stable embeddedness of $k$ in $\widetilde{\VFA}$ \cite[Theorem 9.14]{dor-hrushovskiVFA}, there exists an automorphism $\rho$ of $K^\inv$ over $F$ which maps $\alpha_1$ to $\alpha_2$. By Theorem \ref{thm:main-thm-on-tsep}, $\rho$ restricts to $\widetilde F$, the relative transformal separable algebraic closure of $F$ inside $K^\inv$. It is a model of $\TwVFE$ by Proposition \ref{P:pure insep inside twvfe}. In particular, it is strictly amalgamative and $K$ is transformally separable over $F$. Finally, by model completeness, $\widetilde F \prec K$, and thus since $\widetilde F$ is relatively transformally separably
algebraically closed in $K$, the same is true for $k_{\widetilde F}$ in $k$; so $\alpha_i\in k_{\widetilde F}$.
%
%

We now verify the induced structure is precisely the pure difference field and ordered module structure; that is, we prove that $\emptyset$-definable subsets of $k^d$ definable in the language of $\VFE$ are already $\varnothing$-definable in the language of difference fields and likewise for $\Gamma$. For this, we show that given small models $k_0\prec k_K$ and $\Gamma_0\prec \Gamma_K$ any pair of automorphism of $k_0$ and $\Gamma_0$ lift to an automorphisms of $K$.

Given such models $k_0$ and $\Gamma_0$, the Hahn series $K_0=k_0((t^{\Gamma_0}))$ is a model of $\TVFA$ (see for example the proof of Proposition 9.14 in \cite{dor-hrushovskiVFA}). We claim that $K_0$ embeds in $K$.

Let us assume for a moment that this was shown and finish the proof. The map $\Aut\left(K_0\right) \to \Aut(k_0) \times \Aut(\Gamma_0)$ is evidently surjective (indeed, it even admits a section). The field $K_0$ is algebraically closed and inversive, hence a strict amalgamation basis; using Corollary \ref{cor:qe}, every automorphism of $K_0$ lifts to an automorphism of $K$.

It remains to be shown that $K_0$ can be embedded in $K$ (over the prime field, i.e, over $\varnothing$). 

Let $\mathbf{F}$ be the algebraic closure of the prime field of $k_0$. By Hensel's lemma it identifies with the algebraic closure of the prime field in $K_0$. Since $\mathbf{F}$ is a strict amalgamation basis, by Theorem \ref{thm:wVFE-is-EC} we may  embed $K_0$ in $K$ over $\mathbf{F}$ and we are done.
\end{proof}

\begin{cor}
    Let $K$ be a model of $\TwVFEe$, and let $\tau = \sigma^n p^m$ with $n,m \in \mathbf{N}$, with $n > 0$ and $n,m$ relatively prime. Let $F$ be the fixed field of $\tau$. Then every definable subset of $F^d$ which is definable with parameters in $K$ is already definable with parameters in $F$ and in the language of fields.
\end{cor}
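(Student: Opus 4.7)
The plan is to identify $F$, via the residue map, with the fixed field of $\tau$ acting on the residue field $k$, and then transfer the question to $\ACFA$ using the stable embeddedness of $k$ in $\TwVFE$ together with the classical fixed-field theorem of Chatzidakis--Hrushovski.

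First I would check that $F$ is forced to lie inside the valuation ring. Since the action of $\sigma$ on the value group is $\omega$-increasing and $n > 0$, the identity $v(a^\tau) = p^m \sigma^n(v(a))$ shows that $v(a^\tau) \neq v(a)$ whenever $v(a) \neq 0$. Hence $F \subseteq \mathcal{O}^\times \cup \{0\}$, and the residue map restricts to a field homomorphism $F \to k$ whose image lies in the fixed field $F_0$ of $\tau$ on $k$. For surjectivity onto $F_0$, given $\bar a \in F_0$ and any lift $a_0 \in \mathcal{O}$, I would apply transformal Henselianity (Definition \ref{transformally-henselian-def}) to the twisted difference polynomial $f(x) = x^\tau - x$. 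Since $n > 0$, the conventions of \ref{subsec:Derivatives-of-Twisted} give $f'(x) = -1$, so $vf'(a_0) = 0$; the hypothesis $\bar a^\tau = \bar a$ yields $vf(a_0) > 0$. Transformal Hensel therefore produces a lift $a \in F$ with $\overline{a} = \bar a$. Thus the residue map is a $K$-definable isomorphism of fields $F \cong F_0$.

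Next I would invoke Theorem \ref{thm:stably-embedded}: the residue field $k$ is stably embedded in $K$ with induced structure that of a pure model of $\ACFA$. Via the isomorphism above, any $K$-definable subset of $F^d$ transports to a $K$-definable subset of $F_0^d \subseteq k^d$, and so by stable embeddedness is already definable over $k$ using only the language of difference fields.

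The last step is to cite the theorem of Chatzidakis--Hrushovski (\cite{ChHr-Dif}, extended to arbitrary $\tau = \sigma^n \phi^m$ with $n > 0$ in \cite{hrushovski2004elementary}), asserting that in $\ACFA$ the fixed field of such a $\tau$ is stably embedded with induced structure the pure field structure. Applying this to $F_0 \subseteq k$ descends any $\ACFA$-definable subset of $F_0^d$ to a set definable over $F_0$ in the pure field language; transporting back through the residue isomorphism gives the claim. I do not anticipate a real obstacle here: the substantive input is the $\ACFA$ fixed-field theorem, and the only nonroutine local step is the Hensel lift of the first paragraph, which is precisely where the hypothesis $n > 0$ is used.
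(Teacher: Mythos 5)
Your proposal is correct and is precisely the argument the paper intends: its one-line proof cites exactly your three ingredients (the $\ACFA$ fixed-field theorem, stable embeddedness of the residue field, and transformal Henselianity), and your use of the transformal Hensel lemma applied to $x^{\tau}-x$ (with $f'=-1$ because $n>0$) to identify $F$ with the fixed field of $\tau$ on $k$ via the residue map is the intended role of the Henselianity hypothesis. No gaps.
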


\begin{proof}
    This follows from the corresponding statement in $\ACFA$ (see \cite{chatzidakis2002model}), stable embeddedness of the residue field, and transformal Henselianity.
\end{proof}

\begin{cor}
\label{cor:tilde-w-vfe-is-complete-rel-resd} The theory $\TwVFEe$
is complete relative to the theory of the residue field. Equivalently,
the completions of $\widetilde{\VFE_{e}}$ are given by specifying
the isomorphism type of the action of $\sigma$ on the algebraic closure
of the prime field.
\end{cor}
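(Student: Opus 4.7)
The plan is to deduce the corollary as a direct application of Corollary \ref{cor:qe}, with the base $F$ taken to be $\mathbf{F}^{\alg}$, the relative algebraic closure of the prime field $\mathbf{F}$ inside an ambient model of $\TwVFEe$, equipped with the trivial valuation. The equivalence with the residue field formulation will then fall out from the Chatzidakis--Hrushovski description of completions of $\ACFA$.

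The first task is to verify that $\mathbf{F}^{\alg}$, regarded as a trivially valued transformal valued field, is a strict amalgamation basis in the sense of Definition \ref{defn:strict-amalgamation-basis}. Because the value group of any $K \models \TwVFEe$ is torsion-free and $\mathbf{F}^{\alg}$ is algebraic over $\mathbf{F}$, the inherited valuation on $\mathbf{F}^{\alg}$ is automatically trivial. The key observation is that $\sigma$ restricts to an automorphism of $\mathbf{F}^{\alg}$: its image is an algebraically closed subfield of $\mathbf{F}^{\alg}$ containing $\mathbf{F}$, and hence coincides with $\mathbf{F}^{\alg}$. Thus $\mathbf{F}^{\alg}$ is perfect, inversive, algebraically closed, and trivially valued; it coincides with its own perfect, inversive, algebraically Henselian hull and admits no nontrivial finite Galois extensions. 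It is therefore a strict amalgamation basis and a model of $\VFE_{\leq e}$ for every $e \in \mathbf{N}_{\infty}$. Moreover, being inversive, any extension of $\mathbf{F}^{\alg}$ is automatically transformally separable over it, so in particular every $K \models \TwVFEe$ containing the canonical copy of $\mathbf{F}^{\alg}$ is transformally separable over it.

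Now given $K_1, K_2 \models \TwVFEe$ whose $\sigma$-actions on $\mathbf{F}^{\alg}$ are isomorphic as difference fields, I identify the two embedded copies via the given isomorphism and apply Corollary \ref{cor:qe} to conclude $K_1 \equiv_{\mathbf{F}^{\alg}} K_2$, and in particular $K_1 \equiv K_2$. The converse is immediate, since the isomorphism type of $\sigma$ on $\mathbf{F}^{\alg}$ is recoverable from the types of algebraic elements over the prime field, which is first-order data in $\Th(K)$. For the equivalence with completeness relative to the residue field, I would invoke Remark \ref{rem-on-val-gp-and-res-of-twvfe} (the residue field of a model of $\TwVFEe$ is a model of $\ACFA$), the classical Chatzidakis--Hrushovski result that completions of $\ACFA$ are classified by the $\sigma$-action on $\mathbf{F}^{\alg}$, and the fact that the residue map identifies $\mathbf{F}^{\alg} \subseteq K$ with its image in the residue field $\sigma$-equivariantly, since the valuation is trivial on $\mathbf{F}^{\alg}$. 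I do not anticipate any substantive obstacle: essentially all of the work has been absorbed into Corollary \ref{cor:qe}, and what remains is a short verification that the distinguished trivially valued subfield $\mathbf{F}^{\alg}$ serves as a suitable base of amalgamation.
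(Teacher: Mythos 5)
Your proposal is correct and follows essentially the same route as the paper: the paper likewise takes $F$ in Corollary \ref{cor:qe} to be the relative algebraic closure of the prime field, identifies it via the residue map (Hensel's lemma, or equivalently your triviality-of-the-valuation observation) with the algebraic closure of the prime field of the residue field, and concludes by the classification of completions of $\ACFA$. Your write-up merely makes explicit the verification that this trivially valued, inversive, algebraically closed base is a strict amalgamation basis, which the paper leaves implicit.
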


\begin{proof}
By Hensel's lemma the algebraic closure of the prime field identifies
via the residue map with the algebraic closure of the prime field
of the residue field. So this follows immediately from Corollary \ref{cor:qe}
by taking $F$ to be the relative algebraic closure of the prime field
and the corresponding statement for $\ACFA$.
\end{proof}

\begin{thm}
    \label{thm:wvfe-decidable} The theory $\TwVFE$ is decidable.
\end{thm}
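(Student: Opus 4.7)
The plan is to combine the explicit axiomatization of $\TwVFE$ with Corollary \ref{cor:tilde-w-vfe-is-complete-rel-resd} and the known decidability of $\ACFA$ due to Chatzidakis and Hrushovski, reducing the problem to a transfer along the residue field.

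First I would verify that $\TwVFE$ is recursively axiomatizable. Unpacking Definition \ref{def:tsep-alg-closed} together with the defining axioms of $\FF$ and $\wVFE$, each of the constituent conditions is given by an effective scheme in the language of transformal valued fields: closure under twists, the primary condition of $\FF$, $\omega$-increasingness of the value group, strict transformal Henselianity (a Hensel scheme indexed by twisted difference polynomials together with the $\SCFE$-axioms imposed on the residue field), tame transformal divisibility (a scheme indexed by $\nu \in \mathbf{Z}[\sigma/p^{\infty}]$ with nonzero constant term), and surjectivity of twisted additive operators of nonzero transformal derivative.

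Next, by Corollary \ref{cor:tilde-w-vfe-is-complete-rel-resd}, for each characteristic exponent $p$ and each $e \in \mathbf{N}_{\infty}$, the completions of $\TwVFEe$ are classified by the isomorphism type of $\sigma$ acting on the algebraic closure of the prime field, which is precisely the data classifying the completions of $\ACFA$. Since $\ACFA$ is decidable, the set of such isomorphism types is effectively enumerable, so one obtains a computable family $(T_n)_{n \in \mathbf{N}}$ of complete extensions of $\TwVFE$, uniformly recursively axiomatizable: each $T_n$ is obtained by fixing a recursive choice of characteristic, Ershov invariant, and (decidable) quantifier-free diagram of the action of $\sigma$ on the algebraic closure of the prime field inside a selected completion of $\ACFA$.

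To decide a given sentence $\varphi$, one runs two processes in parallel: (a) enumerate proofs from $\TwVFE$ searching for $\varphi$, and (b) over all $n$, enumerate proofs from $T_n$ searching for $\neg\varphi$. Completeness of each $T_n$ ensures that exactly one process terminates, yielding the decision. The only nontrivial content is the transfer to $\ACFA$ afforded by Corollary \ref{cor:tilde-w-vfe-is-complete-rel-resd}; the remaining work is bookkeeping, confirming the effectivity of the axiom schemes and the algorithmicity of the bijection with $\ACFA$-completions. Consequently there is no genuine obstacle once the machinery developed in the preceding sections is in place.
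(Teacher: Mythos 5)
Your overall strategy --- recursive axiomatizability plus the relative completeness of Corollary \ref{cor:tilde-w-vfe-is-complete-rel-resd} plus the decidability of $\ACFA$ --- is the same as the paper's, but the step in which you package the completions is wrong as stated. You claim that the completions of $\TwVFE$ form a computable family $\left(T_n\right)_{n\in\mathbf{N}}$ obtained by selecting, for each $n$, a completion of $\ACFA$. There are continuum many completions of $\ACFA$ (they correspond to conjugacy classes in the absolute Galois group of the prime field), hence continuum many completions of $\TwVFEe$; no countable family can exhaust them, and "effectively enumerable set of isomorphism types of $\sigma$ on the algebraic closure of the prime field" does not make sense as written. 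Consequently your process (b) is not guaranteed to terminate on a non-consequence $\varphi$: the completions refuting $\varphi$ might all lie outside whatever countable family you chose, unless you additionally prove that your family is dense in the space of completions and that a computable dense family exists --- neither of which you address.

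The repair is to search over \emph{finite fragments} rather than completions: $\varphi$ fails in some model of $T_{p,e}$ if and only if there is a finite set $S_0$ of difference-field sentences, consistent with $\ACFA$ of the relevant characteristic (a decidable condition, by decidability of $\ACFA$), such that $T_{p,e}$ together with $S_0$ relativized to the residue field proves $\neg\varphi$; here you use Corollary \ref{cor:tilde-w-vfe-is-complete-rel-resd} and compactness for the "only if", and the fact that every completion of $\ACFA$ is realized as the residue field of a model of $T_{p,e}$ (e.g.\ via the Hahn series construction used in the proof of Theorem \ref{thm:stably-embedded}) for the "if". This makes the set of non-consequences recursively enumerable, which together with recursive axiomatizability gives decidability. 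Note also that the paper's proof handles the quantification over the Ershov invariant by an explicit compactness argument producing an effective bound $N(\theta)$; your dovetailing over all pairs $(p,e)$ achieves the same thing once the fragment search is fixed, so that part of your argument is fine.
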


\begin{proof}
 Let $p$ be a fixed characteristic exponent and let $e \in \mathbf{N}_\infty$ be the degree of imperfection, so $e = 0$ when $p = 1$. We will prove that the theory $T_{p,e}$ of models of $\TwVFE$ of characteristic exponent $p$ and degree of imperfection $e$ is decidable; the general case follows from compactness.
 By Corollary \ref{cor:tilde-w-vfe-is-complete-rel-resd} the theory $T_{p,e}$ of models of $\TwVFE$ is complete relative to the residue field. By Theorem \ref{thm:stably-embedded} the induced structure on the residue field is that of a pure model of $\ACFA$; since $\ACFA$ is decidable (see \cite{ChHr-Dif}), we find that $T_{p,e}$ is also decidable.
 
 Now fix a prime $p$. Let $T$ be the theory of models of $\TwVFE$ of characteristic $p$ and $T_e$ the theory of models of $\TwVFE$ of characteristic $p$ and Ershov invariant $e$; we will prove that $T$ is decidable. Given a sentence $\theta$ in the language of transformal valued fields we must determine whether or not $\theta$ holds in every model of $T$. Using the result of the previous paragraph we can effectively determine whether or not $\theta$ holds in every model of $T_{\infty}$. If $\theta$ fails in some model of $T_\infty$ then we are done; so assume that $T_\infty \vdash \theta$. By compactness there is an effective $N = N\left(\theta\right)$ such that $T_e \vdash \theta$ for all $e > N$; this reduces our problem to verifying whether or not $T_e \vdash \theta$ for finitely many choices of $e$.
 
 This shows that the theory of models of $\TwVFE$ of any fixed characteristic is decidable. Using the reasoning of the previous paragraph we conclude that $\TwVFE$ is decidable in general.
\end{proof}

\textbf{$\TwVFE$ and the asymptotic theory of the Frobenius action.} By a \textit{Frobenius transformal valued field} we mean a transformal valued field of positive characteristic $p$ whose distinguished endomorphism coincides with a power of the Frobenius endomorphism $x \mapsto x^p$. We regard Frobenius transformal valued fields as structures for the language of transformal valued fields; this expands the language of valued fields by a unary function symbol for the action of $\sigma = p^n$.

Let $\mathcal{C}$ be the class of separably algebraically closed Frobenius transformal valued fields. By the \textit{asymptotic theory} of $\mathcal{C}$ we mean the set of sentences true in all members of $\mathcal{C}$, outside a finite set of exceptional prime powers.

\begin{thm}\label{T:asymp theory}
Let $T$ be the asymptotic theory of separably algebraically closed, nontrivially valued, Frobenius transformal valued fields. Then $T$ is logically equivalent to $\TwVFE$, together with a scheme of axioms asserting that if the characteristic is $p$, then $\sigma$ and the Frobenius are simultaneously onto.
\end{thm}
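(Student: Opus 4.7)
I prove the theorem by establishing mutual containment between $T$ and $\TwVFE + \Sigma$, where $\Sigma$ denotes the scheme asserting that, in each positive characteristic $p$, $\sigma$ is onto if and only if Frobenius is onto.

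For the direction ``$\TwVFE + \Sigma$ is contained in $T$'', compactness reduces the problem to showing that every nonprincipal ultraproduct $K = \prod_{\mathcal{U}}(K_i,\phi^{n_i})$ of separably algebraically closed, nontrivially valued Frobenius transformal valued fields satisfies $\TwVFE$ together with $\Sigma$. I form in parallel $L = \prod_{\mathcal{U}} K_i^{\mathrm{alg}}$ equipped with the induced Frobenius action. By Fact \ref{fact:wvfa}(4) the transformal valued field $L$ is a model of $\TVFA$, hence of $\TwVFE$. The crucial claim is that $K$ is transformally separably algebraically closed in $L$. By Proposition \ref{prop-on-simple-roots} it suffices to show that any simple root in $L$ of a twisted difference polynomial $f$ over $K$ actually lies in $K$, and this transfers from the corresponding level-by-level statement (at indices $i$ with $n_i$ larger than the shift bound of $f$). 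At level $i$, with $\sigma = \phi^{n_i}$, the twisted transformal derivative coincides with the ordinary polynomial derivative because in characteristic $p$ the ordinary derivative of any Frobenius power $x^{p^k}$ with $k \neq 0$ already vanishes; thus a twisted simple root is a simple root of a separable polynomial equation and must lie in the separably algebraically closed $K_i$. Proposition \ref{P:pure insep inside twvfe}(2) then yields $K \models \TwVFE$. The scheme $\Sigma$ is immediate since in each $K_i$, $\sigma$ is literally a Frobenius power, so $\sigma$ is onto iff $K_i$ is perfect iff Frobenius is onto.

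For the other direction, let $K \models \TwVFE + \Sigma$. By Corollary \ref{cor:tilde-w-vfe-is-complete-rel-resd} the completion of $\Th(K)$ is determined by the characteristic, the Ershov invariant $e$, and the isomorphism type $\tau$ of the action of $\sigma$ on the algebraic closure of the prime field. If $K$ is inversive, then $\Sigma$ in positive characteristic forces $K$ to be perfect; combined with separable algebraic closedness (which follows from $\TwVFE$) this makes $K$ algebraically closed and hence a model of $\TVFA$, and Fact \ref{fact:wvfa}(4) supplies an elementarily equivalent ultraproduct of algebraically closed --- and a fortiori separably algebraically closed --- Frobenius transformal valued fields. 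Otherwise $K$ is not inversive, so $\Sigma$ forces positive characteristic $p$ and $K$ imperfect; then $\tau \in \hat{\mathbf{Z}} = \mathrm{Aut}(\overline{\mathbf{F}_p})$ can be written as $\lim_{\mathcal{U}} \phi^{n_i}$ along a suitable nonprincipal ultrafilter. Choosing each $K_i$ to be a separably algebraically closed, nontrivially valued Frobenius TVF of characteristic $p$ and Ershov invariant $e$ (e.g., the separable algebraic closure of a Henselization of $\mathbf{F}_p(t_1, \ldots, t_e)$, with the obvious modification for $e = \infty$) equipped with $\sigma = \phi^{n_i}$, the first direction guarantees that the ultraproduct $\prod_\mathcal{U}(K_i, \phi^{n_i})$ models $\TwVFE + \Sigma$; by construction it realises the invariants $(p, e, \tau)$ and hence is elementarily equivalent to $K$.

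The main obstacle is the transformal separable algebraic closure claim in the first direction: unwinding the twisted-derivative simple-root condition in the Frobenius specialization so that it reduces to a simple root of an ordinary separable polynomial equation, which separable algebraic closedness of the $K_i$ then pins inside $K_i$. Once this is done, Proposition \ref{P:pure insep inside twvfe}(2) upgrades the inversive-hull result of \cite{dor-hrushovskiVFA} to the present setting, and the remainder of the argument is a matter of matching invariants via Corollary \ref{cor:tilde-w-vfe-is-complete-rel-resd}.
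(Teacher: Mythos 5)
Your two directions are sound in outline. The level-by-level unwinding of the twisted simple-root condition in the first direction (the ordinary derivative of the Frobenius specialization of $x^{\nu}$ agrees with the specialization of the transformal derivative once $n_i$ exceeds the denominators occurring in $f$, so a twisted simple root over $K_i$ is separably algebraic over $K_i$) is exactly the right mechanism for showing that the ultraproduct is relatively transformally separably algebraically closed in $\prod_{\mathcal{U}} K_i^{\alg}$, after which Proposition \ref{P:pure insep inside twvfe}(2) applies; this part is fine, modulo the routine check that the ultraproduct is a model of $\wVFE$ in the first place (cf.\ Remark \ref{rem-henselian-hull-not-wVFE}(2)), and it is in fact more explicit than the paper, whose written proof only treats the converse direction. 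That converse is also handled by the paper's strategy: dispose of the inversive case via Fact \ref{fact:wvfa}(4) and match the invariants of Corollary \ref{cor:tilde-w-vfe-is-complete-rel-resd} in the non-inversive case.

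There is, however, a genuine error in your case split for the second direction: you assert that if $K$ is not inversive then ``$\Sigma$ forces positive characteristic $p$ and $K$ imperfect.'' It does not. The scheme only constrains models whose characteristic is a prime, so in characteristic zero it is vacuous, and non-inversive characteristic-zero models of $\TwVFE$ certainly exist --- they are exactly the models of $\TwVFE_{\mathbf{Q}}$ which fail to be inversive, the central objects of Section 7 (their reduct to the pair $\left(K, K^{\sigma}\right)$ is a model of $\ACFtwo$, a theory of \emph{proper} pairs). Your argument therefore silently omits the case of a non-inversive model of characteristic zero, which must be approximated by \emph{imperfect} separably algebraically closed, nontrivially valued Frobenius transformal valued fields of characteristics $p_i \to \infty$: the residue field is a characteristic-zero model of $\ACFA$, approximated by Frobenius difference fields of growing characteristic via \cite{hrushovski2004elementary}, one chooses $K_{q_i}$ imperfect so that the ultraproduct remains non-inversive, and one concludes with Corollary \ref{cor:tilde-w-vfe-is-complete-rel-resd} as before (this is the case the paper dismisses as ``similar''). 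The machinery you set up does cover it, but the case has to be recognized and run; as written, your proof declares it empty on the strength of a false implication.
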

\begin{proof}

Let $K$ be a model of $\TwVFE$ obeying the scheme of axioms of the theorem; we must prove that $K$ is elementarily equivalent to an ultraproduct of Frobenius transformal valued fields. If $K$ is inversive then the result follows from Fact \ref{fact:wvfa} so assume that $K$ fails to be inversive. Then either $K$ is of positive characteristic $p > 0$ and Ershov invariant $e > 0$ or else it is of characteristic zero. We deal with the positive characteristic case; the case $p = 0$ is similar. By \cite{hrushovski2004elementary} the residue field $k$ of $K$ is elementarily equivalent to an ultraproduct of algebraically closed Frobenius difference fields $\left(k_{q_i}, x \mapsto x^{q_i}\right)$ for powers $q_i$ of $p$ with $q_i \to \infty$. For each $i$, let $K_{q_i}$ be a separably algebraically closed and nontrivially valued field of degree of imperfection $e$ and with residue field $k_{q_i}$. It follows from Corollary \ref{cor:tilde-w-vfe-is-complete-rel-resd} that the ultraproduct of the $\left(K_{q_i}, x \mapsto x^{q_i}\right)$ is elementarily equivalent to $K$, and we are done.
\end{proof}

\section{Stability Theoretic Phenomena}

In the final section we study stability theoretic phenomena in the theory $\TwVFE_{
\mathbf{Q}}$ (we restrict our selves to the characteristic zero case  for simplicity). The characterization of forking independence is the same in finite characteristic but the classification of the stationary types is not. Before we turn to the details it will be useful to give an overview of stability theoretic phenomena in the various reducts of $\TwVFE$.

\textbf{Stability Theory in $\ACVF$}. The theory $\ACVF$ of algebraically closed and nontrivially valued fields is unstable but enjoys $\NIP$. Thus forking independence in $\ACVF$ is of fundamentally geometric character, namely a global type over an algebraically closed set does not fork precisely in the event that it is invariant.

In \cite{HHMStable} a fundamental role is played by stably dominated types; for such types the entire forking calculus is available, namely we have uniqueness and symmetry of forking. In $\ACVF$, they can be equivalently characterized as the generically stable types (equivalently the stationary types, since NIP), and the types orthogonal to the value group $\Gamma$. Thus in some loose sense all instability in $\ACVF$ can be traced back to the value group.

The orthogonality to the value group however is only directly apparent over spherically complete models (which are also of fundamental importance in \cite{HHMStable}). Over a spherically complete model, all types are dominated by the value group and the residue field. We will see below that the principle of spherically complete domination by the value group and the residue field remains valid in $\TVFA_{\mathbf{Q}}$ as well.

\textbf{Stability Theory in $\ACFA_{\mathbf{Q}}$.} The theory $\ACFA$ is supersimple but unstable. Forking independence is governed by linear disjointness, that is by forking in the reduct to the field language. For an extension $\nicefrac{B}{A}$ of algebraically closed inversive fields to be stationary in $\ACFA$ it is sufficient that $B \otimes_{A} C$ has no nontrivial finite $\sigma$-invariant Galois extensions, for all algebraically closed inversive extensions $C$ of $A$; types of this form are called \emph{superficially stable} in \cite{ChHr-Dif}. In characteristic zero it turns out that superficially stable types are precisely those orthogonal to the fixed field (see Fact \ref{fact:acfa-char-0-stationary} below). In finite characteristic there exist non-superficially stable types which are modular; they are related to additive equations involving $\sigma$ and the Frobenius, see \cite[Example 6.5]{ChHr-Dif}.

\begin{rem}\footnote{We are grateful to Zo\'e Chatzidakis and Ehud Hrushovski for useful discussions around this issue.}\label{rem:stability-in-acfa}
    We work in $\ACFA$. Let $\nicefrac{B}{A}$ be an extension of inversive algebraically closed difference fields. Say that $\nicefrac{B}{A}$ is \emph{generically stable} if for all algebraically closed and inversive $C$ linearly disjoint from $B$ over $A$, the difference field $B \otimes_{A} C$ admits a unique lift of $\sigma$ to the algebraic closure, up to isomorphism. If $\nicefrac{B}{A}$ is superficially stable, then it is generically stable; conversely in $\ACFA_{
\mathbf{Q}}$, it can be shown that there is no distinction between superficial stability and generic stability. It is not known if this holds for modular types in finite characteristic; the question is related to the the classification of unstable types in $\ACFA_{\mathbf{F}_p}$ orthogonal to all fixed fields.
\end{rem}

\textbf{Situation.} In what follows we will study the following situation. Let $A$ be a model of $\TVFA_{\mathbf{Q}}$ which is spherically complete. Let $B$ and $C$ be models of $\wVFA$ over $A$, jointly embedded over $A$ in an ambient extension. Let us assume that the intersection $\Gamma_B \cap \Gamma_C = \Gamma_A$ and that the residue fields $k_B$ and $k_C$ are linearly disjoint over $k_A$. Let $b$ be a tuple enumerating the elements of $B$. Moreover let $\alpha$ and $\gamma$ be tuples enumerating the residue field elements and the value group elements of $B$, respectively.

\begin{lem}\label{lem:spherically-completeq-qf}
Notation as above. Then $\qftp_{\TVFA}\left(\nicefrac{b}{A}\right) \cup \qftp_{\TwOGA}\left(\nicefrac{\gamma}{\Gamma_C}\right) \cup \qftp_{\ACFA} \left(\nicefrac{\alpha}{k_C}\right) \vdash \qftp_{\TVFA} \left(\nicefrac{b}{C}\right)$. Moreover:
    \begin{enumerate}
        \item The fields $B$ and $C$ are linearly disjoint over $A$.
        \item Let $N = B \otimes_{A} C$; then $N$ has residue field $k_B \otimes_{k_A} k_C$ and its value group is the cofibered sum $\Gamma_B \oplus_{\Gamma_A} \Gamma_C$.
        \end{enumerate}  
\end{lem}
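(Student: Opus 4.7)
The plan is to prove this by a two-stage argument: first constructing the amalgamation $N = B\otimes_{A}C$ with the prescribed residue field and value group (giving (1) and (2)), and then using spherical completeness of $A$ to show that this amalgamation is pinned down by the quantifier-free data on the right-hand side. The result is the difference-field analogue of the Haskell--Hrushovski--Macpherson domination theorem for $\ACVF$ over a maximally complete base.

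For (1) and (2), I would invoke Fact~\ref{fact:wvfa}(5): since $A$ is algebraically closed and transformally Henselian (being a model of $\widetilde{\VFA}$), the category of models of $\wVFA$ admits linearly disjoint amalgamation over $A$, with residue fields that can be taken linearly disjoint and value groups rationally independent over $\Gamma_{A}\otimes\mathbf{Q}(\sigma)$. Under the hypothesis that $\Gamma_B\cap\Gamma_C=\Gamma_A$ and that $k_B,k_C$ are linearly disjoint over $k_A$, any such amalgamation realizes the jointly embedded $B,C$ as linearly disjoint over $A$, with residue field $k_B\otimes_{k_A}k_C$ and value group $\Gamma_B\oplus_{\Gamma_A}\Gamma_C$. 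The extension of $\sigma$ to $B\otimes_A C$ is automatic because $A$ is inversive and algebraically closed, so $B$ and $C$ are linearly disjoint not only as fields but as difference fields, and the field of fractions of $B\otimes_A C$ acquires a unique structure of transformal valued field.

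For the main domination claim, I would argue that any two amalgamations $N,N'$ satisfying these constraints are isomorphic over $B\cup C$ as transformal valued fields. Here spherical completeness of $A$ is the crucial input: for every $b\in B\setminus A$, the set $\{v(b-a):a\in A\}$ attains its supremum in $\Gamma_A$, and the corresponding residual class lies in $k_A$, so each element of $B$ carries a canonical ``distance invariant'' over $A$ that is preserved in any amalgamation. A standard pseudo-Cauchy / back-and-forth argument using these invariants shows that the valuation on $BC$ is reconstructed from that on $B$, that on $C$, and the way $\Gamma_B$ sits in $\Gamma_B\oplus_{\Gamma_A}\Gamma_C$ over $\Gamma_C$ together with the way $k_B$ sits in $k_B\otimes_{k_A}k_C$ over $k_C$. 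Combining this with quantifier elimination for $\widetilde{\VFA}$ and the stable embeddedness/orthogonality of $k$ and $\Gamma$ (Fact~\ref{fact:wvfa}(2),(8)) yields the stated implication of quantifier-free types.

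The main obstacle is the uniqueness step: adapting HHM-style spherically-complete domination to the transformal setting. The difference-structure enhancement is formal given that $A$ is inversive and algebraically closed; the substantive content is the valuation-theoretic back-and-forth argument, which runs through pseudo-Cauchy sequences inside the maximally complete base $A$ and requires some care in checking that the $\sigma$-conjugates $\{b^{\sigma^n}:n\in\mathbf{Z}\}$ are treated coherently (which is where inversiveness of $A$, $B$ and $C$ enters).
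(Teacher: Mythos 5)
Your overall strategy is the one the paper uses: reduce to spherically complete domination for the underlying valued fields, i.e.\ \cite[Proposition 12.11]{HHMStable}, and then observe that once the quantifier-free valued-field type of $BC$ is pinned down, the lift of $\sigma$ is forced by functoriality (equivalently, by the linear disjointness of $B$ and $C$ over the inversive, algebraically closed field $A$). The paper's proof is exactly this and nothing more: it quotes \cite[Proposition 12.11]{HHMStable} for the domination statement, adds the one-line remark about $\sigma$, and notes that clauses (1) and (2) are part of the proof of that proposition.

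Two points in your write-up, however, do not work as stated. First, you derive (1) and (2) from Fact~\ref{fact:wvfa}(5). That fact is an \emph{existence} statement: over a suitable base one can construct \emph{some} amalgamation in which $B$ and $C$ are linearly disjoint with prescribed residue field and value group. In the lemma, $B$ and $C$ are already given, jointly embedded in an ambient extension, and you must show that \emph{this} joint embedding is linearly disjoint; you cannot infer that from the existence of a different, well-behaved amalgamation unless you already know that the constraints $\Gamma_B\cap\Gamma_C=\Gamma_A$ and linear disjointness of $k_B,k_C$ over $k_A$ determine the joint embedding uniquely --- which is precisely the domination statement you only establish afterwards. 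As written, the argument for (1) and (2) is therefore circular; the correct order is to run the HHM induction (or cite it), which produces linear disjointness and the identification of $k_N$ and $\Gamma_N$ along the way. Second, the invariants in your back-and-forth sketch are misstated: for $b\in B\setminus A$ the set $\{v(b-a):a\in A\}$ does attain a maximum by spherical completeness of $A$, but that maximum need not lie in $\Gamma_A$ (that is the ramified case), and in the residual case the relevant residue class lies \emph{outside} $k_A$, not inside it --- otherwise the approximation could be improved. This realized/ramified/residual trichotomy (no immediate extensions over a maximally complete base) is exactly what drives the proof of \cite[Proposition 12.11]{HHMStable}, so the fix is either to state the invariants correctly or simply to quote that proposition, as the paper does.
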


\begin{proof}
    This follows formally from spherically complete domination in $\ACVF$, see \cite[Proposition 12.11]{HHMStable}. Namely the quantifier free type in the language of abstract valued fields is determined, hence by functoriality the lift of $\sigma$ is unique also (or just argue directly using the linear disjointness of $B$ and $C$ over $A$). The ``moreover'' clauses are part of the proof of \cite[Proposition 12.11]{HHMStable}.
\end{proof}

\begin{lem}\label{lem:twvfa-0-complete-type}
    Let $K$ be a model of $\VFA_{\mathbf{Q}}$. Assume that the value group of $K$ is algebraically divisible. Then completions of the theory $\TVFA_{K}$ of models of $\TVFA$ over $K$ correspond bijectively to completions of $\ACFA_{k}$, the theory of models of $\ACFA$ over $k$. In particular if $k$ has no nontrivial finite $\sigma$-invariant Galois extensions then the theory of models of $\TVFA$ over $K$ is complete.
\end{lem}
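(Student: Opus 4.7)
My strategy is to reduce the statement to an essentially Galois-theoretic comparison between the absolute Galois group of $K$ and that of its residue field $k$, both viewed with their induced $\sigma$-action, and then to apply Fact \ref{fact:wvfa}(3) together with the analogous (much easier) fact for $\ACFA$.

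First I would pass to the algebraic Henselization $K^{h}$ of $K$. Since $\sigma$ is an automorphism of $K$ (as $K \models \VFA$ implies $K$ is inversive), it lifts uniquely to an automorphism of $K^{h}$, and the passage $K \rightsquigarrow K^{h}$ is canonical, so the isomorphism type of $K^{h}$ over $K$ is uniquely determined. Moreover $K^{h}$ has the same residue field $k$ and value group $\Gamma$ as $K$. Since $K^{h}$ embeds over $K$ into any algebraically Henselian model of $\wVFA$ over $K$, the completions of $\TVFA_{K}$ are in canonical bijection with the completions of $\TVFA_{K^{h}}$. So we may assume $K$ itself is algebraically Henselian.

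By Fact \ref{fact:wvfa}(3), completions of $\TVFA_{K}$ are in bijection with $K$-isomorphism types of extensions of the transformal valued field structure to $K^{\alg}$, equivalently with orbits of $\Aut(K^{\alg}/K)$ on the set of compatible systems of lifts $(\sigma^{n})_{n > 0}$ to $K^{\alg}$. The classical Galois theory of Henselian valued fields in residue characteristic zero yields a canonical short exact sequence
\[
1 \longrightarrow I \longrightarrow \Gal(K^{\alg}/K) \longrightarrow \Gal(k^{\alg}/k) \longrightarrow 1,
\]
where the inertia group $I$ is the profinite completion of $\mathrm{Hom}(\Gamma^{\mathrm{div}}/\Gamma, k^{\alg,\times})$; under the hypothesis that $\Gamma$ is algebraically divisible, $\Gamma^{\mathrm{div}}/\Gamma = 0$, hence $I$ is trivial, and the residue map induces an isomorphism $\Gal(K^{\alg}/K) \xrightarrow{\sim} \Gal(k^{\alg}/k)$. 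This isomorphism is manifestly $\sigma$-equivariant (the residue map commutes with $\sigma$ by the valued difference field axioms), so it induces a bijection between finite $\sigma$-invariant Galois extensions of $K$ and finite $\sigma^{-1}$-invariant Galois extensions of $k$, and more generally a bijection between compatible systems of lifts of $(\sigma^{n})_{n > 0}$ to $K^{\alg}$ and to $k^{\alg}$, compatibly with the respective $\Aut$-actions.

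On the $\ACFA$ side, an analogous and standard statement holds: completions of $\ACFA_{k}$ are in bijection with $\Aut(k^{\alg}/k)$-orbits on compatible systems of lifts of $(\sigma^{n})_{n > 0}$ to $k^{\alg}$ (see \cite{ChHr-Dif}). Combining these two bijections yields the claimed bijection between completions of $\TVFA_{K}$ and of $\ACFA_{k}$. The ``in particular'' clause is then immediate: if $k$ admits no nontrivial finite $\sigma$-invariant Galois extension, the corresponding statement transfers to $K$ via the $\sigma$-equivariant isomorphism above, so by Fact \ref{fact:wvfa}(3) the theory $\TVFA_{K}$ is complete.

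\textbf{Main obstacle.} The only nontrivial ingredient is the Galois-theoretic identification of $\Gal(K^{\alg}/K)$ with $\Gal(k^{\alg}/k)$ under algebraic divisibility of $\Gamma$ in residue characteristic zero, together with its $\sigma$-equivariance; this is classical but must be invoked carefully, in particular keeping track of the action of all powers $\sigma^{n}$ as required by Fact \ref{fact:wvfa}(3).
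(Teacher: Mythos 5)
Your proof is correct and follows essentially the same route as the paper: pass to the Henselization, use ramification theory in residue characteristic zero with a divisible value group to identify algebraic extensions of $K$ with algebraic extensions of $k$ in a $\sigma$-equivariant way (the paper phrases this as a canonical equivalence of categories via the residue map, you phrase it via triviality of the inertia group in the exact sequence, which is the same content), and conclude from the classification of completions of $\TVFA_K$ by extensions of the structure to $K^{\alg}$ together with the analogous fact for $\ACFA_k$. The only quibble is an attribution: the bijection between completions of $\TVFA_K$ and isomorphism types of extensions of the transformal valued field structure to $K^{\alg}$ is Fact \ref{fact:wvfa}(8), whereas Fact \ref{fact:wvfa}(3) only yields the completeness criterion used for the ``in particular'' clause.
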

\begin{proof}
Using \cite[Proposition 4.7]{dor-hrushovskiVFA} we may assume that $K$ is Henselian; this does not change the value group or the residue field. By Henselianity, the valuation lifts uniquely to finite extensions of $K$. Using divisibility of $\Gamma$ and elementary ramification theory, the category of algebraic extensions of $K$ is in canonical bijection with the category of algebraic extensions of $k$ via the residue map; by functoriality, the category of transformal valued fields algebraic over $K$ is equivalent to the category of difference fields algebraic over $k$. So we finish using Fact \ref{fact:wvfa}(8).
\end{proof}

\begin{lem}
    Notation as in Lemma \ref{lem:spherically-completeq-qf} (and recall that the characteristic is zero). Then we have \[\tp_{\TVFA} \left(\nicefrac{b}{A}\right) \cup \tp_{\TwOGA} \left(\nicefrac{\gamma}{\Gamma_C}\right) \cup \tp_{\ACFA} \left(\nicefrac{\alpha}{k_C}\right) \vdash \tp_{\TVFA} \left(\nicefrac{b}{C}\right).\]
\end{lem}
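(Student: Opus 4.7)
The plan is to upgrade Lemma \ref{lem:spherically-completeq-qf} from quantifier-free types to complete types by invoking the relative completeness principle of Lemma \ref{lem:twvfa-0-complete-type}. That lemma asserts that for a model $K$ of $\VFA_{\mathbf{Q}}$ with algebraically divisible value group, completions of $\TVFA_K$ correspond bijectively to completions of $\ACFA_k$; combined with Lemma \ref{lem:spherically-completeq-qf}, which already pins down the quantifier-free diagram of $\langle C, b \rangle$ over $C$, this reduces the problem to a completeness statement purely at the level of the residue field.

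Concretely, I would set $M := B \otimes_{A} C$ and check that the hypotheses of Lemma \ref{lem:twvfa-0-complete-type} hold for $M$. By Lemma \ref{lem:spherically-completeq-qf}, the value group of $M$ is the pushout $\Gamma_B \oplus_{\Gamma_A} \Gamma_C$, which is divisible as a pushout of divisible abelian groups, and the residue field is $k_M = k_B \otimes_{k_A} k_C$. Since $B$ and $C$ are inversive and perfect (the latter automatic in characteristic zero), so is $M$: $\sigma(b \otimes c) = \sigma b \otimes \sigma c$ is surjective on pure tensors. The $\omega$-increasing property on $\Gamma_M$ descends from the value group of the ambient $\TVFA$-extension in which $M$ sits. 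Thus $M \models \VFA_{\mathbf{Q}}$ with divisible value group, and Lemma \ref{lem:twvfa-0-complete-type} applies.

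Since $M$ is generated as a transformal valued field by $C \cup \{b\}$, specifying $\tp_{\TVFA}(b/C)$ amounts to specifying a completion of $\TVFA_M$ extending the (now determined) quantifier-free diagram of $M$ over $C$; via Lemma \ref{lem:twvfa-0-complete-type} this is in turn equivalent to specifying a completion of $\ACFA_{k_M}$ extending the diagram of $k_M$ as a difference field over $k_C$. Finally, $\alpha$ enumerates $k_B$ and $k_M = k_B \otimes_{k_A} k_C$ is generated as a difference field over $k_C$ by $k_B$ (using that $k_A$ is algebraically closed and that $k_B$, $k_C$ are linearly disjoint over $k_A$); hence the complete type $\tp_{\ACFA}(\alpha/k_C)$ pins down the required $\ACFA$-completion uniquely. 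The hypothesis $\tp_{\TwOGA}(\gamma/\Gamma_C)$ enters only at the quantifier-free level via Lemma \ref{lem:spherically-completeq-qf}, reflecting that in characteristic zero the passage from qftp to tp in $\TVFA$ is controlled entirely by the residue field. The only technical point requiring care is the verification that $M$ falls under Lemma \ref{lem:twvfa-0-complete-type}; once this is done, the rest of the argument is formal.
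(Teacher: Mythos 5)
Your overall strategy is the paper's: first pin down the quantifier-free diagram of $N=B\otimes_A C$ via Lemma \ref{lem:spherically-completeq-qf}, then observe (this is Fact \ref{fact:wvfa}(8), which you use implicitly when you say that specifying $\tp_{\TVFA}(b/C)$ amounts to specifying a completion of $\TVFA_N$) that the remaining freedom is the choice of an extension of the transformal valued field structure to an algebraic closure of $N$, and finally kill that freedom with Lemma \ref{lem:twvfa-0-complete-type} together with $\tp_{\ACFA}(\alpha/k_C)$. However, there is a genuine gap in your verification of the hypotheses of Lemma \ref{lem:twvfa-0-complete-type}: you assert that $\Gamma_N=\Gamma_B\oplus_{\Gamma_A}\Gamma_C$ is ``divisible as a pushout of divisible abelian groups,'' but in the standing Situation $B$ and $C$ are only models of $\VFA$ over $A$, not of $\TVFA$, so $\Gamma_B$ and $\Gamma_C$ need not be divisible (only $\Gamma_A$ is, being a model of $\TwOGA$). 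Without divisibility of $\Gamma_N$, Lemma \ref{lem:twvfa-0-complete-type} simply does not apply, and the reduction of completions of $\TVFA_N$ to completions of $\ACFA_{k_N}$ breaks down: in a non-divisible value group the choice of roots of valued elements introduces additional algebraic extensions of $N$ that are not controlled by the residue field.

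The missing move is the paper's opening step: first replace $B$ and $C$ by their algebraic closures (in the category of transformal valued fields). This makes $\Gamma_B$ and $\Gamma_C$ divisible and $k_B$, $k_C$ algebraically closed, and one must then check that the hypotheses of the Situation survive --- the intersection $\Gamma_{B^{\alg}}\cap\Gamma_{C^{\alg}}=\Gamma_A$ because $\Gamma_A$ is already divisible, and the linear disjointness of the residue fields is preserved by Fact \ref{fact:alg-disjointness-equiv-lin-disjointness-for-reg} since $k_A$ is algebraically closed --- and that the conclusion for the enlarged data implies the conclusion for the original data (restriction of types). With that reduction in place, your computation of $\Gamma_N$ and $k_N$, the generation of $k_N$ over $k_C$ by the image of $\alpha$, and the final appeal to Lemma \ref{lem:twvfa-0-complete-type} all go through as you describe.
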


\begin{proof}
Increasing $B$ and $C$ we can assume both are algebraically closed and the conclusion descends (the linear disjointness assumption on the residue fields is not violated by Fact \ref{fact:alg-disjointness-equiv-lin-disjointness-for-reg}, for example). Let $N = B \otimes_{A} C$ be the displayed model of $\VFA$. By Lemma \ref{lem:spherically-completeq-qf} the transformal valued field structure on $N$ is determined, hence using Fact \ref{fact:wvfa} (8), the type $\tp_{\TVFA}\left(\nicefrac{b}{C}\right)$ is determined by the additional data of the choice of an isomorphism type of an extension of the transformal valued field structure of $N$ to an algebraic closure. So we finish using Lemma \ref{lem:twvfa-0-complete-type}, together with Fact \ref{fact:wvfa}(8).
\end{proof}

For the sake of the next corollary we keep the notation $A, B, C, \alpha, \gamma$ and $b$.  Elementary properties of forking independence, and the characterization of forking in $\ACFA$ and $\TwOGA$ will be used freely.

\begin{cor}\label{C: forking in wVFA}
    Let $A$ be a spherically complete model of $\TVFA_{
\mathbf{Q}}$ and $B,C\supseteq A$ two models of $\VFA$ over $A$.  Then  the type $\tp_{\TVFA}\left(\nicefrac{b}{C}\right)$ does not fork over $A$ if and only if the following conditions are satisfied:
\begin{enumerate}
    \item The type $\tp_{\TwOGA}\left(\nicefrac{\gamma}{\Gamma_C}\right)$ does not fork over $\Gamma_A$.
    \item The type $\tp_{\ACFA}\left(\nicefrac{\alpha}{k_C}\right)$ does not fork over $k_A$. 
\end{enumerate}
    
    Moreover, in these two (equivalent) situations we have
    \begin{center}
$\tp_{\TVFA} \left(\nicefrac{b}{A}\right) \cup \tp_{\TwOGA} \left(\nicefrac{\gamma}{\Gamma_C}\right) \cup \tp_{\ACFA} \left(\nicefrac{\alpha}{k_C}\right) \vdash \tp_{\TVFA} \left(\nicefrac{b}{C}\right)$.
\end{center}
\end{cor}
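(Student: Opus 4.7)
The plan is to deduce both the equivalence and the moreover clause from the preceding Lemma together with the stable embeddedness and full orthogonality of the residue field and value group in $\TVFA$ (Fact \ref{fact:wvfa}(2)). Assuming (1) and (2), I first establish the moreover clause by verifying the geometric hypotheses of the preceding Lemma. For the value group, in the $o$-minimal theory $\TwOGA$ non-forking of $\tp_{\TwOGA}(\nicefrac{\gamma}{\Gamma_C})$ over the model $\Gamma_A$ is equivalent to $\operatorname{dcl}_{\TwOGA}(\Gamma_A\gamma) \cap \Gamma_C = \Gamma_A$; since $\gamma$ enumerates $\Gamma_B$, this yields $\Gamma_B \cap \Gamma_C = \Gamma_A$. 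For the residue field, $k_A$ is algebraically closed and inversive (being the residue field of a model of $\TVFA$), so non-forking of $\tp_{\ACFA}(\nicefrac{\alpha}{k_C})$ over $k_A$ is characterized by linear disjointness of the corresponding difference-algebraic closures; in particular $k_B$ and $k_C$ are linearly disjoint over $k_A$. The preceding Lemma now applies and delivers the moreover clause.

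For the forward direction, assume $\tp_{\TVFA}(\nicefrac{b}{C})$ does not fork over $A$. By Fact \ref{fact:wvfa}(2), the residue field and value group are stably embedded in $\TVFA$ and fully orthogonal, with induced structures the pure theories $\ACFA$ and $\TwOGA$. Since non-forking descends to stably embedded sorts with their induced structure, we obtain (1) and (2) at once.

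For the backward direction, assume (1) and (2). Stable embeddedness and full orthogonality convert these two reduct conditions into the single statement $(\alpha,\gamma) \indep_A C$ in $\TVFA$. The moreover clause, just established, says that $\tp_{\TVFA}(\nicefrac{b}{C})$ is determined by $\tp_{\TVFA}(\nicefrac{b}{A}) \cup \tp_{\TVFA}(\nicefrac{\alpha\gamma}{C})$; in the terminology of stability theory, $b$ is dominated by $(\alpha,\gamma)$ over $A$. A standard transfer argument then gives $b \indep_A C$: any formula in $\tp(\nicefrac{b}{C})$ is implied, modulo $\tp(\nicefrac{b}{A})$ and compactness, by a formula in $\tp(\nicefrac{\alpha\gamma}{C})$, and the latter does not fork over $A$ thanks to $(\alpha,\gamma) \indep_A C$. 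The main obstacle is making this transfer fully rigorous: it rests on the elementary fact that a formula implied by a non-forking formula is itself non-forking, applied carefully to the compactness-theoretic unfolding of the moreover clause.
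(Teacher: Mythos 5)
Your overall architecture matches the paper's: the forward direction by restricting non-forking to the stably embedded sorts, and the backward direction by first extracting $\Gamma_B \cap \Gamma_C = \Gamma_A$ and linear disjointness of $k_B, k_C$ over $k_A$ from (1) and (2) so that the preceding Lemma yields the ``moreover'' entailment. Those parts are correct. The gap is in the transfer step you yourself flag as the main obstacle. The fact you invoke --- a formula implied by a non-forking formula is non-forking --- is true (forking formulas form an ideal), but it only reduces the problem to showing that the \emph{implying} formula does not fork over $A$. By compactness that implying formula is a conjunction $\phi(x) \wedge \psi\left(v(x), \mathrm{res}(x), c\right)$ with $\phi \in \tp_{\TVFA}\left(\nicefrac{b}{A}\right)$ and $\psi(y,z,c) \in \tp\left(\nicefrac{\gamma\alpha}{C}\right)$. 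It is \emph{not} a formula of $\tp\left(\nicefrac{\gamma\alpha}{C}\right)$, so non-forking of the latter does not directly apply: you must handle (i) the pullback along the definable maps $v$ and $\mathrm{res}$ from the sorts to the field sort, and (ii) the conjunction with $\phi$. Neither is automatic --- in general a conjunction of two non-forking formulas can fork, and your writeup supplies no mechanism for either point.

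The paper closes this gap as follows. First it enlarges $C$ to a sufficiently saturated model so that forking equals dividing for complete types over $C$, reducing everything to non-dividing of finite conjunctions from the left-hand side. Then, for a conjunction $\phi(x)\wedge\psi(y,z,c)$ as above, it forms $\psi'(x,y,z,c) := \phi(x)\wedge v(x)=y \wedge \mathrm{res}(x)=z\wedge\psi(y,z,c)$ and observes that $\exists x\,\psi'(x,y,z,c)$ lies in $\tp\left(\nicefrac{\gamma\alpha}{C}\right)$, which does not divide over $A$ by stable embeddedness, orthogonality, and hypotheses (1)--(2). Given an $A$-indiscernible sequence $(c_i)$, a common realization $(\gamma^*,\alpha^*)$ of the formulas $\exists x\,\psi'(x,y,z,c_i)$ produces, via any single witness $x^*$ for one index, a common realization of all the $\phi(x)\wedge\psi\left(v(x),\mathrm{res}(x),c_i\right)$ --- the point being that $\phi$ has parameters only in $A$ and $x^*$ is tied to the \emph{fixed} pair $(\gamma^*,\alpha^*)$. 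This existential-quantifier manoeuvre, together with the reduction from forking to dividing, is the actual content of the step you describe as elementary; without it the backward direction is not proved.
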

\begin{proof}
    Assume first that $\tp_{\TVFA} \left(\nicefrac{b}{C}\right)$ does not fork over $A$.  Then $\tp_{\ACFA}\left(\nicefrac{\alpha}{k_C}\right)$ does not fork over $k_A$ by elementary properties of forking independence, and likewise for $\Gamma$. This shows the easy direction.

    Conversely, assume that (1) and (2) hold.  We show that $\tp\left(\nicefrac{b}{C}\right)$ does not fork over $A$. Towards this end there is no harm in increasing $C$. We may therefore assume that  $C$ is sufficiently saturated. In this situation, forking equals dividing for types over $C$, so it will be enough to show that this type does not divide over $A$.

    Using the characterization of forking in $\ACFA$ and full embeddedness of the residue field in $\TVFA$, we find that $k_B$ and $k_C$ are linearly disjoint over $k_A$. Similarly, using full embeddedness of $\Gamma$, we find that $\Gamma_B \cap \Gamma_C = \Gamma_A$. Using Lemma \ref{lem:spherically-completeq-qf} we have that \[\tp_{\TVFA} \left(\nicefrac{b}{A}\right) \cup \tp_{\TwOGA} \left(\nicefrac{\gamma}{\Gamma_C}\right) \cup \tp_{\ACFA} \left(\nicefrac{\alpha}{k_C}\right) \vdash \tp_{\TVFA} \left(\nicefrac{b}{C}\right).\]

    The goal is to prove that the right hand side does not divide over $A$. Since the left hand side implies the right, it is sufficient to show that any conjunction of formulas from the left hand side does not divide over $A$. 
    
    By stable embeddedness and orthogonality of the residue field and value group in $\TVFA$, $\tp_{\widetilde \VFA}(\nicefrac{\gamma,\alpha}{C})$ is equivalent $\tp_{\TwOGA} \left(\nicefrac{\gamma}{\Gamma_C}\right) \cup \tp_{\ACFA} \left(\nicefrac{\alpha}{k_C}\right)$ and so does not divide over $A$.

    Consider a conjunction $\phi(x)\wedge \psi(y,z,c)$, where $\phi(x)$ is an $A$-formula from $\tp_{\TVFA} \left(\nicefrac{b}{A}\right)$ (so $x$ is a tuple of field sort variables) and $\psi(y,z,c)$ is a formula from $\tp_{\widetilde \VFA}(\nicefrac{\gamma,\alpha}{C})$. The tuples $x,y$ and $z$ correspond to finite subtuples $b_0, \gamma_0$ and $\alpha_0$, respectively. Since $\alpha$ is an enumeration of the residue elements of $b$ and $\gamma$ is an enumeration of the value group of $b$, increasing the tuples $b_0,\gamma_0$ and $\alpha_0$ we may assume that \[b_0\models \psi'(x,y,z,c):=\phi(x)\wedge v(x)=y\wedge \mathrm{res}(x)=z\wedge \psi(y,z,c).\] 
    
    The formula $\exists x \psi'(x,y,z,c)$ lies in $\tp_{\widetilde \VFA}(\nicefrac{\gamma,\alpha}{C})$ so does not divide over $A$. Since $\phi$ is over $A$, the definition of dividing implies that  $\phi(x)\wedge \psi(y,z,c)$ does not divide over $A$ as well. 
%
 %
%
\end{proof}

The following was obtained by Chernikov and Hils in \cite{chernikov2014valued} in a relative setting.

\begin{cor}\label{C:NIP-dividing}
We work in the theory $\TVFA_{\mathbf{Q}}$. Let $A$ be a small model and $p$ a global type which forks over $A$; then there is an NIP formula $\varphi$ dividing over $A$ such that $p \vdash \varphi$.
\end{cor}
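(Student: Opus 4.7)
The plan is to reduce forking over $A$ in $\TVFA_{\mathbf{Q}}$ to forking in the value group (a model of $\TwOGA$, which is $o$-minimal and hence NIP) and in the residue field (a model of $\ACFA_{\mathbf{Q}}$, where forking is witnessed by Boolean combinations of polynomial equations in $x, \sigma x, \ldots$ coming from the stable theory $\ACF$), and then pull these NIP witnesses back to $p$ via stable embeddedness.

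Concretely, I would first pass to a spherically complete elementary immediate extension $A^{*} \succeq A$ using Fact \ref{fact:wvfa}(7); by immediacy, $\Gamma_{A^{*}} = \Gamma_{A}$ and $k_{A^{*}} = k_{A}$. Fix a realization $c \models p$ in the monster. Combining Corollary \ref{C: forking in wVFA} applied over the spherically complete base $A^{*}$ with the orthogonal stable embeddedness of $\Gamma$ and $k$ (Theorem \ref{thm:stably-embedded} and Fact \ref{fact:wvfa}(2)), I would extend the forking characterization to $A$: for $B \supseteq A^{*}$, $\tp(c/B)$ forks over $A$ if and only if either $\tp_{\TwOGA}(\gamma_{c}/\Gamma_{B})$ forks over $\Gamma_{A}$ or $\tp_{\ACFA}(\alpha_{c}/k_{B})$ forks over $k_{A}$, where $\gamma_{c}$ and $\alpha_{c}$ denote the value-group and residue tuples of $c$.

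Given this reduction, the value group case is immediate: any $\TwOGA$-formula witnessing the forking is NIP by $o$-minimality, and its pullback through the valuation is an NIP formula in $p$ dividing over $A$. For the residue field case, the $\ACFA_{\mathbf{Q}}$-forking is witnessed by a Boolean combination of polynomial equations $P(x, \sigma x, \ldots, \sigma^{n} x) = 0$; each such formula is NIP in $\ACFA$ because its underlying polynomial formula $P(z_{0}, \ldots, z_{n}, \bar y) = 0$ is stable in $\ACF$, and any independence pattern for the $\ACFA$-formula would descend via the substitution $z_{i} \mapsto \sigma^{i} x$ to an independence pattern for the polynomial formula on $\sigma$-tuples in $\ACF$, contradicting stability. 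Pulling back through the residue map produces the required NIP formula in $p$ dividing over $A$.

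The main obstacle is the extension of Corollary \ref{C: forking in wVFA} from the spherically complete base $A^{*}$ to the arbitrary small base $A$; this requires a careful use of stable embeddedness of the reducts together with the equality $\Gamma_{A} = \Gamma_{A^{*}}$ and $k_{A} = k_{A^{*}}$ coming from immediacy, in order to make sure that the forking information detected at the level of $A^{*}$ faithfully reflects forking over $A$ in the full theory.
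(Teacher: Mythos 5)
Your endgame is fine and matches the paper: once you know that the forking of $p$ is detected by the value group or the residue field, the witnessing formulas are NIP (by $o$-minimality of $\TwOGA$, respectively by your correct observation that a difference-polynomial equation is an instance of a stable $\ACF$-formula evaluated on $\sigma$-tuples), and their pullbacks lie in $p$ and divide over $A$. The problem is the reduction to the spherically complete case, which is exactly the step you flag as the "main obstacle" and which your proposed route does not close.

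The equivalence you want -- for $B\supseteq A^{*}$, $\tp(c/B)$ forks over $A$ if and only if $\tp_{\TwOGA}(\gamma_c/\Gamma_B)$ forks over $\Gamma_A$ or $\tp_{\ACFA}(\alpha_c/k_B)$ forks over $k_A$ -- is false in the direction you need. Monotonicity of forking goes the wrong way here: non-forking over $A$ implies non-forking over $A^{*}$, but forking over $A$ does \emph{not} imply forking over $A^{*}$, and immediacy of $\nicefrac{A^{*}}{A}$ only makes this worse. Concretely, take $c\in A^{*}\setminus A$ (such $c$ exists unless $A$ was already spherically complete): the global realized type of $c$ forks over $A$ (the formula $x=c$ divides over $A$ since $c\notin\acl(A)$), yet it does not fork over $A^{*}$, and all of its residue and value-group data already lie in $k_{A^{*}}=k_A$ and $\Gamma_{A^{*}}=\Gamma_A$, so neither right-hand condition holds. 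More generally, any global type whose forking over $A$ is "caused" by the immediate extension data (pseudo-limits of pseudo-Cauchy sequences over $A$) is invisible to the residue field and the value group, so your two cases do not exhaust the types forking over $A$, and for the remaining types your argument produces no NIP witness (even though one exists, e.g.\ a ball membership formula).

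The paper's reduction avoids characterizing forking over $A$ altogether. Given that $p$ divides over a sufficiently saturated $C\supseteq A$ (so forking equals dividing there), one picks \emph{any} spherically complete model $A_0$ with $A\subseteq A_0\subseteq C$ and then replaces $A_0$ by an $A$-conjugate so that $p$ \emph{still divides over} $A_0$; this is the standard fact that dividing of $\varphi(x,c)$ over $A$ persists over a suitable $A$-conjugate of any larger set (stretch the witnessing $A$-indiscernible sequence and extract one indiscernible over $A_0$). Then Corollary \ref{C: forking in wVFA} applies over $A_0$, yielding an NIP formula in $p$ dividing over $A_0$, and dividing over $A_0\supseteq A$ trivially implies dividing over $A$. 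If you want to salvage your draft, replace the immediate-extension step by this conjugation argument; the rest of what you wrote then goes through.
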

\begin{proof}
Fix a small but sufficiently saturated model $C$ over $A$, so a type over $C$ forks over $A$ if and only if it divides over $A$. Let us assume that $p = \tp(b/C)$ divides over $A$; we will prove that $p \vdash \varphi$ where $\varphi$ is an NIP formula which divides over $A$. Without loss of generality, replacing $b$ by $\mathrm{dcl}\left(Ab\right)$, we can take $b$ to enumerate a small model $B$ of $\VFA$ over $A$.

First assume that $A$ is spherically complete. In this situation, by Corollary \ref{C: forking in wVFA}, either $\tp_{\TwOGA}\left(\nicefrac{\gamma}{\Gamma_C}\right)$ divides over  $\Gamma_{A}$ or $\tp_{\ACFA}\left(\nicefrac{\alpha}{k_C}\right)$  divides over $k_{A}$. In any of these situations, the formula witnessing this is an NIP formula inside $\tp(b/C)$ dividing over $A$.

The reduction to the spherically complete case is formal. Let $A \subseteq A_0 \subseteq C$ be a spherically complete model of $\TVFA$. Replacing $A_0$ by an isomorphic copy over $A$, we can assume that $p$ continues to divide over $A_0$. By the spherically complete case, there is an NIP formula $p \vdash \varphi$ dividing over $A_0$; this formula divides over $A$ as well, and we are done.
\end{proof}

\begin{cor}
    The theory $\TVFA_{\mathbf{Q}}$ has NTP$_2$. 
\end{cor}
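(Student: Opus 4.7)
The plan is to deduce this as an essentially immediate corollary of the previous statement together with the criterion that will be established in Itay Kaplan's appendix. Recall from the introduction that Kaplan, refining an argument of Chernikov from \cite{chernikov2014theories}, shows the following general principle: if $T$ is a complete theory in which every instance of forking of a global type over a small model is witnessed by an NIP formula (that is, implied by some NIP formula which divides over that model), then $T$ is $\NTPtwo$. My task therefore reduces to verifying the hypothesis of this criterion for $T = \TVFA_{\mathbf{Q}}$.

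But this is precisely the content of Corollary \ref{C:NIP-dividing}: given a small model $A$ and a global type $p$ forking over $A$, one produces an NIP formula $\varphi$ with $p \vdash \varphi$ and $\varphi$ dividing over $A$. Thus invoking Kaplan's appendix directly yields that $\TVFA_{\mathbf{Q}}$ is $\NTPtwo$.

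The main content is not in this final step, which is formal, but in the work already carried out. The genuine input is the spherically complete domination statement (Lemma \ref{lem:spherically-completeq-qf}) which reduces types in $\TVFA_{\mathbf{Q}}$ to residue field and value group data over a spherically complete base, the resulting reduction of forking independence to independence in the stably embedded residue field and value group sorts (Corollary \ref{C: forking in wVFA}), and the corresponding NIP-witnessed dividing statements in these reducts: $\TwOGA$ is $o$-minimal hence $\NIP$, while dividing in $\mathrm{ACFA}_{\mathbf{Q}}$ is witnessed by stable (hence NIP) formulas via the stability-theoretic analysis of $\ACFA_{\mathbf{Q}}$. The passage from the spherically complete case to an arbitrary small base is a standard move: one embeds $A$ into a spherically complete model $A_0$ chosen so that $p$ still forks over $A_0$, applies the spherically complete case, and observes that a formula dividing over $A_0$ also divides over $A$.

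I expect no serious obstacle in writing this out, since all the ingredients are in place; the argument amounts to quoting Corollary \ref{C:NIP-dividing} and the appendix.
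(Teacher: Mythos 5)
Your proposal is correct and follows exactly the paper's own argument: the paper likewise deduces the corollary by combining Corollary \ref{C:NIP-dividing} with the appendix (Proposition \ref{prop:equivalence of NTP2 forking} and Theorem \ref{thm:NTP2 forking iff NTP2}). The additional context you give about the spherically complete domination inputs is accurate but, as you note, not needed for this final formal step.
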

\begin{proof}
    By Corollary \ref{C:NIP-dividing}, $\TVFA_{\mathbf{Q}}$ has NTP$_2$-forking in the sense of  Proposition \ref{prop:equivalence of NTP2 forking}. Thus by Theorem \ref{thm:NTP2 forking iff NTP2}, $\TVFA_{\mathbf{Q}}$ has NTP$_2$.
\end{proof}
Using the characterization of forking we can pin down the stationary types in $\TVFA_{\mathbf{Q}}$.

\begin{defn}\label{def:stationary}
    Let $T$ be any theory and let $p$ be a global type. We say that $p$ is \emph{stationary} if there is a small set $C_0$ with the following property: for every small set $C$ containing $C_0$, the type $p$ is the unique global nonforking extension of $p|_{C}$.
\end{defn}

\begin{rem}
    Let $p$ be a global type which is stationary and let $C_0$ be a small set witnessing this. Then $p$ does not fork over $C_0$. In fact, since the forking ideal is invariant under automorphisms of the universal domain, the type $p$ does not split over $C_0$, i.e, it is invariant under global automorphisms fixing $C_0$ pointwise.
\end{rem}

The following is a Corollary of \cite{ChHr-Dif} and \cite{SCFEe} worth making explicit. It also follows from \cite[Lemma 3.6]{medina2019definablegroupsdcfa}, we sketch here another proof.

\begin{fact}
    \label{fact:acfa-char-0-stationary} In the theory $\ACFA_{\mathbf{Q}}$, a global invariant type is stationary if and only if it is orthogonal to the fixed field.
\end{fact}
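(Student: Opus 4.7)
The plan is to use two ingredients from Chatzidakis-Hrushovski \cite{ChHr-Dif}: their stationarity theorem, asserting that every superficially stable type is stationary, and their identification (a corollary of the trichotomy theorem in characteristic zero) of the superficially stable types in $\ACFA_{\mathbf{Q}}$ with those orthogonal to the fixed field $F = \mathrm{Fix}(\sigma)$. With these in hand, the direction $(\Leftarrow)$ is immediate: if $p$ is orthogonal to $F$, then $p$ is superficially stable, and hence stationary.

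For $(\Rightarrow)$, I argue the contrapositive. Suppose $p = \tp(a/A)$ is not orthogonal to $F$, so that $p$ is not superficially stable. By the very definition of superficial stability, one can then find a small algebraically closed inversive parameter set $B \supseteq A$ and a realization $a \models p|_B$ with $a \ind_A B$, such that $\acl(Ba)$ admits a nontrivial finite $\sigma$-invariant Galois extension $L$. Each Galois-conjugate lift of $\sigma$ to $L$ determines a different difference-field structure on $\acl(Ba)$ over $B$, and I claim these produce genuinely distinct global nonforking extensions of $p$, contradicting stationarity.

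The main obstacle is verifying that these distinct $\sigma$-lifts on the finite Galois extension yield distinct complete types (and not merely distinct quantifier-free types or distinct bookkeeping). This is where I would invoke Chatzidakis-Hrushovski's analysis of completions of $\ACFA$ over an algebraically closed inversive base (the pure-$\ACFA$ analogue of Fact \ref{fact:wvfa}(3)): the completion of the theory is determined precisely by the isomorphism type of the $\sigma$-action on the algebraic closure. Combined with the fact that each Galois conjugate of $L$ yields a non-isomorphic $\sigma$-action, this furnishes distinct completions over $\acl(Ba)$, which pull back to distinct global nonforking extensions of $p$ and close the argument.
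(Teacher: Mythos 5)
Your left-to-right direction (stationary $\Rightarrow$ orthogonal to the fixed field) has a genuine gap. You argue the contrapositive via the chain ``not orthogonal $\Rightarrow$ not superficially stable $\Rightarrow$ not stationary'', but the second implication is false: superficial stability is only a \emph{sufficient} condition for stationarity (the paper is explicit about this), and its failure does not by itself produce two distinct nonforking extensions. Concretely, the existence of a nontrivial finite $\sigma$-invariant Galois extension $L$ of $N = B \otimes_A C$ does not guarantee two non-conjugate lifts of $\sigma$ itself to $N^{\alg}$ --- it only controls the lifts of the powers $\sigma^n$ (this is exactly the ``converse'' caveat in Fact \ref{fact:wvfa}(3)). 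For instance, if $L = N\left(\alpha\right)$ with $\alpha^2 \in N$ and $\sigma\left(\alpha^2\right)$ already a square in $N$, the two lifts $\alpha \mapsto \pm\beta$ with $\beta \in N$ are interchanged by twisted conjugation under $\Gal\left(\nicefrac{L}{N}\right)$, so they yield the same completion and the same type of $B$ over $C$, even though $\nicefrac{L}{N}$ is a nontrivial finite $\sigma$-invariant Galois extension. Note also that you write ``each Galois-conjugate lift \dots determines a different difference-field structure'': it is precisely the conjugate lifts that give \emph{isomorphic} structures over $N$ and hence the \emph{same} type; only non-conjugate lifts separate types. The paper's necessity argument is entirely different and much shorter: the fixed field is a stably embedded pure pseudofinite field, pseudofinite fields have no non-algebraic stationary types, and non-orthogonality would transport a non-algebraic stationary type into the fixed field.

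Your right-to-left direction is essentially the paper's route (orthogonal $\Rightarrow$ superficially stable $\Rightarrow$ stationary), but be aware that the ``identification of superficially stable types with those orthogonal to the fixed field'' is not available as a single citation: the dichotomy/trichotomy analysis of \cite{ChHr-Dif} covers only transformally algebraic (finite-rank) types. The paper must treat the purely transformally transcendental case separately, via the stationarity theorem of \cite{SCFEe}, and then glue the two cases by a transitivity-in-towers argument applied to the relative transformal algebraic closure of the base. Your proposal would need to supply this decomposition rather than cite the equivalence wholesale.
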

\begin{proof}
   There are no non-algebraic stationary types in the theory of pseudofinite fields, so the condition is necessary. For the converse, assume $C$ is an algebraically closed extension of an algebraically closed field $A$ orthogonal to the fixed field; we will show that $C$ is superficially stable over $A$ in the sense of \cite{ChHr-Dif}; such types are stationary, using the characterization of forking in $\ACFA$.
   First assume that $C$ is purely transformally transcendental over $A$. In this situation, it is stationary over $A$ using \cite[Theorem 5.3]{SCFEe}.
   Next assume that $C$ is transformally algebraic over $A$. In this situation, it is superficially stable over $A$ using \cite[Theorem 4.11, Theorem 4.2.5]{ChHr-Dif}.
   We have shown the claim in the case $C$ transformally algebraic over $A$ and purely transformally transcendental over $A$. But if $A \subseteq B \subseteq C$ is a tower with $\nicefrac{B}{A}$ and $\nicefrac{C}{B}$ stationary then it follows formally that $\nicefrac{C}{A}$ is likewise stationary. Applying this with $B$ the relative transformal algebraic closure of $A$ in $C$ we obtain the general case.
\end{proof}

\begin{cor}\label{C:stationary type}
    In the theory $\TVFA_{\mathbf{Q}}$, a global invariant type is stationary if and only if it is orthogonal to the fixed field and the value group.
\end{cor}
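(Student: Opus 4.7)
The plan is to reduce the stationarity of $p$ to the stationarity of its residue field and value group reducts via the forking calculus of Corollary \ref{C: forking in wVFA}, and then invoke the classification of stationary types in $\ACFA_{\mathbf{Q}}$ together with o-minimality of $\TwOGA$.

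Let $p$ be a global invariant type, invariant over some small set $A_{0}$. I first enlarge $A_{0}$ to a spherically complete small elementary extension $A \models \TVFA_{\mathbf{Q}}$, using Fact \ref{fact:wvfa}(7); the type $p$ remains invariant over $A$, and by standard base-enlargement reasoning stationarity of $p$ reduces to showing that $p$ is the unique global non-forking extension of $p|_{A}$. Write $p_{k}$ and $p_{\Gamma}$ for the push-forwards of $p$ under the residue and valuation maps. By Theorem \ref{thm:stably-embedded} the sorts $k$ and $\Gamma$ are stably embedded in $\TVFA_{\mathbf{Q}}$ and fully orthogonal, with induced structures exactly $\ACFA_{\mathbf{Q}}$ and $\TwOGA$; so $p_{k}$ and $p_{\Gamma}$ are invariant global types of these pure theories.

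The moreover clause of Corollary \ref{C: forking in wVFA} shows that any global non-forking extension of $p|_{A}$ is determined by its reducts on $k$ and $\Gamma$, which are themselves global non-forking extensions of $p_{k}|_{k_{A}}$ and $p_{\Gamma}|_{\Gamma_{A}}$; conversely, every such pair of reduct-level non-forking data is realized via the full orthogonality of $k$ and $\Gamma$ and the amalgamation of Fact \ref{fact:wvfa}(5). Hence $p$ is stationary if and only if both $p_{k}$ is stationary in $\ACFA_{\mathbf{Q}}$ and $p_{\Gamma}$ is stationary in $\TwOGA$.

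For the residue field reduct, Fact \ref{fact:acfa-char-0-stationary} gives that $p_{k}$ is stationary if and only if orthogonal to the fixed field. For the value group reduct I plan to exploit o-minimality of $\TwOGA$: in any o-minimal theory only algebraic types are stationary, since if $p_{\Gamma}$ is non-algebraic over $\Gamma_{A}$ and $c$ realizes $p_{\Gamma}|_{\Gamma_{A}}$, then $x<c$ and $x>c$ determine two distinct non-forking extensions over $\Gamma_{A}\cup\{c\}$ (both preserving $\acl$-dimension), contradicting stationarity; conversely algebraic types are trivially stationary. So $p_{\Gamma}$ stationary is equivalent to $p$ being orthogonal to the value group, and the corollary follows. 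The main technical point is justifying the bijective correspondence in the previous paragraph: injectivity is immediate from Corollary \ref{C: forking in wVFA}, but surjectivity---realizing arbitrary prescribed reduct-level non-forking data by a single valued field element---rests on the amalgamation machinery of $\TVFA_{\mathbf{Q}}$ over a spherically complete base.
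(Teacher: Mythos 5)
Your proof is correct and follows essentially the same route as the paper's, which deduces necessity from the linear ordering of the value group together with the easy direction of Fact \ref{fact:acfa-char-0-stationary}, and sufficiency from Fact \ref{fact:acfa-char-0-stationary} combined with the domination statement of Corollary \ref{C: forking in wVFA} over a spherically complete base. The ``surjectivity'' you flag as the main technical point is in fact already supplied by the nonforking characterization of Corollary \ref{C: forking in wVFA} itself (prescribed nonforking reduct data on $k$ and $\Gamma$ automatically yields a nonforking extension of $\tp(b/A)$), so no further amalgamation input is required.
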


\begin{proof}
The conditions are necessary: the value group is linearly ordered, so the only generically stable types are algebraic. Orthogonality to the fixed field is likewise necessary using (the easy direction of) Fact \ref{fact:acfa-char-0-stationary}.

The converse follows immediately from Fact \ref{fact:acfa-char-0-stationary} and Corollary \ref{C: forking in wVFA}.
\end{proof}

\begin{rem}
    In finite characteristic a classification of this form cannot hold; see  \cite[Example 6.5]{ChHr-Dif}.
\end{rem}

Let $K$ be a valued field. We say that $K$ is \emph{separably maximal} if the perfect hull of $K$ is spherically complete; this is equivalent to the requirement that $K$ admits no immediate separable extensions. In \cite{hils2018imaginaries} valued fields of this form were used to establish domination over the residue field and the value group in the theory $\SCVF_{e}$ of separably algebraically closed valued fields of \emph{finite} degree of imperfection. It is not difficult to see that a domination result of this form cannot hold in infinite Ershov invariant. Similarly, in $\TwVFE$ we cannot hope for a domination theorem relative to the value group and the residue field. In order to transfer the domination statement we will make use of the following definition:

\begin{defn}
    Let $K$ be a model of $\VFE$.
    \begin{enumerate}
        \item We say that $K$ is \emph{transformally separably algebraically maximal} if whenever $\widetilde{K}$ is a model of $\VFE$ transformally separably algebraic over $K$ with $k = \widetilde{k}$ and $\widetilde{\Gamma} = \Gamma$, then $\widetilde{K} = K$; in other words the field $K$ has no nontrivial immediate transformally separably algebraic extensions.
        \item We say that $K$ is \emph{transformally separably maximal} if whenever $\widetilde{K}$ is a model of $\VFE$ transformally separable over $K$ with $k = \widetilde{k}$ and $\widetilde{\Gamma} = \Gamma$, then $\widetilde{K} = K$; in other words, the field $K$ has no nontrivial immediate transformally separable extensions.    \end{enumerate}
\end{defn}

\begin{rem}
    Let $K$ be a model of $\VFE$. It is clear that if $K$ is transformally separably maximal, then $K$ is transformally separably algebraically maximal.
\end{rem}

\begin{rem}
    Let $k$ be an inversive algebraically closed difference field which is trivially valued and $x$ transformally transcendental over $k$ with $vx > 0$; by \cite[Proposition 6.23]{hrushovski2004elementary}  the completed algebraic closure of $k\left(x^{\Ns}\right)$ is transformally algebraically maximal.
\end{rem}

\begin{prop}\label{prop:twvfe-tsep-alg-max}
    Let $K$ be a model of $\TwVFE$. Then $K$ is transformally separably algebraically maximal.
\end{prop}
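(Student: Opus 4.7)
The plan is to reduce the statement to the analogous fact for $\TVFA$ by passing to inversive hulls, and then descend back using the equivalence of categories of Theorem \ref{thm:main-thm-on-tsep}(4). Let $\widetilde{K}$ be a model of $\VFE$ transformally separably algebraic over $K$ with $\widetilde{\Gamma}=\Gamma$ and $\widetilde{k}=k$; the goal is to conclude $\widetilde{K}=K$.

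The first step is to verify that $\widetilde{K}^{\inv}$ is a model of $\wVFA$ which is an immediate, transformally algebraic extension of $K^{\inv}$. It is inversive by construction; it is $\omega$-increasing because $\widetilde{K}$ is, and this property is clearly preserved under taking the inversive hull of the value group; and it is closed under twists since it is the directed union $\bigcup_n \sigma^{-n}(\widetilde{K})$ of difference subfields isomorphic to $\widetilde{K}$, and closure under twists is stable under directed unions by Proposition \ref{prop:elementary-twisted}(3). Being inversive and closed under twists, $\widetilde{K}^{\inv}$ is perfect by Remark \ref{rem:inversive-twisted-is-perfect}. It is transformally algebraic over $K^{\inv}$ since every element is of the form $\sigma^{-m}(a)$ with $a\in\widetilde{K}$ transformally algebraic over $K$. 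Finally, since passage to the inversive hull commutes with the residue map and the valuation, the equalities $\widetilde{k}=k$ and $\widetilde{\Gamma}=\Gamma$ imply that the residue field and value group of $\widetilde{K}^{\inv}$ coincide with those of $K^{\inv}$, so the extension is immediate.

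Next, by Proposition \ref{P:pure insep inside twvfe}(1), $K^{\inv}$ is a model of $\TVFA$, and by Fact \ref{fact:wvfa}(7) it is transformally algebraically maximal. Hence $\widetilde{K}^{\inv}=K^{\inv}$; in particular $\widetilde{K}$ embeds into $K^{\inv}$ as a transformally separably algebraic extension of $K$. I now invoke the descent equivalence of Theorem \ref{thm:main-thm-on-tsep}(4): applied to the model $K$ of $\FF$, it identifies the relative transformal separable algebraic closure of $K$ inside $K^{\inv}$ with $K$ itself (as $K$ corresponds to $K^{\inv}$ under the equivalence). Therefore $\widetilde{K}\subseteq K$, whence $\widetilde{K}=K$.

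This is essentially a formal descent, so I do not expect any significant obstacle. The mildly delicate verification is that $\widetilde{K}^{\inv}$ lies in the category of models of $\wVFA$, which requires establishing perfectness; this is handled by the closure-under-twists argument together with Remark \ref{rem:inversive-twisted-is-perfect}.
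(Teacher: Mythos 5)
Your proof is correct and follows essentially the same route as the paper's: pass to inversive hulls, observe the extension remains immediate, invoke transformal algebraic maximality of $\TVFA$ to identify the inversive hulls, and descend using the fact that a transformally separably algebraic extension contained in the (purely transformally inseparably algebraic) inversive hull must be trivial. The extra verifications you carry out (that the inversive hull is a perfect model of $\wVFA$, via closure under twists) are correct, and your appeal to Theorem \ref{thm:main-thm-on-tsep}(4) in the last step is just a packaged form of the paper's direct linear-disjointness observation.
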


\begin{proof}
    Let $L$ be a model of $\VFE$ transformally separably algebraic and immediate over $K$. Let $\widetilde{K}$ and $\widetilde{L}$ denote the inversive hulls of $K$ and $L$ respectively. Since the value group and the residue field of $K$ are inversive, the same is true of $L$; hence $\widetilde{L}$ is an immediate extension of $\widetilde{K}$. But $\widetilde{K}$ is a model of $\TVFA$, hence transformally algebraically maximal by Fact \ref{fact:wvfa}. It follows that $\widetilde{K} = 
    \widetilde{L}$, that is, the inversive hulls of $K$ and $L$ coincide. So $L$ is simultaneously purely transformally inseparably algebraic and transformally separably algebraic over $K$; it follows that $K = L$, which is what we wanted.
\end{proof}

\begin{rem}
    Let $K$ be a model of $\VFA$ which is algebraically closed; then $K$ is maximal in the category of models of $\VFA$ if and only if it is maximal in the category of valued fields; see \cite[Proposition 4.18] {dor-hrushovskiVFA}.
\end{rem}

\begin{lem}\label{lem:tsep-max-iff-inversive-hull-max}
    Let $K$ be a model of $\TwVFE$. Then $K$ is transformally separably maximal if and only if the inversive hull of $K$ is spherically complete.
\end{lem}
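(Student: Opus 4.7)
The plan is to exploit the equivalence, for $K \models \TwVFE$, between immediate transformally separable extensions of $K$ and immediate extensions of its inversive hull $\widetilde{K}$. The key observation making this translation clean is that by Remark \ref{rem-on-val-gp-and-res-of-twvfe}, both $k_K$ and $\Gamma_K$ are already inversive, so passing to inversive hulls does not enlarge them.

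For the direction $(\Leftarrow)$, I would let $L$ be any immediate transformally separable extension of $K$ and aim to show $L = K$. The residue field and value group of $L$ agree with those of $K$ and are therefore inversive, so $k_{\widetilde L} = k_L = k_K = k_{\widetilde K}$ and similarly $\Gamma_{\widetilde L} = \Gamma_{\widetilde K}$; hence $\widetilde L$ is an immediate extension of $\widetilde K$. Spherical completeness of $\widetilde K$ forces $\widetilde L = \widetilde K$, so $L \subseteq K^{\sigma^{-\infty}}$, making $L$ simultaneously purely transformally inseparably algebraic and transformally separable over $K$. Transformal separability says (Remark \ref{R:equiv def of trans sep}(3)) that $L$ and $K^{\sigma^{-\infty}}$ are linearly disjoint over $K$, and since $L \subseteq K^{\sigma^{-\infty}}$ the dimension of $L$ over $K$ is forced to be $1$, i.e., $L = K$.

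For the direction $(\Rightarrow)$, I would prove the contrapositive and construct a proper immediate transformally separable extension of $K$ whenever $\widetilde K$ fails to be spherically complete. Fix a proper immediate extension $\widetilde L / \widetilde K$ in the category of transformal valued fields, which may be chosen as a model of $\wVFA$. By Fact \ref{fact:wvfa}(7), $\widetilde K \models \TVFA$ is transformally algebraically maximal, so every element of $\widetilde L \setminus \widetilde K$ is transformally transcendental over $\widetilde K$ and hence over $K$. Picking such an $a$, I would set $L := K\left(a^{\mathbf{N}\left[\frac{\sigma}{p^{\infty}}\right]}\right) \subseteq \widetilde L$. By Lemma \ref{lem:transformally-transcendental-FE}, $L$ is a model of $\FF$; equipped with the restricted valuation from $\widetilde L$ it is a model of $\VFE$; it is transformally separable over $K$ because it is purely transformally transcendental; it is immediate since $\Gamma_L \subseteq \Gamma_{\widetilde L} = \Gamma_K$ and similarly for residue fields; and it is proper since $a \notin K$. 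This contradicts transformal separable maximality.

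The only nontrivial ingredient is the invocation of transformal algebraic maximality of $\TVFA$ (Fact \ref{fact:wvfa}(7)) in the harder direction; this is what guarantees the existence of a transformally transcendental element in $\widetilde L$ that can be used to generate a concrete proper transformally separable immediate extension of $K$, as opposed to a purely inseparable one that would not help.
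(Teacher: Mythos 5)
Your proof is correct and follows essentially the same route as the paper: the forward implication reuses the argument of Proposition \ref{prop:twvfe-tsep-alg-max} (passing to inversive hulls, which changes neither residue field nor value group, and then using linear disjointness to kill a purely transformally inseparable but transformally separable extension), while the converse uses transformal algebraic maximality of $\widetilde{K}\models\TVFA$ to extract a transformally transcendental element $a$ and form the proper immediate transformally separable extension $K\left(a^{\mathbf{N}\left[\frac{\sigma}{p^{\infty}}\right]}\right)$. The only cosmetic difference is that you assert every element of $\widetilde{L}\setminus\widetilde{K}$ is transformally transcendental, whereas the paper only needs (and only claims) the existence of one such element; both suffice.
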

\begin{proof}
    If the inversive hull of $K$ is spherically complete, then the proof of Proposition \ref{prop:twvfe-tsep-alg-max} shows that $K$ is transformally separably maximal. Conversely, if the inversive hull $M$ of $K$ is not spherically complete, let $L$ be an extension witnessing this. Since $M$ is transformally algebraically maximal we can find some $x$ in $L$ transformally transcendental over $M$; then $K\left(x^{\Nsphul}\right)$ is an immediate proper transformally separable extension of $K$, whence $K$ is not transformally separably maximal.
\end{proof}

\begin{prop}
    Let $K$ be a model of $\TwVFE_e$. Then there is a model $M$ of $\TwVFE_e$, immediate and transformally separable over $K$, which is transformally separably maximal.
\end{prop}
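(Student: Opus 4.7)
The plan is to construct $M$ as a relative transformal separable algebraic closure inside a spherically complete elementary extension of the inversive hull of $K$. First, apply Fact \ref{fact:wvfa}(7) to $\widetilde{K}$ (a model of $\TVFA$ by Proposition \ref{P:pure insep inside twvfe}(1)) to obtain an elementary immediate extension $\widetilde{N}$ which is spherically complete. Since $K$ lies dense in $\widetilde{K}$ by Lemma \ref{lem:deep-ramification}, $\widetilde{N}$ is also immediate over $K$. If $\widetilde{K}$ is already spherically complete then $M=K$ works by Lemma \ref{lem:tsep-max-iff-inversive-hull-max}; otherwise, since $\widetilde{K}$ is transformally algebraically maximal, the extension $\widetilde{N}/\widetilde{K}$ admits transformally transcendental elements, and I fix a transformal transcendence basis $A\subseteq\widetilde{N}$ of $\widetilde{N}$ over $\widetilde{K}$.

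Next, let $M_{0}$ be the subfield of $\widetilde{N}$ generated over $K$ by the elements $a^{\sigma^{n}/p^{m}}$ for $a\in A$ and $n,m\geq 0$. Iterated application of Lemma \ref{lem:transformally-transcendental-FE} (for transformally transcendental adjunction) and Lemma \ref{lem:purely inseparable FE} (for the purely inseparable algebraic pieces) will show that $M_{0}\models\FF$ and, by the same analysis that gave $L^{\sigma}=L_{0}^{\sigma}$ when adjoining $p^{-\infty}$-roots to a transformally transcendental extension, that $M_{0}/K$ is transformally separable (then invoke Proposition \ref{basic-properties-of-tsep}(1) for the composition). Crucially, each generator $a^{\sigma^{n}/p^{m}}$ equals $(a^{\sigma^{n}/p^{m+1}})^{p}$ and is therefore a $p$-th power in $M_{0}$, whence $M_{0}=K\cdot M_{0}^{p}$; so $M_{0}/K$ is relatively perfect, of imperfection degree $e$. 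Finally $M_{0}$ is immediate over $K$, being a subfield of $\widetilde{N}$, and fails to be inversive because no element of $A$ has a $\sigma$-preimage in $M_{0}$.

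Now set $M$ to be the relative transformal separable algebraic closure of $M_{0}$ in $\widetilde{N}$ (Theorem \ref{thm:main-thm-on-tsep}(1)). Combining Proposition \ref{basic-properties-of-tsep}(1) and Proposition \ref{tsep-alg-of-fe-preserves-ershov}, $M$ is a model of $\FF$, transformally separable over $K$, immediate over $K$, of imperfection degree $e$. It is not inversive: for $a\in A$ the preimage $a^{\sigma^{-1}}\in\widetilde{N}$ has vanishing transformal derivative over $M_{0}$ by Remark \ref{substitution}, so it is not a simple root of any twisted difference polynomial over $M_{0}$ and hence lies outside $M$ by Theorem \ref{thm:main-thm-on-tsep}(2). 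For transformal separable maximality, I use the equivalence of Theorem \ref{thm:main-thm-on-tsep}(4): the inversive hull $M^{\sigma^{-\infty}}$ equals the inversive transformal algebraic closure of $M_{0}^{\sigma^{-\infty}}$ inside $\widetilde{N}$, and $\widetilde{N}$ itself is transformally algebraic over $M_{0}^{\sigma^{-\infty}}$ since $A\subseteq M_{0}^{\sigma^{-\infty}}$ is a transformal transcendence basis of $\widetilde{N}$ over $\widetilde{K}$. Therefore $M^{\sigma^{-\infty}}=\widetilde{N}$ is spherically complete, and Lemma \ref{lem:tsep-max-iff-inversive-hull-max} delivers transformal separable maximality. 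Applying Proposition \ref{P:pure insep inside twvfe}(2) to $M$, relatively transformally separably algebraically closed in $\widetilde{N}\models\TwVFE$, gives $M\models\TwVFE_{e}$.

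The main technical obstacle is the imperfection-degree bookkeeping at the level of $M_{0}$: the bare-bones construction $K(A^{\mathbf{N}[\sigma/p^{\infty}]})$ from Lemma \ref{lem:transformally-transcendental-FE} alone inflates the imperfection degree by $|A|$, so it is crucial to simultaneously adjoin the $p^{-\infty}$-roots of $A$ itself and to verify that the resulting extension remains inside $\FF$ (via Lemma \ref{lem:purely inseparable FE}), stays transformally separable over $K$, and becomes relatively perfect over $K$.
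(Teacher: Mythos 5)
Your proof is correct and takes essentially the same route as the paper's: both pass to a spherically complete immediate extension of the inversive hull, adjoin a transformal transcendence basis together with its twisted powers (your $M_0$ is the paper's $K\left(x_i^{\Nsp}\right)$), take the relative transformal separable algebraic closure $M$ inside, and identify the ambient spherically complete field as the inversive hull of $M$ before invoking the lemma characterizing transformal separable maximality. Your write-up is merely more explicit about the relative perfectness and imperfection-degree bookkeeping, the failure of inversivity, and the degenerate spherically complete case, all of which the paper leaves implicit.
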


\begin{proof}
    Let $E$ be a spherically complete immediate extension of $K$ (Fact \ref{fact:wvfa}(7)); then $E$ is a model of $\TVFA$ and in particular inversive.  We will find the desired $M$ inside $E$; in fact the field $E$ will be the inversive hull of $M$.
    
    Let $\left(x_i\right)$ be a transformal transcendence basis of $E$ over $K$. Let $M$ be the relative transformal separable algebraic closure of $K\left(x_i^{\Nsp}\right)$ inside $E$. We claim that $M$ is as desired. For this, we must check that $M$ is transformally separably maximal, that $M$ is transformally separable over $K$, and that $M$ is a model of $\TwVFEe$.
    
    By construction the perfect hull of $M$ splits over $K$, so it is a model of $\wVFEe$ immediate over $K$; it remains to be shown that $M$ is transformally separably maximal, and that it is a model of $\TwVFEe$.
    We claim that $E$ is the inversive hull of $M$. Indeed, since the $x_i$ form a transformal transcendence basis of $E$ over $K$, the field $E$ is transformally algebraic over $M$. Furthermore, by our choice of $M$, the field $M$ is transformally separably algebraically closed in $E$. So $E$ is inversive and purely transformally inseparably algebraic over $M$; hence it is the inversive hull of $M$.
    So the inversive hull of $M$ is a spherically complete model of $\TVFA$; it follows from Lemma  \ref{lem:tsep-max-iff-inversive-hull-max} and Proposition \ref{P:pure insep inside twvfe} that $M$ is as desired.
\end{proof}

Let $\ACFtwo = {\ACFtwo}_{\mathbf{Q}}$ be the theory of pairs $\left(B, A\right)$ of algebraically closed fields with $B$ a proper extension of $A$. This theory is model complete and stable if we expand the language by relation symbols expressing linear independence of tuples of elements of $B$ viewed as a vector space over $A$ \cite{delon}. The theory $\ACFtwo$ is a reduct of $\TwVFE_{\mathbf{Q}}$ by setting $B = K$ and $A = K^{\sigma}$. The transformal $\lambda$-functions are then definable in the reduct to $\ACFtwo$. 

\begin{lem}\label{lem:scfe-stable-forking}
    We work in $\SCFE_{\mathbf{Q}}$. Let $A$ be an algebraically closed difference field and $B, C$ be algebraically closed extensions of $A$ inside a saturated model $\mathbf{K}$. Let us assume that $\mathbf{K}$ is transformally separable over $B$ and $C$. Then $B$ and $C$ are independent over $A$ (in the sense of simplicity theory) if and only if they are linearly disjoint over $A$ and $\mathbf{K}$ is transformally separable over $B \otimes_{A} C$. This independence relation is moreover given by stable formulas, namely by independence in the sense of $\ACFtwo$.
\end{lem}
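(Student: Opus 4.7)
My plan is to identify $\SCFE_{\mathbf{Q}}$-forking over algebraically closed bases with the stable forking in the $\ACFtwo$-reduct of $\SCFE_{\lambda}$ where the big field is $\mathbf{K}$ and the small field is $\mathbf{K}^\sigma$, thereby applying Delon's characterization of forking in pairs of algebraically closed fields. Since $\mathbf{K}$ is transformally separable over $B$ and $C$, I would first verify that $B \cap \mathbf{K}^\sigma = B^\sigma$ and $C \cap \mathbf{K}^\sigma = C^\sigma$: indeed, if $x \in B \cap \mathbf{K}^\sigma \setminus B^\sigma$, then $1, x \in B$ are linearly independent over $B^\sigma$, but the relation $x \cdot 1 + (-1) \cdot x = 0$ with coefficients $x, -1 \in \mathbf{K}^\sigma$ contradicts linear disjointness of $B$ and $\mathbf{K}^\sigma$ over $B^\sigma$. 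Thus the relevant $\ACFtwo$-substructures are the pairs $(B, B^\sigma), (C, C^\sigma)$ over $(A, A \cap \mathbf{K}^\sigma)$.

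Next, I would apply Delon's theorem: $\ACFtwo$-independence of these substructures is equivalent to $B, C$ being linearly disjoint over $A$ as fields together with the compatibility $BC \cap \mathbf{K}^\sigma = B^\sigma C^\sigma$. The first condition is (i); the second is precisely $\mathbf{K}$ being transformally separable over $BC = B \otimes_A C$, i.e., (ii). The apparent additional small-field condition of lin disj of $B^\sigma, C^\sigma$ over $A^\sigma$ is automatic from (i) via the field embedding $\sigma$. To equate this $\ACFtwo$-indep with $\SCFE$-indep: the direction $\SCFE \Rightarrow \ACFtwo$ is formal as $\ACFtwo$ is a reduct of $\SCFE_{\lambda}$. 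For the converse, I invoke the characterization of $\SCFE$-forking from \cite{SCFEe}, which says $B \indep_A^{\SCFE} C$ iff $B^\inv$ and $C^\inv$ are $\ACFA$-independent over $A^\inv$ together with trans sep compatibility. Given (i) and (ii), I show $B^\inv \cap C^\inv = A^\inv$ in $\mathbf{K}^\inv$: any $x$ in the intersection satisfies $\sigma^n(x) = b \in B$ and $\sigma^m(x) = c \in C$ for some $n \geq m$, so $b = \sigma^{n-m}(c) \in B \cap C = A$ by (i) plus algebraic closedness, hence $x \in A^\inv$. Since algebraically closed extensions of an algebraically closed base are linearly disjoint iff they intersect in the base, this yields $\ACFA$-indep of the inversive hulls, which combined with (ii) is $\SCFE$-indep.

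The main obstacle is pinning down the characterization of $\SCFE$-forking from \cite{SCFEe} that I use, namely that over an algebraically closed base it is exactly the combination of $\ACFA$-independence of the inversive hulls plus transformal separability over the compositum. The trans sep compatibility (ii) is the crucial ingredient ensuring that the free field-theoretic amalgam $B \otimes_A C$ inside $\mathbf{K}$ lifts to the correct free amalgam $B^\inv \otimes_{A^\inv} C^\inv$ inside $\mathbf{K}^\inv$ with no spurious elements in $(BC)^\inv \cap \mathbf{K} \setminus BC$; without (ii) the passage between the two levels can fail. Once this is available, the identification with $\ACFtwo$-indep via Delon is formal, and the stability of the resulting forking relation follows automatically from the stability of $\ACFtwo$, since (i) and (ii) are witnessed by (stable) $\ACFtwo$-formulas expressing the linear (non-)disjointness conditions over $\mathbf{K}$ and over $\mathbf{K}^\sigma$.
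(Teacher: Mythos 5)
Your overall strategy is different from the paper's (the paper simply quotes the characterization of independence from \cite[Definition 4.9 and Proposition 4.10]{SCFEe}, which is stated as: linear disjointness over $A$ together with a transcendence-basis lifting condition for $B\otimes_A C$ over $B^{\sigma}\otimes_{A^{\sigma}}A\otimes_{A^{\sigma}}C^{\sigma}$), and as written your route has genuine gaps. The central one is the step where you identify the $\ACFtwo$-compatibility condition with transformal separability: you assert that $BC\cap\mathbf{K}^{\sigma}=B^{\sigma}C^{\sigma}$ ``is precisely $\mathbf{K}$ being transformally separable over $BC$.'' It is not. Transformal separability of $\mathbf{K}$ over $BC$ means that $BC$ and $\mathbf{K}^{\sigma}$ are \emph{linearly disjoint} over $B^{\sigma}C^{\sigma}$, and the intersection condition is strictly weaker; the paper itself points this out (see the remark following Remark \ref{R:equiv def of trans sep}, citing \cite[Remark 2.7.6]{FrJaBook}). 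So either your statement of Delon's characterization of forking in $\ACFtwo$ is too weak to yield condition (ii) of the lemma, or, if the correct characterization does involve linear disjointness from the small field, you have not stated or verified it. Relatedly, your assertion that ``algebraically closed extensions of an algebraically closed base are linearly disjoint iff they intersect in the base'' is false in $\ACF$: trivial intersection does not imply algebraic (equivalently, for regular extensions, linear) disjointness, so the deduction of $\ACFA$-independence of the inversive hulls from $B^{\inv}\cap C^{\inv}=A^{\inv}$ does not go through.

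Finally, the ``characterization of $\SCFE$-forking'' you invoke — $\ACFA$-independence of the inversive hulls plus transformal separability over the compositum — is essentially the statement of the lemma itself, and you acknowledge you have not pinned it down in \cite{SCFEe}. The actual cited characterization is the linear-disjointness-plus-transcendence-basis condition above, and the real (if modest) content of the proof is recognizing that this second condition is exactly almost transformal separability of $\mathbf{K}$ over $B\otimes_A C$, which by Proposition \ref{criterion-algsep-almosttsep-for-fe} (everything in sight being algebraically separable in characteristic zero) is equivalent to transformal separability. Your proposal circles around this equivalence without establishing it, so the argument is not complete.
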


\begin{proof}
    See \cite[Definition 4.9 and Proposition 4.10]{SCFEe}; the fields $B$ and $C$ are independent over $A$ in the sense of $\SCFE$ if and only if they are linearly disjoint over $A$ and a transcendence basis of $B$ over $A \otimes_{A^{\sigma}} B^{\sigma}$ lifts to a transcendence basis of $B \otimes_{A} C$ over $B^{\sigma} \otimes_{A^{\sigma}} A \otimes_{A^{\sigma}} C^{\sigma}$.
\end{proof}

\begin{lem}\label{lem:qe-in-twvfe-wvfa-plus-scfe}
    Let $A$ be a model of $\TwVFE_{\mathbf{Q}}$ and $\mathbf{K}$ a saturated model of $\TwVFE_{\mathbf{Q}}$ transformally separable over $A$. Let $B,C$ be models of $\VFE$ over $A$ inside $\mathbf{K}$, closed under the transformal $\lambda$-functions of $\mathbf{K}$, and assume that $B$ and $C$ are independent over $A$ in the sense of $\SCFE = \SCFE_\mathbf{Q}$. Let $b$ be a tuple enumerating the elements of $B$. Then $\tp_{\TwVFE}\left(\nicefrac{b}{A}\right) \cup \tp_{\ACFtwo}\left(\nicefrac{b}{C}\right) \cup \tp_{\TVFA}\left(\nicefrac{b}{C}\right) \vdash \tp_{\TwVFE}\left(\nicefrac{b}{C}\right)$.
\end{lem}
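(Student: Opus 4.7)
Let $b'$ be a tuple in $\mathbf{K}$ realizing $\tp_{\TwVFE}(b/A)\cup\tp_{\ACFtwo}(b/C)\cup\tp_{\TVFA}(b/C)$, and set $B'=A(b')$. The goal is to exhibit an automorphism $\Phi$ of $\mathbf{K}$ fixing $C$ pointwise with $\Phi(b)=b'$. First, from $\tp_{\TwVFE}(b/A)=\tp_{\TwVFE}(b'/A)$ and the fact that $A\models\TwVFE$ is a strict amalgamation basis, Corollary \ref{cor:qe} together with saturation of $\mathbf{K}$ yield an automorphism of $\mathbf{K}$ over $A$ sending $b$ to $b'$; restricting it to $B$ I obtain a transformal valued field isomorphism $\iota\colon B\cong B'$ over $A$ with $\iota(b)=b'$. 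Since $b$ enumerates $B$ and the closure of $B$ under the $\lambda$-functions of $\mathbf{K}$ is captured by $\mathcal{L}$-formulas in $\tp_{\TwVFE}(b/A)$, the same holds for $B'$, and $\iota$ is in fact an $\mathcal{L}_{\lambda}$-isomorphism.

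Next I would assemble $\iota$ into a transformal valued field isomorphism $\phi\colon B\otimes_A C\cong B'\otimes_A C$ over $C$. By Lemma \ref{lem:scfe-stable-forking}, the given $\SCFE$-independence of $B$ and $C$ over $A$ is detected by stable $\ACFtwo$-formulas and amounts exactly to the linear disjointness of $B$ and $C$ over $A$ together with the transformal separability of $\mathbf{K}$ over $B\otimes_A C$. Using $\tp_{\ACFtwo}(b/C)=\tp_{\ACFtwo}(b'/C)$ these properties transfer with $B'$ in place of $B$, and the tensor product map $\iota\otimes\mathrm{id}_C$ defines an isomorphism of difference rings, identifying via linear disjointness with the natural compositum map $C(b)\to C(b')$. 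That this map preserves the valuation is then forced by $\tp_{\TVFA}(b/C)=\tp_{\TVFA}(b'/C)$, which by the relative quantifier elimination of $\TVFA$ (Fact \ref{fact:wvfa}(8)) contains the entire quantifier-free type of $b$ over $C$ in the language of transformal valued fields and so pins down the valuation on $C(b)$.

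The main obstacle is extending $\phi$ to an isomorphism $\tilde\phi\colon F\cong F'$ over $C$, where $F$ and $F'$ are the relative transformal separable algebraic closures of $B\otimes_A C$ and $B'\otimes_A C$ inside $\mathbf{K}$, respectively. By Proposition \ref{P:pure insep inside twvfe}(2), each of $F,F'$ is itself a model of $\TwVFE$, and in particular a strict amalgamation basis, with $\mathbf{K}$ transformally separable over each. Since $\mathbf{K}$ is algebraically closed in characteristic zero, I would work inside the algebraic closures $(B\otimes_A C)^{\alg}$ and $(B'\otimes_A C)^{\alg}$ taken inside $\mathbf{K}$: by Fact \ref{fact:wvfa}(8) applied to $\TVFA$, the equality $\tp_{\TVFA}(b/C)=\tp_{\TVFA}(b'/C)$ produces an isomorphism of these algebraic closures as transformal valued fields over $C$ extending $\phi$, and this isomorphism restricts to the desired $\tilde\phi\colon F\cong F'$ via the canonical characterization of the relative transformal separable algebraic closure (Theorem \ref{thm:main-thm-on-tsep}).

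Finally, since $F$ and $F'$ are strict amalgamation bases of the same imperfection degree with $\mathbf{K}$ transformally separable over both, Corollary \ref{cor:qe} implies that the two structures obtained by viewing $\mathbf{K}$ as a model of $\TwVFE_{\lambda}$ over $F$ via the inclusion and via $\tilde\phi^{-1}\colon F'\hookrightarrow\mathbf{K}$ are elementarily equivalent over $F$; by saturation of $\mathbf{K}$, the map $\tilde\phi$ lifts to an automorphism $\Phi$ of $\mathbf{K}$, which fixes $C$ pointwise (as $\tilde\phi$ does) and sends $b$ to $b'$, as required.
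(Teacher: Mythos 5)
Your proof is correct and follows essentially the same route as the paper's, which compresses the argument into one paragraph: the $\ACFtwo$-type yields linear disjointness and closure of $B\otimes_A C$ under the transformal $\lambda$-functions via Lemma \ref{lem:scfe-stable-forking}, the $\TVFA$-type supplies the extension of the structure to the algebraic closure, and Corollary \ref{cor:qe} over the relative transformal separable algebraic closure finishes. You have merely unwound this into an explicit automorphism/back-and-forth construction, with all steps matching the paper's intended ingredients.
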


\begin{proof}
This is a formal consequence of Lemma \ref{lem:scfe-stable-forking} and quantifier elimination in $\TwVFE$ (Corollary \ref{cor:qe}). Namely by the independence assumption (encoded in the type $\tp_{\ACFtwo}\left(\nicefrac{b}{C}\right)$), the fields $B$ and $C$ are linearly disjoint over $A$, and the field $N = B \otimes_{A} C$ is closed under the transformal $\lambda$-functions of $\mathbf{K}$; given this, the type $\tp_{\TwVFE}\left(\nicefrac{b}{C}\right)$ is determined by its quantifier free part and the choice of an extension of the transformal valued field structure to the algebraic closure. This data is encoded in $\tp_{\TVFA}\left(\nicefrac{b}{C}\right)$.
\end{proof}

Let $A$ be a model of $\TwVFE_{\mathbf{Q}}$ which is transformally separably maximal. Let $B,C$ be models of $\VFE$ over $A$ inside an ambient saturated transformally separable extension $\mathbf{K} \models \TwVFE_{\mathbf{Q}}$ of $A$, and assume $B$ and $C$ are closed under the transformal $\lambda$-functions of $\mathbf{K}$. Let $b, \alpha, \gamma$ be  tuples enumerating the elements of $B$, the residue field elements of $B$, and the value group elements of $B$, respectively.

\begin{prop}\label{prop:sph-dom-vfe}
    Notation as above (and recall the characteristic is zero). Then the type $\tp_{\TwVFE}\left(\nicefrac{b}{C}\right)$ does not fork over $A$ if and only if the following conditions are satisfied: 
    \begin{enumerate}
        \item The type $\tp_{\ACFtwo}\left(\nicefrac{b}{C}\right)$ does not fork over $A$, that is $B$ and $C$ are independent over $A$ in the sense of $\SCFE$.
        \item The type $\tp_{\ACFA}\left(\nicefrac{\alpha}{k_C}\right)$ does not fork over $k_A$, that is the fields $k_B$ and $k_C$ are linearly disjoint over $k_A$.
        \item The type $\tp_{\TwOGA}\left(\nicefrac{\gamma}{\Gamma_C}\right)$ does not fork over $\Gamma_A$, that it is it admits a global extension which does not split over $\Gamma_A$.
    \end{enumerate}
    Moreover in these two (equivalent) cases we have:
    \begin{center}
    $\tp_{\TwVFE}\left(\nicefrac{b}{A}\right) \cup \tp_{\ACFtwo}\left(\nicefrac{b}{C}\right) \cup \tp_{\TwOGA}\left(\nicefrac{\gamma}{\Gamma_C}\right) \cup \tp_{\ACFA}\left(\nicefrac{\alpha}{k_C}\right) \vdash \tp_{\TwVFE}\left(\nicefrac{b}{C}\right)$.
    \end{center}
\end{prop}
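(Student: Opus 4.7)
The plan is to mirror the proof of Corollary \ref{C: forking in wVFA}, using the quantifier elimination of Lemma \ref{lem:qe-in-twvfe-wvfa-plus-scfe} to decompose the $\TwVFE_{\mathbf{Q}}$-type into its $\TVFA$, $\ACFtwo$, and residue-field/value-group components. Necessity of conditions (1)--(3) is routine: (1) is the restriction to the stable reduct $\ACFtwo$, while (2) and (3) follow from full orthogonality and stable embeddedness of the residue field and value group (Theorem \ref{thm:stably-embedded}).

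For sufficiency and the ``moreover'' clause, my strategy is to pass to inversive hulls and appeal to the already-established spherically complete case in $\TVFA$. By Proposition \ref{P:pure insep inside twvfe} and Lemma \ref{lem:tsep-max-iff-inversive-hull-max}, the inversive hull $\bar{A}$ of $A$ is a spherically complete model of $\TVFA_{\mathbf{Q}}$; since $k_A$ and $\Gamma_A$ are already inversive (being the residue field and value group of a model of $\TwVFE$), conditions (2) and (3) pass without change to $\bar{A}, \bar{B}, \bar{C}$. Corollary \ref{C: forking in wVFA} then gives
\[
\tp_{\TVFA}(b/A) \cup \tp_{\TwOGA}(\gamma/\Gamma_C) \cup \tp_{\ACFA}(\alpha/k_C) \vdash \tp_{\TVFA}(b/C),
\]
and substituting into Lemma \ref{lem:qe-in-twvfe-wvfa-plus-scfe} produces the displayed implication of the ``moreover'' clause.

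To conclude that the type does not fork over $A$, I may assume $C$ is sufficiently saturated so that forking agrees with dividing. Given $\varphi(x,c) \in \tp_{\TwVFE}(b/C)$, the implication just established lets me strengthen $\varphi$ to a conjunction $\theta(x) \wedge \psi_1(x,c) \wedge v(x) = y \wedge \mathrm{res}(x) = z \wedge \psi_2(y,c) \wedge \psi_3(z,c)$, with $\theta \in \tp_{\TwVFE}(b/A)$, $\psi_1 \in \tp_{\ACFtwo}(b/C)$, $\psi_2 \in \tp_{\TwOGA}(\gamma/\Gamma_C)$, and $\psi_3 \in \tp_{\ACFA}(\alpha/k_C)$. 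The value-group and residue-field conjunct does not divide over $A$ by the argument already used in Corollary \ref{C: forking in wVFA} (stable embeddedness and full orthogonality), while $\psi_1$ does not divide over $A$ in $\ACFtwo$ by condition (1), hence does not divide over $A$ in $\TwVFE$ since every $\TwVFE$-indiscernible sequence is in particular $\ACFtwo$-indiscernible. The main obstacle will be assembling these individually non-dividing ingredients into non-dividing of the conjunction, but this is handled by the same bookkeeping as in Corollary \ref{C: forking in wVFA}: after existential quantification, the formula $\exists x\, \psi'(x,y,z,c)$ lies in $\tp_{\TwVFA}(\gamma,\alpha/C)$ and does not divide over $A$, while $\theta(x) \wedge \psi_1(x,c)$ is realizable independently of the $(y,z)$-witnesses on any Morley sequence. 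The real content of the proposition is the reduction to spherical completeness carried out in the previous paragraph.
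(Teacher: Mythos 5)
Your proposal is correct and follows essentially the same route as the paper: necessity via stability of $\ACFtwo$-formulas and stable embeddedness of $k$ and $\Gamma$, and sufficiency by reducing to the ``moreover'' implication, which is obtained by passing to the inversive hull of $A$ (spherically complete by Lemma \ref{lem:tsep-max-iff-inversive-hull-max}), applying Corollary \ref{C: forking in wVFA}, and then feeding the resulting $\tp_{\TVFA}(\nicefrac{b}{C})$ into Lemma \ref{lem:qe-in-twvfe-wvfa-plus-scfe}. Your final paragraph merely expands the dividing bookkeeping that the paper delegates to the proof of Corollary \ref{C: forking in wVFA}, so there is no substantive difference.
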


\begin{proof}
The three conditions are necessary in order to guarantee nonforking. Namely (1) is expressed using stable formulas, and independence is the generic condition. Moreover (2) and (3) follow from stable embeddedness of the value group and the residue field. To show the conditions are sufficient it will be enough to establish the "moreover" part (as in the proof of Corollary \ref{C: forking in wVFA}).
Using Lemma \ref{lem:tsep-max-iff-inversive-hull-max}, the inversive hull of $A$ is a spherically complete model of $\TVFA$. So working over the inversive hull of $A$ and using Corollary \ref{C: forking in wVFA} the left hand side axiomatizes $\tp_{\TVFA}\left(\nicefrac{b}{C}\right)$; using Lemma \ref{lem:qe-in-twvfe-wvfa-plus-scfe}, this data is sufficient to axiomatize the right hand side.
\end{proof}

\begin{cor}\label{cor:twvfe-ntp2}
In the theory $\TwVFE_{\mathbf{Q}}$, forking over a model is always witnessed by an $\NIP$ formula; hence $\TwVFE_{\mathbf{Q}}$ is $\NTPtwo$.
\end{cor}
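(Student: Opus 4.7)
The plan is to mirror the proof of Corollary \ref{C:NIP-dividing} in the $\TVFA_{\mathbf{Q}}$ case, substituting Proposition \ref{prop:sph-dom-vfe} for the spherically complete domination statement and using transformally separably maximal bases in place of spherically complete ones; the passage from ``forking witnessed by $\NIP$ formulas'' to $\NTPtwo$ is then Kaplan's appendix.

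Concretely, fix a small model $A$, a small sufficiently saturated $C \supseteq A$, and a type $p = \tp(b/C)$ dividing over $A$; enlarging $b$ by definable closure I may assume it enumerates a small model $B$ of $\VFE$ containing $A$ and closed under the transformal $\lambda$-functions of the ambient monster. First I would reduce to the case where the base is transformally separably maximal: by the existence result stated just before Lemma \ref{lem:scfe-stable-forking}, $A$ admits an immediate transformally separable extension $A_0$ which is transformally separably maximal, and by the same isomorphic-copy manoeuvre used in Corollary \ref{C:NIP-dividing} we may arrange $A_0 \subseteq C$ so that $p$ continues to divide over $A_0$. Since every $A_0$-indiscernible sequence is $A$-indiscernible, any $\NIP$ formula dividing over $A_0$ automatically divides over $A$, so it suffices to argue over a transformally separably maximal base.

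With $A$ transformally separably maximal, Proposition \ref{prop:sph-dom-vfe} implies that the failure of non-forking of $\tp_{\TwVFE}(\nicefrac{b}{C})$ over $A$ is detected by the failure of at least one of: (a) non-forking of $\tp_{\ACFtwo}(\nicefrac{b}{C})$ over $A$; (b) non-forking of $\tp_{\ACFA}(\nicefrac{\alpha}{k_C})$ over $k_A$; (c) non-forking of $\tp_{\TwOGA}(\nicefrac{\gamma}{\Gamma_C})$ over $\Gamma_A$. Each such failure is witnessed by an $\NIP$ formula: in (a) the independence in $\SCFE$ is detected by the stable theory $\ACFtwo$ (Lemma \ref{lem:scfe-stable-forking}); in (b) forking in $\ACFA_{\mathbf{Q}}$ is linear disjointness, witnessed by stable formulas of the $\ACF$ reduct; in (c) the $o$-minimal theory $\TwOGA$ is $\NIP$. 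Stable embeddedness of the residue field and value group (Theorem \ref{thm:stably-embedded}), together with the fact that $\ACFtwo$ is a reduct of $\TwVFE_{\mathbf{Q}}$, now produce $\NIP$ formulas of $\TwVFE_{\mathbf{Q}}$ contained in $p$ and dividing over $A$; applying the appendix converts this into $\NTPtwo$.

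The main obstacle I expect is the isomorphic-copy reduction in the second paragraph: ensuring that the transformally separably maximal extension $A_0$ admits an $A$-isomorphic copy inside $C$ over which $p$ continues to divide. This parallels the delicate step in Corollary \ref{C:NIP-dividing} and should go through by analogous considerations (using saturation of $C$ together with the existence of transformally separably maximal immediate transformally separable extensions), but deserves careful verification in the $\TwVFE$ setting. Once it is in place, the three-way decomposition afforded by Proposition \ref{prop:sph-dom-vfe} makes the $\NIP$-witnessing essentially automatic and the appendix takes care of the last step.
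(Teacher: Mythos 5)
Your proposal is correct and matches the paper's proof, which is exactly the two-line remark that the argument for Corollary \ref{C:NIP-dividing} goes through with Proposition \ref{prop:sph-dom-vfe} replacing spherically complete domination, transformally separably maximal models replacing maximal ones, and the stability of $\ACFtwo$ supplying the witness for the $\SCFE$-independence coordinate. The isomorphic-copy step you flag is the same one already used (and glossed over) in Corollary \ref{C:NIP-dividing}, and goes through by the standard Ramsey-plus-saturation argument together with the existence of transformally separably maximal immediate transformally separable extensions.
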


\begin{proof}
The proof for the case of $\TVFA_{\mathbf{Q}}$ goes through, using Proposition \ref{prop:sph-dom-vfe} to replace maximal models with transformally separably maximal models and noting that the theory $\ACFtwo$ is stable (so forking is always witnessed by a stable formula).
\end{proof}

\begin{rem}
    Simone Ramello proved a vast generalization of Corollary \ref{cor:twvfe-ntp2} using the proof strategy from \cite{chernikov2014valued}, see \cite[Section 5.5]{simone}.
\end{rem}

\begin{cor}
    In the theory $\TwVFE_{\mathbf{Q}}$, a global invariant type is stationary if and only if it is orthogonal to the fixed field and the value group.
\end{cor}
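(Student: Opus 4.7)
The forward direction follows the same schema as in Corollary \ref{C:stationary type}: the value group is linearly ordered, so any global invariant type not orthogonal to $\Gamma$ picks up a non-trivial linear order on conjugates and cannot be stationary; failure of orthogonality to the fixed field contradicts stationarity via the easy (necessity) direction of Fact \ref{fact:acfa-char-0-stationary} applied at the level of the residue field, using full embeddedness of $k$ in $\TwVFE$ (Theorem \ref{thm:stably-embedded}).

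For the converse, suppose $p$ is a global invariant type orthogonal to the fixed field and to the value group. Pick a small set $A_{0}$ witnessing invariance of $p$, and enlarge $A_{0}$ to a small model $A \models \TwVFE_{\mathbf{Q}}$ which is \emph{transformally separably maximal}, using the construction preceding Proposition \ref{prop:sph-dom-vfe} (take any small model containing $A_{0}$ and pass to an immediate transformally separable extension whose inversive hull is spherically complete). Invariance of $p$ over $A_{0}$ implies that $p$ does not fork over $A$, so it suffices to prove that $p|_{A}$ has a unique global nonforking extension. Let $C$ be a small model containing $A$ and closed under the transformal $\lambda$-functions of the monster, and let $b \models p|_{C}$ with $\tp(b/C)$ nonforking over $A$; write $\alpha, \gamma$ for the residue and value group parts of the tuple $b$.

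By Proposition \ref{prop:sph-dom-vfe}, the type $\tp_{\TwVFE}(b/C)$ is determined by the data of $\tp_{\TwVFE}(b/A)$ together with $\tp_{\ACFtwo}(b/C)$, $\tp_{\ACFA}(\alpha/k_{C})$, and $\tp_{\TwOGA}(\gamma/\Gamma_{C})$. The $\ACFtwo$-component is stationary, since $\ACFtwo$ is a stable theory (the $\SCFE$-independence of Lemma \ref{lem:scfe-stable-forking} is given by stable formulas), so this piece is determined by the nonforking condition. The $\ACFA$-component is stationary by Fact \ref{fact:acfa-char-0-stationary}, using the hypothesis that $p$ is orthogonal to the fixed field together with stable embeddedness of $k$ to transport orthogonality from the ambient theory to the $\ACFA$-reduct on the residue field. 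Finally, the $\TwOGA$-component is determined because $p \perp \Gamma$ forces the $\TwOGA$-type of $\gamma$ over $\Gamma_{C}$ to be fully controlled by $\tp_{\TwVFE}(b/A)$ (again via Theorem \ref{thm:stably-embedded}). Combining, $\tp_{\TwVFE}(b/C)$ is determined by $\tp_{\TwVFE}(b/A)$, so $p$ is stationary.

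The main obstacle is bookkeeping around the base: we must simultaneously arrange that $A$ is transformally separably maximal (for Proposition \ref{prop:sph-dom-vfe} to apply) and that the orthogonality hypotheses, which a priori live in $\TwVFE$, translate into the corresponding stationarity statements for the residue field reduct ($\ACFA$) and the value group reduct ($\TwOGA$) over arbitrary extensions $C$. Both issues are handled by the spherically complete domination package: the former by the construction of transformally separably maximal models preceding Proposition \ref{prop:sph-dom-vfe}, the latter by the full stable embeddedness and orthogonality of $k$ and $\Gamma$ established in Theorem \ref{thm:stably-embedded}, which means orthogonality in $\TwVFE$ can be read off in the corresponding reduct.
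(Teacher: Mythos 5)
Your proposal is correct and follows essentially the same route as the paper: the paper's proof is a one-line citation of Proposition \ref{prop:sph-dom-vfe} (domination over transformally separably maximal models), Fact \ref{fact:acfa-char-0-stationary} (stationarity in $\ACFA_{\mathbf{Q}}$ via orthogonality to the fixed field), and the uniqueness of nonforking extensions in the stable reduct $\ACFtwo$, which are exactly the three components you assemble. Your write-up merely makes explicit the bookkeeping (enlarging the base to a transformally separably maximal model, transporting orthogonality to the residue-field and value-group reducts via Theorem \ref{thm:stably-embedded}) that the paper leaves implicit.
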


\begin{proof}
    This follows immediately from Proposition \ref{prop:sph-dom-vfe}, Fact \ref{fact:acfa-char-0-stationary}, and the uniqueness of nonforking extensions in $\ACFtwo$.
\end{proof}

\appendix

\section{NTP$_2$-Forking, by Itay Kaplan}

Let $T$ be a first order theory and let $\mathcal{U}\models T$ be a large saturated model. Chernikov proved in \cite[Proposition 4.14(1)]{chernikov2014theories}, that assuming that whenever $p(x) \in S(N)$ divides over $M \prec N$ then this is witnessed by an NIP (dependent) formula\footnote{In fact the proof only uses that the witness is NTP$_2$.}, then $T$ is NTP$_2$. We generalize this slightly by relaxing the assumption on $T$ from all types over models to global types (or types over saturated enough models, see Proposition \ref{prop:equivalence of NTP2 forking} below). 

\begin{defn}
    A formula $\phi(x,y)$ (perhaps with parameters) is \emph{TP$_2$} if there is an array $\sequence{a_{i,j}}{i,j<\omega}$ and $k<\omega$ such that:
    \begin{itemize}
        \item every row is $k$-inconsistent, i.e., for every $i<\omega$, $\set{\phi(x,a_{i,j})}{j<\omega}$ is $k$-inconsistent and
        \item every vertical path is consistent, i.e., for every $\eta:\omega\to \omega$, $\set{\phi(x,a_{i,\eta(i)})}{i<\omega}$ is consistent.
    \end{itemize}
    If $\phi$ does not have TP$_2$, we say that $\phi$ is \emph{NTP$_2$}.
\end{defn}

\begin{defn}\label{def:NTP2 forking}
    Say that a (complete first-order) theory $T$ has \emph{NTP$_2$-forking over a (small) model $M$} if whenever $\phi(x,a)$ forks over $M$ then there are NTP$_2$-formulas $\phi_i(x,z_i)$ over $M$ and $b_i$ for $i<n$ such that $\phi(x,a) \vdash \bigvee_{i<n} \phi_i(x,b_i)$ and $\phi_i(x,b_i)$ divides over $M$. In other words, forking over $M$ is witnessed by dividing NTP$_2$-formulas. 

    Say that $T$ has \emph{NTP$_2$-forking} if it has NTP$_2$ forking over all models $M$. 
\end{defn}

\begin{prop}\label{prop:equivalence of NTP2 forking}
    The following are equivalent for a theory $T$ and $M\vDash T$.
    \begin{enumerate}
        \item $T$ has NTP$_2$-forking over $M$.
        \item If $p(x)$ is a global type which divides/forks over $M$ then there is some NTP$_2$-formula $\phi(x,y)$ over $M$ and $b$ such that $\phi(x,b) \in p$ and $\phi(x,b)$ divides over $M$. 
        \item If $N \succ M$ is $|M|^+$-saturated and $p(x) \in S(N)$ divides/forks over $M$ then there is some NTP$_2$-formula  $\phi(x,y)$ over $M$ and $b$ such that $\phi(x,b) \in p$ and $\phi(x,b)$ divides over $M$. 
    \end{enumerate}
\end{prop}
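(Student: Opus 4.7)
The plan is to prove $(1) \Leftrightarrow (2)$ and $(1) \Leftrightarrow (3)$ directly, splitting each equivalence into an easy forward direction using completeness of the type, and a compactness-based converse. Throughout I shall freely use that for complete types, and in particular for types in $S(N)$ with $N$ being $|M|^+$-saturated, forking and dividing over $M$ coincide.

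For the forward directions $(1) \Rightarrow (2)$ and $(1) \Rightarrow (3)$: given a type $p$ (global, respectively over such an $N$) which forks over $M$, pick a forking formula $\phi(x,a) \in p$ and apply (1) to obtain NTP$_2$-formulas $\phi_i(x,z_i)$ over $M$ and tuples $b_i$ with $\phi(x,a) \vdash \bigvee_{i<n} \phi_i(x,b_i)$ and each $\phi_i(x,b_i)$ dividing over $M$. Completeness of $p$ forces some $\phi_i(x,b_i) \in p$. For case (3), where we need parameters inside $N$, replace the $b_i$ by tuples $b_i' \in N$ realizing $\tp(b_i/Ma)$ (using $|M|^+$-saturation of $N$); then $\phi(x,a) \vdash \bigvee_i \phi_i(x,b_i')$ and each $\phi_i(x,b_i')$ still divides over $M$, dividing being an invariant of the type of the parameter over $M$.

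For the converses $(2) \Rightarrow (1)$ and $(3) \Rightarrow (1)$, I argue by contrapositive. Suppose (1) fails, with a witness $\phi(x,a)$ that forks over $M$ but is not implied by any finite disjunction of NTP$_2$-formula instances dividing over $M$. Form the partial type
\[
\Gamma(x) = \{\phi(x,a)\} \cup \{\neg \psi(x,b) : \psi(x,y) \text{ NTP}_2 \text{ over } M, \; \psi(x,b) \text{ divides over } M\};
\]
for case (3) additionally restrict $b$ to range over $N \supseteq M \cup \{a\}$, $|M|^+$-saturated. The failure of (1) combined with compactness shows $\Gamma$ is consistent. Extending to a global type (respectively, to a type in $S(N)$) yields a type that contains $\phi(x,a)$ and therefore forks (hence divides, in case (3)) over $M$, yet contains no instance of an NTP$_2$-formula over $M$ that divides over $M$, contradicting (2) (respectively (3)).

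The main subtlety lies in the forward direction $(1) \Rightarrow (3)$, where the parameters $b_i$ produced by (1) live in the monster and must be relocated inside $N$; this is handled by the $|M|^+$-saturation of $N$ together with the fact that dividing over $M$ depends only on the type of the parameter over $M$. The compactness arguments for the converses, and the appeal to forking = dividing over the $|M|^+$-saturated $N$, are otherwise entirely standard.
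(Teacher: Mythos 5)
Your proof is correct and follows essentially the same route as the paper's: the forward directions come from saturation together with completeness of the type, and the converse is the same contrapositive/compactness argument via the consistency of the partial type $\Gamma(x)$. The only cosmetic differences are that the paper closes a single cycle $(1)\Rightarrow(3)\Rightarrow(2)\Rightarrow(1)$ instead of two separate equivalences, and in your $(1)\Rightarrow(3)$ step you should realize the type of the concatenated tuple $b_0\ldots b_{n-1}$ over $Ma$ inside $N$ (rather than each $\tp(b_i/Ma)$ separately) so that the implication $\phi(x,a)\vdash\bigvee_{i<n}\phi_i(x,b_i')$ is actually preserved.
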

\begin{proof}
    (1) implies (3) follows from saturation, (3) implies (2) is clear.  
    In order to prove that (2) $\implies$ (1), we assume that (1) fails and show that (2) must fail also. So suppose that $\phi(x,a)$ forks over $M$ but does not imply a disjunction of dividing NTP$_2$-formulas over $M$ as in the definition; then $\set{\neg \psi(x,b)}{\psi(x,y)\in L(M) \text{ is NTP}_2, b\in \mathcal{U} \text{ and } \psi(x,b) \text{ divides over }M }$ is consistent with $\phi(x,a)$, thus any completion gives a contradiction to (2). 
\end{proof}

We need the notions of strict invariance and strict Morley sequences.
\begin{defn}
    We say that a global type $q(x)\in S(\mathcal{U})$ is \emph{strictly invariant over a model $M$}, if $q$ is $M$-invariant and if $c \vDash q|_C$ where $C \supseteq M$, then $C \ind_M c$, i.e., $\tp(C/Mc)$ does not fork over $M$. 
    
    We write $a \ind^{ist}_M C$ for: there exists a global strictly invariant over $M$ extension of $\tp(a/MC)$. 
\end{defn}

\begin{thm} \label{thm:forking = dividing over M}
    Suppose that $T$ has NTP$_2$-forking over $M$. Then forking equals dividing over $M$: if $\phi(x,b)$ forks over $M$, then $\phi(x,b)$ divides over $M$. 

    It follows that if $T$ has NTP$_2$-forking then forking equals dividing over models. 
\end{thm}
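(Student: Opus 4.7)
The plan is to adapt the Chernikov--Kaplan proof that forking equals dividing over models in NTP$_2$ theories, observing that in that argument the ambient NTP$_2$ of $T$ enters only through the NTP$_2$-ness of the specific formulas witnessing a given forking instance; under the present hypothesis of NTP$_2$-forking over $M$, those witnesses are NTP$_2$ by assumption, so the argument should go through with minor local modifications. The second clause of the theorem is then immediate from the first, applied at each model $M$.

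\textbf{Setup and extraction of a consistent path.} Suppose for contradiction that $\phi(x,a)$ forks over $M$ but does not divide over $M$. By the NTP$_2$-forking hypothesis I write $\phi(x,a) \vdash \bigvee_{i<n} \psi_i(x, c_i)$ with each $\psi_i(x, z_i)$ an NTP$_2$ formula over $M$, each $\psi_i(x, c_i)$ dividing over $M$, and $n$ chosen minimal. By Ramsey and compactness I extract an $M$-indiscernible sequence $(a^j, c_0^j, \ldots, c_{n-1}^j)_{j < \omega}$ starting with $(a, c_0, \ldots, c_{n-1})$. Non-dividing of $\phi(x,a)$ over $M$ makes $\{\phi(x, a^j) : j < \omega\}$ consistent; fix a realization $b$. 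For each $j$ some $\psi_{i(j)}(b, c_{i(j)}^j)$ holds, so by pigeonhole and a further subsequence (preserving indiscernibility) I may assume $\psi_0(b, c_0^j)$ holds for every $j$. Hence $(c_0^j)_j$ is an $M$-indiscernible sequence starting at $c_0$ with $\{\psi_0(x, c_0^j) : j < \omega\}$ consistent.

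\textbf{Contradicting NTP$_2$ of $\psi_0$.} Since $\psi_0(x, c_0)$ divides over $M$, there is an $M$-indiscernible sequence $(d^j)_j$ with $d^0$ of the same type as $c_0$ over $M$ and $\{\psi_0(x, d^j) : j < \omega\}$ $k$-inconsistent for some $k$. I will weave these two sequences into a TP$_2$-array for $\psi_0$: each row an indiscernible copy of $(d^j)_j$ (giving $k$-inconsistent rows), and a consistent vertical path supplied by a suitable column-wise extension of $(c_0^j)_j$, anchored by $b$. The array is assembled by iterated Ramsey and compactness in the spirit of the Chernikov--Kaplan \emph{broom lemma}, but applied to the single NTP$_2$ formula $\psi_0$. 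The resulting array witnesses TP$_2$ of $\psi_0$, contradicting the choice of $\psi_0$ as an NTP$_2$ formula.

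\textbf{Main obstacle.} The delicate point is the array construction. In the proof for NTP$_2$ theories one builds it using \emph{strict} Morley sequences, and the existence of such sequences in full generality relies on $T$ itself being NTP$_2$. Under the weaker hypothesis here one must ask only for strictness with respect to the finitely many NTP$_2$ formulas $\psi_0, \ldots, \psi_{n-1}$ produced by the NTP$_2$-forking hypothesis. This \emph{local} strict invariance can be extracted by a direct Ramsey/compactness argument formula-by-formula, without invoking any global NTP$_2$ of $T$. Getting this bookkeeping right---in particular interleaving the pigeonhole-based reduction of $n$ (to maintain minimality) with the extraction of a locally strict Morley sequence for the single NTP$_2$ formula $\psi_0$---is where the bulk of the technical work lies.
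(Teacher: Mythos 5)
Your overall strategy (localize the Chernikov--Kaplan argument to the finitely many NTP$_2$ formulas supplied by the hypothesis) is the right one, and your second clause is indeed immediate. But there is a genuine gap at exactly the point you flag as the ``main obstacle,'' and your proposed fix does not work. After extracting your $M$-indiscernible sequence $(c_0^j)_j$ by Ramsey and compactness, you have an $M$-indiscernible sequence starting at $c_0$ along which the instances of $\psi_0$ are consistent, while $\psi_0(x,c_0)$ divides over $M$. This alone does \emph{not} yield a TP$_2$ array for $\psi_0$: to mutually-indiscernibilize the rows (each a copy of the dividing witness for $\psi_0(x,c_0^j)$) while preserving the consistency of the first column, one needs the spine $(c_0^j)_j$ to be a \emph{strict} Morley sequence, and an arbitrary Ramsey-extracted indiscernible sequence has no reason to be strict. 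A concrete illustration: for an equivalence relation with infinitely many infinite classes, $E(x,c_0)$ divides over a model $M$, there are $M$-indiscernible sequences starting at $c_0$ (all within one class) along which the instances are consistent, and yet $E$ is stable, hence NTP$_2$, so no TP$_2$ array exists. So the ``weaving'' step is simply false for non-strict sequences. Your claim that the requisite strictness can be extracted ``formula-by-formula by Ramsey/compactness'' is not tenable either: strict invariance of $q$ over $M$ demands that $\tp(C/Mc)$ not fork over $M$ for arbitrary $C$ and arbitrary formulas of $T$, so it cannot be localized to the finitely many $\psi_i$.

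The paper closes this gap by a chain of four preliminary steps that are entirely absent from your proposal: (i) for an NTP$_2$ formula $\psi$ with $\psi(x,c)$ dividing over $M$, dividing is witnessed by a Morley sequence of a global \emph{coheir} of $\tp(c/M)$; (ii) via the broom lemma (which needs no NTP$_2$ hypothesis on $T$, only the coheir witnesses from (i)), forking over $M$ implies quasi-dividing over $M$; (iii) quasi-dividing yields the existence of strictly invariant (indeed coheir) extensions of every type over $M$; (iv) Kim's lemma for NTP$_2$ formulas along Morley sequences of strictly invariant types. Only then does one take a strict Morley sequence in $\tp(a/M)$, apply (iv) to each $\psi_i$, and pigeonhole to conclude that $\phi(x,a)$ divides. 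Your proof-by-contradiction reorganization of the final step would be fine, but without (i)--(iv) the argument does not go through; in particular (ii) and (iii) are the real engine, and they are not obtainable by ``iterated Ramsey and compactness'' on the single formula $\psi_0$.
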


\begin{proof}
    The proof is essentially the same as the proof of \cite[Theorem 1.1]{CheKa}. See also \cite[Theorem 5.49]{simon-NIP} and \cite{AdlerHandWritten}. 
    The proof follows the following stages:
    \begin{enumerate}
        \item\label{itm:dividing is witnessed by a MS} (Generic witness) Suppose that $\phi(x,y) \in L(M)$ is NTP$_2$ and that $\phi(x,a)$ divides over a model $M$. Then there is a global coheir $q(y)$ extending $\tp(a/M)$ such that some/any Morley sequence in $q$ over $M$ witnesses the dividing of $\phi(x,a)$. In other words, if $\sequence{a_i}{i<\omega} \vDash q^{(\omega)}|_M$, then $\set{\phi(x,a_i)}{i<\omega}$ is inconsistent. This follows from the same proof as in \cite[Lemma 3.12]{CheKa} (see also \cite[Lemma 5.35]{simon-NIP}): note that although there NTP$_2$ is assumed globally, it is only used for the specific formula in question. 
        \item \label{itm:forking implies quasi-dividing} (Forking implies quasi-dividing) If $\phi(x,a)$ forks over $M$, then there are finitely many conjugates of $a$ over $M$, $a=a_0,\ldots,a_{m-1}$ such that $\set{\phi(x,a_i)}{i<n}$ is inconsistent. Indeed, by assumption there are NTP$_2$-formulas $\phi_i(x,z_i) \in L(M)$ and $b_i\in \mathcal{U}$ for $i<n$ such that $\phi_i(x,b_i)$ divides over $M$ and $\phi(x,a)\vdash \bigvee_{i<n} \phi_i(x,b_i)$. By (\ref{itm:dividing is witnessed by a MS}), there are global coheirs $q_i(z_i)\in S(\mathcal{U})$ such that any Morley sequence in $q_i$ over $M$ witnesses that $\phi(x,b_i)$ divides. This allows us to apply the ``broom lemma'', \cite[Lemma 3.1]{CheKa} (see also \cite[Lemma 5.36]{simon-NIP}) which in \cite{CheKa} does not require NTP$_2$ and allows us to conclude exactly as in the proof of \cite[Corollary 3.13]{CheKa}. 
        \item \label{itm:existence of strictly inv} (Existence of strictly invariant extensions) if $p(x)\in S(M)$ then there is a strictly invariant over $M$ type (in fact even a coheir, but we will not use this) $q(x)\in S(\mathcal{U})$ extending $p$. This follows from \cite[Proposition 3.7(1)]{CheKa} which doesn't require NTP$_2$ but does assume (\ref{itm:forking implies quasi-dividing}).
        \item \label{itm:Kim's lemma} (Kim's lemma for NTP$_2$-formulas) If $\phi(x,y) \in L$ is NTP$_2$ and $\phi(x,a)$ divides over a model $M$, and $q(y) \supseteq \tp(a/M)$ is a global strictly invariant type over $M$, then any Morley sequence in $q$ over $M$ witnesses dividing. In other words, if $\sequence{a_i}{i<\omega}$ is a Morley sequence of $q$ over $M$ then $\set{\phi(x,a_i)}{i<\omega}$ is inconsistent. This follows from (the proof of) \cite[Lemma 3.14]{CheKa}. 
        \item (Conclusion) If $\phi(x,a)$ forks over $M$, then there are NTP$_2$-formulas $\phi_i(x,z_i) \in L(M)$ and $b_i\in \mathcal{U}$ for $i<n$ such that $\phi_i(x,b_i)$ divides over $M$ and $\phi(x,a)\vdash \bigvee_{i<n} \phi_i(x,b_i)$. By concatenating and adding dummy variables (these operations clearly preserves being NTP$_2$) we may assume that $b_i = a$ and $z_i = z$ for all $i<n$. By (\ref{itm:existence of strictly inv}) there exists a global $M$-strictly invariant type $q(z) \supseteq \tp(a/M)$. Let $\sequence{a_j}{j<\omega}$ be a Morley sequence in $q$. By (\ref{itm:Kim's lemma}), $\set{\phi_i(x,a_j)}{j<\omega}$ is inconsistent for all $i<n$. It follows by pigeonhole that $\sequence{a_j}{j<\omega}$ witnesses that $\phi(x,a)$ divides over $M$ (see also the proof of \cite[Corollary 3.16]{CheKa}). 
        \qedhere
    \end{enumerate}
\end{proof}

Now we can conclude:

\begin{thm}\label{thm:NTP2 forking iff NTP2}
    Let $T$ be any theory. Then $T$ has NTP$_2$-forking if and only if $T$ is NTP$_2$.
\end{thm}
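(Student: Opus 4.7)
The easy direction, that $T \in \NTPtwo$ implies $T$ has $\NTPtwo$-forking, I would establish immediately: in an $\NTPtwo$ theory every formula is $\NTPtwo$ and forking equals dividing over models, so Definition~\ref{def:NTP2 forking} is satisfied trivially by taking $n=1$, $\phi_0=\phi$, $b_0=a$.

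For the converse, I would argue by contradiction. Assume $T$ has $\NTPtwo$-forking but some $\phi(x,y)$ has $\mathrm{TP}_2$; the goal is to exhibit an $\NTPtwo$-formula with $\mathrm{TP}_2$. Theorem~\ref{thm:forking = dividing over M} is in force, and in particular its step (4), Kim's lemma for $\NTPtwo$-formulas. Using Ramsey, I would first fix a mutually $M$-indiscernible array $\sequence{a_{i,j}}{i,j<\omega}$ over a small model $M$ witnessing $\mathrm{TP}_2$ for $\phi$, with rows $k$-inconsistent and paths consistent. Since $\phi(x, a_{0,0})$ divides hence forks over $M$, $\NTPtwo$-forking yields $\NTPtwo$-formulas $\psi_l(x,z_l) \in L(M)$ and parameters $c_l$ for $l<n$ with $\phi(x, a_{0,0}) \vdash \bigvee_l \psi_l(x, c_l)$ and each $\psi_l(x, c_l)$ dividing over $M$.

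Next, let $b$ realize the consistent path $\bigwedge_i \phi(x, a_{i,0})$. By homogeneity, pick $M$-automorphisms $\tau_i$ with $\tau_i(a_{0,0}) = a_{i,0}$ and set $c_l^{(i)} = \tau_i(c_l)$, so that $\phi(x, a_{i,0}) \vdash \bigvee_l \psi_l(x, c_l^{(i)})$. Pigeonholing on which disjunct $b$ witnesses at each stage $i$ and extracting an $M$-indiscernible subsequence via Erd\H{o}s--Rado, I would obtain a single $l^* < n$ and an $M$-indiscernible sequence $(c_{l^*}^{(i)})_i$ with $\bigwedge_i \psi_{l^*}(b, c_{l^*}^{(i)})$; so this sequence is $\psi_{l^*}$-consistent, while each element has type $\tp(c_{l^*}/M)$.

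On the other hand, since $\psi_{l^*}(x, c_{l^*})$ divides over $M$, conjugating through $c_{l^*}^{(i)} \equiv_M c_{l^*}$ produces for each $i$ an $M$-indiscernible ``row'' $(d^{(i,r)})_r$ starting at $d^{(i,0)} = c_{l^*}^{(i)}$ whose $\psi_{l^*}$-trace is $k'$-inconsistent. Treating each row as an $\omega$-tuple and applying Erd\H{o}s--Rado to the sequence of rows, I would extract a mutually $M$-indiscernible array whose rows remain $\psi_{l^*}$-inconsistent and whose $0$-th column is an $M$-indiscernible subsequence of $(c_{l^*}^{(i)})_i$, hence $\psi_{l^*}$-consistent. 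By mutual indiscernibility, column-consistency upgrades to path-consistency, so this array witnesses $\mathrm{TP}_2$ for $\psi_{l^*}$---contradicting $\psi_{l^*} \in \NTPtwo$. The main obstacle I anticipate is this last construction: carefully combining the consistent column data with conjugated dividing-witness rows while preserving mutual indiscernibility, and rigorously upgrading column-consistency to consistency along arbitrary paths (both are standard facts for mutually indiscernible arrays but require care to set up over the correct base).
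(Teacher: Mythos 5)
Your argument is correct up to the point where you produce a single $l^*<n$ and an $M$-indiscernible sequence $(c_{l^*}^{(i)})_i$ such that $\{\psi_{l^*}(x,c_{l^*}^{(i)})\}_i$ is consistent while each $\psi_{l^*}(x,c_{l^*}^{(i)})$ divides over $M$. The fatal gap is the final step. First, the configuration you have reached does \emph{not} by itself contradict $\NTPtwo$ of $\psi_{l^*}$: in the (stable, hence $\NTPtwo$) theory of an equivalence relation with infinitely many infinite classes, take $M$ a model, $\psi(x,y)=E(x,y)$, and $(c^{(i)})_i$ an $M$-indiscernible sequence of points all lying in a single class not represented in $M$; then $\{\psi(x,c^{(i)})\}_i$ is consistent and every $\psi(x,c^{(i)})$ divides over $M$, yet $\psi$ is $\NTPtwo$. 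Second, and correspondingly, the extraction you describe cannot be carried out: when you pass from the rows $(d^{(i,r)})_r$ to a mutually $M$-indiscernible array, the new array is only \emph{based on} the old one, meaning finite configurations of the new array are realized by tuples $(d^{(i_0,r_0)},d^{(i_1,r_1)},\dots)$ with \emph{arbitrary} column indices $r_s$. You only know consistency of $\{\psi_{l^*}(x,d^{(i,0)})\}_i$ (witnessed by the specific element $b$), not of $\{\psi_{l^*}(x,d^{(i,r_i)})\}_i$ for other choices of $r_i$, so the $0$-th column of the extracted array need not be $\psi_{l^*}$-consistent. In the example above it will not be.

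What is missing is an independence requirement on the column $(c_{l^*}^{(i)})_i$. Your $c_{l^*}^{(i)}=\tau_i(c_{l^*})$ are images under arbitrary $M$-automorphisms and carry no mutual independence, and $M$-indiscernibility alone is far too weak. The paper's proof avoids this by not starting from a raw $\mathrm{TP}_2$ array at all: it invokes Chernikov's Theorem 4.9, which converts $\mathrm{TP}_2$ of $T$ into a long sequence $(b_i)_{i<\kappa}$ with $b_i\ind^{ist}_M b_{<i}$ (strict independence, whose existence rests on the existence of strictly invariant extensions, itself a consequence of the broom-lemma machinery packaged in Theorem \ref{thm:forking = dividing over M}) together with a single $a$ such that $\tp(a/Mb_i)$ divides for all $i$. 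That strict independence is exactly the hypothesis of Chernikov's Lemma 4.4, which lets one replace the dividing-witnessing rows by mutually $M$-indiscernible ones \emph{while keeping the first column fixed}, so that the consistent path survives. To repair your proof you would need to establish an analogous strict independence for your column before attempting the array extraction; as written, the last step fails.
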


\begin{proof}
    Right to left is obvious so we prove left to right.

    Suppose not: $T$ has NTP$_2$ forking and TP$_2$. By \cite[Theorem 4.9]{chernikov2014theories}, there is a model $M$ and sequence $\sequence{b_i}{i<\kappa}$ for $\kappa = ({|M|+|T|})^+$, such that $b_i \ind^{ist}_M b_{<i}$ and some $a$ such that $\tp(a/Mb_i)$ divides for all $i<\kappa$. For each $i<\kappa$, let $\phi_i(x,b_i) \in \tp(a/Mb_i)$ divide over $M$. By assumption there are NTP$_2$-formulas $\sequence{\psi_{i,j}(x,y_{i,j})}{j<n_i}$ over $M$ and $\sequence{c_{i,j}}{j<n_i}$ such that $\phi_i(x,b_i)\vdash \bigvee_{j<n_i} \psi_{i,j}(x,c_j)$. Let $y_i$ be the concatenation of all the variables $y_{i,j}$, and let $c_i$ be the appropriate concatenation of the $c_{i,j}$'s (we cannot incorporate $b_i$ as well since this may cause the loss of the strict independence over $M$). Note that $\psi_i(x,y_i):=\bigvee_{j<n_i}\psi_{i,j}(x,y_{i,j})$ is NTP$_2$ by \cite[Lemma 7.1]{chernikov2014theories}. 
    
    By Theorem \ref{thm:forking = dividing over M}, $\psi_i(x,c_i)$ divides over $M$ for all $i<\kappa$. Thus, for each $i<\kappa$, there is an $M$-indiscernible sequence $\sequence{d_{i,j}}{j<\omega}$ starting with $d_{i,0} = c_i$ and some number $k_i <\omega$ such that $\set{\psi_i(x,d_{i,j})}{j<\omega}$ is $k_i$-inconsistent. Trimming, we may assume that $k_i =: k$, $\phi_i =: \phi$ and $\psi_i(x,y_i) =: \psi(x,y)$ for all $i<\omega$. For each $d_{i,j}$ find $b_{i,j}$ such that $b_{i,j}d_{i,j}\equiv_M b_i c_i$ and $b_{i,0} = b_i$. In particular $\phi_i(x,b_{i,j}) \vdash \psi_i(x,d_{i,j})$. We may assume, by Ramsey and applying an automorphism, that $\sequence{b_{i,j}}{j<\omega}$ is $M$-indiscernible. By \cite[Lemma 4.4]{chernikov2014theories}, there are $M$-mutually indiscernible sequences $\sequence{\sequence{e_{i,j}}{j<\omega}}{i<\omega}$ such that $e_{i,0} = b_{i,0} = b_i$ and $\sequence{e_{i,j}}{j<\omega} \equiv_M \sequence{b_{i,j}}{j<\omega}$ for all $i<\omega$. 
    
    For each $i<\omega$, find $\sequence{f_{i,j}}{j<\omega}$ such that $\sequence{e_{i,j}f_{i,j}}{j<\omega} \equiv_M \sequence{b_{i,j} d_{i,j}}{j<\omega}$. We claim that the array $\sequence{f_{i,j}}{i,j<\omega}$ and $k$ witnesses that $\psi(x,y)$ has TP$_2$. 
    \begin{enumerate}
        \item Every row is $k$-inconsistent, i.e., $\set{\psi(x,f_{i,j})}{j<\omega}$ is $k$-inconsistent, because for each $i<\omega$, $\sequence{f_{i,j}}{j<\omega}\equiv_M \sequence{d_{i,j}}{j<\omega}$ and choice of the latter sequence. 
        \item Every vertical path is consistent: as $e_{i,0} = b_i$ and $\phi(a,b_i)$ holds for all $i<\omega$, it follows from mutual indiscernibility of the array $\sequence{e_{i,j}}{i,j<\omega}$ that for every $\eta:\omega\to\omega$, $\set{\phi(x,e_{i,\eta(i)})}{i<\omega}$ is consistent, and let $a_\eta$ be a realization. As  $\phi(x,b_{i,j}) \vdash \psi(x,d_{i,j})$, $\phi(x,e_{i,j}) \vdash \psi(x,f_{i,j})$ and hence $\psi(a_\eta,f_{i,\eta(i)})$ holds and in particular the set $\set{\psi(x,f_{i,\eta(i)})}{i<\omega}$ is consistent as required. 
    \end{enumerate}
    Together, we get a contradiction.
\end{proof}

\begin{rem}
    \begin{enumerate}
        \item From the proof of (\ref{itm:forking implies quasi-dividing}) in the proof of Theorem \ref{thm:forking = dividing over M}, one can deduce the following statement, which may be interesting on its own. Let $T$ be any theory, and let $\phi(x,y)$ be an NIP formula. Suppose that $M$ is a model and that $\pi(x)$ is a partial type over $\mathcal{U}$ which is $M$-invariant. Then there is some $\pi^* \supseteq \pi$ which is $M$-invariant and such that $\pi^*\restriction \phi$ is a complete global $\phi$-type. Indeed, it is enough to show that $\pi(x) \cup \set{\phi(x,c)\leftrightarrow \phi(x,b)}{b\equiv_M c}$ is consistent. If not then $\pi \vDash \bigvee_{i<n} \neg(\phi(x,c_i)\leftrightarrow \phi(x,b_i))$. For each $i<n$, let $q_i$ be a global coheir extending $\tp(b_i/M) = \tp(c_i/M)$, and let $a_i \vDash q_i|_{Mb_ic_i}$. Then we get that $\pi \vDash \bigvee_{i<n} \neg(\phi(x,a_i)\leftrightarrow \phi(x,b_i)) \vee \neg(\phi(x,a_i)\leftrightarrow \phi(x,c_i))$. Note that as $\phi$ is NIP, $\neg(\phi(x,a_i)\leftrightarrow \phi(x,b_i))$ divides over $M$ as witnessed by any Morley sequence in $q_i^{(2)}$ over $M$ (and the same is true replacing $b_i$ by $c_i$), and that $\psi(x,yz):=\neg(\phi(x,y)\leftrightarrow \phi(x,z))$ is NIP thus NTP$_2$. So we have that $\pi(x)$ implies a disjunction of dividing over $M$ NTP$_2$ formulas. By the ``broom lemma'', \cite[Lemma 3.1]{CheKa}, we get that $\pi$ is inconsistent, contradiction. 
        \item Call a formula $\phi(x,y)$ \emph{simple} if it does not have the tree property (see e.g., \cite[Definition 2.3.5]{Kim-simple}). Say that $T$ has \emph{simple forking} if whenever a formula forks over a model $M$, this is witnessed by simple formulas over $M$, as in Definition \ref{def:NTP2 forking}. Every simple formula is NTP$_2$ so if $T$ has simple forking, it has NTP$_2$ forking and thus forking equals dividing over models. A disjunction of simple formulas is still simple (see \cite[Lemmas 2.3.2 and 2.3.4(3)]{Kim-simple}), so similar ideas as in the proof of Theorem \ref{thm:forking = dividing over M} imply that in this case $T$ is simple (if not then by lack of local character there is some $a$ and an increasing continuous sequence $\sequence{M_i}{i<|T|^+}$ of models of size $\leq |T|$ such that $\tp(a/M_{i+1})$ forks over $M_i$, as witnessed by $\psi_i(x,d_i)$ where $\psi$ is over $M_i$. By the above there is a simple formula $\phi_i(x,y_i)$ over $M_i$ and some $c_i$ such that $\psi_i(x,d_i) \vdash \phi_i(x,c_i)$ and $\phi_i(x,c_i)$ divides over $M_i$. By Fodor and pigeonhole we may assume that for all $i<\omega$, $\phi_i = \phi$ and $\psi_i = \psi$ which are over $M_0$ and that $\phi(x,c_i)$ $k$-divides over $M_i$ for some constant $k<\omega$.  Now, exactly as in the proof of Theorem \ref{thm:NTP2 forking iff NTP2}, we conclude by constructing a tree of instances of $\phi$, contradiction). This is analogous to Chernikov's \cite[Proposition 4.14(2)]{chernikov2014theories} in the same way that Theorem \ref{thm:NTP2 forking iff NTP2} is analogous to \cite[Proposition 4.14(1)]{chernikov2014theories}.
    \end{enumerate}
    \end{rem}

\bibliographystyle{plain}
\bibliography{wvfe}

\end{document}